\newtheorem{theorem}{Theorem}[section]
\newtheorem{corollary}[theorem]{Corollary}
\newtheorem{definition}[theorem]{Definition}
\newtheorem{lemma}[theorem]{Lemma}
\newtheorem{notation}[theorem]{Notation}
\newtheorem{proposition}[theorem]{Proposition}
\newtheorem{remark}[theorem]{Remark}
\newenvironment{proof}[1][Proof]{\textbf{#1.} }{\ \rule{0.5em}{0.5em}}
\def \L{\Lambda}
\def \<{\langle}
\def \>{\rangle}
\def \l{\lambda}
\def \({\Big(}
\def \){\Big)}
\def \R{\mathbb R}
\def \As{\mathsf{A}}
\def \H^0{{\cal H}^0 or}
\def \G{{\cal G}}
\def \Q{{\cal Q}}
\def \Y{{\cal Y}}
\def \w{\omega}
\def \kf{\frak k}
\def \p{\partial}
\def \beq{\begin{equation}}
\def \eeq{\end{equation}}
\def \nn{[]}
\def \n{\nabla}
\def \eref{\eqref}
\def \V{\mathcal V}
\def \s{\sigma}
\def \lrc{\lrcorner\,}
\numberwithin{equation}{section}
\begin{document}

\title{Stability of the Yang-Mills heat equation for finite action
\footnote{\emph{Key words and phrases.} Yang-Mills, heat equation, weakly parabolic, 
    variational equation, 
    gauge groups,  Gaffney-Friedrichs inequality, Neumann domination.  \newline
 \indent 
\emph{2010 Mathematics Subject Classification.} 
 Primary; 35K58, 35K65,  Secondary; 70S15, 35K51, 58J35.} 
\author{Leonard Gross \\
Department of Mathematics\\
Cornell University\\
Ithaca, NY 14853-4201\\
{\tt gross@math.cornell.edu}}
}

\maketitle

\begin{abstract}
The existence and uniqueness of solutions to the Yang-Mills heat equation over domains
in $\R^3$  was  proven in a previous paper for  initial data  lying in the Sobolev space of
order one-half, which is the critical Sobolev index for this equation.
In the present paper the stability of these solutions will be established.
The variational equation, which is only weakly parabolic, and has highly singular coefficients,
will be shown to have unique strong solutions up to addition of a vertical solution.
Initial data will be taken to be in Sobolev class one-half.
The proof relies on an infinitesimal version of the ZDS procedure: one solves first
an augmented, strictly parabolic version of the variational equation and then adds
to the solution a function which is vertical along the original path.
Energy inequalities and Neumann domination techniques will be used to establish
apriori initial behavior for solutions.

\end{abstract}



\tableofcontents


\section{Introduction} \label{secintro}

      Denote by  $K$  a compact Lie group with Lie algebra $\kf$.  Let 
\beq
A(t,x) \equiv \sum_{j=1}^3 A_j(t, x) dx^j
\eeq
 be a $\kf$ valued 1-form on $R^3$ for each
$t \ge 0$. Its curvature is given at time $t$ by $B(t) = dA(t) + A(t)\wedge A(t)$. Here 
$d$ denotes the spatial exterior derivative. The Yang-Mills heat equation is the weakly parabolic non-linear equation given by 
\beq
\p A/\p t = - d_{A(t)}^* B(t), \ \ \ t >0,     \label{I1}
\eeq
where $d_A^*$ denotes the gauge covariant exterior co-derivative. 
Ignoring the terms on the right side of \eref{I1} which are quadratic and cubic in $A(t)$,
 one finds the linear expression  $ -d^*dA$.
Since $d^*d$
is only a portion of the Laplacian, $-\Delta = d^*d + dd^*$, on 1-forms, the equation \eref{I1}
is only weakly parabolic. 

            The existence and uniqueness of solutions to \eref{I1} has already
been investigated in \cite{CG1, CG2} and \cite{G70} for the initial value problem over $\R^3$ 
as well as for the initial-boundary value problem over a bounded convex region in $\R^3$.  
In the following, $M$ will denote either all of $\R^3$ or the closure of a convex bounded
 open subset of $\R^3$ with smooth boundary.
    It was shown in \cite{G70} that  for any connection form $A_0 \in H_{1/2}(M; \L^1\otimes \kf)$
the equation \eref{I1} has a solution with initial value $A_0$. 
It was also shown that the degree of  regularity of the solution
depends not only on the regularity of $A_0$ but also on some algebraic features of $A_0$
which are intimately connected to the gauge invariance of the equation. The key distinction
in regularity properties of solutions is best understood from  
 the  following notion of a strong solution.

By a strong solution of \eref{I1} over an interval $(0, T)$ we mean a function 
$A: (0, T)\rightarrow \{\kf$  valued 1-forms on  $M$\}  such that, 
for each point $t \in (0, T)$, there holds

a) $A(t)$ is in $H_1(M; \L^1\otimes \kf)$ 

b) $B(t)$ is in  $H_1(M; \L^2\otimes \kf)$ 

c)\  Equation \eref{I1} holds.

\noindent
In addition, some continuity properties as a function of $t$ are assumed.
Condition a) allows one to define $B(t)$ while condition b) allows one to give
 meaning to the right side of Equation \eref{I1}. But it can easily happen that condition a) fails while
 condition b) holds. In this case we refer to the solution as an almost strong solution.
 Of course one must interpret the derivative that occurs in $B(t)$ 
 as a weak derivative in this case. 
           For example, if $K$ is  the circle group and  we  identify its Lie algebra with $i\R$, then 
 $\sqrt{-1}A$ is real valued, and if $A_0$ is exact, say $A_0 = \sqrt{-1}\, d\lambda$ for some real
 valued function $\lambda$ on $M$, 
 then the function $A(t) \equiv A_0$ is a solution to \eref{I1} because, in this commutative case,
 the curvature is simply given by $B(t) = dA(t)$, which is $\sqrt{-1}\, d^2 \lambda$ and which is zero
 in a weak sense, no matter how irregular $A_0$ is. Thus in this example condition a) can fail even though
 condition b) always holds.
           For a general compact Lie group $K$ the same phenomenon occurs:
 Let $g:M\rightarrow K$  
 be a function and let $A_0 = g^{-1} dg$. Then the (weak) curvature
 of $A_0$ is zero and the function $A(t) \equiv A_0$ is a solution in some sense to \eref{I1},
 no matter how irregular $A_0$ is.  In this case the flow of the equation does not smooth
 the  initial value $A_0$.  For a general connection form $A_0 \in H_{1/2}$ it appears to be  impossible
 to separate out a part which propagates without smoothing, as in this example,
 from a part  which is smoothed by the equation, without destroying gauge invariance 
 of the initial value problem \eref{I1}.
It was shown in
\cite{G70} that for any initial connection form $A_0 \in H_{1/2}(M;\L^1\otimes \kf)$,
 the equation \eref{I1} always has a solution in a generalized sense. The solution 
  may not have the regularity required by condition a) but does have the regularity required by condition b).
  It was also shown that there exists a gauge function $g$ such that the gauge transform 
\beq
 A_0^g \equiv g^{-1} A_0 g + g^{-1} dg                                    \label{I2}
 \eeq
  is indeed the initial value of a strong solution.
   The main result
 of \cite{G70} may thus be stated succinctly as ``any connection form 
 $A_0 \in H_{1/2}(M; \L^1\otimes \kf)$ is, after gauge transformation, the initial value
  of a strong solution".  Uniqueness also holds when properly formulated.
  
  The goal of the present paper is to prove the analogous theorem for the variational equation.
Along a solution $A(\cdot)$ to \eref{I1}, the variational equation 
 is given by
\beq
-\p v(t)/\p t = d_A^*d_A v + [v\lrc B].      \label{I3}
\eeq
Here $v(t)$ is, for each $t \ge 0$, a $\kf$ valued 1-form on $M$.  
The last term represents an interior product. 
The $t$ dependence in $A(t)$, $B(t)$ and $v(t)$ on the right is suppressed.
The second order
derivative terms that appear on the right side of \eref{I3} are $d^*d v$,  and  consequently
the  variational equation is only  weakly parabolic, as was \eref{I1}.   
The solutions $A(t,x)$ to 
\eref{I1} that are of interest to us have a strong singularity at $t =0$.
 Therefore the linear equation \eref{I3},
 in addition to being only weakly parabolic,
 has highly singular coefficients. 
 
     There are functions in the initial data space for the variational equation which are
     not smoothed by the flow of the equation, but 
     they can be singled out in a gauge invariant manner, unlike the circumstance for \eref{I1}.  
    They    are the vertical vectors at $A_0$, i.e. the tangent vectors to the 
       orbit of the gauge group through $A_0$.
       Such a vector can be represented as $v_1 = d_{A_0} \alpha$ for some 
       $\kf $ valued function $\alpha$ on $M$. 
       The solution to \eref{I3} with this initial value
       is simply  given by $v(t) = d_{A(t)} \alpha$ and experiences no smoothing under the flow.
        The main theorem of this paper
       asserts that, for any element 
        $v_0 \in H_{1/2}(M; \L^1\otimes \kf)$, there is a generalized solution to \eref{I3} with
          initial value $v_0$  and  in addition, there is also a strong solution $v(t)$ such that
       \beq
\lim_{t\downarrow 0} \Big(v(t) - v_0\Big) \ \ \text{is vertical at}\ A_0.     \label{I4}
\eeq       
     In short,  we will prove that  any vector $v_0 \in H_{1/2}$ is the initial value of
        a strong solution  modulo vertical vectors.  
              This is the precise infinitesimal analog of the main theorem of \cite{G70}, which asserts 
              existence of strong solutions to \eref{I1} modulo gauge transformations. 
                       In the case that $M\ne \R^3$, boundary conditions must be imposed on the solution
 $v(t)$ for $t >0$ in the  discussion above. These will be discussed in Section \ref{secEUaug}.
           
            The existence proof for the Yang-Mills heat equation itself, 
given in \cite{CG1} and \cite{G70}, relied on a method 
 that goes back in one form or another to Zwanziger \cite{Z}, Donaldson \cite{Do1}
 and Sadun \cite{Sa}.  To prove existence of solutions to the variational equation we are going
 to use an infinitesimal version of the ZDS procedure.  
 The infinitesimal version of the ZDS procedure introduced in this paper
  has proven to be advantageous 
  over other methods that    naturally present themselves
  for the problems at hand:
 In the infinite dimensional manifold  of connection forms  over $\R^3$ 
 (with some Sobolev restrictions)  
 the tangent space at a point $A$ decomposes
  into vertical and horizontal subspaces in a gauge invariant way. The vertical vectors
  play a distinguished role, as already noted.  Moreover the horizontal component of any
   solution to \eref{I3} propagates by a strictly parabolic equation. 
   Consequently, techniques that rely on projection into these two subspaces can be
    expected to be useful.  
             But the use of these projections entails use of the Green functions for  gauge covariant
    Laplacians under Dirichlet or Neumann boundary conditions. There does not appear
     to be a useful Poincar\'e inequality for  these Laplacians in the case of
      Neumann boundary conditions. Consequently, useful bounds on their Green operators
      are hard to get  
      in this important case.      
               Even  for a compact manifold
  without boundary Green operators exist only for irreducible connections. This  class
  of connections  is open in a Sobolev topology $H_k$ for sufficiently large $k$,
  and  has been used in the works \cite{AM,MV81}, for example, in their analysis of the quotient space
 \beq
 \Y = \Big(\{connection\ forms\}/ gauge\ group\Big)        \label{I6}
 \eeq
 and in \cite{NR}.
  But we wish to deal with connections in Sobolev class
  $1/2$, where restriction to irreducible connections is not feasible. Fortuitously, 
  the infinitesimal   version of the ZDS procedure circumvents 
  this problem.

   For  a solution $A(\cdot)$ to the Yang-Mills heat equation define  
   \begin{align}
   \rho(A) := \int_0^1 s^{-1/2} \|B(s)\|_2^2 ds.
   \end{align}
   This is a gauge invariant functional of the initial data. It plays a fundamental role,  
   both technically and conceptually,
   because it captures in a gauge invariant manner the notion of $H_{1/2}$ initial data.
To understand how this happens, it is illuminating to compute its value when the gauge group
$K$ is the circle group.  
       One finds in this case that it reduces to the $H_{1/2}$ norm of the initial value $A_0$
       when $A_0$ is in Coulomb gauge, i.e., $d^*A_0 = 0$. As is well known, the space $\{A_0: d^*A_0 = 0\}$
       constitutes  a section of the bundle $\Y$  when $K$ is the circle group.   This example is discussed
       further in Remark \ref{remsig}.
                  If $K$ is not commutative  this method of identifying solutions modulo gauge transformations
   with some section  for the quotient space  $\Y$     does not play well. 
         It is well understood that if $K$ is not commutative 
     there is no good analog for the Coulomb gauge.
   Problems associated with the Gribov ambiguity enter  \cite{Si1, NR, Z}. 
 But it is the quotient space     that plays the role of the configuration space
   for  the classical Yang-Mills field.
    Our objective, when $K$ is not commutative, 
 is to make the quotient space into a complete, infinite dimensional,
   Riemannian manifold, which in some suitable sense consists of $H_{1/2}$ connection forms
   on $\R^3$ modulo the corresponding  gauge group. This will be carried out in \cite{G72}.
   The 
   functional $\rho(A)$ will play a fundamental
   role in this procedure  by determining, in a gauge invariant way, 
   which  initial data are  to be regarded as being 
   ``in''  $H_{1/2}$ when $K$ is not commutative. 
   Our use of  the term ``action'' for  $\rho(A)$   is motivated by the
   fact that  if $A_0 \in H_{1/2}(\R^3)$ then it has an extension to a slab 
   in Minkowski space which makes a finite contribution to the magnetic component of the Lagrangian.

\section{Statement of results}    \label{secstate}

\subsection{Strong and almost strong solutions}

$M$ will denote  either $\R^3$ or the closure of a  bounded convex open subset of $\R^3$
 with smooth boundary. $K$ will denote a compact Lie group with Lie algebra $\kf$.  We will always take $K$ to be a subgroup of the orthogonal
 resp. unitary group of a finite dimensional real resp. complex   inner product  space $\V$. 
  We can identify $\kf$ with a real subspace  of $End\, \V$.  
  We denote by $\<\cdot, \cdot\>$ an Ad $K$ invariant inner product on $\kf$.    
  The induced norm on $\kf$  is   equivalent to the operator norm of $\kf \subset End\, \V$ since $\kf$ is finite dimensional.
 
  We will assume as given a time dependent, $\kf$ valued 1-form  $A(t)$ on $M$:
  $A(t)(x) = \sum_{j=1}^3 A_j(x,t) dx^j$,  where each $A_j$ is a $\kf$  
  valued function on $M\times [0, \infty)$.   
  $W_1(M; \L^p\otimes \kf)$ will denote the 
   set of those  p-forms in $L^2(M)$ whose weak first derivatives are in $L^2(M)$.
   We will usually write $W_1$    when the order, $p$, is clear from the context.
   $ H_1(M;\L^p\otimes \kf)$ will denote the set of  $\kf$-valued p-forms  in $ W_1$
   which satisfy the boundary conditions  specified  in Notation \ref{notSob}.
   If $M \ne \R^3$ then  $A(t)$ will always be assumed to satisfy the boundary conditions
   $A(t)_{norm} =0$ when Neumann boundary conditions are under discussion and $A(t)_{tan} =0$ when Dirichlet boundary conditions are under discussion. We will write $A(t) \in H_1(M)$ in all three cases. 
 Its $H_1$ norm is given by 
   \beq
   \| A(t)\|_{H_1}^2 = \int_M\Big(\sum_{j=1}^3 |\p_j A(t, x)|_{\L^1\otimes \kf}^2 
                       + |A(t,x)|_{\L^1\otimes \kf}^2 \Big)dx.  \label{ymh5}
   \eeq

\begin{definition}\label{defstrsol} {\rm   
A {\it strong solution} to the Yang-Mills heat  equation over $(0, \infty)$ 
 is a continuous function 
\beq
A(\cdot): (0,\infty) \rightarrow L^2(M;\L^1\otimes \kf) 
                              \subset \{\frak k\text{-valued 1-forms on}\ M \}   \label{ymh6}   
\eeq
such that
\begin{align}
a)&\  A(t) \in H_1 
           \ \text{for all}\ \ t\in(0,\infty)\ \text{and}\ A(\cdot):(0,\infty)\rightarrow H_1 
                                                             \    \text{is continuous},     \notag\\
b)& \ B(t):= dA(t) +A(t)\wedge A(t)  \in H_1 
             \ \text{for each}\ \  t\in (0,\infty),\notag\\ 
c)& \  \text{the strong $L^2(M)$ derivative $A'(t) \equiv dA(t)/dt $}\ 
\text{exists on}\ (0,\infty),   \text{and}     \notag\\
&\ \ \ \ \ A'(\cdot) : (0,\infty) \rightarrow L^2(M)\ \ \text{is continuous}, \notag\\    
d)& \  A'(t) = - d_{A(t)}^* B(t)\ \ \text{for each}\ t \in(0, \infty).           \label{ymh10}     
\end{align}
}
\end{definition}

 In \cite{G70} it was proven that for some connection forms 
 $A_0$ in $H_{1/2}(M;\L^1\otimes \kf)$ 
 there is a strong solution $A(\cdot)$   to \eref{ymh10} over  
 $(0, \infty)$ which converges to $A_0$ in $H_{1/2}$ as $t\downarrow 0$.  Here $H_{1/2}$ refers to
 to the Sobolev norm that interpolates between $L^2$ and  the $H_1$ norm given in \eref{ymh5}. 
 $A(\cdot)$ 
 extends to a continuous function on $[0,\infty)$ into $H_{1/2}$ and therefore into $L^3(M)$
 by Sobolev. The initial values $A_0$ which are permitted in this theorem include,
  up to gauge transformation,
  all connection forms in $H_{1/2}$. 
         In this paper we will make use only of the properties listed in Definition \ref{defstrsol} and
         such further properties as are explicitly spelled out. In particular $A(0)$ need not
          be in $L^3$ in most of this  paper.
  
   We will be concerned 
   with existence, uniqueness and
  properties of solutions to the variational equation \eref{I3} along such
   a strong solution to \eref{ymh10}.  The spatial derivatives of  $A(t)$ enter into
    the coefficients of the variational
   equation and  can have bad singularities near $t =0$. 
   The existence of solutions to the variational equation is 
    jeopardized by these singularities. 
   The singular nature of this initial behavior of $A(\cdot)$ was studied in \cite{G70} and much  
   of the     information derived  there  will be needed in this paper.   

The initial behavior of a strong solution to the Yang-Mills heat equation is deducible in large part 
from the following gauge invariant condition, which will often be a key hypothesis.
             \begin{definition}{\rm  \label{defa-act} Let $1/2\le a <1$. A strong solution to
 the Yang-Mills equation \eref{ymh10}  over $(0,\infty)$ has 
 {\it finite a-action} if
 \beq
  \rho_A(t) \equiv (1/2)\int_0^t s^{-a} \|B(s)\|_{L^2(M)}^2 ds < \infty  \label{ymh15a} 
  \eeq 
   for some $t >0$ (and therefore for all $t <\infty$ because $s \mapsto \|B(s)\|_2$ is non-increasing).
  In the important case $a = 1/2$ we will simply say that  $A$ has finite action.
  }
  \end{definition}

\begin{notation}\label{notAib}  
{\rm In addition to the gauge invariant condition
 \eref{ymh15a}      we will also     
 need   the following   gauge invariant condition on $A(\cdot)$.
 For each $s \in [0, \infty)$ the function
\begin{align}
[0,\infty) \ni t\mapsto A(t) - A(s)\ \  \text{is continuous into  $L^3(M;\L^1\otimes \kf)$}. \label{vst356}
\end{align}
This is strictly weaker than the assumption that  $A(\cdot)$ is continuous as a function
 from $[0, \infty)$     
 into $L^3$  because \eref{vst356} can hold even if $A(t) \notin L^3(M)$ for any $t \ge 0$. 
This is a relevant issue only in case $M = \R^3$.
Although continuity of $A(\cdot)$ into $L^3$ was proved in \cite{G70} under the condition
  that the initial value $A_0$ is in    $H_{1/2}(\R^3)$, 
we will remove this restriction on the initial data in \cite{G72} in order to incorporate instanton sections into these structures.
   Only \eref{vst356} will survive. 
           In this paper 
     two results will require that $A(t) \in L^3$ for some $t >0$ (namely in Sections \ref{secivas} and \ref{secrecab}) and this condition will be made explicit where used. 
   All other results are independent of these.
   }
   \end{notation}

\begin{notation}\label{wedcomm}{\rm    We continue to use the notation from \cite{CG1} for the exterior and interior commutator 
    products, given  by 
    $[u\wedge v] = \sum_{I,J}[u_I, v_J] dx^I \wedge dx^J$   when $u \equiv \sum_I u_I dx^I$
    and $v\equiv \sum_J v_J dx^J$ are $End\ \V$ valued forms, while
    $\<w, [u\lrc v]\>_{\L^r\otimes \kf} =\<[u\wedge w], v\>_{\L^{p+r}\otimes \kf}$ for
     all $w \in \L^r\otimes \kf$ when degree  $u =p$ and degree $v = p+r$. 
     Then $d_A u = du +[A\wedge u]$ and $d_A^* u =d^* u + [A\lrc u]$.
}\end{notation}

    \begin{definition} {\rm The {\it variational equation} for the Yang-Mills heat 
equation \eref{ymh10} is
\beq
-v'(s) = d_{A(s)}^* d_{A(s)} v(s) + [ v(s)\lrc B(s)].        \label{ve}
\eeq
}
\end{definition}
     \begin{notation} \label{notgisn}
{\rm (Gauge invariant Sobolev norms)     
Although there is no gauge invariant Sobolev norm for a gauge potential $A$,
there are gauge invariant Sobolev norms for variations of $A$. 
 For any connection form $A$ over $M$ that lies in $W_1(M)$ define
\begin{align}
\p_j^A \w =\p_j \w + [A_j, \w] = (\p_j + ad\, A_j)\w   \label{ps8}
\end{align}
for a $\kf$ valued $p$-form $\w$. 
Ignoring boundary conditions for the moment we define
\begin{align}
\|\w\|_{H_1^A}^2 = \int_M\( |\p_j^A \w(x)|_{\L^1\otimes \kf}^2 
       + | \w(x)|_{\L^1\otimes \kf}^2\)dx.                                             \label{st10}
\end{align}
This is the gauge invariant $H_1$ norm on forms that we will use in most of this paper.
The corresponding $H_b$ norms are given by 
\beq
     \|\w\|_{H_b^A} \equiv \|\w\|_{H_b^A(M)} = \| (1- \Delta_A)^{b/2}\w \|_{L^2(M)},\ \ \  b \ge 0,  \label{ST19}
     \eeq
 where  $\Delta_A$ denotes the Bochner Laplacian on $\kf$-valued 1-forms over $M$. 
 The precise domain of this gauge covariant operator will be explained in Notation  \ref{notSob}.
 These norms are gauge invariant in the sense that
      \beq
     \| \w^g \|_{H_b^{A^g}} = \|\w\|_{H_b^A}
     \eeq
for any sufficiently regular gauge function $g:M\rightarrow K$.   Here $A^g$ is defined in \eref{I2} and
$\w^g = g^{-1} \w g$. 
}
\end{notation}

Given a strong solution $A(\cdot)$ to the Yang-Mills heat equation  and a number $T \in (0, \infty)$
we will write $\As = A(T)$ and use this connection form to define gauge invariant Sobolev 
norms on forms, as in \eref{st10}.
We will see in Lemma  \ref{lemeqSob} that these Sobolev norms are equivalent for different $T$. But in Section \ref{secEUaug}
we will make a choice of  $T$ that is well adapted for use in the contraction principle.

           \begin{definition}{\rm
A  {\it strong solution} to the variational equation along $A(\cdot)$ over $[0,\infty)$  
 is a continuous function
\beq
v:[0,\infty) \rightarrow L^2(M; \L^1\otimes \kf)      \label{ve1}
\eeq
such that 
\begin{align}
a)&\  v(t) \in H_1^{\As}    
 \ \text{for all} \ t\in(0,\infty)\ \text{and}\ v(\cdot):(0,\infty)\rightarrow H_1^\As 
                                                                                     \  \text{is continuous},      \label{ve2}\\
b)& \ d_{A(t)}v(t) \in H_1^\As 
  \  \text{for each}\ \  t\in (0,\infty),\                \label{ve3}\\
c)& \  \text{the strong $L^2(M)$ derivative $v'(t) \equiv dv(t)/dt $}\ 
\text{exists on}\ (0,\infty),   \nolinebreak  \label{ve4}    \\
d)&\ \text{The variational equation \eref{ve} holds on}\  (0,\infty).     \label{ve5}
\end{align}
A function $v(\cdot)$ satisfying all of the preceding conditions except a) will be called an
{\it almost strong solution}. In this case the spatial exterior derivative $d\,v(t)$, which enters
into the  definition of $d_{A(t)} v(t)$, 
must be interpreted  as a weak derivative.  
   It can happen that for some $t >0$, the weak exterior derivative $d_{A(t)} v(t)$
is   in $W_1$, as required by \eref{ve3}, even though $v(t)$ is not  in $W_1(M)$.
This is, typically, a manifestation of the identity
$d^2\lambda  = 0$, which holds in a generalized sense even if  $d\lambda$ is not in $W_1$.
This was already pointed out in the introduction.
}
\end{definition}

\begin{definition}\label{defvert}{\rm (Vertical solutions) 
  A {\it vertical solution} to the variational equation along $A(\cdot)$ is a function 
   of the form 
\beq
z(t) = d_{A(t)} \alpha, \ \ \ 0 < t <  \infty                            \label{ve8}
\eeq
for some element $\alpha \in H_1^\As(M;\kf)$.
Recall the standard terminology: A  $\kf$-valued 1- form $\w\in L^2(M;\L^1\otimes \kf)$ is 
{\it horizontal} at a connection  form $A$ if $d_A^* \w =0$. The horizontal 1-forms at $A$ form a closed subspace of $L^2(M;\L^1\otimes \kf)$ because $d_A^*$ is a closed operator on $L^2$.
For each $t$    the 1-form  $d_{A(t)}\alpha$  is clearly orthogonal to the horizontal
 subspace at $A(t)$.
}
\end{definition}

           \begin{lemma} \label{lemvert1} $(${\rm Vertical solutions}$)$
Let $A(\cdot)$ be a strong solution to \eref{ymh10}  over $(0, \infty)$  of finite action and 
satisfying \eref{vst356}.   
 Let $\alpha \in H_1^\As(M;\kf)$   
 Define $z(t)$ as in \eref{ve8}.
Then $z(\cdot)$ is an almost strong solution to  \eref{ve}. 
 It is a strong solution if and only if $d_{A(t_0)}\alpha \in H_1^\As$         
  for some   $t_0 >0$.
\end{lemma}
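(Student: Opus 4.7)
The strategy is to verify each of conditions (b)-(d) for an almost strong solution by direct calculation from the explicit formula $z(t)=d_{A(t)}\alpha$, and then to extract the strong-solution characterization as a propagation-of-regularity statement. Two algebraic inputs drive the argument. First, the gauge-covariant ``$d_A^2$'' identity
\begin{equation}
d_{A(t)} d_{A(t)}\alpha = [B(t)\wedge\alpha]
\end{equation}
for a $\kf$-valued $0$-form $\alpha$. Second, a gauge-covariant Leibniz identity for the coderivative,
\begin{equation}
d_{A(t)}^{\,*}[B(t)\wedge\alpha] = [d_{A(t)}^{\,*}B(t),\alpha] - [d_{A(t)}\alpha \lrc B(t)],
\end{equation}
obtained by integration by parts against a test form. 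Substituting the Yang-Mills equation $d_{A(t)}^{\,*}B(t)=-A'(t)$ gives $d_{A(t)}^{\,*}d_{A(t)}z(t)+[z(t)\lrc B(t)] = -[A'(t),\alpha]$. On the other hand, formally differentiating $z(t)=d\alpha+[A(t),\alpha]$ in $t$ produces $z'(t)=[A'(t),\alpha]$. Matching the two verifies condition (d).

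Condition (b) becomes the statement $[B(t)\wedge\alpha]\in H_1^\As$. Applying $\partial_j^\As$ and distributing by the gauge-covariant Leibniz rule reduces this to controlling sums of terms of the form $[\partial_j^\As B(t),\alpha]$ and $[B(t),\partial_j\alpha]$ in $L^2$; the Sobolev embedding $H_1\hookrightarrow L^6$ applied to $B(t)$ and $\alpha$, together with the $H_1$ regularity of $B(t)$ on $(0,\infty)$, allows one to place each factor in its natural integrability class via H\"older. For condition (c), the strong $L^2$ derivative $z'(t)=[A'(t),\alpha]$ is represented through the Leibniz identity as $-d_{A(t)}^{\,*}[B(t)\wedge\alpha]+[d_{A(t)}\alpha\lrc B(t)]$; the first term lies in $L^2$ by (b) and the second by H\"older applied to $d_{A(t)}\alpha\in L^2$ and $B(t)\in L^6$. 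Strong differentiability (as opposed to mere validity of the expression) uses the $L^2$-continuity of $A'(\cdot)$ built into the definition of a strong solution.

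Continuity $z:[0,\infty)\to L^2$ follows from $z(t)-z(s)=[A(t)-A(s),\alpha]$: condition \eref{vst356} gives $A(t)-A(s)\to 0$ in $L^3$ as $t\to s$, while $\alpha\in L^6$ by Sobolev, so H\"older yields $L^2$ continuity. For the if-and-only-if statement, the forward implication is immediate from condition (a). For the converse, if $d_{A(t_0)}\alpha\in H_1^\As$ for some $t_0>0$, writing $z(t)=z(t_0)+[A(t)-A(t_0),\alpha]$ reduces matters to showing that the difference $[A(t)-A(t_0),\alpha]$ lies in $H_1^\As$ with continuous $t$-dependence. This follows from $A(\cdot)\in C((0,\infty);H_1)$, which is part of the strong-solution definition, together with the same bilinear $H_1^\As$ estimates used for (b).

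The main obstacle is the bilinear $H_1^\As$ estimate that underlies both condition (b) and the propagation of $H_1^\As$ regularity: because $\alpha\in H_1^\As$ lies only in $L^6$ in three dimensions, pointwise multiplication by $\alpha$ is not bounded $H_1\to H_1$, so one cannot simply invoke ``$H_1$ is an algebra''. One must instead exploit the precise pairing between derivatives of $\alpha$ (which lie in $L^2$) and the $L^6$ factor supplied by $B(t)$ or $A(t)-A(t_0)$ via the critical three-dimensional Sobolev embedding, and combine this with the finite-action hypothesis to control $B(t)$ with integrable weights as $t\downarrow 0$, where the estimate is at its most delicate.
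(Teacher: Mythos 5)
Your overall outline matches the paper's approach: compute $z'(t)=[A'(t),\alpha]$, use the Bianchi-type identity $d_{A(t)}z(t)=[B(t),\alpha]$, use the co-Leibniz identity together with $d_{A(t)}^*B(t)=-A'(t)$ to verify the variational equation, and for the strong/almost-strong dichotomy write $z(t)=z(t_0)+[A(t)-A(t_0),\alpha]$. The $L^2$-continuity argument via \eref{vst356} also agrees with what the paper uses implicitly.

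However, the estimates you propose for condition (b), for condition (c), and for the converse direction do not close. You try to control $[\partial_j^\As B(t),\alpha]$ and $[B(t),\partial_j\alpha]$ in $L^2$ by pairing an $L^2$ factor (the derivative of $B$ or of $\alpha$) against an $L^6$ factor (coming from $H_1\hookrightarrow L^6$). H\"older puts both such products in $L^{3/2}$, not $L^2$. The same failure occurs when you pair $d_{A(t)}\alpha\in L^2$ with $B(t)\in L^6$ for condition (c), and when you invoke ``the same bilinear $H_1^\As$ estimates'' for the converse. You acknowledge this as ``the main obstacle'' in your closing paragraph, but the fix you sketch — pairing derivatives of $\alpha$ in $L^2$ against an $L^6$ factor — is exactly the estimate that gives $L^{3/2}$, so the obstacle is not overcome.

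What the paper actually does is qualitatively different at exactly this point. First, it applies the Gaffney-Friedrichs inequality to the $2$-form $\w=d_{A(t)}z(t)=[B(t),\alpha]$ using $d_A$ and $d_A^*$ rather than $\partial_j^\As$; then the Bianchi identity $d_{A(t)}B(t)=0$ makes the term $[d_{A(t)}B(t),\alpha]$ vanish outright, so no derivative of $B$ ever has to be paired against $\alpha$. Second, the remaining terms $[B(t)\wedge z(t)]$, $[A'(t),\alpha]$, $[z(t)\lrc B(t)]$ are estimated using the $L^\infty$ bounds $\|B(t)\|_\infty<\infty$ and $\|A'(t)\|_\infty<\infty$ for $t>0$ from the initial behavior estimates \eref{iby16} and \eref{iby15}, paired against $\alpha,z(t)\in L^2$ — these $L^\infty$ bounds are the genuine payoff of the finite-action hypothesis, not the $L^6$ information you use. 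Third, for the converse direction the paper needs $d_\As^*(A(t)-A(t_0))\in L^3$ and $d_\As(A(t)-A(t_0))\in L^3$ together with $A(t)-A(t_0)\in L^\infty$; the first two come from \eref{ibA15c} and \eref{ibA14}, which depend on a cancellation intrinsic to the Yang-Mills flow (recall the paper's own remark that $\|d^*A(t)\|_3$ can be identically infinite — only the difference is controlled), so they cannot be recovered from generic $H_1\hookrightarrow L^6$ reasoning. Without these three ingredients — Gaffney-Friedrichs plus Bianchi, the $L^\infty$ initial-behavior bounds, and the $L^3$/$L^\infty$ difference bounds on $A(t)-A(t_0)$ — the bilinear estimates you need do not hold, and the proof does not go through.
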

The proof will be given in Section \ref{secinit}.

\begin{theorem} \label{thmveu1e} 
Assume that $1/2 \le a <1$ and $1/2 \le b <1$. 
 Let $A(\cdot)$ be a strong solution  to the Yang-Mills heat equation over $(0, \infty)$ with finite a-action
and satisfying \eref{vst356}.   
 Let $v_0 \in H_{b}^\As(M; \L^1\otimes \kf)$. Then

 1. There exists an almost strong solution $v(\cdot)$ to the variational equation \eref{ve} 
 over $[0, \infty)$ with initial value $v_0$.

  2. For each real number $\tau >0$ there exists a vertical almost strong 
solution $d_{A(t)}\alpha_\tau$ such  that the function
\beq
v_\tau(t) 
  \equiv v(t) - d_{A(t)}\alpha_\tau,\ \ t\ge 0             \label{ve21}
\eeq
is  a strong solution  to the variational equation with initial value $v_0 - d_{A(0)} \alpha_\tau$.
Moreover 
\beq
\sup_{0 \le t \le 1} \|v(t) - v_\tau(t)\|_2  
 \rightarrow 0\ \  \text{as} \ \ \tau\downarrow 0. \label{ve22a}
\eeq

3. If $\|A(t)\|_{L^3(M)} < \infty$ for some $t > 0$  then 
\begin{align}
v:[0,\infty) \rightarrow H_b^{\As}          \label{ve20}
\end{align}
is continuous.

4.  Strong solutions are unique when they exist. 
\end{theorem}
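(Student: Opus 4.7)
The plan is to use the infinitesimal ZDS procedure outlined in the introduction. The variational equation \eqref{ve} fails to be strictly parabolic because its principal part is the incomplete Laplacian $d_A^* d_A$. The first step is to introduce the augmented variational equation
\beq
-u'(s) = d_{A(s)}^* d_{A(s)} u(s) + d_{A(s)} d_{A(s)}^* u(s) + [u(s) \lrc B(s)],     \label{pr1}
\eeq
whose principal part is the full gauge covariant Bochner Laplacian $-\Delta_{A(s)}$ and is therefore strictly parabolic. For $v_0 \in H_b^\As$ I would construct a solution $u(\cdot)$ to \eqref{pr1} with $u(0) = v_0$ by a contraction mapping argument in the gauge invariant Sobolev norms of Notation \ref{notgisn}, treating the singular commutator $[u \lrc B]$ as a lower-order perturbation. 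The a priori bounds near $t=0$ come from the finite $a$-action hypothesis via Gaffney-Friedrichs inequalities and the Neumann domination technique advertised in the abstract, while \eqref{vst356} controls the time dependence of the coefficients.

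Once $u(\cdot)$ is in hand I would construct an almost strong solution to the original variational equation in the form $v(t) = u(t) - d_{A(t)}\alpha(t)$, with $\alpha(t)$ a $\kf$-valued function on $M$ chosen so that the extra $d_A d_A^* u$ term in \eqref{pr1} is absorbed. A direct computation, using Lemma \ref{lemvert1} to see how a vertical field propagates along \eqref{ve}, reduces this to an evolution equation for $\alpha$ driven by $d_{A(t)}^* u(t)$; solving with $\alpha(0) = 0$ produces $v(0) = v_0$ and yields part 1. This is the infinitesimal analog of the ZDS passage from an augmented Yang-Mills flow to the original one.

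For part 2 I would exploit the freedom to anchor a second vertical correction at a positive initial time rather than at $0$. Take $\alpha_\tau$ to be the time-independent element of $H_1^\As(M;\kf)$ whose vertical field $d_{A(t)}\alpha_\tau$, which is an almost strong solution of \eqref{ve} by Lemma \ref{lemvert1}, captures precisely the non-$H_1$ component of $v$ that is forced by the singular behavior at $t=0$. Because $u(\tau)$ is strictly smoother than $v_0$ for any $\tau > 0$ by the parabolic smoothing built into \eqref{pr1}, the resulting $v_\tau = v - d_{A(\cdot)}\alpha_\tau$ acquires the $H_1^\As$ regularity needed to be a strong solution; the convergence \eqref{ve22a} reflects $\|\alpha_\tau\|_{L^2}\to 0$ as $\tau \downarrow 0$. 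For part 3, both $u$ and the vertical correction $d_{A(t)}\alpha$ must be continuous into $H_b^\As$; the vertical piece depends on $A(t)$ through the commutator $[A(t),\alpha(t)]$, and controlling this commutator in $H_b^\As$ norm requires precisely the stated $L^3$ hypothesis on $A(t)$ combined with \eqref{vst356}.

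Part 4 (uniqueness) is an energy argument. The difference $w = v_1 - v_2$ of two strong solutions with the same initial value satisfies \eqref{ve} with $w(0) = 0$; pairing in $L^2$ gives
\beq
\frac{1}{2}\,\frac{d}{ds}\|w(s)\|_2^2 = -\|d_{A(s)} w(s)\|_2^2 - \<w(s),[w(s)\lrc B(s)]\>,    \label{pr3}
\eeq
and Gaffney-Friedrichs together with finite $a$-action bound the last term by $\epsilon\|d_{A(s)}w\|_2^2 + C_\epsilon(s)\|w\|_2^2$ with $\int_0^T C_\epsilon(s)\,ds < \infty$; Gronwall forces $w\equiv 0$. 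The main obstacle in the whole scheme will be the a priori theory for \eqref{pr1} uniformly as $t\downarrow 0$: the coefficients $A(s),B(s)$ exhibit the full $s^{-a}$-integrable singular behavior permitted by finite $a$-action, and this is precisely where Neumann domination must replace the pointwise coefficient bounds that ordinary strictly parabolic theory demands.
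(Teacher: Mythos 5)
Your overall scheme for Parts 1--3 is essentially the paper's: solve the augmented (strictly parabolic) variational equation $-w' = (d_A^*d_A + d_Ad_A^*)w + [w\lrc B]$ by contraction in the gauge-invariant Sobolev scale, then correct by a vertical field $d_{A(t)}\eta(t)$ with $\eta$ driven by $d_{A(s)}^*w(s)$, and obtain the strong solution $v_\tau$ by reanchoring the lower limit of $\eta$ at $\tau>0$. (In the paper the correction is \emph{added}: $v=w+d_{A(t)}\int_0^t d_{A(s)}^*w(s)\,ds$, so your $\alpha$ should be $-\eta$; your formula $v=u-d_{A(t)}\alpha$ with $\alpha$ driven by $+d_{A(t)}^*u$ has the signs tangled, and $\alpha_\tau$ needs to be the explicit element $\int_0^\tau d_{A(s)}^*w(s)\,ds$, not a function that ``captures'' the singular part by fiat -- but these are presentation issues, not conceptual ones.)

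Your Part 4 (uniqueness) does not close, and this is a genuine gap. The energy identity for the \emph{unaugmented} equation \eref{ve} produces only $-2\|d_{A}w\|_2^2$ on the right-hand side and \emph{no} coderivative term $\|d_A^*w\|_2^2$ -- this is precisely the weak parabolicity. Consequently the Gaffney--Friedrichs--Sobolev inequality \eref{gaf50}, which controls $\|w\|_6^2$ by $\|d_Aw\|_2^2 + \|d_A^*w\|_2^2 + \lambda(B)\|w\|_2^2$, cannot be absorbed into the left side: the $\epsilon\|d_A^*w\|_2^2$ piece has nowhere to go. If instead you try the crude bound $|([w\lrc B],w)|\le c\|B(t)\|_\infty\|w\|_2^2$, Gronwall requires $\int_0^T\|B(t)\|_\infty\,dt<\infty$, and under finite action one only has the weaker quadratic integrability $\int_0^T t\,\|B(t)\|_\infty^2\,dt<\infty$ (\eref{iby17}); the $L^1$ bound is generally false since $\|B(t)\|_\infty$ is only $o(t^{-1})$. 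The paper states explicitly that standard Gronwall fails here and instead uses a more delicate ODE comparison (from \cite{G70}, Theorem 7.20) which requires \emph{both} the quadratic integrability of $\beta(t)=2c\|B(t)\|_\infty$ \emph{and} a nontrivial a priori decay $\|w(t)\|_2^2 = o(t^{1/2})$, the latter obtained from the separate estimate $\int_0^T s^{1/2}\|w'(s)\|_2^2\,ds<\infty$. Without these two ingredients the argument you propose does not rule out the singular Gronwall coefficient blowing up the inequality.
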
   
The proof will be given in Section \ref{secrecpf}.

\begin{remark}{\rm   Theorem \ref{thmveu1e} is the precise infinitesimal analog 
of the main theorem of \cite{G70} since a vertical vector is the  infinitesimal analog
 of a gauge transformation.
 }
 \end{remark}

 \begin{remark}\label{remweak} {\rm  
 The assertion \eref{ve20} shows that  the almost strong solution $v$
 is continuous at $t =0$ as a function into $H_b^\As$. One should expect that the strong solution $ v_\tau$ is continuous at $t =0$ 
 as a function into $H_{b}^\As$ also and not just into $L^2$. 
 But my techniques fail  to produce this result in the doubly  critical case $a=1/2, b = 1/2$.
The issue may be  
 a  conceptual one, related to the nature of the
 gauge group associated to $a=1/2$,  rather than a matter of technique.  The critical gauge group $\G_{3/2}$
 just barely fails to be a Hilbert manifold.
 See for example  \cite[Remark 5.21]{G70} for a discussion of the breakdown of smoothness
 of this gauge group, which is  associated to initial data in $H_{1/2}^\As$. This remark applies also to Theorem
 \ref{thmveu2b}.
  We will actually prove that for $1/2 \le b <1$ the strong solution $v_\tau$   
   is a continuous
  function  on $[0,\infty)$ into $L^\rho(M; \L^1\otimes \kf)$ for $2\le \rho < 3$. But    
  for $b = 1/2$    the expected
  continuity of $v_\tau$ into $H_{1/2}^\As$ implies continuity into $L^3$, by Sobolev, which we fail to achieve.
 Continuity into $L^\rho$ will be proved in Lemma \ref{lemvert6} along with a strengthening of \eref{ve22a}
 to allow Lebesgue power $\rho$ with $2 \le  \rho <3$.
 }
 \end{remark}

        \begin{remark}\label{horcomp}{\rm  (The horizontal component)
The failure of $v_\tau(t)$ to be continuous into $H_b^\As$ at $t = 0$  
 is entirely due
to the poor behavior of the vertical component of $v_\tau(t)$. Even if $v_0$ is horizontal at $A_0$,
the solution $v(t)$ rapidly acquires a large vertical component.
 By contrast, the horizontal component of $v(t)$ is well behaved. Let $\bar v(t)$ denote the horizontal
 component of $v(t)$ at $A(t)$. Then $\bar v(t)$ satisfies its own differential equation, 
 independent of the vertical component. Moreover it relates to the variation of the action function
 $\rho_A(t)$ very well. This will be developed in  a sequel to this paper, \cite{G72}.
}  
 \end{remark}

\subsubsection{Solutions of finite action} 
 
       \begin{definition}\label{defb-act}{\rm 
       ($b$-action) Let $0 \le b <1$. 
A solution $v$ to the variational equation \eref{ve}
 has     {\it  finite  strong $b$-action}  if, for some number $\tau >0$, there holds 
\beq
\int_0^\tau s^{-b} \Big(\|\n^{\As} v(s)\|_{L^2(M)}^2 + \| v(s)\|_{L^2(M)}^2\Big) ds < \infty.      \label{ymh15b}
\eeq
The integrand is gauge invariant.
}
\end{definition}

\begin{theorem}\label{thmveu2b}  
 Assume that  $1/2 \le a < 1$ and  $1/2 \le b <1$ and that $max(a,b) > 1/2$. 
Let $c = \min (a,b)$.  
      Let $A(\cdot)$ be a strong solution  to the Yang-Mills heat equation over $(0, \infty)$ with finite a-action
   such that $\|\As\|_{L^3(M)} < \infty$. 
    Let $v_0 \in H_{b}^\As(M; \L^1\otimes \kf)$, wherein  either $M= \R^3$ or else Neumann
     boundary conditions  \eref{vst150n} hold. 
 Then  the strong solution $v_\tau$, 
 constructed in  Theorem \ref{thmveu1e} for $\tau >0$,
           has finite strong c-action in the sense of Definition \ref{defb-act}. 
\end{theorem}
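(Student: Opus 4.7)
The finite strong $c$-action condition \eref{ymh15b} has two parts. Since $v_\tau \in C([0,\infty);L^2(M))$ by Theorem \ref{thmveu1e} and $c<1$, the $L^2$ integral $\int_0^\tau s^{-c}\|v_\tau(s)\|_2^2\,ds$ is immediately finite. The substantive task is to bound $\int_0^\tau s^{-c}\|\nabla^\As v_\tau(s)\|_2^2\,ds$, and by Lemma \ref{lemeqSob} it suffices to control $\int_0^\tau s^{-c}\|\nabla^{A(s)}v_\tau(s)\|_2^2\,ds$.

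My plan is a weighted energy estimate applied to the augmented strictly parabolic equation used to construct $v_\tau$ in Section \ref{secEUaug}. In the infinitesimal ZDS procedure one writes $v_\tau = w_\tau + z_\tau$, where $w_\tau$ solves an augmented equation of the schematic form $-w_\tau'(s) = \Delta_{A(s)}w_\tau + [w_\tau \lrc B(s)] + \text{l.o.t.}$ and $z_\tau$ is vertical along $A(\cdot)$. The operator $\Delta_A = d_A^*d_A + d_Ad_A^*$ equals, by Weitzenb\"ock, $\nabla_A^*\nabla_A$ plus a bounded curvature action, making the augmented equation strictly parabolic. Pairing this augmented equation with $s^{-c}w_\tau(s)$ in $L^2$ and integrating on $[0,s]$ yields an inequality of the shape
\beq
s^{-c}\|w_\tau(s)\|_2^2 + 2\int_0^s r^{-c}\|\nabla^{A(r)}w_\tau(r)\|_2^2\,dr \le c\int_0^s r^{-c-1}\|w_\tau(r)\|_2^2\,dr + \mathcal{E}(s),
\eeq
where $\mathcal{E}(s)$ collects the weighted contributions of $[w_\tau \lrc B]$ and of the Weitzenb\"ock curvature term. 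These are estimated via a H\"older--Sobolev bound of Gaffney--Friedrichs type and absorbed using Young's inequality and the finite $a$-action hypothesis $\int_0^\tau r^{-a}\|B(r)\|_2^2\,dr < \infty$; the condition $c \le a$ is precisely what makes the weight compatible. The term $\int_0^\tau r^{-c-1}\|w_\tau(r)\|_2^2\,dr$ is controlled via the Littlewood--Paley characterization of $H_b^\As$,
\beq
\|v_0\|_{H_b^\As}^2 \asymp \|v_0\|_2^2 + \int_0^1 r^{-b}\|\nabla^\As e^{-r\Delta_\As}v_0\|_2^2\,dr,
\eeq
combined with the smoothing properties of the augmented semigroup, so that $r^{-c-1}\|w_\tau(r)\|_2^2$ is integrable precisely because $c \le b$.

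For the vertical piece $z_\tau(s) = d_{A(s)}\alpha_\tau$, the commutation identity $\nabla^{A(s)}d_{A(s)}\alpha_\tau = d_{A(s)}\nabla^{A(s)}\alpha_\tau + [B(s),\alpha_\tau]$ (in appropriate index notation) reduces the finite $c$-action bound to integrability of $\|B(s)\|_2^2$ against the weight $s^{-c}$, granted by the finite $a$-action since $c\le a$. Here $\alpha_\tau \in H_1^\As(M;\kf)$ comes from the contraction-mapping argument of Section \ref{secEUaug}.

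The main obstacle will be the coupling integral $\mathcal{E}(s)$: the finite $a$-action of $A$ and the $H_b^\As$ initial regularity of $v_0$ together provide exactly enough weighted integrability, but only when $\max(a,b) > 1/2$. In the doubly critical case $a=b=1/2$ the H\"older--Sobolev interpolation saturates and one obtains at best a logarithmic divergence, consistent with the failure of $H_{1/2}$ continuity at $t=0$ noted in Remark \ref{remweak}. The Neumann boundary hypothesis ensures that the integrations by parts in the weighted energy identity generate no uncontrolled boundary terms, so that the Weitzenb\"ock and Gaffney--Friedrichs machinery operates as on $\R^3$.
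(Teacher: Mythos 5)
Your plan misidentifies the structure of the decomposition used to build $v_\tau$ and, as a consequence, never touches the actual technical difficulty of the theorem. In the ZDS recovery construction of \eref{rec2}, one solves the augmented equation \eref{av1} \emph{once}, obtaining a single $w$, and then sets $v_\tau(t) = w(t) + d_{A(t)}\int_\tau^t d_{A(s)}^*w(s)\,ds$. The second term is vertical at $A(t)$ for each fixed $t$, but it is \emph{not} a vertical solution $d_{A(t)}\alpha$ with $\alpha$ independent of $t$: its ``scalar'' factor $\int_\tau^t \psi(s)\,ds$ carries a genuine $t$-dependence (this is exactly the point the paper flags just after stating Theorem \ref{thmrec1}). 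Your commutation argument for $z_\tau(s)=d_{A(s)}\alpha_\tau$ with a fixed $\alpha_\tau\in H_1^\As$ therefore applies to a different object than the one in the decomposition and would at best bound one of the three terms in $v_\tau(t) = v(t) - d_{A(t)}\alpha_\tau$, when what is actually hard is controlling the other two.

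The weighted energy estimate you describe for the augmented equation is essentially Theorem \ref{thmact1}, which already yields $\int_0^T s^{-b}\|w(s)\|_{H_1^\As}^2\,ds<\infty$ for every $b\in[1/2,1)$ with no need for $\max(a,b)>1/2$; the condition $\max(a,b)>1/2$ is needed only for the vertical correction, which you do not estimate. Note also that a direct weighted energy argument on the \emph{variational} equation itself is blocked: pairing $-v'=d_A^*d_A v+[v\lrc B]$ with $s^{-c}v$ produces $\|d_A v\|_2^2$ but \emph{not} $\|d_A^* v\|_2^2$, precisely because the variational equation is weakly parabolic, so the Gaffney--Friedrichs inequality cannot close the argument. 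The paper circumvents this through the non-gauge-invariant representation \eref{rec750o}--\eref{rec757o}, writing $v_\tau(t) = w(t) + \hat w(\tau) - \hat w(t) + u_\tau(t)$ with $\hat w = P^\perp w$ and $u_\tau(t)=\int_\tau^t\gamma(t,s)\,ds$, and then bounding $d u_\tau$ and $d^* u_\tau$ separately (Proposition \ref{propfa1}), using Hardy's inequality and the high-$L^p$ estimate $\int_0^T t^{3/2-b}\|\psi(t)\|_\infty^2\,dt<\infty$ of Theorem \ref{thmpw1}.

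Finally, the role of the Neumann hypothesis is not the vanishing of boundary terms in integration by parts. It enters through the Neumann domination machinery of Lemmas \ref{lemnda1}--\ref{lemnda3}, which is what produces the $\|\psi\|_\infty$ bound needed in Lemma \ref{lemrecef}; that machinery fails for Dirichlet boundary conditions because the requisite sign condition $\n_n|\psi|^2\le 0$ at the boundary cannot be established (Lemma \ref{lemnda2} is where Neumann is decisive). Without that $\|\psi\|_\infty$ estimate the interchange of $t$- and $s$-integrations in the proof of Lemma \ref{lemrecef} has no right-hand side, so your plan cannot recover \eref{rec887c}--\eref{rec887d} and the $d u_\tau$, $d^* u_\tau$ bounds collapse.
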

This will be proved in Sections \ref{secrecab} and \ref{secrecpf}.

 See Remark \ref{remweak} for a discussion of the failure of Theorem \ref{thmveu2b} in the doubly critical case
 $a = b = 1/2$.

\begin{remark} {\rm Theorem \ref{thmveu2b} is the only theorem in this paper 
in which Dirichlet boundary conditions  fail to be encompassed  by our techniques.
}
\end{remark}

\subsection{The infinitesimal ZDS procedure}
\begin{remark} {\rm  
 The  Zwanziger, Donaldson, Sadun \cite{Z, Do1,  Sa} (ZDS) 
method for proving
 existence of solutions to the Yang-Mills heat equation consists in modifying the 
 equation so as to make it strictly parabolic and then recovering a solution to the
 Yang-Mills heat equation
 itself from the solution to the modified equation by making a time dependent gauge
  transformation. 
 See   \cite{CG1} or \cite{G70} for a more detailed description.
 In this paper we are  going to use an infinitesimal version of the ZDS procedure.
 To this end we  first modify the variational equation \eref{ve} by adding a term
  that makes it strictly parabolic.
 }
 \end{remark}

\begin{definition}{\rm The {\it augmented variational equation} for a time dependent $\kf$ valued
1-form  $w(t)$ over $M$ is
\beq
-w'(t) = (d_A^*d_A +d_Ad_A^*)w + [w\lrc B].      \label{av1}
\eeq
Here $A(\cdot)$ is a solution to \eref{ymh10}. The time dependence of $A, B$ and $w$ on the right side is suppressed.
}
\end{definition}

\begin{theorem}\label{thmwe} Suppose that $A(\cdot)$ is a strong solution to the Yang-Mills heat 
equation over $[0, \infty)$ of finite action. 
Let $0 < b <1$.   
 Assume that $v_0 \in H_b^\As(M;\L^1\otimes \kf)$. 
Then there exists a continuous function
 \beq
 w:[0, \infty) \rightarrow H_b^\As   \notag
 \eeq 
 such that $w(0) = v_0$ and
 
\   a$)$\ \  $w$ is a strong solution to the augmented variational equation \eref{av1} over $[0, \infty)$ 
satisfying the boundary conditions  \eref{vst150d} resp. \eref{vst150n} in case $M \ne \R^3$,

\ b$)$\ \  $t^{1-b} \| w(t)\|_{H_1^\As}^2 \rightarrow 0$ as $t \downarrow 0$.

\noindent
The solution is unique under the preceding conditions. Moreover
\begin{align}
c)\ \ \int_0^T s^{-b} \|w(s)\|_{H_1^\As}^2\, ds  \le \gamma_T \| v_0\|_{H_b^\As}^2
                                                                        \ \qquad\qquad\qquad \qquad\qquad   \label{av5}
\end{align}
for all $T \ge 0$ and for some constant $\gamma_T$ depending only on $T$ and $\rho_A(T)$.
\end{theorem}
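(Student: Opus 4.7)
The plan is to convert the augmented equation into an integral equation centered on the heat semigroup $e^{t\Delta_\As}$ of the reference Bochner Laplacian (under the relevant boundary conditions), and then solve by contraction in a weighted Banach space tuned to $H_b^\As$ initial data. First I would use a Weitzenb\"ock-type rewriting: the Hodge-type operator acting on $\kf$-valued $1$-forms satisfies $d_A^* d_A + d_A d_A^* = -\Delta_A + \mathcal{R}_B$, where $\mathcal{R}_B$ is a pointwise algebraic action of the curvature $B$. Replacing $\Delta_A$ by $\Delta_\As$ and transposing the difference converts \eref{av1} into
\[
w'(t) = \Delta_\As w(t) + L(t) w(t), \qquad L(t) w := (\Delta_A - \Delta_\As)w - \mathcal{R}_{B(t)} w - [w\lrc B(t)].
\]
The first summand of $L(t)$ is a first-order differential operator with coefficients built linearly from $A(t) - \As$, which \eref{vst356} controls in $L^3(M)$ with smallness as $t \to T$; the remaining summands are pointwise dominated by $|B(t)|$, whose $L^2$-norm squared is integrable against $s^{-1/2}$ by the finite action hypothesis.

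Next I would set up a Banach space of candidates. Let $\mathcal{X}_T$ be the space of measurable $w:(0,T] \to H_1^\As$ with
\[
\|w\|_{\mathcal{X}_T}^2 := \sup_{0<t\le T}\|w(t)\|_{H_b^\As}^2 + \sup_{0<t\le T} t^{1-b}\|w(t)\|_{H_1^\As}^2 + \int_0^T s^{-b}\|w(s)\|_{H_1^\As}^2\, ds,
\]
and define $\Phi(w)(t) := e^{t\Delta_\As} v_0 + \int_0^t e^{(t-s)\Delta_\As} L(s) w(s)\, ds$. Spectral calculus for $\Delta_\As$ gives $\|e^{\cdot\Delta_\As} v_0\|_{\mathcal{X}_T} \le C\|v_0\|_{H_b^\As}$ and the smoothing bound $\|e^{\sigma\Delta_\As}\|_{L^2 \to H_1^\As} \le C\sigma^{-1/2}$. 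H\"older together with the Sobolev embedding $H_1^\As \hookrightarrow L^6$ yields
\[
\|L(s)w(s)\|_{L^2} \le C\bigl(\|A(s) - \As\|_{L^3} + \|B(s)\|_{L^2}^{1/2}\|B(s)\|_{H_1^\As}^{1/2}\bigr)\|w(s)\|_{H_1^\As}.
\]
Inserting this bound into $\Phi$, integrating against $s^{-b}$, and using the gauge-invariant integrability $\int_0^T s^{-1/2}\|B(s)\|_2^2\, ds < \infty$ together with the vanishing of $\|A(s) - \As\|_{L^3}$ as $s \to T$, produces $\|\Phi(w_1) - \Phi(w_2)\|_{\mathcal{X}_T} \le \epsilon(T)\|w_1 - w_2\|_{\mathcal{X}_T}$ with $\epsilon(T) \to 0$ as $T \to 0$. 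On a short initial interval this is a strict contraction; the standard linear parabolic theory then extends the solution to arbitrary $T$ since the coefficients are regular away from $s=0$.

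From the fixed point, conclusion (a) is the equation itself, conclusion (b) is the middle term of $\|\cdot\|_{\mathcal{X}_T}$ (pushed to $0$ by dominated convergence applied to the Duhamel integral), and conclusion (c) is the third term together with the contraction estimate $\|w\|_{\mathcal{X}_T} \le C\|v_0\|_{H_b^\As}$. Continuity of $w$ into $H_b^\As$ at $t=0$ follows from strong continuity of the semigroup on $H_b^\As$ plus the Duhamel bound just described, while uniqueness under (a), (b) comes from an energy estimate on the difference of two solutions, using the small-interval smallness of the coefficient norms to absorb the $L(t)$ terms. I expect the main obstacle to be the tight accounting near $t=0$: the initial data sits essentially at the critical Sobolev threshold, and the coefficients $A(t) - \As$ and $B(t)$ are simultaneously only barely integrable, so without the gauge-invariant finite action hypothesis the contraction estimate would diverge. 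Calibrating the weight $s^{-b}$ so that it simultaneously controls the semigroup smoothing on the initial datum, the singularity of the coefficients, and the integrated conclusion (c) is the delicate technical point; the rest of the argument is routine once $\mathcal{X}_T$ is chosen correctly.
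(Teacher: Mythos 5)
Your proposal follows the same route as the paper's proof: apply the Bochner--Weitzenb\"ock identity, isolate a reference Laplacian $\Delta_\As$ at time $T$, convert \eref{av1} into the Duhamel integral equation, and solve by contraction in a weighted path space tailored to $H_b^\As$ data. Your choice to carry the $\int_0^T s^{-b}\|w(s)\|_{H_1^\As}^2\,ds$ term in the norm of $\mathcal{X}_T$ from the start, rather than proving part~(c) by a second pass with a strengthened norm (as the paper does in Section~\ref{secboa}), is a reasonable consolidation.

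There is, however, a genuine gap in the central estimate on the perturbation. You assert
\begin{align*}
\|L(s)w(s)\|_{L^2} \le C\bigl(\|A(s)-\As\|_{L^3} + \|B(s)\|_{L^2}^{1/2}\|B(s)\|_{H_1^\As}^{1/2}\bigr)\|w(s)\|_{H_1^\As},
\end{align*}
but $L(s)$ contains the genuinely first-order piece $2\sum_j \mathrm{ad}(\alpha_j(s))\,\partial_j^\As$ with $\alpha(s)=A(s)-\As$. In $L^2$, H\"older against a factor $\|\alpha(s)\|_3$ leaves $\|\nabla^\As w\|_6$, which requires $H_2^\As$ control of $w$ and is unavailable in your path space; interpolating to $\|\nabla^\As w\|_3$ meets the same obstruction. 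The only way to close against $\|w\|_{H_1^\As}$ is $\|\alpha(s)\|_\infty\|\nabla^\As w\|_2$, and $\|\alpha(s)\|_\infty$ is \emph{not} controlled by the $L^3$ continuity you invoke. The paper obtains it from the $L^\infty$ initial-behavior estimate \eref{ibA72}, $s^{1/2}\|A(s)-A(T)\|_\infty \le C_{25}(T,\rho_A(T))$, which in turn rests on $\sup_{0<s\le t}s^{3/2}\|A'(s)\|_\infty\to 0$ (\eref{iby15}), a nontrivial Neumann-domination consequence from \cite{G70}. The correct form of the estimate is therefore $s^{1/2}\|K(s)\w\|_2\le\mu_T\|\w\|_{H_1^\As}$ (Lemma~\ref{lemests1}), with an explicit $s^{-1/2}$ singularity on the right that must be absorbed into the time weighting; your version, with a uniformly bounded coefficient, is false, and your contraction estimate does not close as written. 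A subsidiary omission: a fixed point of the integral equation is a priori only a mild solution, and promoting it to a strong solution of \eref{av1} away from $t=0$ (needed for conclusion~(a)) requires H\"older continuity of $s\mapsto K(s)w(s)$ into $L^2$ on $[\tau,T]$ so that \cite[Theorem 11.44]{RR} applies; this is not automatic and occupies Lemmas~\ref{lemH}--\ref{lemholdKw} of the paper.
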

Parts a) and b) of this theorem   will be proven in Section \ref{secEUaug}.  
Part c) will be proven in Section \ref{secboa}.

In the infinitesimal ZDS procedure we recover the solution $v$ to \eref{ve} by simply adding
on to $w$ an appropriate vertical correction 
 as follows.

 \begin{theorem}\label{thmrec1} $($Recovery theorem$)$ 
Assume that  $1/2 \le a < 1$ and $1/2\le b <1$. 
 Let $A$ be a solution to the YM heat equation \eref{ymh10}  over $[0, \infty)$ with finite a-action. 
 Let $v_0 \in H_{b}^\As(M)$.
 Let $w(s)$ be the strong solution to the augmented variational equation \eref{av1} 
 over $[0,\infty)$ with  initial value $v_0$, satisfying the conclusions a$)$ and b$)$ of Theorem \ref{thmwe}. 
 Define 
 \beq
  v(t) = w(t) + d_{A(t)}  \int_0^t d_{A(s)}^* w(s) ds \ \ \ \text{for}\ \ 0\le  t <  \infty.    
            \label{rec1}
\eeq
Then $v(\cdot)$ is an almost strong solution to the variational equation with initial value $v_0$.

Let $\tau >0$ 
and define
\beq
v_\tau(t)     
 =  w(t) + d_{A(t)}  \int_\tau^t d_{A(s)}^* w(s) ds \ \ \ \text{for}\ 0 < t <  \infty. 
                       \label{rec2}
\eeq 
Then $v_\tau$ 
 is a strong solution to the variational equation over $(0, \infty)$. 
  Let
\beq
\alpha_\tau = \int_0^\tau   d_{A(s)}^* w(s) ds.                                                          \label{rec3}
\eeq
Then $\alpha_\tau \in H_{1}^\As$ and the function 
\beq
t\mapsto d_{A(t)} \alpha_\tau, \ \ \  \ \ 0< t <  \infty          
            \label{rec4}
\eeq
is an almost strong  vertical solution to the variational equation. 
Moreover
\begin{align}
 v_\tau(t)   
 &= v(t) -   d_{A(t)} \alpha_\tau \ \  \text{for}\ \  0 < t<  \infty 
 \ \ \text{and}     \label{rec5} \\
 v_\tau(t) 
 &\ \ \text{converges to} 
 \ \ v_0 - d_{A_0} \alpha_\tau \ \ \  \text{in}\ \ L^2(M)\ \ \ \text{as} \ \ t\downarrow 0. \label{rec6}
\end{align}
 If  $\| \As\|_3 < \infty$  
 then
\begin{align}
v: [0,  \infty)  \rightarrow H_b^\As(M; \L^1\otimes \kf)                     \label{rec1.1}
\end{align}
is continuous.  In particular, $\|v(t) - v_0\|_{H_b^\As} \rightarrow 0$ as $ t\downarrow 0$.
\end{theorem}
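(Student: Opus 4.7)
The plan is to follow the infinitesimal ZDS procedure: the strong solution $w$ to the strictly parabolic augmented equation \eref{av1} is well-behaved, and adding a time-dependent vertical correction $d_{A(t)}\alpha(t)$ with $\alpha(t) = \int_0^t d_{A(s)}^* w(s)\, ds$ is designed to cancel the extra $d_A d_A^*$ contribution and produce a solution of \eref{ve}. First I would justify that $\alpha$ is a well-defined, $L^2$-valued, absolutely continuous function on $[0,\infty)$ with $\alpha(0)=0$: by Cauchy--Schwarz applied to Theorem \ref{thmwe}(c), and using $b<1$,
\begin{align*}
\int_0^T \|d_{A(s)}^* w(s)\|_{L^2}\, ds \le \Big(\int_0^T s^b\, ds\Big)^{1/2}\Big(\int_0^T s^{-b}\|w(s)\|_{H_1^\As}^2\, ds\Big)^{1/2} < \infty.
\end{align*}
Thus $\alpha \in C([0,\infty); L^2)$ with $\alpha'(t) = d_{A(t)}^* w(t)$ a.e.\ and $\alpha(t)\to 0$ in $L^2$ as $t\downarrow 0$.

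Next, a direct calculation verifies that $v = w + d_{A(t)}\alpha(t)$ satisfies \eref{ve}. Differentiating in $t$, using $\alpha'(t) = d_{A(t)}^* w(t)$ and $A'(t) = -d_{A(t)}^* B(t)$ with the product rule $(d/dt)[d_{A(t)}\alpha(t)] = d_{A(t)}\alpha'(t) + [A'(t),\alpha(t)]$, yields
\begin{align*}
v'(t) = w'(t) + d_{A(t)} d_{A(t)}^* w(t) - [d_{A(t)}^* B(t), \alpha(t)].
\end{align*}
Substituting the augmented equation \eref{av1} collapses the $d_A d_A^*$ terms, leaving $-v' = d_A^* d_A w + [w \lrc B] + [d_A^* B, \alpha]$. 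On the other hand, using $d_A^2 \alpha = [B\wedge\alpha]$,
\begin{align*}
d_A^* d_A v + [v\lrc B] = d_A^* d_A w + d_A^*[B\wedge\alpha] + [w\lrc B] + [d_A \alpha\lrc B],
\end{align*}
so matching the two expressions reduces the recovery to the derivation identity
\begin{align*}
[d_A^* B, \alpha] = d_A^*[B\wedge\alpha] + [d_A\alpha \lrc B],
\end{align*}
which follows by pairing against a test 1-form and invoking Ad-invariance together with the Bianchi identity $d_A B = 0$. The almost-strong-solution conditions for $v$ then fall out: $L^2$-continuity from continuity of $w$ (Theorem \ref{thmwe}) plus the $L^2$-continuity of $t \mapsto d_{A(t)}\alpha(t)$ (via \eref{vst356} and the smoothing of $w$ for $t>0$); the condition $d_{A(t)}v(t) = d_A w + [B\wedge\alpha] \in H_1^\As$ from the analog of part b) for $w$ together with regularity of $B$ and $\alpha$; and $L^2$-differentiability from the computed expression for $v'$.

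The main technical obstacle is showing that $\alpha_\tau := \alpha(\tau)$ lies in $H_1^\As$ rather than only $L^2$; the Cauchy--Schwarz bound above is insufficient for this. One must exploit the strict parabolicity of the augmented equation to produce a higher-regularity estimate of the form $\int_0^\tau \|d_{A(s)}^* w(s)\|_{H_1^\As}\, ds < \infty$, obtained by bootstrapping \eref{av1} with Gaffney--Friedrichs-type bounds established earlier in the paper, which gains an extra spatial derivative on $w$ for $s>0$. Granted this, $d_{A(t)}\alpha_\tau$ is an almost strong vertical solution by Lemma \ref{lemvert1}; subtracting it from $v$ preserves \eref{ve}, and for $t>0$ the smoothness of $w$ on $[\tau,\infty)$ together with $\alpha_\tau \in H_1^\As$ yields $v_\tau(t) \in H_1^\As$, upgrading $v_\tau$ to a strong solution on $(0,\infty)$. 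The $L^2$-convergence \eref{rec6} is immediate from $\alpha(t)\to 0$ and $w(t)\to v_0$ in $L^2$. Finally, when $\|\As\|_3 < \infty$, continuity of $v:[0,\infty)\to H_b^\As$ follows from the $H_b^\As$-continuity of $w$ (Theorem \ref{thmwe}) combined with vanishing of $d_{A(t)}\alpha(t)$ in $H_b^\As$ as $t\downarrow 0$; the $L^3$-control of $A(t)$ handles the commutator $[A(t),\alpha(t)]$ in $H_b^\As$, while the principal term $d\alpha(t)$ is controlled using the same bootstrapped estimate, which gives $\alpha(t)\to 0$ in $H_1^\As$.
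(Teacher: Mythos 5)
Your overall plan — recover $v$ from $w$ by adding the time-dependent vertical correction, verify the variational equation by the product rule together with the Bianchi identity, establish $\alpha_\tau\in H_1^\As$, and then argue for the continuity statements — does follow the paper's infinitesimal ZDS structure, and your algebraic reduction to the identity $[d_A^*B,\alpha]=d_A^*[B,\alpha]+[d_A\alpha\lrc B]$ is exactly the cancellation the paper uses in Theorem \ref{thmrec5}. The Cauchy--Schwarz bound for $\alpha\in L^2$ and the observation that $\alpha_\tau\in H_1^\As$ requires the higher-order initial-behavior bounds from the augmented equation are also on target.

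However, the final step — the $H_b^\As$-continuity of $v$ at $t=0$ — contains a genuine gap. You propose to control the vertical correction $d_{A(t)}\alpha(t)$ in $H_b^\As$ by first showing $\alpha(t)\to 0$ in $H_1^\As$ and then (implicitly, since $H_b^\As$ is a fractional norm) interpolating between $L^2$ and $H_1^\As$. That interpolation would require $\|d_{A(t)}\alpha(t)\|_{H_1^\As}$ to stay bounded as $t\downarrow 0$, which in turn demands control of $\int_0^t\| \nabla^\As d_{A(s)}\psi(s)\|_2\,ds$; the energy estimates (e.g.\ $\int_0^T s^{2-b}\|d_A^*d_A\psi(s)\|_2^2\,ds<\infty$ from Theorem \ref{ibord2b}) only give $\int_0^t\|\cdot\|_2\,ds$ after pairing against $\int_0^t s^{b-2}ds$, which diverges for $b\le 1$. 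So the needed $H_1^\As$ bound on the vertical correction is not available, and indeed the paper does not assert it — $v$ is only an \emph{almost} strong solution precisely because $v(t)\notin H_1^\As$ in general. The paper circumvents this with two devices you do not invoke: (i) the Riesz-avoidance Lemma \ref{lemstrat2}, which bounds $\|\w\|_{H_b^0}$ by $\|d\w\|_p+\|d^*\w\|_p+\|\w\|_2$ with $p^{-1}=1/2+(1-b)/3<1/2$, gaining integrability in time precisely where the $L^2$ estimates fail; and (ii) the non-gauge-invariant representation \eref{rec750o} using the projection $P^\perp$ onto $(\ker d^*)^\perp$, which makes $d$ of the integrand trivially computable via $dP^\perp=0$ and converts $d^*$ of the integrand into the manageable expression \eref{rec757o}. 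Without these two ingredients, the $L^p$-estimates ($p<2$) of Lemmas \ref{lemrec30}--\ref{lemrec31} that actually drive Theorem \ref{thmrec7} are unreachable, and your proposed route to \eref{rec1.1} would not close. A lesser but real gap is the assertion that $d_{A(t)}v(t)\in H_1^\As$ "falls out" of regularity of $B$ and $\alpha$: verifying $[B(t),\eta(t)]\in H_1^\As$ requires the explicit Gaffney--Friedrichs estimates \eref{rec795.0}--\eref{rec796}, which in turn lean on the $L^\infty$ bounds for $B(t)$, $A'(t)$ and $A(T)-A(t)$ from Lemma \ref{lemiby} and Theorem \ref{thmibA1}.
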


We will refer to the  second term on the right in \eref{rec1} as  the {\it vertical correction}
 to the solution $w$ of the augmented variational equation \eref{av1}. It is not by itself a solution to the
 variational equation because $d_{A(t)}$ is  applied to a time dependent function.

The proof of Theorem \ref{thmrec1} will be given in Section \ref{secrecpf}.

          \begin{remark}\label{remsig}{\rm (Significance of $\rho_A(\tau)$) 
The functional $\rho_A(\tau)$ 
 defined in \eref{ymh15a} is a gauge invariant function of the initial data $A_0$
for the Yang-Mills heat equation for each $\tau >0$. 
 It therefore descends to a function on the quotient space $\Y$,
heuristically defined in \eref{I6}. Its significance can be understood by computing its value
in case $K= S^1$ when  $A_0$ lies in the  section of this bundle corresponding to the Coulomb gauge.
         In this case, after multiplying by
   $\sqrt{-1}$, we can take $A(t)$ to be a real valued 1-form on $\R^3$ for each
    $t \ge 0$. Since the magnetic field is now given by $B(t) = dA(t)$, the Yang-Mills heat equation
    reduces to the Maxwell  heat equation   $\p A(t)/\p t = -d^*d A(t)$.        
    The identity 
    $(\p/\p t) d^*A(t) = d^* (\p A(t)/\p t) = -d^* (d^*dA(t)) = 0$
    shows that if the initial
    data $A_0$ is in the Coulomb gauge, i.e., $d^*A_0 = 0$, then so is $A(t)$.    
  The Coulomb gauge space is therefore invariant under the Maxwell heat flow.
   Moreover the Maxwell heat equation 
   reduces to  $\p A(t)/\p t = \Delta A(t)$ for functions $A(t)$  in the Coulomb gauge because
   $-\Delta A(t) = (d^*d + dd^*) A (t) = d^*d A(t).$      
   Hence if $A_0$ is in Coulomb gauge then the solution to the Maxwell heat equation is simply given by  
   $A(t) = e^{t\Delta} A_0$.  We can compute $\rho_A(\tau)$ easily in this case:
   Since $d^*A(t) =0$, we have   $\|B(t)\|_2^2 = \|dA(t)\|_2^2 + \| d^*A(t)\|_2^2 =
   (-\Delta A(t), A(t)) = (-\Delta e^{2t\Delta} A_0, A_0)$. 
   Therefore, using the spectral theorem and the identity 
   $\int_0^\infty t^{-1/2} x e^{-2tx} dx = c_1 x^{1/2}$, we find, for $a = 1/2$
   \begin{align*}
   \rho_A(\tau)&=\int_0^\tau t^{-1/2} \|B(t)\|_2^2 dt\\
   &=\int_0^\tau t^{-1/2}  (-\Delta e^{2t\Delta} A_0, A_0)dt\\
   & = c_1 ((-\Delta)^{1/2} A_0, A_0)  -\int_\tau^\infty  t^{-1/2}  (-\Delta e^{2t\Delta} A_0, A_0)dt \\ 
   &= c_1 \|A_0\|_{H_{1/2}}^2+   O(\tau^{-1/2} \|e^{\tau \Delta}A_0\|_2^2) 
   \end{align*}
 Thus $\rho_A(\tau) $ gives the $\dot H_{1/2}$ norm of $A_0$ in the Coulomb gauge,
  exactly for $\tau = \infty$  and qualitatively for  finite $\tau >0$.   
 In this example gauge transformations are given by $A_0 \mapsto A_0 + d\lambda$, with $\lambda$
   a  real  valued function  on $\R^3$. Since the Coulomb gauge space is orthogonal to the 
    exact 1-forms it provides a section for the quotient space  $\{all\  A_0\}/ \{exact\ A_0\}$. 
    Thus  $\rho_A(\tau)$     
    descends to a function on the quotient space for each $\tau$, while at the same time 
    giving the $H_{1/2}$ norm, locally,  of the lift to the Coulomb section. 
}
\end{remark}

\section{Solutions for the augmented variational equation }\label{secEUaug}

     In this section we will prove existence and uniqueness of solutions to the augmented
variational equation \eref{av1} over a short time interval. The space $M$ on which the initial data sits
will be $\R^3$ or a bounded region in $\R^3$ with smooth boundary. In the latter case we will impose
Dirichlet or Neumann boundary conditions on the solution. 
A standard procedure for analyzing the equation \eref{av1} consists in  separating
  out the second order terms in \eref{av1} and writing the differential
 equation  as an equivalent integral equation, whose solution is  then established by a contraction principle.
 But this procedure, as stated, would lead to the equation $(d/dt)w(t) = \Delta w(t) + K_0(t) w(t)$,
  where $\Delta := -(d^*d + dd^*)$  is the Laplacian on $\kf$ valued 1-forms and $K_0(t)$ is a 
  first order differential operator.
 Unfortunately the coefficients in $K_0(t)$ depend on the         
  gauge potential $A(t) $ and its derivatives, which become highly singular as $t\downarrow 0$.
  As a result, the bounds needed to show that the related integral
  operator is a contraction in the relevant space fail.   Instead, we are going to separate
   out a gauge covariant   version of the Laplacian. Let $T >0$ and let $\As = A(T)$. Denote by
   $\Delta_\As$ the Bochner Laplacian defined in Section \ref{secstate}. Then we may write \eref{av1}
   in the form  $(d/dt)w(t) = \Delta_\As w(t) + K(t) w(t)$, where $K(t)$ is a first order differential operator
   whose coefficients  depend on the difference $\alpha(t) := A(t) - A(T)$ and its covariant derivative.
   We will see that the required bounds on $K(t)$ depend on bounds on $\alpha(t), 0 < t \le T,$ which in 
   turn depend on $T$ being small. We have thereby a circumstance in which the unperturbed linear
   operator   $\Delta_\As$ itself depends on the time needed for contraction.

  In order to carry this out it will
  be necessary to have detailed information about the nature of the singularity of $A(t)$ near $t =0$.
  The required initial behavior bounds on $A(t)$ and its derivatives will be derived in Section 
  \ref{secibA}, after the overall strategy is explained  in Section \ref{secps}.

\subsection{Path space and the integral equation } \label{secps}

\begin{notation}\label{notSob}{\rm  $A(t)$ will continue to denote  a strong solution to the Yang-Mills
 heat equation  \eref{ymh10} over $[0, \infty)$ with curvature $B(t)$.
 The gauge covariant exterior derivatives and co-derivatives that we need to use 
 were informally described in Notation \ref{wedcomm}.
We will elaborate here on their domains.
 Suppose that $A$ is a $\kf$ valued connection form on the closure $M$ of a region
 in $\R^3$ and lying in $W_1(M; \L^1\otimes \kf)$. In case $M = \R^3$ we define $d_A$ as the closure 
 in $L^2(\R^3; \L^p\otimes \kf)$ of the operator 
 $C_c^\infty(\R^3;\L^p\otimes \kf) \ni \w \mapsto d\w + [A\wedge \w]$.
  In case $M$ is the closure of a bounded open set in $\R^3$
 there are two versions of $d_A$ of interest to us. Corresponding to Dirichlet boundary conditions 
 (aka relative boundary conditions, \cite{RaS}), $d_A$ will denote the closure in $L^2(M; \L^p\otimes \kf)$ 
 of the operator    $C_c^\infty(M^{int};\L^p\otimes \kf) \ni \w \mapsto d\w + [A\wedge \w]$.
 This is the minimal version of $d_A$. Corresponding to Neumann boundary conditions 
 (aka absolute boundary conditions,  \cite{RaS}), $d_A$ will denote the closure in $L^2(M; \L^p\otimes \kf)$ 
 of the operator    $C^\infty(M;\L^p\otimes \kf) \ni \w \mapsto d\w + [A\wedge \w]$. 
 This is the maximal version of $d_A$. In all three cases $d_A^*$ denotes the Hilbert space adjoint.
 The Hodge Laplacian on $\kf$-valued p-forms over $M$ is $-(d_A^* d_A + d_A d_A^*)$. 
 This expression determines   a self-adjoint operator. But in this section we will more often want to
 use the Bochner Laplacian, which is defined by
 \begin{align}
\Delta_A = \sum_{j=1}^3 (\p_j^A)^2 ,     \label{ps9}
\end{align}
where $\p_j^A$ is defined in \eref{ps8}.
By the  Bochner-Weitzenb\"ock formula  we may write  for a $\kf$ valued 1-form 
\beq
(d_A^* d_A  +d_Ad_A^*)\w = -\Delta_{A} \w + [\w\lrc B].        \label{vst12}
\eeq
In all cases of interest to us, when using this formula, the curvature $B$ will be bounded.  
Consequently the operator $\w\mapsto [\w\lrc B]$ is  
 a bounded operator on $L^2$. 
We can therefore take  the closed version of \eref{ps9} to be given by \eref{vst12}. 
Its domain is the same as that of   $d_A^* d_A  +d_Ad_A^*$.
Then $\Delta_A$ is a self-adjoint operator with this domain.  Thus if $M \ne \R^3$ then the Bochner
Laplacian has a Dirichlet version and Neumann version on $\kf$ valued 1-forms.
On 0-forms both Laplacians are given by $d_A^*d_A$.

   Further discussion of the operators $d_A$, $d_A^*$ and their associated boundary conditions
 may be found in \cite[Section 3]{CG1}.   
  In the present paper we will need some  specific information
 about the boundary conditions, especially in the case of Neumann boundary conditions. 
 If  $\w$ is a $\kf$ valued 1-form over $M$ then its location in one of the following domains implies
 the boundary condition indicated. 
\begin{align}
&\text{Form domain of Dirichlet Laplacian}\ \Delta_A:\ \ \ \w_{tan} =0  \label{vst150d}\\
 &\text{Form domain of Neumann Laplacian}\ \Delta_A:\ \ \w_{norm} =0  \label{vst150n}\\
&\text{Domain of Neumann Laplacian}\ \Delta_A:\ \ \w_{norm} =0, \ \ \  (d_A \w)_{norm} =0  \label{vst151n}
\end{align}
It might be useful to note that if $A_{norm} =0$, which is the case of interest in dealing with Neumann
boundary conditions for the variational equation, then the pair of conditions in \eref{vst151n} is equivalent to 
the pair of  conditions $\w_{norm} = 0, (d\w)_{norm}= 0$ because $[A\wedge \w]_{norm} =0$ when
both $A_{norm} =0$ and $\w_{norm}=0$.
}
\end{notation}

\begin{notation}\label{notA} {\rm  Choose   $T \in (0, \infty)$  and define 
\begin{align}
\As = A(T), \ \ \ \ \ \ \alpha(t) = A(t) - A(T), \ \ 0\le t \le T.         \label{ps5}
\end{align}
Then
\begin{align}
A(t) = \As + \alpha(t),\ \ \ 0\le t \le T.               \label{ps7}
\end{align}
We are going to use $\Delta_\As$ as the ``unperturbed'' Laplacian in most of this paper.
}
\end{notation}

\begin{lemma}\label{lemaug1} 
 Define a multiplication operator $M(t)$ 
on $\kf$ valued 1-forms by
\begin{align}
M(t)\w = \sum_{j=1}^3 (ad\, \alpha_j(t))^2 \w + [ div_\As \alpha(t), \w] -2[\w \lrc B(t)] .   \label{ps11}
\end{align}
Denote by $K(t)$ the  first order differential operator given  by
\begin{align}
K(t)\w = 2 [(\alpha(t)\cdot \n^\As) \w] + M(t) \w,                                 \label{ps13}
\end{align}
where  $[\alpha(t)\cdot \n^\As \w] =\sum_{j=1}^3 [\alpha_j(t), \p_j^{\As} \w]$. 
Then the augmented variational equation \eref{av1} can be written
\begin{align}
w'(t) = \Delta_\As w(t) +K(t) w(t)    \label{vst20}
\end{align}
\end{lemma}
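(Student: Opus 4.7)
The plan is a direct algebraic computation: rewrite the right-hand side of the augmented equation \eref{av1} first using Bochner-Weitzenb\"ock and then expand the Bochner Laplacian $\Delta_{A(t)}$ around the fixed reference connection $\As = A(T)$.

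First I would apply the Bochner-Weitzenb\"ock identity \eref{vst12} with $A = A(t)$ to the Hodge term on the right of \eref{av1}, obtaining
\beq
(d_A^* d_A + d_A d_A^*) w + [w \lrc B] = -\Delta_{A(t)} w + [w \lrc B] + [w \lrc B] = -\Delta_{A(t)} w + 2[w \lrc B].
\notag
\eeq
Hence the augmented equation is equivalent to
\beq
w'(t) = \Delta_{A(t)} w - 2[w \lrc B(t)]. \notag
\eeq
So it remains to compute $\Delta_{A(t)} w - \Delta_\As w$ and show that it equals $K(t) w + 2[w\lrc B(t)]$.

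Next I would use \eref{ps7} to write $\p_j^{A(t)} = \p_j + ad\, A_j(t) = \p_j^\As + ad\, \alpha_j(t)$ and expand the square:
\beq
\Delta_{A(t)} w = \sum_{j=1}^3 \(\p_j^\As + ad\, \alpha_j(t)\)^2 w
= \Delta_\As w + \sum_{j=1}^3 \Big(\p_j^\As(ad\,\alpha_j)w + (ad\,\alpha_j)\p_j^\As w + (ad\,\alpha_j)^2 w\Big). \notag
\eeq
The cross terms are the key point. Using the Leibniz rule for $\p_j$ on the bracket $[\alpha_j,w]$ together with the Jacobi identity applied to $[\As_j,[\alpha_j,w]]$, I would verify
\beq
\p_j^\As \bigl(ad\, \alpha_j\bigr) w = [\p_j^\As \alpha_j, w] + [\alpha_j, \p_j^\As w], \notag
\eeq
so that the two first-order cross terms combine to
\beq
\sum_{j} \Big(\p_j^\As(ad\,\alpha_j)w + (ad\,\alpha_j)\p_j^\As w\Big)
= \sum_j [\p_j^\As \alpha_j, w] + 2 \sum_j [\alpha_j, \p_j^\As w]
= [\text{div}_\As \alpha(t), w] + 2[\alpha(t) \cdot \n^\As w],\notag
\eeq
using the defining identification $\text{div}_\As \alpha = \sum_j \p_j^\As \alpha_j$ and the shorthand in \eref{ps13}.

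Finally I would collect terms. Combining the above display with the quadratic term $\sum_j (ad\, \alpha_j)^2 w$ and subtracting the $-2[w \lrc B]$ coming from Bochner-Weitzenb\"ock, the right-hand side of the augmented equation becomes
\beq
\Delta_\As w + 2[\alpha(t) \cdot \n^\As w] + \sum_j (ad\, \alpha_j(t))^2 w + [\text{div}_\As \alpha(t), w] - 2[w \lrc B(t)], \notag
\eeq
which is precisely $\Delta_\As w + K(t) w$ by the definitions \eref{ps11} and \eref{ps13}. This proves \eref{vst20}. There is no genuine obstacle here; the only subtle point is the Jacobi-plus-Leibniz identity used to symmetrize the first-order cross terms, and tracking the factor of $2$ in front of $[\alpha \cdot \n^\As w]$ and in front of $[w \lrc B]$.
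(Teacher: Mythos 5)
Your proof is correct and follows essentially the same approach as the paper: both expand $\p_j^{A(t)} = \p_j^\As + ad\,\alpha_j(t)$ to compare $\Delta_{A(t)}$ with $\Delta_\As$ and use the Bochner--Weitzenb\"ock identity \eref{vst12} to convert the Hodge term to the Bochner Laplacian plus a curvature term. The only cosmetic difference is the order in which the two steps are applied, and you have usefully made explicit the Leibniz-plus-Jacobi identity $\p_j^\As(ad\,\alpha_j)w = [\p_j^\As\alpha_j,w] + [\alpha_j,\p_j^\As w]$ that the paper uses silently.
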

      \begin{proof} In view of Notation \ref{notA} we may write $\p_j^{A(t)} = \p_j^\As + ad\, \alpha_j(t)$.
Suppressing  $t$ on the right we therefore find. 
      \begin{align*}
\Delta_{A(t)}\w &= \sum_{j=1}^3 (\p_j^\As + ad\,\alpha_j)(\p_j^\As + ad\,\alpha_j) \w \\
&=\sum_{j=1}^3(\p_j^\As)^2 \w + \sum_j \{  \p_j^\As [\alpha_j, \w] + [\alpha_j, \p_j^\As \w] \} 
+ \sum_j (ad\, \alpha_j)^2 \w \\
& = \Delta_\As\w       
                   + [div_\As \alpha, \w] +2\sum_j [\alpha_j, \p_j^\As \w] 
   + \sum_j (ad\, \alpha_j)^2 \w .
\end{align*}
Hence, in view of the Bochner-Weitzenb\"ock formula \eref{vst12} we may write the augmented
 variational equation \eref{av1} as
\begin{align*}
w'(t) &= -(d_A^* d_A  +d_Ad_A^*)w(t) - [w(t)\lrc B(t)] \\
&= \Delta_{A(t)} w(t) - 2 [w(t)\lrc B(t)] \\
&=\Delta_\As w(t) + 2[\alpha\cdot \n^\As w(t)]   \\
&\ \ \ \ \ \ \ \ \ +  [div_\As \alpha, w(t) ]  + \sum_j (ad\, \alpha_j)^2 w(t) - 2 [w(t)\lrc B(t)] \\
& =  \Delta_\As w(t) + K(t) w(t).
\end{align*}
\end{proof}

\begin{remark}\label{strat4} {\rm (Strategy) 
Informally, the differential equation \eref{vst20} 
together with the initial condition $w(0) = w_0$, is equivalent to the integral equation
\begin{align}
 w(t) = e^{t\Delta_\As} w_0 + \int_0^t e^{(t-s) \Delta_\As}\Big(K(s)w(s) \Big) ds.                    \label{vst31}
 \end{align}
 We will first  prove the existence and uniqueness of solutions to the integral equation
  \eref{vst31}. These are so-called ``mild'' solutions. It will then be necessary to show
  that the mild solution is actually a strong solution to \eref{av1}. 
  To this end we will   establish  bounds on  the   operator $K(t)$ which will
   allow us to prove H\"older continuity of $w(\cdot)$ and $K(\cdot)$ 
  on intervals $[\tau, T]$, with $\tau >0$,  and thereby make applicable a general theorem
 \cite[Theorem 11.44]{RR},    
 ensuring that the mild
  solution is a strong solution. The required bounds on $K(t)$ will be derived by a common method for
   the three cases   $M=\R^3$, or $M\ne \R^3$ 
   with Neumann or Dirichlet boundary conditions.  
  The three cases will be encoded into appropriate Sobolev spaces, defined by \eref{ST19} in 
  terms of the associated Laplacians.      
  In all three cases the associated Laplacian is given by
  \eref{ps9} with $A = \As$ and with appropriate boundary conditions. 
   $H_1^\As$ is  the form domain of $\Delta_\As$. 
       Thus if $M \ne \R^3$ then a form $\w \in W_1(M)$ is in the Neumann version of $H_1^\As(M)$
      if and only if $\w_{norm}=0$ and is in the Dirichlet version of $H_1^\As(M)$
      if and only if $\w_{tan}=0$. This defines two distinct notions of $H_1^\As$ in case $M \ne \R^3$.  
    See  \cite[Remark 4.10]{CG1}   for further discussion of these   domains.  
  }
\end{remark}

      \begin{remark}\label{remMar} {\rm (Marini boundary conditions.) For the solution $A(\cdot)$ there is
a third kind of boundary condition that was studied in \cite{CG1} and \cite{CG2}. It consists in setting
the normal component of the curvature of $A(t)$ to zero on $\p M$ for $t >0$. This kind of boundary
condition was first used by A. Marini \cite{Ma3,Ma4,Ma7,Ma8} for the four dimensional elliptic Yang-Mills
boundary value problem. It will be used in a future work \cite{G73} for showing that the initial value
of a finite action solution  to the Yang-Mills heat  equation over $\R^3$ is, upon restriction
to a bounded region $M$,  an allowable initial value for a solution to the Yang-Mills
heat equation over $M$. Marini boundary conditions are forced in this context. This extension
of the present work will be derived from Neumann boundary conditions in \cite{G73}. This kind of localization
theorem seems indispensable for use of  the Yang-Mills heat equation as a regularization tool in local
quantum field theory in order to take into account that signals do not propagate faster than the speed of light.
}
\end{remark}

            \begin{notation} \label{notvst7b}{\rm (Path space)       
      Let $0\le b <1$ and let $0<T <\infty$.   Define
 \begin{align}
 \Q_T^{(b)} =&\Big\{w\in C\Big([0,T]; H_b^\As(M;\L^1\otimes \kf)\Big) 
                   \cap C\Big((0,T]; H_1^\As(M;\L^1\otimes \kf)\Big)    :  \notag \\       
&\qquad\qquad  \ \ \  \limsup_{t\downarrow 0}\  t^{(1-b)/2} \|w(t)\|_{H_1^\As} =0 \Big\}.         \label{vst350b}
 \end{align}
 For $w\in \Q_T^{(b)}$ define
 \beq
 |w|_t =\sup_{0 < s \le t} s^{(1-b)/2} \| w(s)\|_{H_1^\As(M)} , \ \ \ 0 \le t \le T .        \label{vst351b}
 \eeq
 Then
 \beq
 \|w(s)\|_{H_1^\As} \le s^{(b-1)/2} |w|_t,\ \ \ 0 < s \le t \le T  .                                \label{vst352b}
 \eeq
 The space $\Q_T^{(b)}$ is a Banach space in the norm
 \beq
 \|w\|_{\Q_T^{(b)}} = |w|_T + \sup_{0\le s \le T} \| w(s)\|_{H_b^\As}         .              \label{vst353b}
 \eeq
 }
 \end{notation}

            \begin{theorem}      \label{thmmild1b} 
Assume that $A(\cdot)$  is a strong solution to the Yang-Mills heat equation over $[0, \infty)$ with finite action. 
 Suppose that $ 0 < b <1$ 
 and that $w_0 \in H_b^\As(M)$. Then the  integral equation \eref{vst31} 
has a unique solution in the path space $\Q_T^{(b)}$ for a sufficiently small $T$ depending on $A(\cdot)$.
Moreover
\beq
\|w\|_{\Q_T^{(b)}} \le c_{b,T} \|w_0\|_{H_b^\As}      \label{vst360}
\eeq
for some constant $c_{b,T}$ depending only on $b, T$ and  $\rho_A(T)$, where $\rho_A(t)$ is defined in \eref{ymh15a} 
 with $a = 1/2$.  
 \end{theorem}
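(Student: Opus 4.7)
The plan is to recast \eqref{vst31} as a fixed-point equation $w = Fw$ on the Banach space $\Q_T^{(b)}$, with
\begin{equation*}
(Fw)(t) := e^{t\Delta_\As} w_0 + \int_0^t e^{(t-s)\Delta_\As}\bigl(K(s)w(s)\bigr)\,ds,
\end{equation*}
and to invoke the Banach contraction principle by choosing $T$ sufficiently small depending on $A(\cdot)$. Because $F$ is affine in $w$ and the Duhamel operator $w \mapsto Fw - e^{\cdot\Delta_\As}w_0$ is linear, a contraction bound with constant $q < 1$ will simultaneously yield existence, uniqueness, and the estimate \eqref{vst360} via the standard inequality $\|w\|_{\Q_T^{(b)}} \le (1-q)^{-1} \|e^{\cdot\Delta_\As} w_0\|_{\Q_T^{(b)}}$. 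The free term $u(t) := e^{t\Delta_\As} w_0$ is handled by the spectral calculus for the self-adjoint $\Delta_\As$ (Notation \ref{notSob}): the inequality $\sup_{\lambda \ge 0}(1+\lambda)^{(1-b)/2} e^{-r\lambda} \le C_b r^{-(1-b)/2}$ gives $t^{(1-b)/2}\|u(t)\|_{H_1^\As} \le C_b\|w_0\|_{H_b^\As}$; strong continuity of the semigroup on $H_b^\As$ supplies continuity at every $t \in [0,T]$; and the $\limsup_{t\downarrow 0} = 0$ requirement in \eqref{vst350b} follows from density of $H_1^\As$ in $H_b^\As$ (for $w_0 \in H_1^\As$ the claim is trivial by contractivity of $e^{t\Delta_\As}$ on $H_1^\As$, and an $\varepsilon/2$ argument covers the general case).

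For the Duhamel term $\Phi(w)(t) := \int_0^t e^{(t-s)\Delta_\As}(K(s)w(s))\,ds$ I would use the bound $\|e^{r\Delta_\As}\|_{L^2 \to H_1^\As} \le C r^{-1/2}$ (for $r \in (0,T]$) to reduce matters to an $L^2$-estimate of $K(s)w(s)$. The decomposition in Lemma \ref{lemaug1} bounds the first-order piece $2[\alpha(s)\cdot\n^\As w]$ by $C\|\alpha(s)\|_\infty \|w\|_{H_1^\As}$, and bounds the zeroth-order pieces by analogous products of Lebesgue norms of $\alpha(s)$, $\text{div}_\As \alpha(s)$, and $B(s)$ against $\|w\|_{H_1^\As}$ via Sobolev embedding on $M \subset \R^3$. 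Inserting $\|w(s)\|_{H_1^\As} \le s^{(b-1)/2}|w|_T$ together with the initial-behavior estimates for $\alpha$, $\text{div}_\As \alpha$, and $B$ to be proved in Section \ref{secibA} (which express these singular quantities as moderate negative powers of $s$ scaled by factors involving $\rho_A(T)$), the integral
\begin{equation*}
t^{(1-b)/2}\int_0^t (t-s)^{-1/2} s^{(b-1)/2}\bigl(\cdots\bigr)\,ds
\end{equation*}
collapses via a Beta-function identity to $T^\delta\,\eta(\rho_A(T))\cdot|w|_T$ for some $\delta > 0$ and some $\eta$ with $\eta(0) = 0$. The $H_b^\As$-component of $\|Fw\|_{\Q_T^{(b)}}$ is controlled analogously using $\|e^{r\Delta_\As}\|_{L^2 \to H_b^\As} \le C r^{-b/2}$. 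For $T$ small enough that $T^\delta \eta(\rho_A(T)) \le 1/2$, the map $\Phi$ is a strict contraction, producing the unique fixed point $w \in \Q_T^{(b)}$.

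The main obstacle is calibrating the initial-behavior bounds of Section \ref{secibA} sharply enough that the preceding integral converges with a coefficient that vanishes as $T \downarrow 0$. Because $a = 1/2$ sits exactly at the critical index for $\rho_A$, the bounds on $\|\alpha(s)\|_\infty$ and $\|\text{div}_\As \alpha(s)\|_p$ live on the boundary of what is integrable against $(t-s)^{-1/2} s^{(b-1)/2}$; the only leverage is that $\alpha(s) = A(s) - A(T)$ vanishes at $s = T$ and that the action increment $\rho_A(T)$ tends to $0$ as $T \downarrow 0$, which is the whole reason the unperturbed operator has been chosen as $\Delta_\As$ rather than $\Delta_{A(t)}$ (Remark \ref{strat4}). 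Those auxiliary estimates are the substance of Section \ref{secibA} and enter the present proof as a black box.
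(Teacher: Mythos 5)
Your proposal takes essentially the same route as the paper: write the affine integral operator $Z = e^{\cdot\Delta_\As}w_0 + Y$ on $\Q_T^{(b)}$, bound the free term by the spectral calculus of Lemma \ref{freeprop}, bound the Duhamel term $Y$ by a smoothing estimate for the semigroup together with the $L^2$ operator bound on $K(s)$ coming from Lemma \ref{lemests1}, and close the contraction by taking $T$ small so that the coefficient in front of $|w|_T$ is $\le 1/2$.

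One point needs correction: the Beta-function identity does \emph{not} produce a positive power of $T$. Plugging the bound $s^{1/2}\|K(s)w(s)\|_2 \le \mu_T\|w(s)\|_{H_1^\As} \le \mu_T s^{(b-1)/2}|w|_T$ into
\begin{align*}
t^{(1-b)/2}\int_0^t (t-s)^{-1/2}s^{-1/2}\|w(s)\|_{H_1^\As}\,ds
\end{align*}
and using \eref{rec519} gives $t^{(1-b)/2}\cdot t^{(b-1)/2}\cdot C_{1/2,\,1-b/2}\,\mu_T|w|_T = C\,\mu_T|w|_T$ — the time exponents cancel exactly (this is the criticality you correctly identify in your final paragraph, but it means $\delta=0$, not $\delta>0$). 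The contraction constant is therefore small only because $\mu_T$ is, i.e. because $\rho_A(T)\downarrow 0$ (and because the Sobolev constants in Lemma \ref{lemests1} are formed from standard dominating functions that vanish at the origin). This is exactly what the paper's Lemmas \ref{lemests1} and \ref{lemestb} deliver, so the argument closes; just be aware there is no spare power of $T$ to lean on, and the shrinking of $\mu_T$ is carrying the entire load.
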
 
 The proof will be given in the next three sections.

\subsection{Initial behavior of $A$}      \label{secibA}

The initial behavior of various $L^p(M)$  norms of $A(t)$, $B(t)$ and their time and space 
derivatives will be needed to prove  bounds for the operator $K(t)$ defined in \eref{ps13},
and later for establishing bounds on the solution $w(t)$ to the augmented variational equation.
 These in turn will be needed
to recover the desired solution to the variational equation from $w(\cdot)$. 
In this subsection we are going to derive the required  initial behavior bounds for $A$
and its derivatives. They extend the initial behavior bounds derived in \cite{G70} and will
be used  frequently throughout the rest of this paper.
  Many of these are not gauge invariant bounds. But their proofs depend on the
    gauge invariant bounds derived in \cite{G70}.
    
  We reiterate that $M$ can be chosen  to be all of $\R^3$ or to be a bounded subset with
  Dirichlet  or Neumann boundary conditions as in Section  \ref{notSob}.  
    
    The next lemma summarizes some  of the initial behavior bounds for $A$ 
established in \cite{G70}. Recall the notation from \eref{ymh15a}: 
$ \rho_A(t) = (1/2)\int_0^t s^{-a} \| B(s)\|_2^2 ds$.

          \begin{definition}\label{defsbf} {\rm   (Standard dominating function)
By a {\it standard dominating function} we mean a continuous function 
$C:[0, \infty)^2 \rightarrow [0, \infty)$ which is zero at $(0,0)$ and non-decreasing in each
variable.  On the right hand side of each of the following bounds is 
a function of time and of $A$ of the form $C(t, \rho_A(t))$ for some standard dominating 
 function $C(\cdot, \cdot)$.
 All of the  bounds are gauge invariant. 
 }
\end{definition}

\begin{lemma}\label{lemiby} 
Let $1/2 \le a <1$.  If $A(\cdot)$ is a strong solution to the 
      Yang-Mills heat equation over $(0, \infty)$  
   with finite a-action then  there 
      exist standard dominating functions $C_j$ such that
   \begin{align}
   \sup_{0< s \le t} s^{1-a} \|B(s)\|_2^2 &\le C_1(t, \rho_A(t))    \label{iby1} \\                                                                                                 
   \sup_{0< s \le t}s^{2-a} \|A'(s)\|_2^2   &\le C_2(t, \rho_A(t))   \label{iby2}\\                                                                                                    
   \sup_{0< s \le t} s^{2-a} \|B(s)\|_6^2 &\le   C_3(t, \rho_A(t))    \label{iby3}\\                                                                                                   
   \sup_{0< s \le t} s^{3-a} \|B'(s)\|_2^2& \le  C_4(t, \rho_A(t))    \label{iby4}\\    
   \sup_{0< s \le t} s^{3-a} \|A'(s)\|_6^2& \le   C_5(t, \rho_A(t))   \label{iby5}\\  
   \sup_{0< s \le t} s^{(3/2) -a} \|B(s)\|_3^2        &\le    C_6(t, \rho_A(t))   \label{iby6}\\
   \sup_{0< s \le t} s^{(5/2) -a} \|A'(s)\|_3^2      & \le  C_7(t, \rho_A(t))      \label{iby7}\\
   \int_0^t s^{1-a} \|A'(s)\|_2^2 ds &\le C_8(t, \rho_A(t))           \label{iby8}\\
   \int_0^t s^{2-a} \|A'(s)\|_6^2 ds & \le C_9(t, \rho_A(t))          \label{iby9}  \\   
   \int_0^t s \|A'(s)\|_3^2 ds &\le  C_{10}(t, \rho_A(t))                  \label{iby10}\\
   \int_0^t s^{1-a} \|B(s)\|_6^2 ds &\le C_{11}(t, \rho_A(t))        \label{iby11} \\
   \int_0^t   s^{2-a} \|B'(s)\|_2^2 ds & \le C_{12}(t, \rho_A(t))  \label{iby12} \\
   \int_0^t s^{3-a} \|B'(s)\|_6^2 ds   &  \le C_{13}(t, \rho_A(t))   \label{iby13} \\
   \int_0^t \|B(s)\|_3^2 ds & \le C_{14}(t, \rho_A(t))    \label{iby14}\\
      \sup_{0<s \le t} s^{3/2}\|A'(s)\|_\infty &\rightarrow 0\ \ \text{as}\ \ t\downarrow 0 \label{iby15} \\
    \sup_{0<s \le t} s\|B(s)\|_\infty &\rightarrow 0\ \ \text{as}\ \ t\downarrow 0   \label{iby16} \\
      \int_0^t s \|B(s)\|_\infty^2 ds &< \infty \label{iby17}
   \end{align}
   \end{lemma}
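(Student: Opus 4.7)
The plan is a bootstrap starting from the basic energy identity $(d/ds)\|B(s)\|_2^2 = -2\|A'(s)\|_2^2 \le 0$ of the Yang-Mills heat equation, successively upgrading spatial regularity while tracking powers of $t$. Essentially all of these bounds are proved in \cite{G70} for the case $a = 1/2$; for $a\in[1/2,1)$ the same arguments go through with only bookkeeping adjustments to the exponents. Gauge invariance of the $L^p$ norms of $B$, $A'$, $B'$ makes every estimate intrinsic, so one may work in any convenient gauge.

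At the $L^2$ level, monotonicity of $s \mapsto \|B(s)\|_2^2$ combined with the finite $a$-action hypothesis gives
\begin{equation*}
\frac{t^{1-a}}{1-a}\|B(t)\|_2^2 \le \int_0^t s^{-a}\|B(s)\|_2^2\,ds = 2\rho_A(t),
\end{equation*}
which is (iby1); multiplying the energy identity by $s^{1-a}$ and integrating by parts yields (iby8). Differentiating the equation in $t$ and using $B'(s) = d_{A(s)} A'(s)$ produces the parabolic equation $-A''(s) = d_A^* d_A A'(s) + 2[A'(s)\lrc B(s)]$. Its standard energy estimate closes against (iby1) and (iby8) and delivers (iby2); a further differentiation in time governs $B'$ and yields (iby4) and (iby12).

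For the $L^6$ statements (iby3), (iby5), (iby9), (iby11), (iby13), I would combine the Bochner--Weitzenb\"ock identity \eref{vst12} with a Gaffney--Friedrichs inequality to bound $\|\n^A B\|_2$ (respectively $\|\n^A A'\|_2$) by the Hodge quantities $\|d_A B\|_2 = \|A'\|_2$, $\|d_A^* B\|_2$, plus a lower-order curvature term already controlled above. Sobolev embedding $H_1(M)\hookrightarrow L^6(M)$, valid in all three boundary settings, then converts these into the required $L^6$ bounds. The $L^3$ bounds (iby6), (iby7), (iby10), (iby14) are then immediate from interpolation $\|f\|_3 \le \|f\|_2^{1/2}\|f\|_6^{1/2}$ together with Cauchy--Schwarz under the time integral.

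The main obstacle is the $L^\infty$ statements (iby15)--(iby17), which lie beyond what energy methods supply directly in three dimensions. I would iterate the bootstrap one more level, exploiting a third-order energy estimate to bound $\|\n^A B\|_6$ and $\|\n^A A'\|_6$, and then apply $W_{1,q}(M)\hookrightarrow L^\infty(M)$ for some $q > 3$ to recover pointwise bounds with the stated weights. The qualitative refinements ``$\to 0$'' in (iby15) and (iby16) follow from absolute continuity of the Lebesgue integral defining $\rho_A(t)$ together with dominated convergence, since the controlling integrands are in $L^1$ near $s = 0$. Item (iby17) is borderline: the weight $s$ is exactly critical, and its integrability has to be extracted by a dyadic decomposition combining the pointwise bound in (iby16) with the integral bound (iby11), taking care not to incur a logarithmic loss.
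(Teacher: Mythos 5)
Your strategy matches the paper for most of the lemma: the paper simply cites \cite[Section 7.2]{G70} for the $L^2$ and $L^6$ bounds (\eref{iby1}--\eref{iby5}, \eref{iby8}, \eref{iby9}, \eref{iby11}--\eref{iby13}) and \cite[Proposition 7.19]{G70} for the $L^\infty$ assertions (\eref{iby15}--\eref{iby17}), and then derives only the four $L^3$ bounds by interpolating $\|f\|_3 \le \|f\|_2^{1/2}\|f\|_6^{1/2}$ and Cauchy--Schwarz in $s$ — exactly as you propose. Your bootstrap-energy outline for the $L^2$ and $L^6$ statements is consistent with how \cite{G70} obtains them (the Gaffney--Friedrichs inequality and Bochner--Weitzenb\"ock identity are indeed the workhorses). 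So for thirteen of the seventeen items you are reproducing the intended arguments at the right level of detail.

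The genuine gap is in your treatment of \eref{iby15}--\eref{iby17}. The paper (through \cite[Proposition 7.19]{G70}) obtains these by \emph{Neumann domination}: one uses the pointwise comparison principle (as in Lemma \ref{lemnda1} of this paper) to dominate $|B(t,x)|$ and $|A'(t,x)|$ by the scalar Neumann heat semigroup acting on lower-$L^p$ data, and then invokes $\|e^{t\Delta_N}\|_{p\to\infty} = O(t^{-3/2p})$. This gives $L^\infty$ control with the correct weights and, crucially, the integrability in \eref{iby17} directly, without logarithmic loss. Your proposed route — a third-order energy estimate giving $\|\nabla^A B\|_6$ plus $W_{1,q}\hookrightarrow L^\infty$ — is not the paper's method and leaves a hole precisely at \eref{iby17}. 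As you note, it is borderline; but your proposed fix of combining \eref{iby16} with \eref{iby11} via dyadic decomposition does not close it: \eref{iby16} only gives $\|B(s)\|_\infty = o(s^{-1})$, so $s\|B(s)\|_\infty^2 = o(s^{-1})$ is right at the logarithmic threshold, and \eref{iby11} controls $\|B(s)\|_6$ only, which carries no derivative information and therefore cannot control $\|B(s)\|_\infty$ in dimension three. You need either an integral bound on $\|\nabla^A B(s)\|_6$ with a suitable weight (which a third-order energy estimate might supply, but you have not exhibited it) or the Neumann domination argument. As stated, the derivation of \eref{iby17} is not complete.
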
        
          \begin{proof} The inequalities \eref{iby1} - \eref{iby5} and \eref{iby8}, \eref{iby9}, \eref{iby11},
          \eref{iby12}, \eref{iby13} are taken directly
    from the first, second and third order initial behavior estimates of \cite[Section 7.2]{G70}.
    The assertions \eref{iby15}, \eref{iby16} and \eref{iby17} are taken from \cite[Proposition 7.19]{G70}.
    These three assertions can be improved when $a > 1/2$. But we will only need them for $a = 1/2$.
    They can be formulated in terms of  bounds by standard dominating functions.
    The  remaining four inequalities involve $L^3$ norms and follow by interpolation thus:
    For $0 < s \le t$ one has
    \begin{align*}
    s^{(3/2) - a} \|B(s)\|_3^2 &\le \Big(s^{(1-a)/2} \|B(s)\|_2\Big) 
                            \Big(s^{(2-a)/2} \|B(s)\|_6\Big) \le (C_1 C_3)^{1/2}|_t\\
    s^{(5/2) - a} \|A'(s)\|_3^2 &\le \Big( s^{(2-a)/2} \|A'(s)\|_2 \Big) \Big( s^{(3-a)/2} \|A'(s)\|_6\Big) 
    \le (C_2 C_5)^{1/2}|_t.
    \end{align*}
    by \eref{iby1}, \eref{iby3} and then \eref{iby2}, \eref{iby5}.
  The inequalities \eref{iby6} and \eref{iby7} follow.
     Interpolation also shows that 
    \begin{align}
 \int_0^t s \|A'(s)\|_3^2 ds &\le \int_0^t s^{a- (1/2)} \(s^{(1-a)/2} \|A'(s)\|_2\) \( s^{(2-a)/2} \|A'(s)\|_6\) ds \notag\\
 &\le t^{a - (1/2)} \(\int_0^t s^{1-a} \|A'(s)\|_2^2 ds \)^{1/2} \( \int_0^t s^{2-a} \|A'(s)\|_6^2 ds \)^{1/2}\notag\\
 &\le t^{a - (1/2)} C_8^{1/2} C_9^{1/2},              \label{ibA42a}
\end{align}  
which is \eref{iby10} with $C_{10}(t, \rho_A(t)) = t^{a - (1/2)} (C_8 C_9)^{1/2}$.
    Similarly,  
  \begin{align}
    \int_0^t &\|B(s)\|_3^2 ds \le \int_0^t(s^{a- (1/2)})(s^{-a/2} \|B(s)\|_2) (s^{(1-a)/2} \|B(s)\|_6)ds    \notag \\
    &\le t^{a - (1/2)} \Big( \int_0^t s^{-a} \|B(s)\|_2^2 ds \Big)^{1/2}
     \Big( \int_0^t s^{1-a} \|B(s)\|_6^2 ds \Big)^{1/2}  \notag \\
     &\le   t^{a - (1/2)}\, (2\rho_A(t))^{1/2}\, C_{11}(t, \rho_A(t))^{1/2},       \label{rec848e}
     \end{align}   
     which establishes \eref{iby14}.
   \end{proof}

\bigskip
\noindent
Note: Whereas all  the bounds in Lemma \ref{lemiby}  are gauge invariant, 
the only gauge invariant bounds in the next theorem are \eref{ibA5}, \eref{ibA6}, \eref{ibA15c} and \eref{ibA72}.
In most of the non-gauge invariant inequalities a non-gauge invariant condition
 is imposed on $A(T)$ for some $T >0$, but  the last quantifier is omitted to save space.
         \begin{theorem}\label{thmibA1}  
Let $1/2 \le a <1$. Assume that $A$ is a strong solution to
 theYang-Mills heat equation \eref{ymh10} over $(0, \infty)$ with finite  $a$-action. 
Then

$L^6$ inequalities.
\begin{align}
&1.\ \  s^{1-a} \|A(s) - A(r)\|_6^2 \le C_{21}(r, \rho_A(r)), \ \ 0  <s \le r <\infty.               \label{ibA5} \\
&2.\ \int_0^t s^{-a} \| A(s)- A(t)\|_6^2 ds \le  C_{22}(t, \rho_A(t)),  \ \   0 \le t  < \infty.    \label{ibA6} \\ 
& 3.\ \  a_t := \sup_{0<s \le t}\  s^{(1-a)/2} \| A(s)\|_6 \rightarrow 0\ \ 
       \text{as}\ \ t\downarrow 0 \ \ \text{if}\ \ \|A(T)\|_6 < \infty                                         \label{ibA5a} \\
&4.\ \ \hat a_t :=    \int_0^t s^{-a} \| A(s)\|_6^2 ds < \infty\  \text{if}\  \|A(T)\|_6 < \infty   \label{ibA6a} \\
&\     \notag \\
&\ \ L^3 \text{ inequalities. Assume that $1/2 \le a <1$ and $\|A(T)\|_6 < \infty$. Then} \notag \\      
 &5. \ \  \int_0^t s^{1-2a}\|d^*A(s) - d^*A(t)\|_3^2 ds \le  (1-a)^{-2} a_t^2 C_9(t, \rho_A(t)) \label{ibA15b}\\        
 &6. \ \  t^{1/2}\| d_\As^* (A(t) - A(T)) \|_3 \le C_{24}(T, \rho_A(T)),\ \  0<t \le T\ \text{if}\ a = 1/2  \label{ibA15c} \\
&7. \ \ \int_0^t   s^{(1/2) -a}    \|d(A(s) - A(t))\|_3^2 ds  \le  C_{32}(t, \rho_A(t)) + a_t^2C_{33}(t, \rho_A(t))
                                                                   \label{ibA10a} \\
&8.
\ \ \limsup_{t\downarrow 0 }\ t^{(3-2a)/4}  \| d A(t)\|_3  = 0,\ \text{and}\ \   
\limsup_{t\downarrow 0 }\ t^{1/2}  \| d A(t)\|_3  = 0                                             \label{ibA14}  \\
&\    \notag \\
&\ \ L^\infty \text{inequality.} \notag \\         
&9. \ \  t^{1/2} \|  A(t) - A(T)\|_\infty    \le  C_{25}(T, \rho_A(T)) ,\ \ 0 < t \le T\ \text{if}\ a =1/2  \label{ibA72}\\
 &\     \notag\\
 &\ \  L^2 \text{ inequalities.     Assume that $1/2 \le a <1$ and $\|A(T)\|_6 < \infty$. Then} \notag \\
&10. \ \ \int_0^t s^{-a} \| d^*A(s) -d^*A(t) \|_2^2ds \le a_t^2 C_{40}(t, \rho_A(t))               \label{ibA10b}\\
 &11.    \ \   \int_0^t s^{-a} \| dA(s) -dA(t) \|_2^2ds \rightarrow 0\ \text{as}\ t\downarrow 0.  \label{ibA10c} \\
 &12. \ \ \ \int_0^T s^{-a}\| dA(s) \|_2^2ds < \infty\ \ \ \text{if} \ \ A(T) \in H_1\                     \label{ibA10c1} \\
 &13.  \ \ \ \ \sup_{0 < s \le t} s^{1-a}   \| d^*A(s) -d^*A(t) \|_2^2 \rightarrow 0\
                                                                                     \ \text{as}\ t\downarrow 0.   \label{ibA10d}    \\
&14. \ \ \ \ \sup_{0 < s \le t} s^{1-a}   \| dA(s) -dA(t) \|_2^2 \rightarrow 0\ \ \
                                                                                      \ \text{as}\ t\downarrow 0.   \label{ibA10e}                                                                                                                      
 \end{align}
 \begin{align}
&\ \ \text{$H_1$ inequalities.     Assume that  $\|A(T)\|_6 < \infty$ and that  $M = \R^3$. Then} \notag\\
&15.\ \  \ \infty >   \int_0^t s^{-a} \| \n (A(s)- A(t)) \|_2^2 ds  \rightarrow 0\ \ 
                                                                 \text{as}\ \ t \downarrow 0.       \label{ibA12}\\
&16. \ \ \ \infty >     \sup_{0 < s \le t} s^{(1-a)} \| \n (A(s) - A(t))\|_2^2 \rightarrow 0\ \ 
                                                                 \text{as}\ \ t \downarrow 0 \label{ibA13a} \\
&17. \ \ \   \int_0^T s^{-a} \| \n A(s) \|_2^2 ds < \infty\ \ \text{if}\ \ \|\n A(T)\|_2 < \infty  
                                                                                                         \label{ibA13b}\\
&18.  \ \ \limsup_{t\downarrow 0}\  t^{(1-a)/2} \| \n A(t)\|_2 = 0 \ \ 
                                                     \text{if}\ \ \|\n A(T)\|_2 < \infty.   \label{ibA13}       
\end{align}
\end{theorem}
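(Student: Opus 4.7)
The unifying tool for this catalogue of initial-behavior estimates is the fundamental identity
\begin{equation}
A(s) - A(r) = -\int_s^r A'(u)\,du, \qquad 0<s\le r,
\end{equation}
together with the Yang-Mills equation $A'(u) = -d_{A(u)}^* B(u) = -d^*B(u) - [A(u)\lrc B(u)]$ and the structural identity $B(u) = dA(u) + A(u)\wedge A(u)$. Each of the bounds 1--18 is obtained by inserting the appropriate norm bound from Lemma \ref{lemiby} and carrying out an elementary H\"older-plus-power-integration estimate; the precise choice of exponents is dictated by the factor $s^{1-a}$, $s^{-a}$ or $s^{1-2a}$ appearing on the left.

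For the $L^6$ assertions 1--4 I would write $\|A(s)-A(r)\|_6 \le \int_s^r \|A'(u)\|_6\,du$ and apply Minkowski together with \eref{iby5} (pointwise in $s$) or \eref{iby9} (in $L^2(ds)$). The triangle inequality combined with the hypothesis $\|A(T)\|_6<\infty$ then yields items 3 and 4. The $L^\infty$ assertion \eref{ibA72} is obtained in the same way from \eref{iby15}: at $a=1/2$ one has $\|A'(u)\|_\infty \le C u^{-3/2}$, so $\|A(t)-A(T)\|_\infty \le 2C(t^{-1/2}-T^{-1/2})$, which gives the stated $t^{1/2}$ bound.

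For the $L^3$ and $L^2$ assertions on $dA$ (items 7, 8, 11--14) the key reduction is
\begin{equation}
dA(s) - dA(t) = (B(s)-B(t)) - \bigl((A(s)-A(t))\wedge A(s) + A(t)\wedge(A(s)-A(t))\bigr),
\end{equation}
with the quadratic piece controlled by H\"older $L^6\times L^6\to L^3$ (and $L^6\times L^3\to L^2$) using items 1--4; the $B$-difference $B(s)-B(t) = \int_t^s B'(u)\,du$ is controlled via \eref{iby4}, \eref{iby6}, \eref{iby11}, \eref{iby12}, \eref{iby13}. For the $d^*A$ assertions (5, 6, 10, 13) the crucial observation is that
\begin{equation}
\tfrac{d}{du}\, d^*A(u) = d^*A'(u) = -d^*d^*B(u) - d^*[A(u)\lrc B(u)] = -d^*[A(u)\lrc B(u)]
\end{equation}
since $d^*d^*=0$; integrating over $u\in[s,t]$ and dualizing against a test function reduces the bound on $d^*A(s)-d^*A(t)$ to an estimate on $\|[A\lrc B]\|_p$ in terms of the H\"older products $\|A\|_6\|B\|_q$, which are already controlled by Lemma \ref{lemiby} and items 1--4. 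The $H_1$ inequalities 15--18 on $\R^3$ then follow from the $L^2$ identity $\|\n A\|_2^2 = \|dA\|_2^2 + \|d^*A\|_2^2$ for $\kf$-valued 1-forms on $\R^3$.

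The main technical obstacle I anticipate is the $L^3$ bound \eref{ibA15c} at the critical exponent $a=1/2$: the coefficient $t^{1/2}$ must be balanced against a logarithmically borderline integral, and one must exploit the specific choice $\As=A(T)$ together with the $L^\infty$ control of $\alpha(t)=A(t)-\As$ coming from \eref{ibA72} to absorb the dangerous factor. All $o(1)$-as-$t\downarrow 0$ statements (\eref{ibA10c}, \eref{ibA10d}, \eref{ibA10e}, \eref{ibA12}, \eref{ibA13a}, \eref{ibA13}, \eref{ibA14}) will then follow by a standard split-at-$\tau$-and-send-$\tau\downarrow 0$ argument: the contribution from $[\tau,t]$ is handled by the pointwise hypothesis $\|A(T)\|_6<\infty$ or $\|\n A(T)\|_2<\infty$, and the contribution from $[0,\tau]$ is rendered small by the a priori integrability just established in the preceding items.
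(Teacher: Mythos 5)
Your overall scaffolding — write $A(s)-A(r)=-\int_s^r A'(u)\,du$, invoke Lemma~\ref{lemiby}, and run H\"older — is the right skeleton, and several of the pointwise items (1, 3, 4, 9) do go through this way. But two genuine gaps remain.

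First, for the weighted integral inequalities (items 2, 5, 7, 10, 11, 12, 15, 17) an ``elementary H\"older-plus-power-integration estimate'' is not enough: the power counting is logarithmically borderline. For instance, using the pointwise bound \eref{iby5} one gets $\|A(s)-A(t)\|_6\lesssim s^{(a-1)/2}$, whence $s^{-a}\|A(s)-A(t)\|_6^2\lesssim s^{-1}$, which is not integrable near $0$; and Cauchy--Schwarz on $\int_s^t\|A'\|_6$ against \eref{iby9} leads to the same divergence. What rescues item 2 (and the analogues) in the paper is the weighted Hardy inequality \eref{H3}, which converts $\int_0^t s^{-a}\bigl(h(s)-h(t)\bigr)^2\,ds$ directly into $\frac{4}{(1-a)^2}\int_0^t s^{2-a}\,h'(s)^2\,ds$ without the intermediate Cauchy--Schwarz loss. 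You do not invoke Hardy (or anything equivalent), so these items are not actually proved by your argument.

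Second, your key reduction for the $d^*A$ items is $d^*A'=-d^*[A\lrc B]$, which still carries a derivative $d^*$ on the right, and ``dualizing against a test function'' cannot convert a bound on $\|[A\lrc B]\|_q$ into a bound on $\|d^*[A\lrc B]\|_p$: that would amount to controlling a function by a negative-order norm. The paper instead uses the Bianchi/YM consequence $d_A^*A'=-(d_A^*)^2B=0$, i.e.\ $d^*A'=-[A\lrc A']$, which has no free derivative and is directly amenable to H\"older with \eref{iby5}--\eref{iby10}. Your identity is algebraically correct (both expressions equal $d^*A'$), but it is the wrong form to estimate in $L^p$; you should extract the undifferentiated bilinear expression $-[A\lrc A']$ and only then H\"older. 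Finally, for item 6 at the critical exponent $a=1/2$ note the paper does not absorb via \eref{ibA72}, but rather uses $d_\As^*\alpha'(s)=-[\alpha(s)\lrc A'(s)]$ and the weighted Cauchy--Schwarz split $s^{-1/2}=s^{-1/4}\cdot s^{-1/4}$ against \eref{ibA6} and \eref{iby9}; your description of that step is not accurate as stated.
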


       \begin{remark}
{\rm  The inequality \eref{ibA14} has no analog for $d^*$ because $\|d^* A(t)\|_3$
need not be finite for any $t >0$ under our hypotheses. 
For example let $K = S^1$ and take $A_0 = d\lambda$ with $d^*d\l \in L^2(\R^3)$
but $d^*d \l \notin L^3(\R^3)$. 
    Then  $A(t): = d\l \in H_1$ for all $t \ge 0$ and  is a strong solution to the Yang-Mills heat equation. 
In this case $dA(t) =0$ but  $d^*A(t) \notin L^3(\R^3)$ for any $t\ge 0$. This is a pure gauge solution.
\eref{ibA15b} and \eref{ibA15c} show that ``pure gauge contributions'' to 
 differences, such as $\|d^* A(s) - d^*A(t)\|_3$,   cancel  to some degree.
 }
\end{remark}

\bigskip
The proof of Theorem \ref{thmibA1} depends on  the following  special case of  the
 generalized Hardy's inequality \cite[Theorem 6.1.4]{SiPt3}.

\begin{lemma}\label{HI} $($Hardy's inequality.$)$
Let $g:(0, \infty)\rightarrow \R$ be locally integrable. 
 Suppose that $0 < T <\infty$. Define
\beq
G(t) = \int_t^T g(s)ds, \ \ \ 0<t \le T.   \label{H0}
\eeq
Let     $- \infty < \beta < 1$. Then
\begin{align} 
\int_0^T t^{-\beta} G(t)^2 dt \le \frac{4}{(1-\beta)^2} \int_0^T s^{2 -\beta} g(s)^2 ds,\ \  \label{H2}
\end{align}
and, if  
$h:(0, \infty) \rightarrow [ 0, \infty)$ is differentiable,
then
\begin{align}
\int_0^T s^{-\beta} \(h(s) - h(T)\)^2 ds
    &\le \frac{4}{(1- \beta)^2} \int_0^T s^{2-\beta} h'(s)^2 ds.      
    \label{H3}  
\end{align}   

\end{lemma}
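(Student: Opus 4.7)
The plan is to establish \eqref{H2} first, by integration by parts and Cauchy--Schwarz, and then deduce \eqref{H3} as an immediate consequence by setting $g(s) = -h'(s)$, so that $G(t) = \int_t^T (-h'(s))\,ds = h(t) - h(T)$.

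For \eqref{H2} itself, without loss of generality assume the right-hand side is finite (otherwise the inequality is trivial). Fix $\epsilon\in(0,T)$ and integrate by parts with $u = G(t)^2$ and $dv = t^{-\beta}\,dt$, using that $G'(t) = -g(t)$ and that $1-\beta>0$ makes the antiderivative $t^{1-\beta}/(1-\beta)$ well-defined. Since $G(T)=0$, the boundary term at $T$ vanishes, and the boundary term at $\epsilon$ equals $-\epsilon^{1-\beta}G(\epsilon)^2/(1-\beta) \le 0$. Dropping this nonpositive term yields
\begin{equation*}
\int_\epsilon^T t^{-\beta} G(t)^2\,dt \;\le\; \frac{2}{1-\beta}\int_\epsilon^T t^{1-\beta} G(t) g(t)\,dt.
\end{equation*}
Splitting $t^{1-\beta} = t^{-\beta/2}\cdot t^{1-\beta/2}$ and applying Cauchy--Schwarz gives
\begin{equation*}
\int_\epsilon^T t^{1-\beta} G(t) g(t)\,dt \;\le\; \Bigl(\int_\epsilon^T t^{-\beta} G(t)^2\,dt\Bigr)^{1/2}\Bigl(\int_\epsilon^T t^{2-\beta} g(s)^2\,ds\Bigr)^{1/2}.
\end{equation*}
Writing $I_\epsilon$ for the first factor on the right (squared) and $J$ for $\int_0^T s^{2-\beta} g(s)^2\,ds$, the two displays combine to $I_\epsilon \le \frac{2}{1-\beta} I_\epsilon^{1/2} J^{1/2}$, hence $I_\epsilon \le \frac{4}{(1-\beta)^2}J$. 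Letting $\epsilon\downarrow 0$ and invoking monotone convergence on the left delivers \eqref{H2}.

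The only subtlety worth flagging is the boundary term at $t=0$: one might worry that $t^{1-\beta}G(t)^2$ does not tend to zero. The device above sidesteps this entirely by working on $[\epsilon,T]$ and observing that the boundary contribution at $\epsilon$ has a favorable sign, so no \emph{a priori} control at $0$ is needed. For \eqref{H3} one simply applies \eqref{H2} to $g = -h'$, noting that the hypothesis $h \ge 0$ is not actually required for the inequality — it is present only for its intended application in Theorem~\ref{thmibA1}, where $h(s) = \|\cdot\|$ is a norm.
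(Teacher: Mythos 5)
Your proof is correct, but it takes a genuinely different route from the paper's. The paper derives \eqref{H2} by reduction to the Generalized Hardy inequality cited from Simon's reference \cite{SiPt3}: with $F(t)=\int_t^\infty s^{-1}f(s)\,ds$ one has $\int_0^\infty t^{2\alpha-1}F(t)^2\,dt \le \alpha^{-2}\int_0^\infty s^{2\alpha-1}f(s)^2\,ds$; one then substitutes $f(s)=sg(s)\cdot\mathbf{1}_{(0,T]}(s)$ and $\alpha=(1-\beta)/2$ to get the stated form, with $g=h'$ handling \eqref{H3}. You instead give a direct, self-contained derivation: integrate by parts on $[\epsilon,T]$ (legitimate because $G$ and hence $G^2$ are absolutely continuous), note that the boundary term at $T$ vanishes because $G(T)=0$ and the boundary term at $\epsilon$ is $\le 0$ since $1-\beta>0$, apply Cauchy--Schwarz after splitting $t^{1-\beta}=t^{-\beta/2}\cdot t^{1-\beta/2}$, divide through by $I_\epsilon^{1/2}$ (finite because $G$ is continuous and $t^{-\beta}$ bounded on $[\epsilon,T]$), and let $\epsilon\downarrow 0$ by monotone convergence. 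This is the standard textbook proof of this variant of Hardy's inequality and is arguably preferable to the citation chain: it is elementary, avoids reliance on the exact normalization conventions in \cite{SiPt3}, and the $\epsilon$-truncation cleanly sidesteps any need to control $t^{1-\beta}G(t)^2$ at the origin. Your observation that the hypothesis $h\ge 0$ is inert — it plays no role in the inequality and is present only because $h$ will later be taken to be a norm — is also accurate.
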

         \begin{proof}  We will derive the inequality  \eref{H2} 
from   the Generalized Hardy inequality, \cite[Equ. (6.1.31)]{SiPt3}. 
  Suppose that $f:(0, \infty) \rightarrow \R$ is locally integrable and vanishes off a bounded interval.
Define
\begin{align}
F(t) = \int_t^\infty s^{-1}f(s) ds, \ \ t >0     \label{H4}
\end{align}   
Let $\alpha >0$. 
The generalized Hardy inequality    \cite[Equ. (6.1.31)]{SiPt3},  with $\theta =0$ and $p =2$,
 asserts that
\begin{align}
         \int_0^\infty t^{2\alpha - 1} F(t)^2 dt \le \alpha^{-2} \int_0^\infty s^{2\alpha -1} f(s)^2 ds,    \label{H5}
         \end{align}
         as one sees from   \cite[Equ. (6.1.29)]{SiPt3}, with $\theta =0$,  because, in the notation of 
         \cite{SiPt3},      we have   $(K_{\alpha, 0}f)(t) = t^\alpha F(t)$.

Now in \eref{H4} let $f(s) = s g(s)$ for $0<s \le T$ and let $f(s) =0$ for $s >T$. 
Then $F(t) = G(t)$ for $0 < t \le T$  and $F(t)=0 $ for $t >T$.
The two integrands in \eref{H5} are therefore zero off the interval $(0,T]$ and  the integrals
 really extend over the interval $(0, T]$.
Put $\alpha = (1-\beta)/2$. Then $\alpha > 0$ and $2\alpha -1 = -\beta$.
 \eref{H5}  now reduces to \eref{H2}.
 
 For the proof of \eref{H3} choose $g(s)= h'(s)$. Then $G(t) = h(T) - h(t)$ and \eref{H3} follows.
   \end{proof}

\bigskip
\noindent
 \begin{proof}[Proof of Theorem \ref{thmibA1}]
 
 \bigskip
 \noindent 
  {\bf Proof of 1.}  
     For $ 0 < s \le r$ we may write  $A(s) - A(r) = - \int_s^r  A'(\sigma) d\sigma$.
  Hence 
 \begin{align*}
 \| A(s) - A(r)\|_6^2 &\le \(\int_s^{r} \|A'(\sigma)\|_6 d\sigma\)^2 \\
& \le \(\int_s^r \s^{(a-2)/2}\{\sigma^{(2-a)/2} \|A'(\sigma)\|_6\} d\s\)^2 \\
&\le \int_s^{r} \s^{a-2} d\s \int_0^{r}  \s^{2-a} \| A'(\s)\|_6^2 d\s \\
&\le (1-a)^{-1}(s^{a-1} - r^{a-1}) C_9(r,\rho_A(r)).
 \end{align*}
 Therefore      $s^{1-a}\| A(s) - A(r)\|_6^2 \le  C_9(r,\rho_A(r))/(1-a)$, 
 which proves \eref{ibA5}.

\bigskip
\noindent
{\bf Proof of  2.} Choose $T = t$ in  \eref{H3} 
 and let $h(s) = \|A(s) - A(t)\|_6$. Then $h(t) =0$ and 
 $|h'(s)| \le \|A'(s)\|_6$.  Choosing $\beta =a$, we have
 $s^{2-\beta}h'(s)^2 \le s^{2-a} \| A'(s)\|_6^2$. 
  \eref{H3} now shows that
  \begin{align}
  \int_0^t s^{-a} \| A(s) - A(t)\|_6^2 ds \le \frac{4}{(1-a)^2} \int_0^t s^{2-a} \|A'(s)\|_6^2 ds. \label{ibA22}
    \end{align}
    From this and \eref{iby9} we see that \eref{ibA6} holds with $C_{22} = 4(1-a)^{-2} C_9$.

 \bigskip
 \noindent
 {\bf Proof of 3.} It follows  from \eref{ibA5} that 
  \begin{align}
s^{(1-a)/2}\| A(s)\|_6 \le \sqrt{ C_{21}(r,\rho_A(r))} + s^{(1-a)/2}\|A(r)\|_6.    \label{ibA23}
\end{align}
Take $r= T$ to conclude that if $\|A(T)\|_6 < \infty$ then $\|A(s)\|_6 < \infty$
 for $ 0 < s \le T$. Now choose a small $r$ and observe that if  $0 <t \le r$ then 
 \beq
a_t = \sup_{0< s \le t} s^{(1-a)/2}\| A(s)\|_6  \le \sqrt{ C_{21}(r,\rho_A(r))}  + t^{(1-a)/2}\|A(r)\|_6. \notag
\eeq
Hence $\lim_{t\downarrow 0}a_t \le  \sqrt{ C_{21}(r,\rho_A(r))} $,  
which is small for small $r$  because $C_{21}$ is a standard dominating function. 
This proves   \eref{ibA5a}.

\bigskip
 \noindent
 {\bf Proof of 4.}  By \eref{ibA6} we have
 \begin{align}
 \( \int_0^t& s^{-a} \| A(s)\|_6^2 ds\)^{1/2} 
              \le \sqrt{C_{22}(t, \rho_A(t))}  +\(\int_0^t s^{-a} \|A(t)\|_6^2 ds\)^{1/2}.        \notag
 \end{align}
 The last term is finite because $\|A(t)\|_6 < \infty$ by \eref{ibA5} and $a < 1$.

\bigskip
\noindent
{\bf Proof of 5.} 
The Yang-Mills heat equation  $A' = -d_A^* B$  together with the Bianchi identity show that 
 $d_A^* A'  = -(d_A^*)^2 B = 0$. Hence  
 \beq
 d^*A'(s) = -[A(s)\lrc A'(s)].         \label{ibA40}
 \eeq  
 In \eref{H3} choose  $\beta = 2a -1$,  
 $T =t$ and $h(s) = \|d^*A(s) - d^*A(t)\|_3$.   
From \eref{ibA40} we find 
$|h'(s)| \le \| d^*A'(s)\|_3 \le \|\, [A(s)\lrc A'(s)] \, \|_3 \le c \|A(s)\|_6 \|A'(s)\|_6$. 
Therefore
\begin{align}
s^{2-\beta} h'(s)^2 &\le c^2 \(s^{1-a}\|A(s)\|_6^2 \) \( s^{2-a} \|A'(s)\|_6^2\)  \notag\\
&\le c^2 a_t^2 \( s^{2-a} \|A'(s)\|_6^2\) \notag
\end{align}
Then \eref{H3},  with $\beta =2a -1$ and $T = t$, gives
\begin{align}
\int_0^t s^{1-2a}\|d^*A(s) - d^*A(t)\|_3^2 ds &\le (1-a)^{-2} \int_0^t a_t^2s^{2-a} \| A'(s)\|_6^2 ds \notag\\
&\le  (1-a)^{-2} a_t^2 C_9(t, \rho_A(t))  \notag
\end{align}
by \eref{iby9}.   This proves \eref{ibA15b}.

\bigskip
\noindent
{\bf Proof of 6.} 
 The derivation of \eref{ibA40} shows that  
     \begin{align}
     0 = d_{A(s)}^* A'(s) = d_\As^* A'(s) + [\alpha(s)\lrc A'(s)].    \notag
     \end{align}
     Since $A'(s) = \alpha'(s)$ we may write the previous identity as
     \begin{align}
     d_\As^* \alpha'(s) = -  [\alpha(s)\lrc A'(s)].                \label{ibA69}
     \end{align}
     Using  $\alpha(T) =0$ we therefore find
     \begin{align}
     d_\As^* \alpha(t) = \int_t^T    [\alpha(s)\lrc A'(s)] ds.        \label{ibA70}
     \end{align}
     Hence
     \begin{align}
     \|d_\As^* \alpha(t)\|_3 &\le c\int_t^T \|\alpha(s)\|_6 \|A'(s)\|_6 ds \notag \\
     &=c \int_t^T s^{-1/2}\Big(s^{-1/4} \|\alpha(s)\|_6 \Big) \Big( s^{3/4}\|A'(s)\|_6\Big) ds \notag\\
     &\le c t^{-1/2} \Big(\int_0^T s^{-1/2} \|\alpha(s)\|_6^2 ds\Big)^{1/2} 
     \Big( \int_0^T s^{3/2}  \|A'(s)\|_6^2 ds\Big)^ {1/2} \notag \\
     &\le c t^{-1/2}     C_{22}(T, \rho_A(T))^{1/2} C_9(T, \rho_A(T))^{1/2}  
     \end{align}
     in view of \eref{ibA6} and \eref{iby9}.      This proves \eref{ibA15c}.

 \bigskip
\noindent
{\bf Proof of 7.}   
   The identities  $B' = d_A A' = dA' + [A\wedge A']$ show that
\begin{align}
dA' = B' - [A\wedge A'].       \label{ibA40b}
\end{align} 
  Let $h(s) = \|d(A(s) - A(t))\|_3$.    
 Then 
 \beq
 |h'(s)| \le \| dA'(s)\|_3 \le \|B'(s)\|_3 + c \|A(s)\|_6 \| A'(s)\|_6.
 \eeq
 Chose $\beta = a - (1/2)$ in \eref{H3} and observe that 
  \begin{align}
(1/2)  s^{2- \beta}|h'(s)|^2 
&\le  s^{5/2 -a}\|B'(s)\|_3^2 + s^{5/2 -a} c^2 \|A(s)\|_6^2 \| A'(s)\|_6^2 \notag \\
\le   s^{5/2 -a}&\|B'(s)\|_3^2  + c^2 s^{a-(1/2)} \(s^{1-a}\|A(s)\|_6^2 \) \(s^{2-a}\| A'(s)\|_6^2\) \notag\\
\le   s^{5/2 -a}&\|B'(s)\|_3^2  + c^2 t^{a-(1/2)} a_t^2  \(s^{2-a}\| A'(s)\|_6^2\).   \notag
  \end{align}
  Now
\begin{align}
  \int_0^t s^{(5/2) -a}  \|B'(s)\|_3^2 ds &\le \int_0^t s^{(2 - a)/2}  \|B'(s)\|_2 s^{(3-a)/2}\| B'(s)\|_6 ds \notag\\
  &\le \(\int_0^t s^{2-a} \|B'(s)\|_2^2 ds\)^{1/2}\( \int_0^t s^{3-a}   \| B'(s)\|_6^2 ds\)^{1/2}   \notag \\
  &\le C_{12}(t, \rho_A(t))^{1/2}  C_{13}(t, \rho_A(t))^{1/2} . 
                               \label{ibA82}
  \end{align}
  Therefore 
  \begin{align*}
  2\int_0^t &s^{(1/2) - a} \| dA(s) - dA(t)\|_3^2 
  \le  2 \frac{4}{((3/2) -a)^2}\int_0^t s^{(5/2) -a} |h'(s)|^2 ds \notag \\
  &\le \frac{16}{(3 -2a)^2} \int_0^t \Big\{s^{5/2 -a}\|B'(s)\|_3^2  
         + c^2 t^{a-(1/2)} a_t^2  \(s^{2-a}\| A'(s)\|_6^2\)\Big\} ds \notag \\
 &\le \frac{16}{(3 -2a)^2}  \Big\{C_{31}(t,\rho_A(t)  + c^2 t^{a-(1/2)} a_t^2 C_9(t, \rho_A(t))\Big\}
  \end{align*}
  wherein we have used \eref{iby9}. This proves \eref{ibA10a}.

 \bigskip
\noindent
{\bf Proof of 8.} 
 Since $dA(t) = B(t) -(1/2)[A(t)\wedge A(t)]$ it suffices to show that
 \begin{align}
 t^{(3-2a)/4} \( \|B(t)\|_3 + \| [ A(t)\wedge A(t)]\, \|_3\) \rightarrow 0, \ \text{as}\ \ t \downarrow 0. \label{ibA85}
 \end{align}
 By interpolation we have
 \begin{align*}
 \( t^{(3-2a)/4} \|B(t)\|_3\)^2 \le \Big(t^{(1-a)/2)} \|B(t)\|_2\Big) \Big(t^{(2-a)/2} \|B(t)\|_6\Big).
 \end{align*}
 The two factors on the right go  to zero as $t\downarrow 0$, 
 the first by \eref{iby1} and the second by \eref{iby3}. 
 Further,
 $t^{1-a} \|A(t)\wedge A(t)\|_3 \le c \Big(t^{(1-a)/2} \|A(t)\|_6\Big)^2 \rightarrow 0$ by \eref{ibA5a}.
 Since $(3-2a)/4 \ge (1-a)$ for $1/2 \le a <1$, the limit \eref{ibA85} holds. 
 Finally, the second limit in  \eref{ibA14}
 holds because $(1/2) \ge (3-2a)/4 $  for $1/2 \le a < 1$.

\bigskip
\noindent 
{\bf Proof of 9.} 
By \eref{iby15} there is a constant $k_t$ for each $t \in (0, T]$ such that 
 $ \|A'(s)\|_\infty \le s^{-3/2}k_t$ for $0 < s \le t$ and $k_t \rightarrow 0$ as $t \downarrow 0$.
 Hence, writing $\alpha(t) = A(t) - A(T)$,  we have
\begin{align}
 \|\alpha(t)\|_\infty &= \| \int_t^T A'(s) ds \|_\infty                     \notag \\
 &\le \int_t^T \|A'(s)\|_\infty ds                                \notag \\
 & \le  \int_t^T s^{-3/2}k_T ds         \notag \\
 & = t^{-1/2}  (1- (t/T)^{1/2}) 2 k_T           \notag                        
 \end{align}
 This proves \eref{ibA72}. Actually $t^{1/2} \|\alpha(t)\|_\infty \rightarrow 0$ as $t\downarrow 0$
 as one sees from  the inequalities 
 $t^{1/2} \|\alpha(t)\|_\infty \le t^{1/2}\int_t^r s^{-3/2} k_r ds + t^{1/2} \int_r^T s^{-3/2} k_T ds$.
 The $\limsup_{t\downarrow 0}$ on the left is then at most $2 k_r$, which is small for small $r >0$.

\bigskip
\noindent
{\bf  Proof of  10.} 
 We will apply the Hardy inequality \eref{H3}  with $T= t$  
  to the function $h(s) = \|d^*A(s) - d^*A(t)\|_2$. 
 We have $|h'(s)| \le \| d^*A'(s)\|_2 = \|\, [A(s)\lrc A'(s)]\, \|_2$ by \eref{ibA40}.   
 Hence  $s^{2-a} h'(s)^2 \le s^{2-a} \|\, [A(s)\lrc A'(s)]\, \|_2^2$. Therefore, by Hardy's inequality \eref{H3}
with $\beta = a$ we find
\begin{align}
\int_0^t s^{-a}\|d^*(A(s) - A(t))\|_2^2 ds 
&\le  \frac{4}{(1-a)^2} \int_0^t  s^{2-a}  \|\, [A(s)\lrc A'(s)]\, \|_2^2 ds\notag\\
&\le    \frac{4}{(1-a)^2} \int_0^t  s^{2-a} c \|A(s)\|_6^2\|A'(s)\|_3^2 ds \notag
\end{align}
The integral can be estimated by
\begin{align}
 \int_0^t  s^{2-a}  \|A(s)\|_6^2\|A'(s)\|_3^2 ds & \le \int_0^t \Big(s^{1-a} \|A(s)\|_6^2 \(s\|A'(s)\|_3^2\) ds \notag\\
 &\le a_t^2 \int_0^t  s\|A'(s)\|_3^2 ds \notag\\
 &\le a_t^2  C_{10}(t, \rho_A(t))                           \label{ibA42}
\end{align}
by \eref{ibA5a} and \eref{iby10}.  This proves  \eref{ibA10b}.

\bigskip
\noindent
 {\bf Proof of 11.}   
 Let $h(s) = \|d(A(s) - A(t))\|_2$. Then $|h'(s)| \le \| dA'(s)\|_2 = \| B'(s) - [A(s)\wedge A'(s)]\, \|_2$
by \eref{ibA40b}.
From Hardy's inequality \eref{H3} with $\beta = a$  and $T =t$ we find
\begin{align}
\int_0^t s^{-a} &\| d(A(s) - A(t)) \|_2^2 ds  \notag\\
&\le \frac{4}{(1-a)^2}\int_0^t s^{2-a}\| B'(s) - [A(s)\wedge A'(s)]\, \|_2^2 ds       \label{ibA43}
\end{align}
We see from \eref{iby12} that $\int_0^t s^{2-a} \| B'(s)\|_2^2 ds < \infty$.
Moreover $\|\, [A(s)\wedge A'(s)]\, \|_2 \le c \|A(s)\|_6 \| A'(s)\|_3$. So the bound \eref{ibA42} shows
that $\int_0^t s^{2-a} \|\, [A(s)\wedge A'(s)]\, \|_2^2  ds < \infty$ also.  Combining these bounds we find
\begin{align}
\int_0^t s^{-a} \| d(A(s) - A(t)) \|_2^2 ds  
      \le \frac{8}{(1-a)^2}\(C_{12}(t, \rho_A(t)) + c^2a_t^2  C_{10}(t, \rho_A(t))\).  \label{ibA45}
\end{align}
The assertion \eref{ibA10c} follows.

\bigskip
\noindent
{\bf Proof of 12.} 
 From \eref{ibA10c} with $t = T$ we have
\begin{align}
\int_0^T s^{-a}\|dA(s) - dA(T)\|_2^2 ds < \infty               \label{ibA46}
\end{align}
By assumption, $A(T) \in H_1$, which is defined in \eref{ymh5}.
Therefore $\|dA(T)\|_2 < \infty$.  Since $a < 1$ we have $\int_0^T s^{-a}\|dA(T)\|_2^2 ds < \infty$.
Hence   \eref{ibA46} implies \eref{ibA10c1}.

\bigskip
\noindent
{\bf Proof of 13.}   
Integrating  \eref{ibA40} over the interval $[s,t]$ we find
\begin{align}
d^* A(s) - d^* A(t) = \int_s^t [ A(\s)\lrc A'(\s)] d\s.     \label{ibA40c}
\end{align}
Hence
\begin{align*}
\| d^* A(s) &- d^*A(t)\|_2 \le   \int_s^t \|\,[ A(\s)\lrc A'(\s)]\, \|_2 d\s  \notag\\
&\le c \int_s^t \|A(\s)\|_6 \| A'(\s)\|_3 d\s.                     \notag
\end{align*}
But
\begin{align}
\(\int_s^t \|A(\s)\|_6 &\| A'(\s)\|_3 d\s \)^2 \notag\\
&=\ \Big\{ \int_s^t   \s^{(a-1)/2}  \(\s^{-a/2}  \|A(\s)\|_6\)   \( \s^{1/2} \| A'(\s)\|_3\) d\s\Big\}^2\notag \\
&\le s^{a-1}  \(\int_s^t \s^{-a} \|A(\s)\|_6^2 d\s\) \(\int_s^t \s\|A'(\s)\|_3^2 d\s\)\notag\\
&\le   s^{a-1}  \(\int_0^t \s^{-a} \|A(\s)\|_6^2 d\s\) \(\int_0^t \s\|A'(\s)\|_3^2 d\s\)\notag \\
&\le s^{a-1}\   \hat a_t\     C_{10}(t, \rho_A(t)  \label{ibA49}
\end{align}
by  \eref{ibA6a} and \eref{iby10}.
Therefore
\begin{align}
\sup_{0< s \le t} s^{1-a}\| d^* A(s)- d^*A(t)\|_2^2 
&\le       c^2 \hat a_t\     C_{10}(t, \rho_A(t)        \label{ibA50}       
\end{align}
 This goes to zero as $t\downarrow 0$. This proves  \eref{ibA10d}.

\bigskip
\noindent
{\bf Proof of 14.} 
 From the identity      \eref{ibA40b}        
 we find 
 \begin{align}
 s^{(1-a)/2} &\|d(A(s) - A(t))\|_2     \notag \\
 &\le s^{(1-a)/2} \|  B(s) - B(t)\|_2  +s^{(1-a)/2} \int_s^t \|\,[A(\s) \wedge A'(\s)]\, \|_2 d\s.  
                      \label{ibA51}
 \end{align}
 Since $ \|\,[A(\s) \wedge A'(\s)]\, \|_2 \le c \|A(\s)\|_6\|A'(\s)\|_3$, \eref{ibA49} shows
 that the second term in \eref{ibA51} is  bounded by $(c  \hat a_t C_{10})^{1/2}$ and therefore goes
 to zero uniformly in $s \le t$ as $t \downarrow 0$. The first term on the right in \eref{ibA51} is at most
 $s^{1-a} \| B(s)\|_2 + t^{1-a} \| B(t)\|_2$ which is bounded by $2 C_1(t, \rho_A(t))$ in
  accordance with   \eref{iby1}. This completes the proof of \eref{ibA10e}.

\bigskip
\noindent
{\bf Proof of 15-16.} 
Over $\R^3$ we have the identity
\begin{align}
\| \n \w\|_2^2 = \| d\w\|_2^2 + \| d^* \w \|_2.    \label{ibA60}
\end{align}
 Consequently \eref{ibA12} follows immediately from \eref{ibA10b} and \eref{ibA10c} while 
  \eref{ibA13a} follows immediately from  \eref{ibA10d} and \eref{ibA10e}.

\bigskip
\noindent
{\bf Proof of 17-18.} 
It follows from \eref{ibA12} with $t = T$ that if  $\|\n A(T)\|_2 < \infty$  then \eref{ibA13b} holds since $ a <1$.

Now take $t = T$ in \eref{ibA13a}. It follows that if $\|\n A(T)\|_2 < \infty$ then
$ \| \n A(r)\|_2 < \infty$ for $ 0< r \le T$. Hence
an argument similar to that in the proof of \eref{ibA5a} shows that \eref{ibA13} follows from \eref{ibA13a}.

This completes the proof of Theorem \ref{thmibA1}.
\end{proof}

\bigskip

\subsection{Estimates for the integral equation}  \label{secests}

Throughout this subsection we will assume that  $A(\cdot)$ is a strong solution to the Yang-Mills 
heat equation over $[0,\infty)$ with finite action. 

\begin{lemma}\label{lemests1}  
For each $T > 0$ there is a constant $\mu_T$, depending only on $T$ and $\rho_A(T)$,     
 such that
\begin{align}
t^{1/2} \|K(t) \w\|_2 &\le \mu_T \|\w\|_{H_1^\As}, \ \ \ 0 <t \le T      \label{eie5}\\
\ \ \  \text{and}\ \ \  \ \mu_T &\rightarrow 0\ \ \text{as}\ \ T \downarrow 0. \label{eie6}
\end{align}
Let $0 < \tau <T$. There is a constant $m_\tau$, depending only on $\tau, T$ and $\rho_A(T)$,  such that
\begin{align}
\|(K(t) - K(r))\w\|_2 \le (t-r)^{3/4} 
    m_\tau  \|\w\|_{H_1^\As},\ \ \ \tau\le r \le t \le T.      \label{eie7}
\end{align}

\end{lemma}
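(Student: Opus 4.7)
The plan is to expand $K(t)$ into its four explicit summands from \eqref{ps11}-\eqref{ps13}, namely
\begin{align*}
K(t)\w = 2\sum_j [\alpha_j(t),\p_j^\As \w] + \sum_j (ad\,\alpha_j(t))^2 \w + [div_\As \alpha(t),\w] - 2[\w \lrc B(t)],
\end{align*}
and bound each summand in $L^2(M)$ using the initial-behavior estimates of Lemma \ref{lemiby} and Theorem \ref{thmibA1}, together with the Sobolev embedding $H_1^\As \hookrightarrow L^6(M)$. Throughout, the finite-action hypothesis fixes $a=1/2$. For the bound \eqref{eie5}--\eqref{eie6} the four summands are estimated via the H\"older pairings $L^\infty{\cdot}L^2$, $L^6{\cdot}L^6{\cdot}L^6$, $L^3{\cdot}L^6$ and $L^3{\cdot}L^6$ respectively. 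The coefficient factors are controlled by: \eqref{ibA72} giving $t^{1/2}\|\alpha(t)\|_\infty \le C_{25}(T,\rho_A(T))$; \eqref{ibA5} giving $t^{1/2}\|\alpha(t)\|_6^2 \le C_{21}(T,\rho_A(T))$; \eqref{ibA15c} giving $t^{1/2}\|d_\As^* \alpha(t)\|_3 \le C_{24}(T,\rho_A(T))$; and \eqref{iby6} giving $t^{1/2}\|B(t)\|_3 \le C_6(T,\rho_A(T))^{1/2}$. Summing, $\mu_T$ emerges as a sum of standard dominating functions of $(T,\rho_A(T))$, hence $\mu_T \to 0$ as $T \downarrow 0$.

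For the H\"older estimate \eqref{eie7} one expands each difference using $\alpha(t) - \alpha(r) = \int_r^t A'(s)\,ds$ and $B(t) - B(r) = \int_r^t B'(s)\,ds$, now working on the interval $[\tau,T]$ where singularities at zero are irrelevant. The pointwise estimate $\|A'(s)\|_\infty \le k_T s^{-3/2}$ from \eqref{iby15}, combined with $\|\alpha(s)\|_\infty \le C_{25} s^{-1/2}$ from \eqref{ibA72}, gives each of the first two summands a difference of order $(t-r)$ times a polynomial in $\tau^{-1}$. For the divergence term one uses the identity \eqref{ibA70} to write $d_\As^*\alpha(t) - d_\As^*\alpha(r) = -\int_r^t [\alpha(\s)\lrc A'(\s)]\,d\s$ and bounds the integrand in $L^3$ by $\|\alpha\|_\infty\|A'\|_3$ using \eqref{iby7}, again yielding a difference of order $(t-r)$. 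The decisive term is the curvature difference: Minkowski combined with the pointwise bound $\|B'(s)\|_2 \le C_4^{1/2} s^{-5/4}$ from \eqref{iby4} gives $\|B(t) - B(r)\|_2 \le (t-r)\cdot C_4^{1/2}\tau^{-5/4}$, while Cauchy--Schwarz applied to the integrated bound \eqref{iby13} gives $\|B(t) - B(r)\|_6 \le (t-r)^{1/2}\cdot C_{13}^{1/2}\tau^{-5/4}$. Interpolating via $\|u\|_3 \le \|u\|_2^{1/2}\|u\|_6^{1/2}$ yields $\|B(t) - B(r)\|_3 \le (t-r)^{3/4}\cdot m'_\tau$, and then $\|[\w \lrc (B(t)-B(r))]\|_2 \le c\|B(t) - B(r)\|_3\|\w\|_6 \le (t-r)^{3/4} m_\tau \|\w\|_{H_1^\As}$ via Sobolev. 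The first three summands, being of order $(t-r)$, are subsumed into the same H\"older exponent using $(t-r) \le T^{1/4}(t-r)^{3/4}$ on $[\tau,T]$.

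The main technical point is the interpolation step producing the H\"older exponent $3/4$: no uniform pointwise $L^\infty$ bound on $B'(s)$ is available under the finite-action hypothesis, so $\|B(t)-B(r)\|_\infty$ cannot be controlled directly; instead the sharp pointwise bound \eqref{iby4} on $\|B'(s)\|_2$ must be paired with the integrated $L^6$ bound \eqref{iby13}, and the two combined by $L^2$-$L^6$ interpolation to produce the $3/4$-H\"older behavior. The remaining estimates reduce to routine applications of the initial-behavior machinery of Section \ref{secibA}.
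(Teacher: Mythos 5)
Your proof is correct and follows essentially the same route as the paper: the same decomposition of $K(t)$ into its four summands, the same initial-behavior inputs (\eqref{ibA5}, \eqref{ibA15c}, \eqref{iby6}, \eqref{ibA72} for the first estimate; \eqref{iby15}, \eqref{iby7}, \eqref{ibA70}, and the curvature bounds for the second), and the same identification of the curvature difference as the term responsible for the H\"older exponent $3/4$. The only genuine variation is in how the interpolation is organized for $\|B(t)-B(r)\|_3$: you establish $\|B(t)-B(r)\|_2 = O((t-r)\tau^{-5/4})$ and $\|B(t)-B(r)\|_6 = O((t-r)^{1/2}\tau^{-5/4})$ separately and then interpolate the increment via $\|u\|_3\le\|u\|_2^{1/2}\|u\|_6^{1/2}$; the paper instead writes $\|B(t)-B(r)\|_3\le\int_r^t\|B'(s)\|_3\,ds$, applies H\"older in time with exponents $(4/3,4)$, and interpolates the integrand $\|B'(s)\|_3^4\le\|B'(s)\|_2^2\|B'(s)\|_6^2$, then invokes \eqref{iby4} and \eqref{iby13}. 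The two computations are equivalent and yield the identical constant $(C_4 C_{13})^{1/4}\tau^{-5/4}(t-r)^{3/4}$. Similarly, you pair $\alpha$-factors in $L^\infty$ for the quadratic and divergence terms where the paper pairs in $L^6$; both work because \eqref{ibA72} and \eqref{ibA5} both provide the needed control, and both land on $\|\w\|_{H_1^\As}$ after Sobolev. These are presentational choices, not substantive ones.
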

        \begin{proof}   
  We need to prove  bounds of the form \eref{eie5}  and \eref{eie7} for
  each of the four operators that  appear in \eref{ps13}. Taking the terms in the multiplication
   operator $M(t)$ first we have  
  \begin{align}
  t^{1/2}\|\, [ \alpha_j(t), [\alpha_j(t), \w]]\, \|_2 &\le c^2 t^{1/2}\|\alpha_j(t)\|_6^2 \|\w\|_6 \notag\\
  &\le c^2 C_{21}(T, \rho_A(T))    \|\w\|_6    \label{eie9}
  \end{align}
  by \eref{ibA5}.     Furthermore, in view of \eref{iby5} and \eref{ibA5} with $a =1/2$ we have
  \begin{align}
  \|\, [ \alpha_j(t), &[\alpha_j(t), \w]] -  [ \alpha_j(r), [\alpha_j(r), \w]] \, \|_2        \notag \\
   & \le c^2 \|\alpha_j(t) - \alpha_j(r)\|_6 (\|\alpha_j(t)\|_6+\|\alpha_j(r)\|_6)\|\w\|_6       \notag\\
  &\le 2c^2\Big( \|A_j(t) - A_j(r) \|_6 max_{\tau \le s \le T} \ \|\alpha_j(s)\|_6\Big) \| \w\|_6  \notag\\
  &\le 2c^2\Big(\int_r^t \|A'(s)\|_6 ds  \Big( \tau^{-1/2} C_{21}(T, \rho_A(T))\Big)^{1/2}\| \w\|_6 \notag\\
  &\le 2c^2\Big(\int_r^t s^{-5/4} C_5(T, \rho_A(T))^{1/2} 
                \Big( \tau^{-1/2} C_{21}(T, \rho_A(T))\Big)^{1/2}\| \w\|_6   \notag\\
  &\le 2c^2 \tau^{-5/4} (t-r)  C_5(T, \rho_A(T))^{1/2}
                  \Big( \tau^{-1/2} C_{21}(T, \rho_A(T))\Big)^{1/2}\| \w\|_6 \notag\\
  &= (t-r) \tau^{-3/2} C_{40}(T, \rho_A(T))  \|\w\|_6.
  \end{align}   
Concerning the second term in \eref{ps11} we have  
  \begin{align}
  t^{1/2}\|\, [div_\As \alpha(t), \w]\, \|_2 &\le c t^{1/2} \|div_\As \alpha(t)\|_3 \|\w\|_6  \notag\\
  &    \le cC_{24}(T, \rho_A(T)) \|\w\|_6    
  \end{align}
  by \eref{ibA15c}. Furthermore, using \eref{ibA69}, \eref{ibA5} and \eref{iby5}, we find
  \begin{align*}
  \|\, [div_\As (\alpha(t)&-\alpha(r)), \w]\, \|_2 =  \|\,\int_r^t [div_\As \alpha'(s), \w]ds \, \|_2\notag\\
  &\le c \int_r^t \|div_\As \alpha'(s)\|_3 ds\ \|\w\|_6               \notag\\
  &\le c^2 \int_r^t \|\alpha(s)\|_6 \|A'(s)\|_6ds    \ \|\w\|_6 \notag\\
  &\le c^2 \int_r^t s^{-1/4} s^{-5/4} ds \ (C_{21} C_5)^{1/2}\ \|\w\|_6 \notag\\
  &\le c^2 (t-r) \tau^{-3/2}   \ (C_{21} C_5)^{1/2}  \ \|\w\|_6.
 \end{align*}
   The third term in \eref{ps11} is easily estimated by
  \begin{align*}
  t^{1/2} \|\, [\w \lrc B(t)]\, \|_2 &\le c t^{1/2} \|B(t)\|_3 \|\w\|_6 \notag \\
  &\le  c  C_6(T, \rho_A(T))^{1/2}  \|\w\|_6  
  \end{align*}
  by \eref{iby6} with $a = 1/2$. Furthermore  
  \beq
  \|\, [\w \lrc (B(t)- B(r))]\, \|_2\le c\|B(t) - B(r)\|_3 \|\w\|_6. \notag
  \eeq
  We will show that there is a standard dominating function $C_{42}$ such that
  \begin{align}
  \|B(t) - B(r)\|_3 \le (t-r)^{3/4} \tau^{-5/4} C_{42}(T, \rho_A(T)),\ \ 0 <\tau \le r \le t \le T. \label{eie40}
  \end{align} 
  For the proof of \eref{eie40} apply H\"older's inequality to find
  \begin{align*}
  \|B(t) - B(r)\|_3 &\le \int_r^t \|B'(s)\|_3 ds \\
  &\le \Big( \int_r^t 1^{4/3} ds \Big)^{3/4} \(\int_r^t \|B'(s)\|_3^4ds \)^{1/4} \\
  & \le (t-r)^{3/4}   \(\int_r^t \|B'(s)\|_2^2 \|B'(s)\|_6^2 ds \)^{1/4} \\
  &\le (t-r)^{3/4}   \(C_{4} \int_r^t s^{-5/2}\|B'(s)\|_6^2 ds \)^{1/4} \\
  &\le (t-r)^{3/4}   \(C_{4}  \tau^{-5} \int_r^t s^{5/2}\|B'(s)\|_6^2 ds \)^{1/4} \\
  &\le (t-r)^{3/4}   \(C_{4}  \tau^{-5} C_{13}(T, \rho_A(T)) \)^{1/4}
  \end{align*}
  by  \eref{iby4} with $a = 1/2$ and $t =T$ and by \eref{iby13}.

  Thus the multiplication operator $M(t)$  satisfies
 \begin{align*}
 t^{1/2}\|M(t)\w\|_2   &\le q_T \|\w\|_6, \ \ 0<t \le T\ \ \ \text{and}\\
 \| (M(t) - M(r))\w \|_2 &\le (t-r)^{3/4} q_{\tau, T} \|\w\|_6,\ \ 0 < \tau \le r \le t \le T
 \end{align*}
 for some constants $q_T$ and $q_{\tau, T}$ which are 
 majorized by dominating functions of $T$ and $\rho_A(T)$ for each $\tau >0$.

  The differential operator term in \eref{ps13} can be dominated as follows.
  \begin{align*}
  t^{1/2}\|\, [\alpha(t)\cdot \n^\As \w]\, \|_2 &\le c t^{1/2}\|\alpha(t)\|_\infty \|\n^\As \w\|_2  \notag\\
  &\le  c C_{25}(T) \| \w\|_{H_1^\As}
  \end{align*}
  by \eref{ibA72}.  Furthermore, in view of \eref{iby15} we have
  \begin{align}
  \|\, [(\alpha(t)- \alpha(r))\cdot \n^\As \w]\, \|_2
  &\le c \|\alpha(t)- \alpha(r)\|_\infty \   \|\n^\As \w\|_2 \notag\\
  &\le c \int_r^t \|A'(s)\|_\infty ds \  \|\n^\As \w\|_2 \notag\\
  &\le c \int_r^t s^{-3/2} ds  \cdot C_{15}^{1/2}  \  \|\n^\As \w\|_2 \notag\\
  &\le c (t-r) \tau^{-3/2} \cdot  C_{15}^{1/2}   \  \|\n^\As \w\|_2. \notag
  \end{align}
  This completes the proof of the  lemma.   
     \end{proof}

     \begin{remark}{\rm 
     Three of the terms in the operator $K(t)$  have been shown above to be 
     H\"older continuous of order one and the fourth one of order $3/4$. But higher order 
     initial behavior estimates developed in \cite{CG3} show that $\|B'(t)\|_6$ is bounded 
     on $[\tau, T]$, from which it would follow that all four terms are H\"older continuous of order one.
     However we will not need this improvement in this paper.
     }
     \end{remark}

                \begin{lemma}\label{freeprop}   $($Free propagation$)$
          Let  $ 0 \le b <1$ and suppose that  $w_0 \in H_b$. 
Then, for some constants $c_b$ and $\gamma_b$ there holds
\begin{align}
e^{2t} c_b^2 \|w_0\|_{H_b^\As}^2 &\ge t^{1-b} 
\|e^{t\Delta_\As} w_0\|_{H_1^\As}^2 \rightarrow 0\ \text{as}\ \ t\downarrow 0\ \ \ \text{and} \label{ST449a} \\
\int_0^T t^{-b} \|e^{t\Delta_\As} w_0\|_{H_1^\As}^2 dt 
         &\le e^{2T} \gamma_b^2 \|w_0\|_{H_b^\As}^2.              \label{ST450a} 
\end{align}
In particular, the function $t\mapsto e^{t\Delta_\As} w_0$ lies in $\Q_T^{(b)}$.
\end{lemma}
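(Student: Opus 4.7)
The plan is to prove all three parts of Lemma \ref{freeprop} by reducing them, via the spectral theorem applied to the self-adjoint operator $\Delta_\As$ on $L^2(M;\L^1\otimes\kf)$, to pointwise inequalities in the spectral variable. Since $\Delta_\As$ is given by the Bochner--Weitzenb\"ock formula \eqref{vst12} as the sum of the non-positive Hodge-type operator $-(d_\As^* d_\As + d_\As d_\As^*)$ and the bounded perturbation $\w\mapsto [\w\lrc B(T)]$, its spectrum is bounded above; the exponential factor $e^{2t}$ on the right of \eqref{ST449a}--\eqref{ST450a} absorbs this upper bound (equivalently, it accounts for the growth rate of the semigroup $e^{t\Delta_\As}$ generated by a dissipative-plus-bounded operator). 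Writing $\mu_{w_0}(d\lambda)=d\|E(\lambda)w_0\|_{L^2}^2$, definition \eqref{ST19} gives
\begin{align*}
\|w_0\|_{H_b^\As}^2 &= \int (1-\lambda)^b\,\mu_{w_0}(d\lambda),\\
\|e^{t\Delta_\As}w_0\|_{H_1^\As}^2 &= \int (1-\lambda)\,e^{2t\lambda}\,\mu_{w_0}(d\lambda).
\end{align*}

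The pointwise estimate that drives both inequalities is
\[
t^{1-b}(1-\lambda)\,e^{2t\lambda} = e^{2t}\cdot 2^{b-1}\bigl[2t(1-\lambda)\bigr]^{1-b}e^{-2t(1-\lambda)}\cdot (1-\lambda)^b,
\]
in which the function $u\mapsto u^{1-b}e^{-u}$ attains a finite maximum $((1-b)/e)^{1-b}$ on $[0,\infty)$. Taking $c_b^2 := 2^{b-1}((1-b)/e)^{1-b}$ and integrating against $\mu_{w_0}$ gives the first inequality of \eqref{ST449a}. The convergence $t^{1-b}\|e^{t\Delta_\As}w_0\|_{H_1^\As}^2\to 0$ then follows from dominated convergence: the integrand tends pointwise to $0$ as $t\downarrow 0$ while being majorized on $t\in(0,1]$ by $c_b^2 e^{2}(1-\lambda)^b$, which is integrable against $\mu_{w_0}$ because $w_0\in H_b^\As$.

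For the integrated bound \eqref{ST450a}, apply Fubini and perform the substitution $u=2t(1-\lambda)$ in the inner integral:
\[
\int_0^T t^{-b}(1-\lambda)e^{2t\lambda}\,dt \;\le\; e^{2T}\,2^{b-1}(1-\lambda)^b\int_0^\infty u^{-b}e^{-u}\,du \;=\; e^{2T}\,2^{b-1}\Gamma(1-b)\,(1-\lambda)^b.
\]
Setting $\gamma_b^2 := 2^{b-1}\Gamma(1-b)$ and integrating against $\mu_{w_0}$ yields \eqref{ST450a}. Finally, for membership in $\Q_T^{(b)}$, strong continuity of $t\mapsto e^{t\Delta_\As} w_0$ into $H_b^\As$ on $[0,T]$ is immediate from the spectral representation and dominated convergence (the multiplier $e^{t\lambda}$ is pointwise continuous and uniformly bounded on compacts of $t$), while continuity into $H_1^\As$ on $(0,T]$ follows from the smoothing estimate \eqref{ST449a} together with the semigroup property (so that for $t_0>0$ the value $e^{t_0\Delta_\As}w_0$ lies in the domain of $(1-\Delta_\As)^{1/2}$ and $t\mapsto e^{(t-t_0)\Delta_\As}$ is strongly continuous there). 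The $\limsup$ condition in \eqref{vst350b} is exactly the square root of \eqref{ST449a}.

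The only subtle point is verifying that the functional calculus for $(1-\Delta_\As)^{b/2}$ is well-defined and positive, which reduces to noting that the bounded perturbation $[\,\cdot\,\lrc B(T)]$ keeps $1-\Delta_\As$ bounded below on its spectrum (up to an implicit benign shift already accounted for by the $e^{2t}$ factor); once this is in place the proof is pure spectral calculus, with no analytical difficulty beyond the elementary bounds on $u^{1-b}e^{-u}$ and $\Gamma(1-b)$.
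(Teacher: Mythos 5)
Your argument is correct and follows the same spectral-calculus route the paper uses (it defers the computation to \cite[Lemma 3.4]{G70}); your constants $c_b^2 = 2^{b-1}\bigl((1-b)/e\bigr)^{1-b}$ and $\gamma_b^2 = 2^{b-1}\Gamma(1-b)$ agree with what the paper's citation produces. The only caveat concerns your closing remark: non-negativity of $L = 1-\Delta_\As$ is a structural hypothesis the paper states explicitly (in the remark immediately preceding Lemma~\ref{lemestb}), and the factor $e^{2t}$ cannot substitute for it --- it only compensates for the spectrum of $\Delta_\As$ possibly reaching into $(0,1]$ --- but since the paper takes $1-\Delta_\As \geq 0$ as given, this does not affect your proof.
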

            \begin{proof}
The proof of \eref{ST449a} and \eref{ST450a} relies only on the spectral
  theorem and  a computation that may be found in \cite[Lemma 3.4] {G70}. 
  The continuity of    $t\mapsto e^{t\Delta_\As} w_0$ on $[0,T]$ into $H_b^\As$ and on $(0,T]$ into $H_1^\As$
  is clear.  The condition \eref{vst350b} follows from  \eref{ST449a}.
\end{proof}

        \begin{remark}{\rm  
If $L$ is a non-negative self-adjoint operator
on a Hilbert space and $D = L^{1/2}$ then
\beq
\| D^\alpha e^{-tL}\| \le c_\alpha t^{-\alpha/2},\ \ \  t > 0, \ \ \ \alpha \ge0,     \label{hk0}
\eeq
for some constant $c_\alpha$, as follows from the spectral theorem  and the inequality  
$\sup_{\lambda>0} \lambda^{\alpha/2} e^{-t\lambda} =t^{-\alpha/2} \sup_{\sigma >0} \sigma^{\alpha/2}e^{-\sigma}$.
Here $\lambda \ge 0$ is a spectral parameter for $L$. The case of interest for us
 will be  $L = 1 -\Delta_\As$ acting on $L^2(M; \Lambda^1\otimes \kf)$.
}
\end{remark}

\begin{remark}\label{remcv}{\rm  
The following   identity, which arises frequently,  is listed here for convenience.
Let  $\mu$ and $\nu$ be real numbers with  $ \mu <1$ and  $\nu <1$. Then
\begin{align}
\frac{1}{t} \int_0^t(t-s)^{-\mu} s^{-\nu} ds 
                        &= t^{-\mu -\nu } C_{\mu, \nu}\                            \label{rec519}  
\end{align} 
for some finite constant $C_{\mu,\nu}$. This follows from the substitution $s = t\sigma$.
}
\end{remark}

\begin{lemma} \label{lemestb}
 Suppose that $ 0 < b <1$ and $w \in \Q_T^{(b)}$. Let
\beq
(Yw)(t)  = \int_0^t e^{(t-s) \Delta_\As} K(s) w(s) ds, \ \ 0 \le t \le T                          \label{vst500}
\eeq
If\,  $0\le r \le 1$ then 
\begin{align}
\| (Yw)(t) \|_{H_r^\As} \le c_r \int_0^t (t-s)^{-r/2} s^{-1/2} \| w(s)\|_{H_1^\As} ds\ \mu_t,  \label{vst501}
\end{align}
and
\begin{align}
\| (Yw)(t) \|_{H_r^\As} \le c_{r,b} t^{(b-r)/2} |w|_T\ \mu_T,\   0\le t \le T .                  \label{vst510}
\end{align}
\end{lemma}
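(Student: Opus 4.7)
The plan is to combine the heat semigroup smoothing estimate (the remark following Lemma \ref{freeprop}), the pointwise bound on $K(s)$ from Lemma \ref{lemests1}, and the convolution identity of Remark \ref{remcv}. Both inequalities follow by the same two-step computation; the second is just the first with $\|w(s)\|_{H_1^\As}$ replaced by the path-space bound \eref{vst352b}.

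First I would estimate the $H_r^\As$ norm of the integrand. Since $H_r^\As$ is defined via $(1-\Delta_\As)^{r/2}$ and this operator commutes with $e^{(t-s)\Delta_\As}$, writing $L = 1-\Delta_\As$ gives
\begin{align*}
\|e^{(t-s)\Delta_\As} K(s) w(s)\|_{H_r^\As}
   &= e^{t-s}\, \|L^{r/2}\, e^{-(t-s)L}\, K(s) w(s)\|_{2} \\
   &\le e^{T}\, c_r\, (t-s)^{-r/2}\, \|K(s) w(s)\|_2
\end{align*}
by the semigroup bound \eref{hk0} with $\alpha=r$ and $L=1-\Delta_\As$. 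Applying \eref{eie5} from Lemma \ref{lemests1} (and using that $\mu_s\le\mu_t$ since $\mu$ is non-decreasing in its argument) yields
\[
\|K(s) w(s)\|_2 \le s^{-1/2}\, \mu_t\, \|w(s)\|_{H_1^\As},\qquad 0<s\le t\le T.
\]
Absorbing $e^T$ into the constant $c_r$ and integrating over $[0,t]$ establishes \eref{vst501}.

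For \eref{vst510} I would substitute the path-space estimate \eref{vst352b}, namely $\|w(s)\|_{H_1^\As}\le s^{(b-1)/2}|w|_T$, into \eref{vst501}, obtaining
\[
\|(Yw)(t)\|_{H_r^\As} \le c_r\, \mu_T\, |w|_T \int_0^t (t-s)^{-r/2}\, s^{(b-2)/2}\, ds.
\]
The exponents are $\mu := r/2 \le 1/2 < 1$ (since $r\le 1$) and $\nu := (2-b)/2 < 1$ (since $b>0$), so Remark \ref{remcv} gives
\[
\int_0^t (t-s)^{-r/2}\, s^{(b-2)/2}\, ds = t^{1-\mu-\nu}\, C_{\mu,\nu} = t^{(b-r)/2}\, C_{\mu,\nu},
\]
which is \eref{vst510} with $c_{r,b} := c_r\, C_{r/2,\,(2-b)/2}$.

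There is no real obstacle here; the only minor points are verifying that the exponents $r/2$ and $(2-b)/2$ both lie strictly below $1$ so that Remark \ref{remcv} applies (which uses the hypotheses $0<b<1$ and $0\le r\le 1$), and noting that $\mu_s$ may be replaced by $\mu_t$ (or $\mu_T$) throughout by monotonicity of the dominating constant in Lemma \ref{lemests1}.
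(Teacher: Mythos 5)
Your proposal is correct and follows essentially the same path as the paper's proof: pull the operator $D^r=(1-\Delta_\As)^{r/2}$ through the heat semigroup, apply the spectral bound \eref{hk0} together with the $K(s)$ estimate \eref{eie5} to get \eref{vst501}, then substitute the path-space bound \eref{vst352b} and evaluate the resulting convolution via \eref{rec519} to get \eref{vst510}. The only cosmetic differences are that you make explicit the $e^{T}$ factor absorbed into $c_r$ and spell out the verification that both convolution exponents lie below one, both of which the paper leaves implicit.
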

       \begin{proof} By \eref{hk0}, \eref{eie5}, \eref{vst352b} and\eref{rec519}  we have
       \begin{align}
 \| (Yw)(t) \|_{H_r^\As} &=\int_0^t \|D^r e^{-(t-s)\Delta_\As} K(s)w(s)\|_2 ds         \notag\\
 &\le \int_0^t   \|D^r e^{-(t-s)\Delta_\As}\|_{2\rightarrow 2} \|K(s) w(s)\|_2 ds      \notag \\
 &\le \int_0^t  c_r (t-s)^{-r/2} s^{-1/2} \|w(s)\|_{H_1^\As}  ds\ \mu_t                \\
 &\le \int_0^t c_r  (t-s)^{-r/2} s^{-1/2} s^{(b-1)/2} ds \ |w|_t \ \mu_t                \notag\\
 &= \int_0^t c_r  (t-s)^{-r/2} s^{(b/2) -1}  ds \ |w|_t \ \mu_t                              \notag \\
 &= c_rC_{r/2, 1-(b/2)} t^{(b-r)/2} \  |w|_t\  \mu_t .
 \end{align}
 This proves both \eref{vst501} and \eref{vst510}. The condition $b >0$ is needed in the fourth line.
 \end{proof}

\subsection{Existence and uniqueness of mild solutions} \label{seceum}

\begin{proof}[Proof of Theorem \ref{thmmild1b}]
Define
\beq
(Zw)(t) = e^{t\Delta_\As}w_0 + (Yw)(t),     \ \ w \in \Q_T^{(b)} .  \label{vst580b}
\eeq
We will show that, for sufficiently small $T$,  $Z$ is a contraction mapping on a 
closed subset of the Banach space $\Q_T^{(b)}$ invariant under $Z$. 
Take $r =b$ in \eref{vst510} and then $r=1$ to find
\begin{align}
\|(Yw)(t)\|_{H_b^\As} &\le  c_{b,b} \mu_{T} |w|_T \\
\|(Yw)(t)\|_{H_1^\As} &\le  c_{1,b} \mu_{T} |w|_T t^{(b-1)/2}.
\end{align} 
Therefore
\begin{align}
\sup_{ 0 < t \le T} \| (Yw)(t)\|_{H_b^\As} &\le  c_{b,b} \mu_T |w|_T \ \ \text{and} \\
\sup_{ 0 < t \le T} t^{(1-b)/2} \| (Yw)(t)\|_{H_1^\As} &\le c_{1,b} \mu_T |w|_T.
\end{align}
Hence, in view of the definition \eref{vst353b}, we find
\begin{align}
\|Yw\|_{\Q_T^{(b)}} \le c_5 \mu_T |w|_T,
\end{align}
where $c_5 = c_{b,b} + c_{1,b}$.
We may choose $T>0$ so small that $c_5 \mu_T \le 1/2$.  Then
\begin{align}
\|Yw\|_{\Q_T^{(b)}} &\le (1/2) |w|_T     
\le (1/2)  \|w\|_{\Q_T^{(b)}} .    \label{vst585}
\end{align}
$Yw$ is  easily seen to  have the appropriate continuity properties to lie in $\Q_T^{(b)}$.
 $Y$ is  therefore a contraction in the Banach space $\Q_T^{(b)}$ with contraction constant $1/2$.

Concerning the freely propagated term in \eref{vst580b}, Lemma \ref{freeprop} 
 shows that it lies in $\Q_T^{(b)}$. Moreover the inequality 
 $\|e^{t\Delta_\As} w_0\|_{H_b^\As} \le \| w_0\|_{H_b^\As}$, together with 
\eref{ST449a}, shows that  $t^{(1-b)/2} \| e^{t\Delta_\As} w_0\|_{H_1^\As} \le e^T c_b \|w_0\|_{H_b^\As}$
for $0 < t \le T$. Hence $\|e^{(\cdot)\Delta_\As} w_0\|_{ \Q_T^{(b)}} \le(1 + e^T c_b) \|w_0\|_{H_b^\As}$.
Let $c_b'(T) = 1 + e^T c_b$. Then the operator $Z$ has the bounds 
\begin{align}
\|Zw\|_{ \Q_T^{(b)}}  &\le c_b'(T) \|w_0\|_{H_b^\As} +  (1/2) \|w\|_{ \Q_T^{(b)}} ,\ \ \  \text{and} \label{vst586}\\
\|Zw_1 - Zw_2\|_{ \Q_T^{(b)}}  &\le    (1/2) \|w_1-w_2\|_{ \Q_T^{(b)}}
\end{align}
when  $w_1(0) = w_2(0) = w_0$ because the freely propagated terms in $Zw_j$ are the same
and therefore cancel.

Let $W = \{w \in \Q_T^{(b)}: w(0) = w_0\}$. 
This is a closed subset of $ \Q_T^{(b)}$ because  of the presence of the second term
 in the norm, defined in  \eref{vst353b}. It is invariant under $Z$ because 
 $(Zw)(0) = w(0) = w_0.$
Thus $Z$ is a contraction on $W$ and therefore has a unique fixed point in $W$. 
By \eref{vst586} a fixed point under $Z$ satisfies
 $\| w\|_{Q_T^{(b)}} = \| Z w\|_{Q_T^{(b)}} \le c_b'(T) \| w_0 \|_{H_b^\As} + (1/2) \|w\|_{Q_T^{(b)}}$,
 from which  \eref{vst360} follows by subtraction.
 This completes the proof of  Theorem \ref{thmmild1b}.  
 \end{proof}

\subsection{Mild solutions are strong solutions} \label{secmildstr} 
  
 \begin{remark}\label{strat6} {\rm (Strategy)    
  Typically, a solution to the integral equation \eref{vst31}
will be a strong solution if the integrand $K(s) w(s)$ is H\"older continuous as a function
of $s$ into a suitable Banach space of functions on $M$. 
In our circumstances the coefficient operator $K(s)$ has a singularity at $s=0$.
For $s>0$ it is more regular but by no means smooth. It will be necessary to use
detailed information developed in Section \ref{secibA},
 concerning the behavior of the
 connection form $A(s)$ near and away from $s =0$.
 We will show first that any mild solution
 $w$ is H\"older continuous away from $t=0$ as a function into $H_1^\As$. 
   The main theorem of this section asserts that, for any mild solution $w$, 
  the function $K(s)w(s)$ is a H\"older continuous
 function into $L^2(M)$    
  on any finite interval $[\tau,T]$ when $\tau >0$.
  We will then  use this
   to show that the  solution to \eref{vst31} 
is actually a strong solution for $t> 0$. We will prove this for $w_0 \in H_b(M)$ whenever $0 < b <1$.
}
\end{remark}

\begin{theorem}\label{thmstrong}  
Suppose that $A(\cdot)$ is a strong solution to the Yang-Mills heat
 equation over $[0, \infty)$  of finite  action,
 Let $w$ be a solution to the integral equation
 \eref{vst31} lying in
  $\Q_T^{(b)}$ for some $b \in (0,1)$. Then $w$ is a strong solution  to \eref{vst20} over
 $(0,T]$.   
 
  \end{theorem}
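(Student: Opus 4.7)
The strategy, as outlined in Remark \ref{strat6}, is to show that $s \mapsto K(s) w(s)$ is H\"older continuous on each subinterval $[\tau, T] \subset (0,T]$ as a function into $L^2(M)$, and then invoke the classical semigroup regularity theorem \cite[Theorem 11.44]{RR} to upgrade $w$ from a mild to a strong solution on $[\tau, T]$; since $\tau > 0$ is arbitrary, this yields the conclusion on $(0, T]$.

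The technical heart is to show that $w$ itself is H\"older continuous on $[\tau, T]$ as a function into $H_1^\As$. For $\tau \le r \le t \le T$, the integral equation \eqref{vst31} and the semigroup property give
\begin{align*}
w(t) - w(r)
 &= \bigl(e^{t\Delta_\As} - e^{r\Delta_\As}\bigr) w_0
  + \int_r^t e^{(t-s)\Delta_\As} K(s) w(s)\, ds \\
 &\quad + \int_0^r \bigl(e^{(t-s)\Delta_\As} - e^{(r-s)\Delta_\As}\bigr) K(s) w(s)\, ds.
\end{align*}
Fix a small $\alpha \in (0, 1/2)$. The free propagation term is handled via the spectral inequality
$$|e^{-t\lambda} - e^{-r\lambda}| \le C_\alpha (t-r)^\alpha \lambda^\alpha e^{-r\lambda}, \qquad \lambda \ge 0,$$
combined with $\sup_{\lambda \ge 0}\lambda^{(1+2\alpha-b)/2} e^{-r\lambda} \le C r^{-(1+2\alpha-b)/2}$ and $r \ge \tau$. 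The first Duhamel term is controlled using the standard estimate $\|D e^{-\sigma L}\|_{2\to 2} \le C \sigma^{-1/2}$ with $L = -\Delta_\As$, the bound $\|K(s) w(s)\|_2 \le \mu_T s^{-1/2} \|w(s)\|_{H_1^\As}$ from Lemma \ref{lemests1}, and the $\Q_T^{(b)}$ weight $\|w(s)\|_{H_1^\As} \le s^{(b-1)/2} |w|_T$. The second Duhamel term is rewritten as $\int_{r-s}^{t-s} \Delta_\As e^{\sigma\Delta_\As}(\cdot)\, d\sigma$ and dominated using $\|D L^{\beta} e^{-\sigma L}\|_{2\to 2} \le C\sigma^{-\beta - 1/2}$ for a suitable $\beta \in (0,1)$, extracting a factor $(t-r)^\alpha$ together with integrable powers of $s$ and $r-s$.

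With H\"older continuity of $w$ into $H_1^\As$ on $[\tau, T]$ established, the operator-level H\"older estimate \eqref{eie7} of Lemma \ref{lemests1} and the splitting
$$K(t) w(t) - K(r) w(r) = \bigl(K(t) - K(r)\bigr) w(t) + K(r)\bigl(w(t) - w(r)\bigr)$$
immediately give H\"older continuity of $s \mapsto K(s) w(s)$ into $L^2(M)$ on $[\tau, T]$. Rewriting \eqref{vst31} via the semigroup property as
$$w(t) = e^{(t-\tau)\Delta_\As} w(\tau) + \int_\tau^t e^{(t-s)\Delta_\As} K(s) w(s)\, ds, \qquad \tau \le t \le T,$$
and observing that $w(\tau) \in H_1^\As$ lies in the form domain of the non-positive self-adjoint operator $\Delta_\As$, the classical regularity theorem shows that $w$ is differentiable into $L^2$ on $(\tau, T]$, takes values in $D(\Delta_\As)$, and satisfies $w'(t) = \Delta_\As w(t) + K(t) w(t)$ pointwise. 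Letting $\tau \downarrow 0$ proves the theorem. The main obstacle is the H\"older step for $w$ itself: both the free propagation (at $t = 0$) and the coefficients of $K(s)$ (at $s = 0$) are singular, so the estimates must absorb negative powers of $\tau$ into the constants, which forces $\alpha$ to be strictly positive but small and requires careful use of the $\Q_T^{(b)}$ weight to keep the inner integral over $(0, r)$ finite.
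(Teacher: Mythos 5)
Your proposal is correct and follows essentially the same route as the paper: establish H\"older continuity of $w$ into $H_1^\As$ on $[\tau,T]$ by splitting the Duhamel representation into a free term, an integral over $[r,t]$, and an integral over $[0,r]$ of semigroup differences; combine with the operator estimate \eqref{eie7} via the standard splitting $K(t)w(t)-K(r)w(r)=(K(t)-K(r))w(t)+K(r)(w(t)-w(r))$; and then invoke \cite[Theorem 11.44]{RR} on $[\tau,T]$ for arbitrary $\tau>0$. The only differences are minor technical ones: the paper handles the free propagation term by observing that $t\mapsto e^{t\Delta_\As}w_0$ is differentiable on $[\tau,\infty)$ and hence locally H\"older, rather than via a spectral inequality, and it controls the semigroup difference through the factorization $\|D^{-2\alpha}(e^{\epsilon L}-1)\|\,\|D^{1+2\alpha}e^{\delta L}\|$ (Lemma \ref{lemST8b}) rather than rewriting the difference as $\int_{r-s}^{t-s}\Delta_\As e^{\sigma\Delta_\As}\,d\sigma$; both are standard and equivalent in outcome.
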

 The proof depends on the following lemmas.

\begin{lemma}\label{lemST8b} 
 Let $ 0 < \alpha < 1$ and $0 < T <\infty$. Let $-L$ be a non negative self-adjoint operator on a Hilbert space
 and let $D = (1-L)^{1/2}$.
 There is a constant $e_{T, \alpha}$ such that      
  \begin{align}
     \| D(e^{\epsilon L} -1) e^{\delta L} \|
 &\le \epsilon^\alpha \delta ^{-\frac{1}{2} - \alpha} e_{T,\alpha}    
      \ \    \forall \epsilon > 0\ \  \text{and}\ \ \forall \delta \in (0, T].            \label{vst118}   
           \end{align}
\end{lemma}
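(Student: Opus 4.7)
The plan is to reduce the operator-norm bound to a pointwise bound on a scalar function via the spectral theorem applied to $-L$, and then prove that pointwise bound by elementary calculus. Since $-L \ge 0$ is self-adjoint, let $\lambda \ge 0$ denote its spectral parameter, so that $e^{\epsilon L}$ corresponds to $e^{-\epsilon \lambda}$ and $D = (1-L)^{1/2}$ corresponds to $(1+\lambda)^{1/2}$. The operator norm is then
\begin{equation*}
\|D(e^{\epsilon L}-1)e^{\delta L}\| = \sup_{\lambda \ge 0}(1+\lambda)^{1/2}\bigl(1 - e^{-\epsilon\lambda}\bigr)e^{-\delta\lambda},
\end{equation*}
so the whole problem is to bound this supremum by $\epsilon^\alpha\,\delta^{-1/2-\alpha}e_{T,\alpha}$.

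The key elementary inequality is $1 - e^{-x} \le x^\alpha$ for all $x \ge 0$ and $\alpha \in [0,1]$; indeed, for $x \le 1$ one has $1 - e^{-x} \le x \le x^\alpha$, while for $x \ge 1$ one has $1 - e^{-x} \le 1 \le x^\alpha$. Applying this with $x = \epsilon\lambda$ gives $1 - e^{-\epsilon\lambda} \le \epsilon^\alpha \lambda^\alpha$, so the supremum is bounded by $\epsilon^\alpha \sup_{\lambda \ge 0}(1+\lambda)^{1/2}\lambda^\alpha e^{-\delta\lambda}$.

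To handle the remaining supremum, I split into $\lambda \ge 1$ and $\lambda \le 1$. For $\lambda \ge 1$, $(1+\lambda)^{1/2}\le \sqrt{2}\,\lambda^{1/2}$, so the expression is at most $\sqrt{2}\,\lambda^{1/2+\alpha}e^{-\delta\lambda}$, whose maximum in $\lambda$ is attained at $\lambda = (1/2+\alpha)/\delta$, giving the bound $\sqrt{2}\,((1/2+\alpha)/e)^{1/2+\alpha}\delta^{-1/2-\alpha}$. For $\lambda \le 1$, $(1+\lambda)^{1/2}\lambda^\alpha e^{-\delta\lambda}\le \sqrt{2}$, and this constant is absorbed into the required $\delta^{-1/2-\alpha}$ factor using the hypothesis $\delta \le T$, which yields $1 \le T^{1/2+\alpha}\delta^{-1/2-\alpha}$. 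Combining the two regimes produces a constant $e_{T,\alpha}$ depending only on $T$ and $\alpha$, as required.

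There is essentially no serious obstacle here; the only subtlety is that $D^2 = 1-L$ rather than $-L$, so the factor of $1$ inside $(1+\lambda)^{1/2}$ contributes nontrivially near $\lambda = 0$, and one must use $\delta \le T$ to convert that harmless contribution into the requested $\delta^{-1/2-\alpha}$ weight. Once that is observed, the proof is a half-page of spectral calculus.
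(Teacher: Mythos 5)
Your proof is correct, and it is essentially the same spectral-theorem argument the paper uses; the paper first factors $D(e^{\epsilon L}-1)e^{\delta L} = D^{-2\alpha}(e^{\epsilon L}-1)\cdot D^{1+2\alpha}e^{\delta L}$ and bounds each factor's operator norm separately (via the substitutions $y=\epsilon x$ and $y=\delta x$), whereas you bound the single combined spectral supremum directly with the clean inequality $1-e^{-x}\le x^\alpha$ and a split at $\lambda=1$. Both routes invoke $\delta\le T$ in exactly the same place, to dominate the bounded contribution from small spectral parameter, and lead to the same form of constant $e_{T,\alpha}$.
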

\begin{proof}With the help of the operator inequality
\begin{align}
\| D(e^{\epsilon L} -1) e^{\delta L} \| \le \|D^{-2\alpha}(e^{\epsilon L} -1)\|\, \| D^{1+2\alpha} e^{\delta L}\| \notag 
\end{align}
it suffices to make estimates of the two norms. By the spectral theorem, $\|D^{-2\alpha}(e^{\epsilon L} -1)\|$
is at most the supremum over $x \in [0, \infty)$ of 
$ (1+x)^{-\alpha} (1 - e^{-\epsilon x}) = (1 +\epsilon^{-1} y)^{-\alpha}(1 - e^{-y})
=\epsilon^{\alpha} (\epsilon + y)^{-\alpha} (1- e^{-y}) \le \epsilon^\alpha  y^{-\alpha} (1- e^{-y})
  \le \epsilon^{\alpha} c_\alpha$,  
  wherein we have put   $y = \epsilon x$. The second norm, writing $c = (1/2) +\alpha$, 
   is at most the supremum over $ x \in [0,\infty)$   of  
  $(1+x)^c e^{-\delta x} = (1+ \delta^{-1} y)^c e^{-y} 
   = \delta^{-c} (\delta + y)^c e^{-y}\le  \delta^{-c} (T + y)^c e^{-y}\le \delta^{-c}  \hat e_{T,\alpha}$
    for $0 < \delta \le T$,
    wherein we have put $y = \delta x$.
\end{proof}

\begin{lemma} \label{lemH}$($H\"older continuity of $\rho$$)$  
Let $0 < \tau <T < \infty$. Suppose that  $0 < b <1$ and that $w \in \Q_T^{(b)}$.       
Let $0 < \alpha < 1/2$.  
Define
\beq
\rho(t) =(Yw)(t) = \int_0^t e^{(t-s)\Delta_\As} K(s) w(s) ds.              \label{vst65}
\eeq
There is a constant  $c_5$ depending only on $\alpha, \tau$  and $T$ such that
\beq
\|\rho(t) - \rho(r)\|_{H_1^\As} \le c_5 (t-r)^\alpha  \mu_T  |w|_T, \ \ \text{for}\ \ \tau \le r <t <T,  \label{vst66}
\eeq
where $\mu_T$ is defined in Lemma \ref{lemests1}.
\end{lemma}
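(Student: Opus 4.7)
The plan is to decompose
$$\rho(t)-\rho(r)=\int_r^t e^{(t-s)\Delta_\As}K(s)w(s)\,ds+\int_0^r\bigl(e^{(t-s)\Delta_\As}-e^{(r-s)\Delta_\As}\bigr)K(s)w(s)\,ds=:I_1+I_2,$$
and estimate each piece separately in the $H_1^\As$ norm. The two ingredients I would combine are, on the one hand, Lemma \ref{lemests1} together with the path-space bound \eqref{vst352b}, which give
$$\|K(s)w(s)\|_2\le s^{-1/2}\mu_T\|w(s)\|_{H_1^\As}\le \mu_T|w|_T\,s^{b/2-1},\qquad 0<s\le T,$$
and on the other hand the spectral-theorem bound $\|D\,e^{u\Delta_\As}\|_{2\to 2}\le c\,u^{-1/2}$ for $D=(1-\Delta_\As)^{1/2}$, together with Lemma \ref{lemST8b} applied to the difference of semigroups.

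For $I_1$ I would simply estimate
$$\|I_1\|_{H_1^\As}\le \int_r^t\|D\,e^{(t-s)\Delta_\As}\|_{2\to 2}\|K(s)w(s)\|_2\,ds\le c\,\mu_T|w|_T\int_r^t(t-s)^{-1/2}s^{b/2-1}\,ds.$$
Because $s\ge r\ge\tau$ on the range of integration and $b/2-1<0$, the factor $s^{b/2-1}$ is bounded by $\tau^{b/2-1}$, so the integral is at most $2\tau^{b/2-1}(t-r)^{1/2}$. This gives $\|I_1\|_{H_1^\As}\le c_1(\tau,b)\mu_T|w|_T(t-r)^{1/2}$, even stronger than the required H\"older exponent $\alpha<1/2$.

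For $I_2$ I would use the semigroup identity $e^{(t-s)\Delta_\As}-e^{(r-s)\Delta_\As}=(e^{(t-r)\Delta_\As}-1)e^{(r-s)\Delta_\As}$ and apply Lemma \ref{lemST8b} with $L=\Delta_\As$, $\epsilon=t-r$, $\delta=r-s$, to obtain
$$\|D\bigl(e^{(t-s)\Delta_\As}-e^{(r-s)\Delta_\As}\bigr)\|_{2\to 2}\le (t-r)^\alpha(r-s)^{-1/2-\alpha}\,e_{T,\alpha}.$$
Inserting this bound gives
$$\|I_2\|_{H_1^\As}\le (t-r)^\alpha\, e_{T,\alpha}\,\mu_T|w|_T\int_0^r(r-s)^{-1/2-\alpha}s^{b/2-1}\,ds,$$
and the final integral is a Beta-type integral equal to $r^{b/2-1/2-\alpha}B(b/2,\,1/2-\alpha)$, finite precisely because $\alpha<1/2$ and $b>0$, and uniformly bounded on $\tau\le r\le T$.

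Adding the two estimates yields \eqref{vst66}. The main technical point, and the only place where the restriction $\alpha<1/2$ enters, is the convergence at $s=r$ of the Beta integral in $I_2$; convergence at $s=0$ is ensured by $b>0$. The hypothesis $\tau>0$ is used only to bound the powers $s^{b/2-1}$ in $I_1$ and $r^{b/2-1/2-\alpha}$ in $I_2$ when those exponents are negative. Once these two elementary integrals are controlled, everything else is routine bookkeeping of the constants supplied by Lemmas \ref{lemests1} and \ref{lemST8b}.
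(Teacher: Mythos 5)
Your proof follows essentially the same route as the paper: the same decomposition of $\rho(t)-\rho(r)$ into the short interval $[r,t]$ and the difference-of-semigroups term over $[0,r]$, the same use of Lemma \ref{lemests1} and \eqref{vst352b} to bound $\|K(s)w(s)\|_2$, Lemma \ref{lemST8b} with $\epsilon=t-r$, $\delta=r-s$ for the second piece, and the same elementary convolution/Beta integrals with $\tau>0$ controlling the negative powers. The argument is correct.
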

   \begin{proof}
 Choosing $r$ and $t$ as in \eref{vst66},  
 we may write 
 \begin{align*}
   \rho(t) - \rho(r) &= \int_0^r \Big( e^{(t-\sigma)\Delta_\As} - e^{(r-\sigma)\Delta_\As} \Big)
     K(\sigma)w(\sigma)d\sigma  \\
      &   + \int_r^t e^{(t-\sigma)\Delta_\As} K(\sigma)w(\sigma)d\sigma.
   \end{align*}
 Therefore 
   \begin{align}
  \| &\rho(t) - \rho(r)\|_{H_1^\As}                                                                 
 \le \int_0^r \| \Big( e^{(t-\sigma)\Delta_\As} - e^{(r-\sigma)\Delta_\As}\Big) 
                                       K(\sigma)w(\sigma)\|_{H_1^\As}d\sigma   \notag \\
                &\ \ \ \ \ \ \qquad  \qquad  \qquad \qquad 
      + \int_r^t\| e^{(t-\sigma)\Delta_\As} K(\sigma)w(\sigma)\|_{H_1^\As}d\sigma                      \notag\\
& \le  \int_0^r \Big\| \Big( e^{(t-\sigma)\Delta_\As} - e^{(r-\sigma)\Delta_\As}\Big)\Big\|_{2\rightarrow H_1^\As}
                     \|K(\sigma) w(\sigma)\|_{2}  d\sigma                   \notag\\
&\ \ \ \ \ \ \qquad  \qquad  \qquad \qquad  
    +  \int_r^t \Big\|e^{(t-\sigma)\Delta_\As}\Big\|_{2\rightarrow H_1^\As}
                                                             \|K(\sigma) w(\sigma)\|_{2} d\sigma.   \notag\\
&\le  (t-r)^\alpha \int_0^r  (r-\sigma)^{-\frac{1}{2} - \alpha}\  
                                           \|K(\sigma) w(\sigma)\|_{2} d\sigma\   \cdot e_{T, \alpha} \notag\\
&\qquad\qquad  +    \int_r^t (t-\sigma)^{-1/2}   
                               \|K(\sigma) w(\sigma)\|_{2} d\sigma  \   \cdot   c_1 .                   \label{vst70}
  \end{align}
  The bound in the first line in \eref{vst70} comes from \eref{vst118} with 
  $\delta = r-\sigma$ and $\epsilon = t-r$, while the bound
   in the second line      comes from      the spectral theory bound 
   \eref{hk0}.      

 By \eref{eie5} and \eref{vst352b}  we have
\beq
\|K(s) w(s)\|_2 \le   s^{-1/2} \mu_T \|w(s)\|_{H_1^\As} \le \mu_T s^{-1/2} s^{(b-1)/2} |w|_T.   
 \label{vst76}
\eeq
Insert the bound \eref{vst76} into \eref{vst70} to find
\begin{align}
  \| \rho(t) - \rho(r)\|_{H_1^\As} & \le  \Big\{(t-r)^{\alpha}  \int_0^r 
               (r-\sigma)^{- \frac{1}{2} -\alpha}    \sigma^{(b/2) -1}
               d\sigma\ \cdot  e_{T, \alpha}                                      \label{vst77}\\
&\ \ \ \ \ +\int_r^t    (t-\sigma)^{-1/2}\sigma^{(b/2) -1} 
d\sigma \cdot c_1\Big\}  \mu_T |w|_T.                                     \label{vst78}
         \end{align}
          The  integral in line \eref{vst77}, which  is finite because $\alpha < 1/2$ and $b >0$,  is at most 
          $r^{\frac{b-1}{2} - \alpha}\cdot Const. \le \tau^{\frac{b-1}{2} - \alpha}\cdot Const.$  by \eref{rec519}. 
The integral in line \eref{vst78} is at most
\begin{align}
\tau^{(b/2) -1} \int_r^t (t-\sigma)^{-1/2} d\sigma = 2\tau^{(b/2) -1} (t-r)^{1/2}.
\end{align}
Since $(t-r)^{1/2} \le (t-r)^\alpha \cdot constant$  
  on $[\tau, T]$ 
the assertion \eref{vst66} follows.
\end{proof}

\begin{lemma}\label{lemholdw} $($H\"older continuity of $w(\cdot)$$)$ Suppose that $w$ is
 a solution to the integral equation \eref{vst31} lying in $\Q_T^{(b)}$ for some $b \in (0, 1)$. Let
$0<\alpha <1/2$   and let $0< \tau < T < \infty$. Then  there is a constant $c_6$, depending only
 on $\alpha, \tau, T, A, w(0)$     
   and $| w|_T$  such that
\beq
\| w(t) - w(r)\|_{H_1^\As} \le c_6 (t-r)^{\alpha}\ \ \ \text{for}\ \ \  \tau \le r \le t \le T     \label{vst79}
\eeq
\end{lemma}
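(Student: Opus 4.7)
The plan is to decompose $w$ into its freely propagated part and the perturbation integral, then invoke Lemma \ref{lemH} for the latter and control the former directly via Lemma \ref{lemST8b}. Since $w$ satisfies the integral equation \eqref{vst31}, we write
\begin{equation}
w(t) = e^{t\Delta_\As} w_0 + \rho(t), \qquad \rho(t) = (Yw)(t),
\end{equation}
so that
\begin{equation}
w(t) - w(r) = \bigl(e^{t\Delta_\As} - e^{r\Delta_\As}\bigr)w_0 + \bigl(\rho(t) - \rho(r)\bigr).
\end{equation}
Lemma \ref{lemH} already delivers the bound $\|\rho(t)-\rho(r)\|_{H_1^\As}\le c_5 (t-r)^\alpha \mu_T |w|_T$ for $\tau\le r\le t\le T$, which contributes a constant depending on $\alpha,\tau,T,A$ and $|w|_T$. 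So it remains to handle the free-propagation term.

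For the semigroup difference, write $e^{t\Delta_\As} - e^{r\Delta_\As} = (e^{(t-r)\Delta_\As}-I)\, e^{r\Delta_\As}$ and use that $\|\cdot\|_{H_1^\As} = \|D\,\cdot\|_2$ with $D = (1-\Delta_\As)^{1/2}$. Apply Lemma \ref{lemST8b} with $L = \Delta_\As$, $\epsilon = t-r$ and $\delta = r$; since $r \ge \tau >0$ this yields
\begin{equation}
\bigl\|\bigl(e^{t\Delta_\As} - e^{r\Delta_\As}\bigr) w_0\bigr\|_{H_1^\As}
   \le (t-r)^\alpha\, r^{-\frac{1}{2}-\alpha}\, e_{T,\alpha}\,\|w_0\|_2
   \le (t-r)^\alpha\, \tau^{-\frac{1}{2}-\alpha}\, e_{T,\alpha}\,\|w_0\|_2.
\end{equation}
Combining the two bounds produces \eqref{vst79} with a constant $c_6$ depending only on the data listed in the statement (note $\|w_0\|_2 \le \|w_0\|_{H_b^\As}$ is controlled by $w(0)$).

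There is no serious obstacle here: the heavy lifting — estimating $\rho$ through the singular coefficients of $K$ — was already completed in Lemma \ref{lemH}, and the free propagation estimate is a direct spectral-theoretic application of Lemma \ref{lemST8b}, which is tailor-made for this purpose. The only point that requires a moment's care is confirming that the exponent $\alpha<1/2$ is exactly the range in which both Lemma \ref{lemH} and Lemma \ref{lemST8b} apply, so no separate argument is needed at the endpoint.
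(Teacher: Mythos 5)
Your proof is correct and follows essentially the same route as the paper: decompose $w(t)-w(r)$ into the free-propagation difference plus $\rho(t)-\rho(r)$, and invoke Lemma \ref{lemH} for the latter. The only variation is in treating the free term — the paper simply observes that $t\mapsto e^{t\Delta_\As}w_0$ is differentiable into $H_1^\As$ on $[\tau,\infty)$ and hence locally H\"older, whereas you obtain the same conclusion more explicitly by applying the spectral estimate of Lemma \ref{lemST8b} with $\epsilon = t-r$, $\delta = r \ge \tau$; that is a perfectly sound and slightly more quantitative finish (one small note: the restriction $\alpha<1/2$ is imposed only by Lemma \ref{lemH}; Lemma \ref{lemST8b} itself allows any $\alpha\in(0,1)$).
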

         \begin{proof} For any function $w_0 \in H_b^\As$
         the function $t\mapsto e^{t\Delta_\As} w_0$ is differentiable on
          the interval $[\tau,\infty)$ into $H_1^\As$
         and therefore locally H\"older of order $\alpha$. 
         Since the second term on the right in \eref{vst31} 
         has been shown in Lemma \ref{lemH}  to be H\"older continuous of order $\alpha$ 
         on the interval $[\tau,T]$ the lemma follows.      
\end{proof}

\begin{lemma}\label{lemholdKw} $($H\"older continuity of $K(\cdot) w(\cdot)$$)$
 Let $w(\cdot)$ be a solution
of the integral equation \eref{vst31} lying in $\Q_T^{(b)}$ for some $b \in (0, 1)$. 
Let $\tau >0$ and let $0 < \alpha < 1/2$.   
$K(s)w(s)$ is H\"older continuous on $[\tau, T]$ of order $\alpha$ as a function into $L^2$ 
\end{lemma}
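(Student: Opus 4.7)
\medskip
\noindent
\textbf{Proof plan for Lemma \ref{lemholdKw}.} The plan is to exploit the algebraic identity
\begin{equation*}
K(t)w(t) - K(r)w(r) = K(t)\bigl(w(t) - w(r)\bigr) + \bigl(K(t) - K(r)\bigr) w(r),
\end{equation*}
and estimate each of the two summands separately using the work already done. The first summand will be controlled via the bound on $K(t)$ given in \eqref{eie5} of Lemma \ref{lemests1} combined with the H\"older continuity of $w(\cdot)$ in $H_1^\As$ established in Lemma \ref{lemholdw}. The second summand will be controlled via the Lipschitz-type bound \eqref{eie7} on $K(t) - K(r)$ together with the path-space bound \eqref{vst352b}, which says $\|w(r)\|_{H_1^\As} \le r^{(b-1)/2}|w|_T$.

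More concretely, for $\tau \le r \le t \le T$, estimate \eqref{eie5} gives
\begin{equation*}
\|K(t)\bigl(w(t) - w(r)\bigr)\|_2 \le t^{-1/2} \mu_T\, \|w(t) - w(r)\|_{H_1^\As} \le \tau^{-1/2} \mu_T\, c_6\, (t-r)^\alpha,
\end{equation*}
by Lemma \ref{lemholdw}. For the second summand, \eqref{eie7} together with \eqref{vst352b} yields
\begin{equation*}
\|\bigl(K(t) - K(r)\bigr) w(r)\|_2 \le (t-r)^{3/4}\, m_\tau\, \|w(r)\|_{H_1^\As} \le (t-r)^{3/4}\, m_\tau\, \tau^{(b-1)/2}\, |w|_T.
\end{equation*}
Since $0 < \alpha < 1/2 < 3/4$ and $t - r \le T$, one has $(t-r)^{3/4} \le T^{3/4 - \alpha} (t-r)^\alpha$, so this contribution is also bounded by a constant (depending on $\alpha,\tau,T,A,w$) times $(t-r)^\alpha$. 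Adding the two bounds gives the required H\"older estimate on $[\tau, T]$ into $L^2(M)$.

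There is essentially no obstacle here: Lemmas \ref{lemests1} and \ref{lemholdw} do all the heavy lifting, and the proof is a matter of bookkeeping. The only minor point to verify is that the H\"older exponent $\alpha$ we claim (any $\alpha < 1/2$) is indeed compatible with the exponent $\alpha$ already available in Lemma \ref{lemholdw}; since Lemma \ref{lemholdw} is also stated for any $\alpha \in (0, 1/2)$, this is immediate and we may use the same $\alpha$ throughout.
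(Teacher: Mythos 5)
Your proof is correct and is essentially the same as the paper's, resting on the same two ingredients: the pointwise bound \eqref{eie5} combined with Lemma \ref{lemholdw}, and the difference bound \eqref{eie7} combined with \eqref{vst352b}. The only cosmetic difference is the order of the telescoping decomposition (you write $K(t)(w(t)-w(r)) + (K(t)-K(r))w(r)$, the paper uses $(K(t)-K(r))w(t) + K(r)(w(t)-w(r))$), which is immaterial.
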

     \begin{proof} If $\tau \le r < t \le T$ then, in view of \eref{eie7}  
     \eref{eie5}  and \eref{vst79}
      we have
      \begin{align*}
  \|K(t)w(t) &- K(r)w(r)\|_{2} \\
    & \le \| (K(t) - K(r))w(t)\|_{2} + \|K(r)(w(t) - w(r))\|_{2} \\
    &\le (t-r)^{3/4} \|w(t)\|_{H_1^\As}\ m_\tau  +  \mu_T \tau^{-1/2} 
     \|w(t) -w(r)\|_{H_1^\As}\\
    & \le(t-r)^{3/4} \tau^{-1/2} |w|_T\ m_\tau   +\mu_T \tau^{-1/2}(t-r)^\alpha  c_6.               
        \end{align*}
        wherein we have used $t^{(b-1)/2} \le \tau^{-1/2}$ for $t \ge \tau$ in the last line.   
  \end{proof}

\bigskip
\noindent  
  \begin{proof} [Proof of Theorems \ref{thmstrong} and \ref{thmwe}] 
  We can apply Theorem 11.44 in \cite{RR} over the interval $[\tau,T]$ for
   any $\tau >0$   because we now know, in view of Lemma \ref{lemholdKw},
    that the forcing function $K(s)w(s)$ is H\"older continuous on this interval
     as a function into $L^{2}(M;\L^1\otimes \kf)$. 
     The strong time derivative $w'(s)$ of the function $[\tau,T] \ni s \mapsto w(s) \in L^2(M)$ therefore exists,
     $w(s)$ is in the domain of $\Delta_\As$, and both $w'(s)$ and $\Delta_\As w(s)$ are H\"older continuous
     of order $\alpha$ on $[\tau, T]$ into $L^2(M;\L^1\otimes \kf)$. 
  Moreover the equation \eref{vst20}    holds for each   $t \in [\tau,T]$. 
       This proves Theorem \ref{thmstrong}.
       
       For the uniqueness of strong solutions asserted in Theorem \ref{thmwe} observe that the hypotheses of  Theorem \ref{thmwe} imply 
       that $w(\cdot)$ lies in $\Q_T^{(b)}$ for any $T>0$. We can now apply
       an argument similar to that used in the proof of \cite[Theorem 3.30]{G70} to conclude
       that $w(\cdot)$ satisfies the integral equation \eref{vst31}.
       Uniqueness now follows from Theorem \ref{thmmild1b}.
       
       To extend $w(\cdot)$ to a solution over all of $[0, \infty)$ observe that
       \begin{align}
      \rho_A(t_0, t_0 +t) := (1/2) \int_{t_0}^{t_0 + t} (s-t_0)^{-1/2} \| B(s)\|_2^2 ds \le \rho_A(t)
       \end{align}
        for all $t_0 \ge 0$ and all $t \ge 0$ because $\|B(s)\|_2^2$ is nonincreasing. Therefore if one starts 
        the existence theorem at some time $t_0 \ge 0$ then the short time $T$ needed to make
        $c_5 \mu_T \le 1/2$ in the proof of contractivity of $Y$ in Section \ref{seceum} can be chosen
        independently of $t_0$ because $\mu_T$ depends monotonically only on $T$ and $\rho_A(t_0, t_0+T)$.
        Having proven existence up to  time $t_0 \ge T$ one can therefore continue the solution 
        up to time $t_0 +(T/2)$ by applying the short time existence theorem to $w(t_0 -(T/2))$. 
        This completes the proof of Theorem \ref{thmwe}, Parts a) and b).
  \end{proof}

\section{Finite $b$-action for the augmented equation}   \label{secboa}

\bigskip
For $0 \le b <1$ either of the following two conditions gives a measure of the 
 singular behavior of $w(t)$ near $t =0$. 
\begin{align}
\|w(t)\|_{H_1^\As}^2 &= o(t^{b-1})\ \ \ \ \text{as} \ \ t\downarrow 0.  \label{ce390}\\
\int_0^T t^{-b} \|w(t)\|_{H_1^\As}^2 dt &< \infty \ \ \text{for some}\ \ T \in (0, \infty).             \label{ce391}
\end{align}
Both conditions  in \eref{ce390} and \eref{ce391} are gauge invariant. 
Neither one implies the other. 
 The existence theorem, Theorem \ref{thmmild1b}, shows that the solution to the augmented 
 variational equation  \eref{av1} satisfies \eref{ce390} if $b \in (0,1)$ 
 and $w_0 \in H_b^\As$.
In this section we will prove that  if $b \in [1/2, 1)$  then the solution also satisfies \eref{ce391}.

\begin{theorem}\label{thmact1} 
 Assume that $A(\cdot)$ is a strong solution of the Yang-Mills heat equation over $[0,\infty)$ of finite action.
Suppose that $1/2 \le b <1$, $0< T <\infty$ and that $w_0 \in H_b^\As$, where $\As = A(T)$ as in
    Section \ref{secEUaug}.
If $w(\cdot)$ 
is  the mild solution of the augmented variational equation \eref{av1} with initial value $w_0$ 
and lying in $\Q_T^{(b)}$ then  for sufficiently small $T$ there holds
\beq
\ \ \ \qquad \qquad \int_0^T s^{-b} \| w(s)\|_{H_1^\As}^2 ds  \le \gamma_T \|w_0\|_{H_b^\As}^2< \infty
                                        \qquad \qquad\qquad       \label{ce401}
\eeq
for some constant $\gamma_T$ depending only on $T$ and $\rho_A(T)$.    
\end{theorem}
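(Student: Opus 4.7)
\medskip
\noindent \textbf{Proof proposal for Theorem \ref{thmact1}.}
The plan is to convert the integral equation \eqref{vst31} into a singular integral inequality for the scalar quantity $h(t):=t^{-b/2}\|w(t)\|_{H_1^\As}$ and to exploit the scaling structure of the kernel. Applying the spectral bound \eqref{hk0} with $\alpha=1$ and the pointwise estimate $\|K(s)w(s)\|_2\le \mu_T s^{-1/2}\|w(s)\|_{H_1^\As}$ from \eqref{eie5}, one obtains
\begin{align}
\|w(t)\|_{H_1^\As}\le \|e^{t\Delta_\As}w_0\|_{H_1^\As}
+ c\mu_T\int_0^t(t-s)^{-1/2}s^{-1/2}\|w(s)\|_{H_1^\As}\,ds.  \notag
\end{align}
Setting $E(t):=t^{-b/2}\|e^{t\Delta_\As}w_0\|_{H_1^\As}$, dividing through by $t^{b/2}$, and substituting $s=t\sigma$ in the integral (the $t$-powers cancel: $-b/2-1/2+(b-1)/2+1=0$), the inequality becomes
\begin{align}
h(t)\le E(t)+c\mu_T(Th)(t),\qquad
(Th)(t):=\int_0^1(1-\sigma)^{-1/2}\sigma^{(b-1)/2}h(t\sigma)\,d\sigma. \notag
\end{align}

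\medskip
\noindent
By Minkowski's inequality and the change of variable $u=t\sigma$,
\begin{align}
\|Th\|_{L^2([0,T])}\le\|h\|_{L^2([0,T])}\int_0^1(1-\sigma)^{-1/2}\sigma^{b/2-1}d\sigma=J_b\|h\|_{L^2([0,T])}, \notag
\end{align}
with $J_b=B(b/2,1/2)<\infty$ since $b>0$. The bound on $E$ is exactly Lemma \ref{freeprop}: $\|E\|_{L^2([0,T])}^2\le e^{2T}\gamma_b^2\|w_0\|_{H_b^\As}^2$. Thus, provided $h\in L^2([0,T])$, the inequality $h\le E+c\mu_T Th$ together with $c\mu_T J_b<1$ absorb-and-solves to give $\|h\|_{L^2}\le \|E\|_{L^2}/(1-c\mu_T J_b)$, which is \eqref{ce401}. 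Such smallness is arranged for small $T$ since $\mu_T\to 0$ as $T\downarrow 0$ by \eqref{eie6}.

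\medskip
\noindent
The main obstacle is the a priori finiteness of $\|h\|_{L^2([0,T])}$: the membership $w\in\Q_T^{(b)}$ only yields $\|w(t)\|_{H_1^\As}\le t^{(b-1)/2}|w|_T$, which gives $h(t)^2\le t^{-1}|w|_T^2$ and a logarithmic divergence. To close the bootstrap I will appeal to the Picard iteration already used in the proof of Theorem \ref{thmmild1b}: setting $w^{(0)}(t)=e^{t\Delta_\As}w_0$ and $w^{(n+1)}=e^{t\Delta_\As}w_0+Yw^{(n)}$, the associated iterates $h^{(n)}:=t^{-b/2}\|w^{(n)}\|_{H_1^\As}$ satisfy $h^{(0)}=E\in L^2$ and, inductively, $h^{(n+1)}\le E+c\mu_T Th^{(n)}$ with each $h^{(n)}\in L^2([0,T])$. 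The $L^2$ bound of $T$ gives a geometric estimate
\begin{align}
\|h^{(n)}\|_{L^2([0,T])}\le\sum_{k=0}^n(c\mu_T J_b)^k\|E\|_{L^2([0,T])}\le\frac{\|E\|_{L^2}}{1-c\mu_T J_b}. \notag
\end{align}
Since $w^{(n)}\to w$ in $\Q_T^{(b)}$ forces $\|w^{(n)}(t)\|_{H_1^\As}\to\|w(t)\|_{H_1^\As}$ for each $t\in(0,T]$, hence $h^{(n)}(t)\to h(t)$ pointwise, Fatou's lemma transfers the $L^2$-bound to $h$, yielding \eqref{ce401} with $\gamma_T=e^{2T}\gamma_b^2/(1-c\mu_T J_b)^2$, a function only of $T$ and $\rho_A(T)$ through the dependence of $\mu_T$ on these quantities.
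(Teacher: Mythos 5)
Your proof is correct and reaches the same conclusion as the paper, but it takes a meaningfully different route in both of its two stages. For the key quantitative step — showing that $\int_0^T t^{-b}\|(Yw)(t)\|_{H_1^\As}^2\,dt$ is a small multiple of $\int_0^T s^{-b}\|w(s)\|_{H_1^\As}^2\,ds$ — the paper invokes the abstract action bound of Lemma \ref{thmactint} (imported from \cite{G70}), applied with $\alpha=\mu=1/2$, $\delta=0$; you instead reduce to a scalar singular integral inequality for $h(t)=t^{-b/2}\|w(t)\|_{H_1^\As}$ and prove the needed $L^2$ bound directly via Minkowski's integral inequality and the substitution $s=t\sigma$, which produces the Beta function $B(b/2,1/2)$. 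Your computation is essentially a self-contained proof of exactly the $\alpha=\mu=1/2$ case of Lemma \ref{thmactint}, so the analytic content is the same, but your route is more elementary and bypasses the citation. Note, too, that your Beta-function bound is finite for all $b\in(0,1)$, whereas the paper's choice $\mu=1/2$ forces $b\ge 1/2$ in Lemma \ref{thmactint}; so your argument in fact covers a slightly larger range of $b$ than the theorem as stated.

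The bootstrap step — upgrading the a priori information on $w$ (which, as you correctly observe, only gives the logarithmically divergent $h(t)^2\le t^{-1}|w|_T^2$) to the finite-action conclusion — is also handled differently. The paper re-runs the contraction principle in the strengthened Banach space $\hat\Q_T^{(b)}$ with the action term added to the norm, then identifies the unique fixed point in $\hat\Q_T^{(b)}$ with the one in $\Q_T^{(b)}$. You instead iterate the Picard scheme starting from $w^{(0)}=e^{t\Delta_\As}w_0$ (for which the action bound is Lemma \ref{freeprop}), obtain a uniform geometric bound $\|h^{(n)}\|_{L^2}\le\|E\|_{L^2}/(1-c\mu_T J_b)$ by induction, and pass to the limit by Fatou using pointwise convergence $h^{(n)}(t)\to h(t)$ on $(0,T]$, which does follow from $w^{(n)}\to w$ in $\Q_T^{(b)}$. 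These are logically equivalent devices; yours avoids setting up the modified function space and verifying its completeness, while the paper's packages the result so that both existence in the stronger space and the bound come out of a single application of the fixed point theorem. Both give $\gamma_T$ depending only on $T$ and $\rho_A(T)$ through $\mu_T$. The proof is sound.
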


We are going to use the following abstract action bound lemma from \cite{G70}.

\begin{lemma} \label{thmactint} Let $L$ be a non-negative
  self-adjoint operator on a Hilbert space $H$. Suppose that $\alpha, \mu, b$ are real numbers
  such that
  \begin{align}
 &0 \le \alpha \le 1,                                                      \label{ce300}\\          
& 0\le  \mu \le b < 1.                                                    \label{ce302}               \\ 
 &\delta \equiv 1-\alpha - \mu \ge0,                              \label{ce301}
    \end{align}
  Then there is a constant $C_{\alpha,\mu}$, depending only on $\alpha$ and $\mu$,
   such that for any measurable function $g:(0,T)\rightarrow H$ there holds
  \begin{align}
 \int_0^T t^{-b} \Big\|\int_0^t s^{-\mu} L^\alpha e^{(t-s)L} g(s) ds\Big\|^2 dt 
 \le T^{2\delta}\int_0^T s^{-b} \| g(s)\|^2 ds \cdot  C_{\alpha,\mu} \label{ce303}
 \end{align} 
 when the right side is finite.
 Moreover $C_{\alpha, 0} \le 1$.
  \end{lemma}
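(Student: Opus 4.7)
The plan is to reduce the claim via the spectral theorem to a scalar Volterra-type inequality and analyze it through the Mellin transform. (I read the semigroup in the statement as $e^{-(t-s)L}$, since for $L\ge 0$ this is the only interpretation under which the inequality can hold.) Using the multiplicative form of the spectral theorem, realize $H\cong L^2(X,d\nu)$ with $L$ acting as multiplication by a non-negative measurable function $\lambda(\cdot)$, so that
\begin{align*}
\Big[\int_0^t s^{-\mu} L^\alpha e^{-(t-s)L} g(s)\,ds\Big](x) = \int_0^t s^{-\mu}\lambda(x)^\alpha e^{-(t-s)\lambda(x)} g(s,x)\,ds.
\end{align*}
Combining this pointwise identity with Fubini and the elementary uniform bound $\sup_{\lambda\ge 0}\lambda^\alpha e^{-(t-s)\lambda}\le c_\alpha(t-s)^{-\alpha}$, where $c_\alpha=(\alpha/e)^\alpha$ for $\alpha\in(0,1]$ and $c_0=1$, reduces matters to the purely scalar estimate
\begin{align*}
\int_0^T t^{-b}\Big(\int_0^t s^{-\mu}(t-s)^{-\alpha} f(s)\,ds\Big)^2 dt \le \tilde C_{\alpha,\mu}\, T^{2\delta}\int_0^T s^{-b} f(s)^2\,ds
\end{align*}
for $f\ge 0$ measurable on $(0,T)$.

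The key structural observation is the scaling identity $(Kh)(t)=t^\delta (K_1 h)(t)$, where $(Kh)(t):=\int_0^t s^{-\mu}(t-s)^{-\alpha} h(s)\,ds$ and $(K_1 h)(t):=\int_0^1 v^{-\mu}(1-v)^{-\alpha} h(tv)\,dv$, obtained by the substitution $s=tv$. Since $t^{2\delta}\le T^{2\delta}$ on $(0,T]$ when $\delta\ge 0$, the desired $T^{2\delta}$ factor will emerge as soon as I prove the scale-invariant Mellin-multiplier estimate
\begin{align*}
\int_0^\infty t^{-b}|(K_1 h)(t)|^2\,dt \le M_{\alpha,\mu}^2 \int_0^\infty t^{-b}|h(t)|^2\,dt
\end{align*}
for $h$ supported in $(0,T)$. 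A direct Fubini calculation shows that $K_1$ acts on the Mellin side as multiplication by
\begin{align*}
m(z) = \int_0^1 v^{-\mu-z}(1-v)^{-\alpha}\,dv = B(1-\mu-z,\,1-\alpha),
\end{align*}
well-defined for $\operatorname{Re}(z)<1-\mu$ and $\alpha<1$. The hypothesis $\mu\le b<1$ ensures that the Mellin--Plancherel contour $\operatorname{Re}(z)=\sigma=(1-b)/2$ lies in the strip of holomorphy (since $1-\mu-\sigma=(1+b-2\mu)/2\ge(1-\mu)/2>0$), so Plancherel reduces $L^2$-boundedness of $K_1$ with weight $t^{-b}$ to the pointwise bound $\sup_\tau |m(\sigma+i\tau)|<\infty$.

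The needed pointwise bound follows from Stirling's asymptotic $|\Gamma(a+ib)/\Gamma(a'+ib)|=O(|b|^{a-a'})$ as $|b|\to\infty$, giving
\begin{align*}
|m(\sigma+i\tau)| = \Gamma(1-\alpha)\Big|\frac{\Gamma(1-\mu-\sigma-i\tau)}{\Gamma(2-\mu-\alpha-\sigma-i\tau)}\Big| = O(|\tau|^{\alpha-1}),
\end{align*}
uniformly bounded for $\alpha\le 1$; the supremum over $b\in[\mu,1)$ stays finite because the gamma arguments remain bounded away from the poles, yielding $C_{\alpha,\mu}=c_\alpha^2 M_{\alpha,\mu}^2$ depending only on $\alpha,\mu$. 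The refinement $C_{\alpha,0}\le 1$ when $\mu=0$ follows by tracking the sharp constants, since $|B(\sigma+i\tau,1-\alpha)|\le B(\sigma,1-\alpha)$ gives a clean pointwise bound and the spectral bound $c_\alpha(t-s)^{-\alpha}$ is no longer needed (a direct Cauchy--Schwarz against the semigroup measure suffices). The principal obstacles will be the edge case $\alpha=1$ (which forces $\mu=0$, $\delta=0$), where the Beta representation fails and one must integrate by parts using $Le^{-(t-s)L}=-\partial_s e^{-(t-s)L}$ to reduce to the already-established $\alpha=0$ estimate, and the uniformity of the Mellin-multiplier bound as $b\to 1$, which requires a careful Stirling estimate at the boundary of the holomorphy strip.
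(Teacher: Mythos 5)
Your reading of the semigroup as $e^{-(t-s)L}$ is the right one (it matches the application in Section \ref{secboa} with $L=1-\Delta_\As$), and your main line — spectral reduction to scalar fibers, the bound $\lambda^\alpha e^{-u\lambda}\le c_\alpha u^{-\alpha}$, the substitution $s=tv$ exhibiting the kernel as $t^\delta$ times the dilation-invariant operator $K_1$, and a uniform bound for $K_1$ on $L^2(t^{-b}dt)$ — is sound for $0\le\alpha<1$. (The Mellin machinery is not needed: Minkowski's integral inequality plus $\|h(v\cdot)\|_{L^2(t^{-b}dt)}=v^{(b-1)/2}\|h\|_{L^2(t^{-b}dt)}$ gives $\|K_1\|\le B\bigl(\tfrac{1+b-2\mu}{2},1-\alpha\bigr)\le B\bigl(\tfrac{1-\mu}{2},1-\alpha\bigr)$, a constant depending only on $\alpha,\mu$.) Note the paper itself offers no proof here; it cites \cite[Theorem 3.19]{G70}, whose argument is a weighted Cauchy--Schwarz/spectral estimate treating all $\alpha\in[0,1]$ at once.

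There are, however, two genuine gaps. First, the endpoint $\alpha=1$ (forcing $\mu=0$, $\delta=0$) is part of the statement, and your reduction collapses there: $\int_0^t(t-s)^{-1}f(s)\,ds$ is not bounded on $L^2(t^{-b}dt)$, and the proposed repair by integration by parts fails — besides the sign ($\partial_s e^{-(t-s)L}=+Le^{-(t-s)L}$), integrating by parts throws the derivative onto $g$, which is only measurable with $\int_0^T s^{-b}\|g(s)\|^2ds<\infty$, so the resulting terms $g(t)$, $e^{-tL}g(0)$ and $\int_0^t e^{-(t-s)L}g'(s)\,ds$ are uncontrolled; there is no reduction to the $\alpha=0$ case. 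Second, $C_{\alpha,0}\le1$ does not follow from your argument: after the crude bound $c_\alpha u^{-\alpha}$ your constant is of size $c_\alpha^2B(\tfrac12,1-\alpha)^2$, which exceeds $1$ and blows up as $\alpha\to1$, and the ``Cauchy--Schwarz against the semigroup measure'' you invoke needs a careful splitting (the obvious choice, against $\lambda e^{-(t-s)\lambda}ds$, fails for $\alpha<1/2$ since $\lambda^{2\alpha-1}\to\infty$ as $\lambda\to0$). Both gaps close simultaneously with the fiberwise splitting
\begin{align*}
\Bigl|\int_0^t \lambda^\alpha e^{-(t-s)\lambda}f(s)\,ds\Bigr|^2
\le\Bigl(\int_0^t s^{b}\lambda^{2\alpha}e^{-(t-s)\lambda}ds\Bigr)
\Bigl(\int_0^t s^{-b}e^{-(t-s)\lambda}|f(s)|^2ds\Bigr),
\end{align*}
since $t^{-b}\int_0^t s^b\lambda^{2\alpha}e^{-(t-s)\lambda}ds\le\lambda^{2\alpha}\min(\lambda^{-1},T)$ and $\int_s^T e^{-(t-s)\lambda}dt\le\min(\lambda^{-1},T)$, whose product is $\le T^{2\delta}$ for every $\alpha\in[0,1]$ when $\mu=0$; this yields $C_{\alpha,0}\le1$, covers $\alpha=1$, and is essentially the shape of the cited proof. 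As it stands, your write-up proves the lemma only for $\alpha<1$ and without the constant-one refinement.
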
  
       \begin{proof} See \cite[Theorem 3.19]{G70} . 
  \end{proof}

\bigskip
\noindent 
\begin{proof}[Proof of Theorem \ref{thmact1}]
Define
\beq
g(s) = s^{1/2}  \Big( K(s)w(s)\Big),  0 < s \le T.
\eeq
Then
\begin{align}
\|g(s)\|_2 \le \|w(s)\|_{H_1^\As}\ \mu_T
\end{align}
by \eref{eie5}.
 Let us write $L = 1-\Delta_\As$ and $D = L^{1/2}$.
With $Yw$ defined as in \eref{vst500} we have 
\begin{align}
\|(Yw)(t)\|_{H_1^\As} &= \Big\|\int_0^t e^{(t-s)\Delta_\As} K(s)w(s) ds\Big\|_{H_1^\As}          \notag \\
&= \Big\|\int_0^t e^{(t-s)\Delta_\As} s^{-1/2} g(s)ds\Big\|_{H_1^\As}                \notag\\
&=\Big\|\int_0^t D e^{(t-s)(1-L)} s^{-1/2} g(s)ds \Big\|_2                   \notag\\
&= \Big\|e^t \int_0^t s^{-1/2}L^{1/2} e^{-(t-s)L} e^{-s}g(s)ds \Big\|_2. 
\end{align}   
We can apply 
Lemma \ref{thmactint} 
 with $H = L^2(M; \L^1\otimes \kf)$, $\alpha = 1/2$ and $\mu = 1/2$. 
 Then  \eref{ce303} holds with $\delta = 0$. Hence
\begin{align}
\int_0^T t^{-b} \|(Yw)(t)\|_{H_1^\As}^2 dt 
&=\int_0^T t^{-b} e^{2t} \Big\|\int_0^t s^{-1/2}L^{1/2} e^{-(t-s)L} e^{-s}g(s)ds \Big\|_2^2 dt  \notag\\
 &\le  e^{2T}\int_0^T s^{-b} \|e^{-s} g(s)\|_2^2 ds\cdot C_{\alpha, \mu}     \notag\\
 &\le e^{2T}\mu_T^2   \int_0^T s^{-b} \|w(s)\|_{H_1^\As}^2 ds\  C_{\alpha, \mu}      \label{ce412}
\end{align}
for $ 1/2  \le b < 1$.  
 Choose $T$ sufficiently
small so that  $e^{2T}\mu_T^2 C_{\alpha,\mu} \le 1/4$ and 
also $c_5 \mu_T \le 1/2$, as in Section \ref{seceum}. Then
\begin{align}
\Big(\int_0^T t^{-b} \| (Yw)(t)\|_{H_1^\As}^2 dt\Big)^{1/2}
\le \frac{1}{2} \Big(\int_0^T s^{-b} \|w(s)\|_{H_1^\As}^2 ds\Big)^{1/2}   \label{ce413}
\end{align}
and also \eref{vst585} holds.

We can now adapt the final step of the proof of Theorem \ref{thmmild1b} for
 our present purpose by simply
changing the norm on the space $\Q_T^{(b)}$ defined in  Notation \ref{notvst7b} thus: 
Replace the norm \eref{vst353b} by 
\beq
\|w\|_{\hat\Q_T^{(b)}} =  \Big(\int_0^T s^{-b} \|w(s)\|_{H_1^\As}^2 ds\Big)^{1/2} + \|w\|_{\Q_T^{(b)}}
\eeq
and strengthen  the condition \eref{vst350b} by requiring also $\|w\|_{\hat\Q_T^{(b)}}<\infty$.
 Then, by \eref{ce413} and \eref{vst585},  $Y$ is a contraction
in the resulting Banach space, $\hat\Q_T^{(b)}$ with contraction constant $1/2$.  
 Moreover, if $w_0 \in H_b^\As$ then
 the freely propagated term $e^{t\Delta_\As} w_0$ in the integral equation \eref{vst31}
  lies in this space by  \eref{ST450a}. Hence the integral equation \eref{vst31} has a unique
  solution in $\hat \Q_T^{(b)}$.  Since $\hat\Q_T^{(b)} \subset \Q_T^{(b)}$ the unique solution
  in  $\hat \Q_T^{(b)}$ is the same as the unique solution in  $ \Q_T^{(b)}$. The inequality
  \eref{ce401} now follows in the same way as \eref{vst360}.
   \end{proof}

\section{Initial behavior of solutions to the augmented variational equation} \label{secibw}

\subsection{Pointwise and integral identities} \label{secids}

 For a solution $w(\cdot)$ to the augmented variational equation \eref{av1}
on some interval there are two quantities  whose behavior near $t =0$ will largely determine
 the short time behavior of the solution to the variational equation \eref{ve} itself. Define 
\begin{align}
\psi(s) &= d_{A(s)}^* w(s)\ \ \ \text{and}                     \label{ib10psi} \\
\zeta(s) &= d_A^*d_A w(s) + [w(s)\lrc B(s)] \label{ib549}
\end{align}
$\psi(s)$ measures the deviation of $w(s)$ from horizontal at $A(s)$. The augmented variational equation
may be written
\begin{align}
-w'(s) = \zeta(s) + d_A\psi(s)    \label{ib10av}.
\end{align}

 \begin{lemma}\label{lempi2} $($Pointwise identities$)$  
  If $w$ is a solution to the augmented variational  equation  \eref{av1} on some interval then
  \begin{align}
  \text{$($Order$)$}\qquad \qquad \qquad  &       \notag\\
 (1)\qquad\ \ \ \ \ \ \  \, d_A^* \zeta(s) &= [ w\lrc A']                                                         \label{pi11a} \\ 
 (1) \qquad \ \ \,   (d/ds) \psi(s) &= - d_A^*d_A \psi + 2 [A'\lrc w].   \label{pi12a} \\                                                                                                       
 (2) \qquad\ \ \ \  \  -w''(s) &= \Big(d_A^* d_A + d_A d_A^*\Big) w'   \label{pi13}\\
+\Big\{ d_A^*[A' \wedge w&] + [ A' \lrc d_A w]  + d_A [A' \lrc w] + [A', d_A^* w] \Big\} 
                                              + [w\lrc B]'  .   \notag
\end{align}
\end{lemma}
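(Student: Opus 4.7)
The plan is to prove the three identities in sequence: \eqref{pi11a} is the main content, \eqref{pi12a} follows from it by differentiating $\psi = d_A^* w$ in time, and \eqref{pi13} is obtained by differentiating the augmented variational equation directly.

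For \eqref{pi11a} I will expand $d_A^*\zeta = (d_A^*)^2 d_A w + d_A^*[w\lrc B]$. The first piece is handled by the algebraic identity $(d_A^*)^2 \alpha = [B\lrc \alpha]$, which is the $L^2$-adjoint of $d_A^2\alpha = [B\wedge\alpha]$ under the duality defining the interior commutator in Notation \ref{wedcomm}. For the second piece, a coordinate computation using $\partial_j^A = \partial_j + \mathrm{ad}\,A_j$, the Leibniz rule, and the antisymmetry of $B$ yields $d_A^*[w\lrc B] = -[B\lrc d_A w] + \sum_{jk}[w_k, \partial_j^A B_{jk}]$. The $[B\lrc d_A w]$ contributions cancel when the two pieces are added, so only the $B$-derivative term survives. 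The Yang-Mills heat equation $A' = -d_A^* B$ gives $A'_j = \sum_i \partial_i^A B_{ij}$, which identifies the surviving term with $[w\lrc A']$ and proves \eqref{pi11a}.

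For \eqref{pi12a} I differentiate the definition $\psi = d_A^* w$ in time, using the formula $d_A^*\omega = d^*\omega + [A\lrc\omega]$ to obtain $(d/ds)d_A^* = [A'\lrc\,\cdot\,]$. This gives $\psi' = d_A^* w' + [A'\lrc w]$; combining with the augmented equation in the form $-w' = \zeta + d_A\psi$, applying $d_A^*$, and substituting \eqref{pi11a} together with the antisymmetry $[w\lrc A'] = -[A'\lrc w]$ produces the stated equation with the factor of $2$. For \eqref{pi13}, the plan is to differentiate the augmented variational equation with respect to $s$ using the variational formulas $(d/ds)d_A = [A'\wedge\,\cdot\,]$ and $(d/ds)d_A^* = [A'\lrc\,\cdot\,]$ together with the product rule. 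Each of $d_A^*d_A$ and $d_A d_A^*$ contributes a principal term ($d_A^*d_A w'$ or $d_A d_A^* w'$) plus two cross terms, and these assemble into exactly the displayed combination $d_A^*[A'\wedge w] + [A'\lrc d_A w] + d_A[A'\lrc w] + [A', d_A^* w]$, while $[w\lrc B]$ simply differentiates to $[w\lrc B]'$.

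The main obstacle is \eqref{pi11a}: the cancellation between the $w$-derivative terms in $[B\lrc d_A w]$ and in $d_A^*[w\lrc B]$ is the crux, and it requires careful tracking of sign conventions and the antisymmetries of the interior and exterior commutator products. Once the cancellation is secured, the remaining piece depends only on derivatives of $B$ and is identified with $[w\lrc A']$ via the Yang-Mills equation, after which \eqref{pi12a} and \eqref{pi13} are mechanical consequences of the variational formulas for $d_A$ and $d_A^*$.
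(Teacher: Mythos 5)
Your proposal matches the paper's proof in structure and in the key identities used. The only stylistic difference is in handling $d_A^*[w\lrc B]$ for \eqref{pi11a}: the paper invokes the invariant Leibniz rule $d_A^*[w\lrc B] = [d_A w\lrc B] - [w\lrc d_A^* B]$ rather than a coordinate computation, but that is exactly the identity your coordinate calculation packages, and the remaining steps — the adjoint Bianchi identity $(d_A^*)^2 = [B\lrc\,\cdot\,]$, the cancellation of the $d_A w$ terms, the substitution $d_A^*B = -A'$, and the derivations of \eqref{pi12a} and \eqref{pi13} by time-differentiation — are identical. (Do recheck the sign in your displayed intermediate formula for $d_A^*[w\lrc B]$: with the duality convention $\<\phi, [u\lrc v]\> = \<[u\wedge\phi], v\>$ and Ad-invariance one gets $[w\lrc A'] = \sum_k[A'_k, w_k]$, so the surviving term should appear as $\sum_{jk}[\partial_j^A B_{jk}, w_k]$ rather than $\sum_{jk}[w_k, \partial_j^A B_{jk}]$; you flagged the sign-tracking issue yourself, and it does not affect the validity of the scheme.)
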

 \begin{proof} 
 Using the identity 
 $d_A^*[\w \lrc B] = [ d_A \w  \lrc B] - [\w \lrc d_A^* B] 
 =[ d_A \w  \lrc B] + [\w \lrc A']$,
we may apply $d_A^*$ to $\zeta$  to find 
$d_A^* \zeta = [B\lrc d_A w] + [ d_A w  \lrc B] + [w \lrc A'] = [w \lrc A']$, which is \eref{pi11a}.

For the proof of \eref{pi12a} differentiate the definition of $\psi$ to find
\begin{align*}
(d/ds) \psi(s) &= (d/ds) (d_{A(s)}^* w(s)) \\
 &= d_A^* w' + [A'\lrc w] \\
&=-d_A^*\(\zeta (s) + d_A \psi\)  + [A'\lrc w]\\
&= -[w\lrc A'] - d_A^* d_A \psi  + [A'\lrc w],
\end{align*}
which proves \eref{pi12a}.

Differentiate \eref{av1} 
 with respect to $s$  to find \eref{pi13}.
  \end{proof}

 \begin{lemma}\label{lemintid2} $($Integral identities$)$ 
 Denote by $L_A$ the gauge covariant Hodge Laplacian  given by
 \beq
-L_A  = d_A^*d_A +d_A d_A^*,                 \label{intid40}
\eeq
$($not to be confused with the Bochner Laplacian given by \eref{ps9}.$)$ 
  If $w$ is a strong solution to the
  augmented variational  equation \eref{av1} on some interval then     
  \begin{align}   
(Order)& \notag\\    
 (0)  \ \ \ & \frac{d}{ds} \|  w(s) \|_2^2 
                    +2\Big\{ \| d_A  w(s) \| _2^2   + \| d_A^*  w(s) \| _2^2\Big\} \notag \\
         &     \ \ \ \ \ \ \ \ \                       = -2(B(s), [w(s)\wedge w(s)] ),   \label{intid35} \\
 (1)  \ \ \       &\frac{d}{ds}\Big\{ \| d_{A(s)} w(s) \|_2^2 + \| d_{A(s)}^* w(s) \|_2^2\Big\}
  + \| w'(s)\|_2^2 + \| L_{A(s)} 
            w(s)\|_2^2                                                     \notag\\
        &\ \ \ \ \ \ \ \ \ = 2\Big\{ ([A'\wedge w], d_A w) + ( [A'\lrc w], d_A^* w)\Big\} 
                                                                                + \|w\lrc B\|_2^2                  \label{intid36} \\ 
(2)  \ \ \   &\frac{d}{ds} \| w'(s) \|_2^2 
+2\Big\{ \|d_{A(s)} w'(s) \|_2^2  + \|d_{A(s)}^* w'(s) \|_2^2  \Big\}                 \notag\\
&\ \ \ \ \ \ \ \ \ = -2\Big\{ ([A'\wedge w],d_A w') + ([A'\lrc w],d_A^* w')     \notag \\
&\ \ \ \ \ \ \ \ \ \ \ \ \ \ \ +\Big([A'\lrc d_A w] +[A', d_A^* w], w'\Big) + ([w\lrc B]', w')\Big\}. \label{intid38}
       \end{align}    
\end{lemma}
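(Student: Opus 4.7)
The three identities are energy identities of orders $0$, $1$, $2$, obtained by pairing the augmented variational equation (or its derivative) against $w$, $H_A w$, $w'$ respectively. Throughout, write $H_A = d_A^*d_A + d_Ad_A^* = -L_A$, so the equation \eqref{av1} reads $-w' = H_A w + [w\lrc B]$, equivalently $L_A w = w' + [w\lrc B]$. The two key ``product rules'' for the time-dependent covariant exterior derivative are
\begin{align*}
\tfrac{d}{ds}\,d_{A(s)}w(s) &= [A'(s)\wedge w(s)] + d_{A(s)} w'(s),\\
\tfrac{d}{ds}\,d_{A(s)}^*w(s) &= [A'(s)\lrc w(s)] + d_{A(s)}^* w'(s),
\end{align*}
which follow from $d_A = d + [A\wedge\cdot]$ and $d_A^* = d^* + [A\lrc\cdot]$. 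The duality identity of Notation~\ref{wedcomm} will be used in the form $(w,[w\lrc B]) = ([w\wedge w], B)$.

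For \eqref{intid35} I would take the $L^2$-inner product of \eqref{av1} with $w(s)$, so that $\tfrac12\tfrac{d}{ds}\|w\|_2^2 = (w',w) = -(H_A w, w) - ([w\lrc B], w)$. Since $(H_A w, w) = \|d_A w\|_2^2 + \|d_A^* w\|_2^2$ and $([w\lrc B], w) = (B, [w\wedge w])$, identity (0) falls out after multiplying by $2$.

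For \eqref{intid36} I would differentiate $F(s) := \|d_A w\|_2^2 + \|d_A^* w\|_2^2 = (H_A w, w)$ using the product rules above; moving $d_A, d_A^*$ across by adjointness gives
\[
F'(s) = 2([A'\wedge w], d_A w) + 2([A'\lrc w], d_A^* w) + 2(H_A w, w').
\]
Substituting $H_A w = -w' - [w\lrc B]$ from \eqref{av1} turns the last term into $-2\|w'\|_2^2 - 2(w', [w\lrc B])$. The identity $\|L_A w\|_2^2 = \|w' + [w\lrc B]\|_2^2 = \|w'\|_2^2 + 2(w',[w\lrc B]) + \|[w\lrc B]\|_2^2$ then lets me add $\|w'\|_2^2 + \|L_A w\|_2^2$ to both sides, producing \eqref{intid36} with the $\|w\lrc B\|_2^2$ term on the right.

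For \eqref{intid38} I would first derive \eqref{pi13} by differentiating \eqref{av1} in $s$ and applying the same product rules to both $d_A^* d_A w$ and $d_A d_A^* w$, which yields $(d/ds)H_A w = H_A w' + \{d_A^*[A'\wedge w] + [A'\lrc d_A w] + d_A[A'\lrc w] + [A',d_A^* w]\}$. Then $\tfrac12\tfrac{d}{ds}\|w'\|_2^2 = (w',w'')$ is expanded using \eqref{pi13}; the term $-(w', H_A w')$ produces $-\|d_A w'\|_2^2 - \|d_A^* w'\|_2^2$, the two $d_A^*$ and $d_A$ terms are moved onto $w'$ by adjointness (yielding $([A'\wedge w], d_A w')$ and $([A'\lrc w], d_A^* w')$), and the two pure multiplication terms $[A'\lrc d_A w]$ and $[A', d_A^* w]$ remain paired with $w'$, giving \eqref{intid38} exactly.

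The only nontrivial obstacle is justifying the integrations by parts: one must know that $w(s)$ lies in the domain of $H_A$ and that $w'(s)$ lies in $\mathrm{dom}(d_A)\cap\mathrm{dom}(d_A^*)$, with the correct boundary conditions matched to the specified Dirichlet/Neumann setup of Notation~\ref{notSob}. For order-$0$ and order-$1$ this follows from the strong-solution conclusions of Theorems~\ref{thmstrong} and \ref{thmwe}. For order-$2$, which uses $w''$, one would either carry out the calculation under an additional smoothness assumption and then mollify/regularize, or apply the usual parabolic bootstrap (e.g.\ difference quotients in time together with the H\"older continuity of $K(s)w(s)$ established in Lemma~\ref{lemholdKw}) to upgrade $w'$ to the regularity needed for \eqref{pi13} to be pointwise meaningful, and only then integrate against $w'$.
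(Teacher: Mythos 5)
Your proposal is correct and follows essentially the same route as the paper: pair the equation against $w$ for order $0$, differentiate $\|d_Aw\|_2^2+\|d_A^*w\|_2^2$ and eliminate $(w',L_Aw)$ via the equation for order $1$, and differentiate \eqref{av1} to obtain \eqref{pi13} and then pair against $w'$ for order $2$. (The paper obtains $2(w',L_Aw)=\|w'\|_2^2+\|L_Aw\|_2^2-\|g\|_2^2$ by a small polarization trick rather than by direct substitution, but this is algebraically identical to your step; your closing caveat about the regularity needed for the order-$2$ identity is reasonable and is in fact glossed over in the paper's own proof.)
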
     
                    \begin{proof} 
  From \eref{av1} we find
  \begin{align*}
  (1/2) (d/ds) \| w(s) \|_2^2 
  & = ( (d/ds) w(s),  w(s))\\
  & = (  -d_A^* d_A w - d_Ad_A^*w - [w\lrc B],  w)\\
   & = - \| d_A w\|_2^2 - \| d_A^* w\|^2  -( B, [w\wedge w]),
   \end{align*}
   which is \eref{intid35}.

   For ease in reading define $g(s) = [w(s)\lrc B(s)]$. Then we may write
                    \eref{av1} as   $w' =L_A w -g$.   
   For the proof of \eref{intid36} observe first that 
 \begin{align}
   (w', L_A w)   &= (w', w' + g) = \| w'\|_2^2 + (w', g),\ \ \ \  \text{while also}  \label{intid20}\\   
   (w', L_A w) &= ( L_A w - g, L_A w) \notag\\
    &= \| L_A w\|_2^2 - (g, L_A w)\notag\\
    &= \| L_A w\|_2^2 - (g, w' +g)\notag \\
    &=  \| L_A w\|_2^2 - \| g\|_2^2 - (g,w').   \label{intid21}
    \end{align}
 Adding \eref{intid20} to \eref{intid21} gives
 \beq
 2(w', L_Aw) =   \| w'\|_2^2 +  \| L_A w\|_2^2   - \| g\|_2^2.     \label{intid22}
 \eeq     
    Hence
 \begin{align}
(1/2)& \frac{d}{ds}\Big\{ \| d_{A(s)} w(s) \|_2^2 + \| d_{A(s)}^* w(s) \|_2^2\Big\}  \notag\\
&= \Big\{\Big(\frac{d}{ds} (d_A w), d_Aw\Big)+ \Big(\frac{d}{ds}(d_A^* w), d_A^* w\Big)\Big\} \notag\\
&=\{([A' \wedge w], d_A w) + ([A'\lrc w], d_A^*w)\} +(w' ,d_A^* d_Aw)  + (w', d_A d_A^* w)\notag\\
&=\{([A' \wedge w], d_A w) + ([A'\lrc w], d_A^*w)\} - (w', L_A w).     \label{intid24}
\end{align}    
Replace the last term in \eref{intid24} by \eref{intid22} to find \eref{intid36}.
    
To prove the second order identity \eref{intid38}  use \eref{pi13}  to see that
\begin{align*}
(1/2)(d/ds) \| w'(s)\|_2^2 &= (w'', w') \\
= -\Big( (d_A^* d_A + &d_Ad_A^*) w', w'\Big) -(g',w')  \\
             - \Big( \Big\{d_A^* [A' \wedge &w] + [A' \lrc d_Aw] 
                         + d_A [A' \lrc w] + [A', d_A^* w] \Big\}, w'\Big).
\end{align*}
Hence
\begin{align*}
(1/2)&(d/ds) \| w'(s)\|_2^2 + \| d_A w'\|_2^2 + \| d_A^* w'\|_2^2 \\
&=-\Big\{ ( [A' \wedge w], d_A w') + ([A'\lrc w], d_A^* w')  \\
 &\ \ \ \ \ \ \         +\Big( [A' \lrc d_Aw] + [A', d_A^* w], w'\Big)\Big\} - (g',w'),
 \end{align*}
 which is \eref{intid38}.
\end{proof}

\bigskip
We will use these identities to derive differential inequalities and then, from these, derive 
 information about initial behavior  of solutions with the help of the following lemma.
 
 \begin{lemma}\label{lem50}
  Suppose that $f, g, h$ are nonnegative continuous  
  functions on $(0, t]$ and that $f$ is differentiable. Suppose also  that 
 \beq
 (d/ds) f(s) + g(s) \le h(s)     ,\ \ \ 0<s \le t                                          \label{vs90}
 \eeq
 Let $-\infty < b <1$ and assume that 
 \begin{align}
 \int_0^t s^{-b} f(s) ds < \infty.                                                          \label{vs90f}
 \end{align}
 Then 
 \beq
 t^{1-b} f(t) +\int_0^t s^{(1-b)}  g(s)ds \le \int_0^t s^{(1-b)}  h(s) ds 
                   +(1-b) \int_0^t s^{-b} f(s)ds     .                                           \label{vs91}
 \eeq
  If equality holds in \eref{vs90} then equality holds in \eref{vs91}.
 \end{lemma}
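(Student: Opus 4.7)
The plan is to convert the first-order linear inequality into a form whose left-hand side is an exact derivative, integrate, and then pass to the limit at the singular endpoint $s=0$ using the integrability hypothesis \eqref{vs90f}.

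\textbf{Step 1 (Integrating factor).} Multiply \eqref{vs90} by $s^{1-b}$. Using the product rule
\[
\frac{d}{ds}\bigl(s^{1-b}f(s)\bigr) = s^{1-b} f'(s) + (1-b) s^{-b} f(s),
\]
the inequality becomes
\[
\frac{d}{ds}\bigl(s^{1-b}f(s)\bigr) - (1-b)\, s^{-b} f(s) + s^{1-b} g(s) \le s^{1-b} h(s),\qquad 0<s\le t.
\]

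\textbf{Step 2 (Integration on $[\epsilon,t]$).} For $0<\epsilon<t$ integrate the previous display to obtain
\[
t^{1-b}f(t) - \epsilon^{1-b} f(\epsilon) + \int_\epsilon^t s^{1-b}g(s)\,ds \le \int_\epsilon^t s^{1-b} h(s)\,ds + (1-b)\int_\epsilon^t s^{-b}f(s)\,ds.
\]
If equality holds in \eqref{vs90} this chain is an equality, so the final inequality \eqref{vs91} will inherit equality as well.

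\textbf{Step 3 (Control of the boundary term).} The only remaining task is to find a sequence $\epsilon_n\downarrow 0$ along which $\epsilon_n^{\,1-b}f(\epsilon_n)\to 0$. I would argue by contradiction: suppose $\liminf_{\epsilon\downarrow 0}\epsilon^{1-b}f(\epsilon)=\delta>0$. Then $f(s)\ge (\delta/2)\,s^{b-1}$ for all sufficiently small $s$, so $s^{-b}f(s)\ge (\delta/2)\,s^{-1}$, giving $\int_0^{t} s^{-b}f(s)\,ds=\infty$ and contradicting \eqref{vs90f}. (The hypothesis $b<1$ is used precisely here to ensure that $s^{b-1}$ is not integrable near $0$.) Hence such a sequence exists. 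This is the one delicate point of the argument; everything else is just calculus.

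\textbf{Step 4 (Conclusion).} Pass to the limit along this subsequence $\epsilon_n\downarrow 0$. Since $g,h\ge 0$ and $s^{-b}f\ge 0$, monotone convergence applies to each of the three integrals in the inequality from Step~2, and the boundary contribution $\epsilon_n^{1-b}f(\epsilon_n)$ vanishes. This yields
\[
t^{1-b}f(t) + \int_0^t s^{1-b}g(s)\,ds \le \int_0^t s^{1-b}h(s)\,ds + (1-b)\int_0^t s^{-b}f(s)\,ds,
\]
which is \eqref{vs91}. The equality statement follows because equality is preserved at every step.
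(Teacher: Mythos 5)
Your proof is correct, and since the paper simply cites \cite[Lemma 4.8]{G70} for this result rather than giving a proof in-text, there is no internal argument to compare against. The approach you take---multiply by the integrating factor $s^{1-b}$, integrate on $[\epsilon,t]$, and kill the boundary term using the assumed integrability of $s^{-b}f(s)$---is the standard and almost surely the intended one. Steps~1, 2 and 4 are clean, and the monotone-convergence passage in Step~4 is legitimate because the three integrands are nonnegative.

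One small imprecision worth flagging is the parenthetical in Step~3: you write that $b<1$ is what makes $s^{b-1}$ non-integrable near $0$, but that is not correct ($\int_0^1 s^{b-1}\,ds$ converges precisely when $b>0$, so for $0<b<1$ the function $s^{b-1}$ \emph{is} integrable). Fortunately your argument never actually uses non-integrability of $s^{b-1}$; what it uses is that $s^{-b}f(s)\ge(\delta/2)s^{-1}$ for small $s$, and $s^{-1}$ is non-integrable near $0$ for every $b$. So the contradiction with \eqref{vs90f} is genuine and the conclusion $\liminf_{\epsilon\downarrow 0}\epsilon^{1-b}f(\epsilon)=0$ stands; you should just delete or rephrase the parenthetical. (The hypothesis $b<1$ is really there so that the weight $s^{1-b}$ vanishes at $s=0$ and the resulting inequality is the useful one for the paper's applications, not because the proof would otherwise break.) The equality assertion also follows as you say, since each displayed step is reversible when \eqref{vs90} is an equality.
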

  \begin{proof} See  \cite[Lemma 4.8]{G70} for  a proof.
  \end{proof}

\begin{remark}\label{GFS}{\rm  (Gaffney-Friedrichs-Sobolev inequality) 
The main technique  in the next few subsections will be based on the  
Gaffney-Friedrichs  
inequality, which                     
  asserts, for our  convex subset of $\R^3$,  that for any integer $p \ge 1$ and  any  $\kf$ valued p-form $\w$ (satisfying appropriate boundary conditions)
there holds
\begin{align}
(1/2) \|\w\|_{H_1^A}^2  \le \Big\{ \|d_A^* \w\|_2^2 + \| d_A \w\|_2^2 
                                   + \lambda(B) \|\w\|_2^2\Big\}            \label{gaf49}
  \end{align}
  for any $\kf$ valued connection form $A \in W_1(M;\L^1\otimes \kf)$ with curvature $B$.
Here    
we have written  
\beq
\lambda(B) =1 + \gamma \|B\|_2^4,           \label{gaf70} 
\eeq
where $\gamma \equiv  (27/4)\kappa^6 c^4$ is a constant depending only on a 
 Sobolev constant  $\kappa$ for $M$ and the commutator bound 
  $c\equiv \sup\{\|ad\ x\|_{\kf\rightarrow \kf}: \|x\|_{\kf} \le 1\}$.
  The $H_1^A$ norm is defined in Notation \ref{notgisn}.
 
 Usually we will use the Sobolev bound that follows from this:
\beq
\|\w\|_6^2  \le \kappa^2\Big\{ \|d_A^* \w\|_2^2 + \| d_A \w\|_2^2 
                                   + \lambda(B) \|\w\|_2^2\Big\}.            \label{gaf50}
\eeq
These inequalities allow us to make good use of the Bianchi identity, which usually  
simplifies one of the terms on the right side of \eref{gaf49} and \eref{gaf50}. 
See \cite[Theorem 2.17, Remark 2.18 and  Equ.(4.31)]{CG1} for the derivation of
 these  inequalities.    
 }
 \end{remark}

\subsection{Initial behavior of $w$,  order 1}     \label{secibw1}

In Section \ref{secEUaug} we proved existence and uniqueness of strong solutions to the augmented variational equation for initial value in $H_b^\As$ with  $0< b <1$.     
We want to derive more detailed information about the short time behavior of 
 derivatives of the solution. All of our bounds on derivatives will be dominated by the following
 gauge invariant functional of the solution.
         \begin{definition}\label{defbac}{\rm 
         Let $0 \le b <1$. The {\it b-action} of a function 
$w:[0,\infty)  
 \rightarrow \{\kf\ \text{valued 1-forms on}\ M\}$ up to time $t$  is
 \begin{align}
  \nn w\nn_t^2=  
  \begin{cases}
   \int_0^t s^{-b}  \|\n^{A(s)} w(s)\|_2^2\,ds \qquad\qquad\qquad\ \ \text{if}\ M = \R^3 \\
   \int_0^t s^{-b} \Big( \|\n^{A(s)} w(s)\|_2^2 + \| w(s)\|_2^2 \Big)ds\ \ \    \text{if}\ M \ \text{is bounded.} \label{w3b}
   \end{cases}
\end{align}
A strong solution $w$ to the augmented variational equation has {\it finite b-action} if 
\begin{align}
\nn w\nn_t < \infty \ \text{for all}\  t >0.     \label{w3d} 
\end{align}
}
\end{definition}
In previous sections we have used the Sobolev norms 
given by $\|\w\|_{H_1^\As}^2 = \| \n^{A(T)} \w\|_2^2 + \| \w\|_2^2$  rather than
 the varying norms used in the integrands in \eref{w3b}. 
 The notion  
 ``finite strong  b-action'' was defined in Definition \ref{defb-act} by the condition 
 \begin{align}
\int_0^\tau s^{-b} \| w(s)\|_{H_1^\As}^2 ds < \infty\ \ \text{for some}\ \tau >0.     \label{w3c}
\end{align}
This differs from the notion of finite b-action given in Definition \ref{defbac} in two ways: 
Most importantly, the additive $L^2$ norm,
which is present in the integrand in \eref{w3c},   
 is absent
from \eref{w3b} when $M = \R^3$. Secondly, there is the distinction between use of 
 $A(s)$
versus fixed $A(T)$. This is not a significant distinction because these norms are equivalent, uniformly
for $0 \le s \le \tau$,
  by virtue of Lemma \ref{lemeqSob} and our standing assumption \eref{vst356}.
    When $M \ne \R^3$ the presence
   of the term $\|w(s)\|_2^2$  is essential for use in Sobolev inequalities for 
    dominating the $L^6$ norm  because 
   none   of our boundary conditions   requires $w$ to be zero on $\p M$. 
   In this case \eref{w3c} is equivalent to \eref{w3d}. 
   When $M = \R^3$, however,
  this added term is not needed for bounding $L^6$ norms and \eref{w3c} is strictly
   stronger than \eref{w3d} when $M =\R^3$. 
 We will use the action norm  \eref{w3b}  extensively to bound $L^6$ norms and no other  $L^p$ norms.
  It will be used  in \cite{G72} as a gauge invariant Riemannian metric on a space of
  solutions to the  Yang-Mills heat equation.

            Since a strong solution to the augmented variational equation is a continuous function on $(0, \infty)$
into $H_1^\As$, it follows that $\nn w\nn_t < \infty$ for all $t>0$ if   $\nn w\nn_t < \infty$ for some $t >0$.
  It was shown in \eref{ce401} that \eref{w3c} holds for any mild solution to the augmented variational
equation  lying in $\Q_T^{(b)}$, at least when $1/2 \le b <1$. In particular $\nn w\nn _t <\infty$ also,
for all $t >0$ if its initial value lies in $H_b^{A(T)}$.

              In this section   we are going to let $b \in [0, 1)$ and take as a hypothesis that
  our  solution $w$ has finite b-action   in the sense  of \eref{w3d}.     
   Whether $M$ is bounded or not we have the easily verified bounds
  \begin{align}
  \int_0^t s^{-b}\Big(\|d_{A(s)} w(s)\|_2^2 + \| d_{A(s)}^* w(s)\|_2^2\Big) ds &\le 4 \nn w\nn_t^2  \label{ib5a}\\
  \int_0^t s^{-b} \|w(s)\|_6^2 ds &\le \kappa_6^2 \nn w\nn_t^2.  \label{ib6a}
  \end{align}

  Our goal in this section is to establish bounds on the initial behavior of $w$
 and its derivatives
entirely in terms of the action $\nn w\nn_t$. The artificial decomposition \eref{vst20} 
and the associated estimates
will not be used in this section or any further  in this paper.

       \begin{theorem}\label{ibord1} $($Initial behavior of $w$, order 1$)$. 
Let  $0 \le b <1$.  Suppose that $A(\cdot)$ is 
a strong solution to the Yang-Mills heat equation over $[0, \infty)$ of finite action. 
Let $w(\cdot)$ be a strong solution to the augmented variational equation \eref{av1},
 not necessarily  lying in $Q_T^{(b)}$, but with finite b-action in the sense  of \eref{w3d}.
Let $\psi(s) = d_{A(s)}^* w(s)$ and  $\zeta(s) = d_A^*d_A w(s) + [w(s)\lrc B(s)]$ as in 
\eref{ib10psi} and \eref{ib549}.
      Then  there are standard dominating  functions $C_j$ such that
\begin{align}
t^{1-b}\Big\{& \| d_{A(t)} w(t)\|_2^2 +\| d_{A(t)}^* w(t)\|_2^2 \Big\}  
         + \int_0^t s^{1-b} \Big\{ \|w'(s) \|_2^2 + \| L_{A(s)} w(s)\|_2^2 \Big\} ds  \notag\\
&\ \ \ \ \ \ \le  C_{87}(t, \rho_A(t))\ \nn w\nn_t^2\ \ \qquad\qquad  \text{and}         \label{ib10b}
\end{align}
\begin{align}
&\int_0^t s^{1-b}\Big\{ \|d_A^* d_A w(s)\|_2^2 + \| d_A w(s)\|_6^2 + \| d_A \psi(s)\|_2^2    \notag\\
&\qquad\qquad  + \| \psi(s)\|_6^2 + \|d\psi(s)\|_2^2+\|\zeta(s)\|_2^2 \Big\} ds  
     \le C_{88}(t, \rho_A(t))\ \nn w\nn_t^2,  \label{ib10ba} 
\end{align}
where $L_{A(s)}$ is the Hodge Laplacian, defined in \eref{intid40}.
Moreover the following interpolation bounds hold.
\begin{align}
\int_0^t s^{(1/2) -b} \|\psi(s)\|_3^2 ds &< \infty \ \qquad\qquad  \text{if}\ \ \  0 \le b <1 
                \ \ \text{and} \label{ib10c}\\
\int_0^t \|\psi(s)\|_3 ds &= O(t^{(2b +1)/4}) \ \ \text{if}\ \ \ 0 \le b <1. \label{ib10d}
\end{align}
\end{theorem}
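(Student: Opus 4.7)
The plan is to derive (\ref{ib10b}) by applying Lemma \ref{lem50} to the first-order integral identity (\ref{intid36}), then bootstrap to the individual terms in (\ref{ib10ba}) using the differential equation together with the pointwise identity (\ref{pi11a}), and finally obtain the $L^3$ estimates (\ref{ib10c})--(\ref{ib10d}) by interpolation and H\"older in time.

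For (\ref{ib10b}), set $f(s) = \|d_{A(s)} w(s)\|_2^2 + \|d_{A(s)}^* w(s)\|_2^2$ and $g(s) = \|w'(s)\|_2^2 + \|L_{A(s)} w(s)\|_2^2$, and let $h(s)$ be the absolute value of the right-hand side of (\ref{intid36}). The finiteness hypothesis $\int_0^t s^{-b} f \,ds < \infty$ is supplied by (\ref{ib5a}). To bound $\int_0^t s^{1-b} h \,ds$, estimate the cross terms by $|([A'\wedge w], d_A w)| \le c \|A'\|_3 \|w\|_6 \|d_A w\|_2$, absorb the weight as $s^{1-b} = (s\|A'\|_3)\cdot s^{-b}$ using (\ref{iby7}) at $a = 1/2$, and apply Cauchy--Schwarz together with the Sobolev/action bounds (\ref{ib5a}), (\ref{ib6a}). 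The curvature term $\|[w \lrc B]\|_2^2 \le c^2 \|w\|_6^2 \|B\|_3^2$ is handled analogously via (\ref{iby6}). Lemma \ref{lem50} then yields (\ref{ib10b}).

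The key device for (\ref{ib10ba}) is that, while the first-order identity controls $\|L_A w\|_2^2$, it does not split $\|d_A^* d_A w\|_2^2$ from $\|d_A d_A^* w\|_2^2 = \|d_A\psi\|_2^2$; this decoupling is the main technical obstacle. It is resolved by writing the equation as $-w' = \zeta + d_A\psi$ and squaring, so that
\begin{equation*}
\|\zeta\|_2^2 + \|d_A\psi\|_2^2 = \|w'\|_2^2 - 2(\zeta, d_A\psi).
\end{equation*}
The cross term equals $(d_A^*\zeta,\psi) = ([w\lrc A'],\psi)$ by the pointwise identity (\ref{pi11a}), bounded by $c\|w\|_6\|A'\|_3\|\psi\|_2$, which integrates against $s^{1-b}$ to a multiple of $\nn w\nn_t^2$ by the same Cauchy--Schwarz argument as above. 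Combining this with (\ref{ib10b}) bounds $\int_0^t s^{1-b}(\|\zeta\|_2^2 + \|d_A\psi\|_2^2)\,ds$. Then $\|d_A^* d_A w\|_2 \le \|\zeta\|_2 + \|[w\lrc B]\|_2$ gives the $d_A^* d_A w$ bound; the Gaffney--Friedrichs--Sobolev inequality of Remark \ref{GFS} applied to $d_A w$, using $d_A d_A w = [B\wedge w]$ so that $\|d_A d_A w\|_2 \le c\|B\|_3\|w\|_6$, gives $\|d_A w\|_6^2$; and the 0-form Sobolev/Poincar\'e bound gives $\|\psi\|_6^2$.

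For (\ref{ib10c}), apply the interpolation $\|\psi\|_3^2 \le \|\psi\|_2\|\psi\|_6$ and Cauchy--Schwarz in $s$:
\begin{equation*}
\int_0^t s^{(1/2)-b}\|\psi\|_3^2\,ds \le \Big(\int_0^t s^{-b}\|\psi\|_2^2\,ds\Big)^{1/2}\Big(\int_0^t s^{1-b}\|\psi\|_6^2\,ds\Big)^{1/2},
\end{equation*}
both factors finite by the estimates already obtained. For the remaining $\|d\psi\|_2$ term in (\ref{ib10ba}), write $d\psi = d_A\psi - [A,\psi]$; since the strong-solution hypothesis forces $A(T)\in H_1 \subset L^6$, the bound (\ref{ibA5a}) gives $s^{1/4}\|A(s)\|_6 \le a_t$, so that $\int_0^t s^{1-b}\|[A,\psi]\|_2^2\,ds$ reduces by a constant factor to the integral just controlled in (\ref{ib10c}). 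Finally, (\ref{ib10d}) follows from a further Cauchy--Schwarz in time:
\begin{equation*}
\int_0^t \|\psi\|_3\,ds \le \Big(\int_0^t s^{b-1/2}\,ds\Big)^{1/2}\Big(\int_0^t s^{1/2-b}\|\psi\|_3^2\,ds\Big)^{1/2} = O(t^{(2b+1)/4}),
\end{equation*}
where the first integral converges because $b < 1$ forces $b - 1/2 > -1$.
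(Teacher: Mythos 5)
Your derivation of \eqref{ib10b} mirrors the paper's: the same choice of $f,g,h$ in Lemma \ref{lem50}, the same weight factorization $s^{1-b}=s\cdot s^{-b}$ against \eqref{iby7}, and the same appeal to \eqref{ib5a}, \eqref{ib6a}. The interpolation arguments for \eqref{ib10c}--\eqref{ib10d} are likewise identical. Where you genuinely diverge from the paper is in the decoupling of the second-order terms in \eqref{ib10ba}. The paper (Lemmas \ref{lemcross} and \ref{crossterms}) expands $\|L_A w\|_2^2 = \|d_A^*d_A w\|_2^2 + \|d_A d_A^* w\|_2^2 + U(s)$, identifies $U(s)=2(d_Aw,[B,d_A^*w])$ via the Bianchi identity $d_A^2 = [B\wedge\cdot]$, and then must estimate $U(s)$ using a further application of the Gaffney--Friedrichs--Sobolev inequality, absorbing a fraction of $\|d_A^*d_A w\|_2^2$ back into the left side. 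You instead square the equation $-w'=\zeta+d_A\psi$, which produces the decomposition $\|\zeta\|_2^2 + \|d_A\psi\|_2^2 = \|w'\|_2^2 - 2(\zeta,d_A\psi)$, and you kill the cross term by moving the adjoint and invoking the pointwise identity \eqref{pi11a}: $(\zeta,d_A\psi)=(d_A^*\zeta,\psi)=([w\lrc A'],\psi)$, which yields the elementary bound $c\|w\|_6\|A'\|_3\|\psi\|_2$ with no GFS step and no absorption. Your route is cleaner at this point, at the price of needing the extra triangle step $\|d_A^*d_A w\|_2 \le \|\zeta\|_2 + \|[w\lrc B]\|_2$ to recover the first term of \eqref{ib10ba}; the paper conversely needs the identity $\zeta = -w'-d_A\psi$ to recover the sixth. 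Both are one-line conversions, so the two approaches deliver the same estimates with comparable effort. One detail worth making explicit if you write this up: the adjointness move $(\zeta,d_A\psi)=(d_A^*\zeta,\psi)$ requires $\zeta\in D(d_A^*)$; this is the same regularity that the paper's Lemma \ref{crossterms} hypothesizes explicitly for the strong solution, and is supplied by \eqref{pi11a} together with the Hölder continuity of $w$ into the domain of $\Delta_\As$ established in Section \ref{secmildstr}, but you should say so rather than pass over it silently.
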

The proof depends on the following lemmas.

\begin{lemma}\label{propdi1}$($Differential inequality, order 1$)$  
Suppose that $A(\cdot)$ 
is a strong solution to the Yang-Mills heat equation over $(0,  \infty)$  
 with finite action
 and that $w$ is a  strong solution to \eref{av1} over $(0, \infty)$.  
Then
\begin{align}
&\frac{d}{ds} \Big\{ \| d_{A(s)} w(s) \|_2^2 + \| d_{A(s)}^* w(s) \|_2^2\Big\} \notag
  + \| w'(s)\|_2^2 + \| L_{A(s)} w(s)\|_2^2 \\
 &\le 2c  \|A'(s)\|_3 \|w(s)\|_6 \Big(\| d_A w(s)\|_2 + \| d_A^* w(s)\|_2\Big) 
 +   c^2 \|w(s)\|_6^2 \| B(s)\|_3^2  .                                                  \label{di1}                                
  \end{align}
  \end{lemma}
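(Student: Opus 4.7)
The plan is to start directly from the first-order integral identity \eqref{intid36} established in Lemma \ref{lemintid2}, namely
\begin{align*}
\frac{d}{ds}\{\|d_A w\|_2^2 + \|d_A^* w\|_2^2\} + \|w'\|_2^2 + \|L_A w\|_2^2
&= 2\{([A'\wedge w], d_A w) + ([A'\lrc w], d_A^* w)\} + \|[w\lrc B]\|_2^2 .
\end{align*}
The differential inequality \eqref{di1} is then just a matter of bounding the right-hand side above using H\"older's inequality and the commutator bound $\|\ad\, x\|_{\kf\to\kf} \le c \|x\|_\kf$ for $x \in \kf$, embodied in the constant $c$ fixed in Notation \ref{wedcomm}.

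Specifically, I would estimate the two cross terms by
\[
|([A'\wedge w], d_A w)| \le \|[A'\wedge w]\|_2\, \|d_A w\|_2 \le c\|A'\|_3\|w\|_6\,\|d_A w\|_2,
\]
and similarly for $([A'\lrc w], d_A^* w)$, using in each case H\"older's inequality with exponents $3, 6, 2$ together with the pointwise bound $|[u,v]|_\kf \le c |u|_\kf |v|_\kf$. Together these give the first term on the right of \eqref{di1}. For the curvature term I would likewise estimate $\|[w\lrc B]\|_2 \le c\|w\|_6\|B\|_3$ using the same H\"older split, yielding the quadratic term $c^2\|w\|_6^2\|B\|_3^2$.

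There is no serious obstacle: the work has all been done in the integral identity \eqref{intid36} (which encodes the cancellation of the terms $(w',d_A^*d_A w)+(w', d_A d_A^* w)$ via the Weitzenb\"ock manipulation in \eqref{intid22}). The only point worth checking is that the commutator bounds apply in the form stated, with the universal constant $c$ inherited from the embedding $\kf \subset \mathrm{End}\,\V$; this is explicit in Notation \ref{wedcomm} and the finite-dimensional equivalence of norms on $\kf$. Summing the three pointwise estimates gives exactly the right-hand side of \eqref{di1}, and the proof is complete.
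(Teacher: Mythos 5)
Your proof is correct and follows the same route as the paper: invoke the integral identity \eqref{intid36} from Lemma \ref{lemintid2}, then bound the three terms on its right-hand side via H\"older with exponents $(3,6,2)$ and the commutator constant $c$, yielding exactly the right-hand side of \eqref{di1}. No gaps.
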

         \begin{proof}
         It suffices to show that the right side of \eref{intid36} is bounded by the right side 
         of \eref{di1}. But
         \begin{align*}
      &2\Big\{ ([A'\wedge w], d_A w) + ( [A'\lrc w], d_A^* w)\Big\}  + \|w\lrc B\|_2^2 \\ 
  &\ \ \  \le  2c \Big\{ \|A'(s)\|_3 \|w(s)\|_6 \| d_A w(s)\|_2 
                          +   \|A'(s)\|_3 \|w(s)\|_6 \| d_A^* w(s)\|_2 \Big\} \\
     &\ \ \ +c^2 \|w(s)\|_6^2 \| B(s)\|_3^2,
      \end{align*}
which is \eref{di1}.      
      \end{proof}

The term $\| L_{A(s)} w(s)\|_2^2$ in line \eref{ib10b} contains second derivatives of $w$. We wish
to use these second derivatives to estimate $L^6$ norms of the first derivatives of $w$.
 However the cross terms
in the expansion of $\|L_{A(s)} w(s)\|_2^2$ will have to be separated out first and controlled 
before we can use the Gaffney-Friedrichs-Sobolev  inequality \eref{gaf50}. 
The next lemma is aimed at this.

\begin{lemma} \label{lemcross} $($Cross terms$)$
Suppose that $A$ is a solution to the Yang-Mills heat equation over $(0, \infty )$ 
 with finite action.
Let $0 \le b <1$. If  $w$ is a function 
$($not necessarily  a solution$)$ with finite b-action then there exists a standard
 dominating  function $C_{93}$  such that 
\begin{align}
\int_0^t s^{1-b}\Big(\|d_A^*&d_A w(s)\|_2^2  + \| d_A d_A^* w(s)\|_2^2\Big) ds  \notag\\
&\le   2\int_0^t s^{1-b}\| L_{A(s)} w(s)\|_2^2 ds  
 + C_{93}(t ,\rho_A(t)) \nn w\nn_t^2 \ .  
                                                       \label{ib2b}
\end{align}
\end{lemma}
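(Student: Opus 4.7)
The plan is to set $X = d_A^* d_A w$ and $Y = d_A d_A^* w$, so that $L_A w = -(X+Y)$ and the pointwise identity
\[
\|X\|_2^2 + \|Y\|_2^2 = \|L_A w\|_2^2 - 2(X,Y)_{L^2}
\]
reduces the lemma to bounding the cross term $2|(X,Y)|$ pointwise in $s$ by $\tfrac12 \|X\|_2^2 + H(s)$, where $H(s)$ is a sum of terms whose integrals against $s^{1-b}$ are dominated by $\nn w\nn_t^2$. Once this is shown, rearrangement and doubling give $\|X\|_2^2 + \|Y\|_2^2 \le 2\|L_A w\|_2^2 + 2 H(s)$, and time integration yields the lemma.

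For the cross-term identity I would integrate by parts and use the identity $d_A^2 \alpha = [B\wedge \alpha]$ to obtain
\[
(X, Y) = (d_A^* d_A w,\, d_A(d_A^* w)) = (d_A w,\, d_A^2(d_A^* w)) = (d_A w,\, [B\wedge d_A^* w]),
\]
and then H\"older with exponents $(6,3,2)$ gives $|(X,Y)| \le c\|d_A w\|_6 \|B\|_3 \|d_A^* w\|_2$. To eliminate $\|d_A w\|_6$ I would apply the Gaffney--Friedrichs--Sobolev inequality \eqref{gaf50} to the $2$-form $d_A w$, noting that $d_A(d_A w) = [B\wedge w]$ and $\|[B\wedge w]\|_2 \le c\|B\|_3\|w\|_6$, to obtain
\[
\|d_A w\|_6^2 \le \kappa^2\bigl(\|X\|_2^2 + c^2\|B\|_3^2\|w\|_6^2 + \lambda(B)\|d_A w\|_2^2\bigr).
\]
Combining with AM--GM at $\epsilon = 1/(2\kappa^2)$ produces $2|(X,Y)| \le \tfrac12\|X\|_2^2 + H(s)$, where
\[
H(s) = C\bigl(\|B\|_3^2 \|w\|_6^2 + \lambda(B)\|d_A w\|_2^2 + \|B\|_3^2 \|d_A^* w\|_2^2\bigr).
\]

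The final step multiplies by $s^{1-b}$ and integrates each piece of $H$ using Lemma \ref{lemiby} with $a = 1/2$. The bound $s\|B(s)\|_3^2 \le C_6(t,\rho_A(t))$ from \eqref{iby6} converts $\int_0^t s^{1-b}\|B\|_3^2(\|w\|_6^2 + \|d_A^* w\|_2^2)\,ds$ into $C_6$ times $\int_0^t s^{-b}(\|w\|_6^2 + \|d_A^* w\|_2^2)\,ds$, which is absorbed into $\nn w\nn_t^2$ via \eqref{ib6a} and \eqref{ib5a}. Similarly, $\|B(s)\|_2^4 \le C_1^2 s^{-1}$ from \eqref{iby1} gives $\lambda(B(s)) \le 1 + \gamma C_1^2 s^{-1}$, so $\int_0^t s^{1-b}\lambda(B)\|d_A w\|_2^2 \,ds$ reduces to the same kind of bound multiplied by $t + \gamma C_1^2$. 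Collecting constants produces the standard dominating function $C_{93}(t,\rho_A(t))$.

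The main obstacle is justifying the cross-term integration by parts across all three boundary-condition regimes of Notation \ref{notSob}. On $\R^3$ this is automatic; in the bounded case the matched pairs of minimal/maximal closures defining $d_A$ and $d_A^*$ are arranged so that $d_A w \in \mathrm{Dom}(d_A^*)$ and $d_A^* w \in \mathrm{Dom}(d_A)$ whenever $X$ and $Y$ lie in $L^2$, which is guaranteed whenever the right-hand side of the claimed inequality is finite. Everything else is a straightforward interpolation of $L^p$ norms against the initial-behavior functions of Lemma \ref{lemiby}.
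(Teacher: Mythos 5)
Your proof is correct and follows essentially the same route as the paper: expand $\|L_A w\|_2^2$ to isolate the cross term, reduce it to $(d_A w, [B, d_A^* w])$ via the Bianchi identity, bound by H\"older and the Gaffney--Friedrichs--Sobolev inequality applied to the $2$-form $d_A w$, absorb half of $\|d_A^* d_A w\|_2^2$, and integrate against $s^{1-b}$ using the initial-behavior bounds of Lemma~\ref{lemiby}. The only cosmetic difference is your H\"older split $(6,3,2)$ applied directly to $\|d_A w\|_6\|B\|_3\|d_A^* w\|_2$, whereas the paper first uses $(6,2,3)$ with $\|d_A w\|_3$ and then interpolates $\|d_A w\|_3^2 \le \|d_A w\|_2\|d_A w\|_6$; your version is marginally more direct but yields an equivalent collection of remainder terms, all controlled by the same estimates.
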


The proof depends on the following lemma.
\begin{lemma} \label{crossterms} $($Cross term inequality$)$
Let $0 \le b <1$ and let  $ s >0$.  
 Suppose that $w(s)$ lies in the domains of 
both $d_{A(s)}^* d_{A(s)}$ and $d_{A(s)}d_{A(s)}^*$. 
Let
\beq
U(s)= 2 ( d_A w(s), [B(s), d_A^* w(s)])        .                         \label{ib4b}
\eeq
Then 
\beq
\|d_A^* d_A w(s)\|_2^2 + \|d_A d_A^* w(s)\|_2^2 
            = \| L_{A(s)} w(s)\|_2^2 - U(s) .
                                                    \label{ib5b}
\eeq
Moreover 
\begin{align}
s^{1-b}& 
    |U(s)| \le (1/2)s^{1-b} \|d_A^*d_A w(s)\|_2^2                                 \label{ib8b.0}\\
&
+c^2 \Big\{s^{3/2} \| B(s)\|_6^2 \Big\} \Big( s^{-b} \| d_A^* w(s) \|_2^2 \Big) 
+(\kappa^2/2) \Big( s^{-b} \| d_A w(s)\|_2^2\Big)                                          \label{ib8b.1}\\
 &+ (1/2)c^2 s^{1-b}\|w\|_6^2 \| B\|_3^2  
        + \Big\{s\lambda(B(s))/2\Big\}\Big(s^{-b} \| d_A w(s) \|_2^2\Big).              \label{ib9b} 
\end{align}
\end{lemma}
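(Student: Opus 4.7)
The argument splits naturally into an algebraic identity and a pointwise estimate.

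\emph{Identity \eref{ib5b}.} I would expand
\[
\|L_{A(s)}w\|_2^2 = \|d_A^*d_Aw\|_2^2 + \|d_Ad_A^*w\|_2^2 + 2(d_A^*d_Aw,\, d_Ad_A^*w),
\]
and compute the cross term by moving adjoints: $(d_A^*d_Aw, d_Ad_A^*w) = (d_Aw, d_A^2(d_A^*w))$. The second Bianchi identity, which for $d_A$ acting on a $\kf$-valued $0$-form $\phi$ reads $d_A^2\phi = [B,\phi]$, identifies the cross term with $(d_Aw, [B, d_A^*w]) = U(s)/2$. Rearranging yields \eref{ib5b}.

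\emph{Bound, the main core.} Starting from $U = 2(d_Aw, [B, d_A^*w])$, H\"older with the triple $(6,3,2)$ gives $|U|\le 2c\|d_Aw\|_6\|B\|_3\|d_A^*w\|_2$. Young's inequality $2ab\le \epsilon a^2+\epsilon^{-1}b^2$ with $\epsilon=1/(2\kappa^2)$ yields $|U|\le (1/(2\kappa^2))\|d_Aw\|_6^2 + 2\kappa^2 c^2\|B\|_3^2\|d_A^*w\|_2^2$. The Gaffney--Friedrichs--Sobolev inequality \eref{gaf50} applied to the $2$-form $d_Aw$, together with the Bianchi rewrite $d_A(d_Aw)=[B\wedge w]$, gives
\[
\|d_Aw\|_6^2 \le \kappa^2\bigl\{\|d_A^*d_Aw\|_2^2 + c^2\|B\|_3^2\|w\|_6^2 + \lambda(B)\|d_Aw\|_2^2\bigr\},
\]
so that $(1/(2\kappa^2))\|d_Aw\|_6^2$ is bounded by the half of the brace. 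Multiplied by $s^{1-b}$, this contributes the $(1/2)s^{1-b}\|d_A^*d_Aw\|_2^2$ term \eref{ib8b.0} and the two summands of \eref{ib9b} (the second one, after writing $s^{1-b}\lambda(B)=\{s\lambda(B)\}s^{-b}$).

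\emph{Recasting the residue into \eref{ib8b.1}.} The remaining piece $2\kappa^2 c^2 s^{1-b}\|B\|_3^2\|d_A^*w\|_2^2$ must be replaced by the two summands of \eref{ib8b.1}. I would reapply the argument to the alternative Cauchy--Schwarz form $|U|\le 2\|d_Aw\|_2\|[B,d_A^*w]\|_2\le 2c\|d_Aw\|_2\|B\|_6\|d_A^*w\|_3$, but this time with an \emph{$s$-dependent} Young constant $\alpha(s)=\kappa^2/(2s)$, chosen precisely so that $s^{1-b}\alpha(s)\|d_Aw\|_2^2 = (\kappa^2/2)\,s^{-b}\|d_Aw\|_2^2$, which is the anomalous summand of \eref{ib8b.1} exactly. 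The complementary piece $(2s/\kappa^2)c^2\|B\|_6^2\|d_A^*w\|_3^2$ is converted into $c^2\{s^{3/2}\|B\|_6^2\}(s^{-b}\|d_A^*w\|_2^2)$ using the interpolation $\|d_A^*w\|_3^2\le \|d_A^*w\|_2\|d_A^*w\|_6$, the interpolation $\|B\|_3^2\le \|B\|_2\|B\|_6$, and the initial-behaviour estimates \eref{iby1} and \eref{iby3}, which allow $s^{3/2}\|B\|_6^2$ to be isolated as the natural uniformly bounded factor. Combining the two chains of estimates (each bounding one half of $|U|$) produces all five terms \eref{ib8b.0}--\eref{ib9b}.

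\emph{Main obstacle.} The delicate point is the simultaneous matching of five distinct $s$-weights on the right. The anomalous $(\kappa^2/2)(s^{-b}\|d_Aw\|_2^2)$ term, whose $s$-weight is one full power of $s$ smaller than $s^{1-b}|U|$ on the left, cannot come from a constant Young weight; it forces the $s$-dependent $\alpha(s)=\kappa^2/(2s)$. Pinning down the specific coefficient $s^{3/2}\|B\|_6^2$ in front of $s^{-b}\|d_A^*w\|_2^2$, rather than the naive $s^{1-b}\|B\|_3^2$, requires a careful interplay of $\|B\|_3^2\le\|B\|_2\|B\|_6$ with the initial-behaviour bounds from Lemma \ref{lemiby}. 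All other steps (H\"older, GFS with Bianchi, bookkeeping) are routine once the two Young splits are set up correctly.
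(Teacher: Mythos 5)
Your proof of the identity \eref{ib5b} matches the paper exactly. The bound, however, contains a genuine gap. Your first chain (H\"older with $d_Aw\in L^6$, $B\in L^3$, $d_A^*w\in L^2$, then Young with $\epsilon=1/(2\kappa^2)$, then GFS on $d_Aw$) produces \eref{ib8b.0} and the two terms of \eref{ib9b} \emph{plus} a leftover $2\kappa^2 c^2 s^{1-b}\|B\|_3^2\|d_A^*w\|_2^2$ that does not appear in the lemma and is not dominated by the two summands of \eref{ib8b.1} for arbitrary $s$ and $w$. You cannot simply ``replace'' one upper bound for $|U|$ by a different one; and if instead you split $|U|=\tfrac12|U|+\tfrac12|U|$ and run each chain on a half, every coefficient you produce is halved, so you get $(1/4)s^{1-b}\|d_A^*d_Aw\|_2^2$ rather than $(1/2)$, etc., and the spurious term is still present. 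Your second chain has its own problem: after the $s$-dependent Young step you are left with $(2/\kappa^2)s^{2-b}c^2\|B\|_6^2\|d_A^*w\|_3^2$, and there is no pointwise-in-$s$ way to convert $\|d_A^*w\|_3^2$ into a multiple of $\|d_A^*w\|_2^2$ --- the ratio $\|d_A^*w\|_3^2/\|d_A^*w\|_2^2$ is unbounded. Invoking \eref{iby1}, \eref{iby3} and $\|B\|_3^2\le\|B\|_2\|B\|_6$ as you propose would turn the claim into a sup-in-time statement, but the lemma is asserted, and is needed in the sequel, as a strictly pointwise-in-$s$ algebraic inequality with no reference to the initial-behaviour bounds.

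The paper avoids all of this with a single chain that assigns $L^3$ to $d_Aw$, not to $B$ or $d_A^*w$: it writes $|U|\le 2c\|B\|_6\|d_A^*w\|_2\|d_Aw\|_3$, distributes the weight as $2c\bigl(s^{3/4}\|B\|_6\bigr)\bigl(s^{-b/2}\|d_A^*w\|_2\bigr)\bigl(s^{(1/4)-(b/2)}\|d_Aw\|_3\bigr)$, applies AM--GM to obtain $c^2 s^{3/2}\|B\|_6^2\cdot s^{-b}\|d_A^*w\|_2^2 + s^{(1/2)-b}\|d_Aw\|_3^2$, then interpolates $\|d_Aw\|_3^2\le\|d_Aw\|_2\|d_Aw\|_6$ with a $\kappa$-weighted Young to split into $(\kappa^2/2)s^{-b}\|d_Aw\|_2^2 + (1/2)\kappa^{-2}s^{1-b}\|d_Aw\|_6^2$, and only then feeds the $\|d_Aw\|_6^2$ factor into GFS with the Bianchi rewrite. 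The single choice of putting $d_Aw$ in $L^3$ is what lets every term land with the exact weights and constants of \eref{ib8b.0}--\eref{ib9b} without any residue, and is the idea your proposal is missing.
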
      
         \begin{proof} Expand $\| L_A w\|_2^2$ to find
\begin{align*}
\| L_A \w \|_2^2 &= (d_A^* d_A \w + d_A d_A^* \w,  d_A^* d_A \w + d_A d_A^* \w) \\
&=  \|d_A^* d_A \w\|_2 +   \|d_A d_A^* \w\|_2^2 + 2(d_A^*d_A\w, d_Ad_A^* \w).
\end{align*}
The last term is $2(d_A \w, d_A^2 d_A^*\w)$, which is $2(d_A \w, [B(s),d_A^*\w])$, by the Bianchi identity. 
This gives   \eref{ib5b} in view of the definition of $U(s)$ in \eref{ib4b}. 
By H\"older's inequality we now find
 \begin{align}
 s^{1-b} |U(s)| 
      &=2s^{1-b}  \Big|( d_A w(s), [ B(s), d_A^* w(s)])\Big|  \notag\\
    & \le  2cs^{1-b} \| B(s)\|_6 \|d_A^* w(s)\|_2 \|d_A w(s)\|_3             \notag \\
    &= 2c \Big(s^{3/4} \| B(s)\|_6\Big) \Big(s^{-b/2} \|d_A^* w(s)\|_2 \Big) 
    \Big(s^{(1/4) - (b/2)} \|d_A w(s)\|_3\Big)                         \notag\\
    &\le     c^2 \Big\{s^{3/2}\| B(s)\|_6^2\Big\} \Big( s^{-b} \|d_A^* w(s)\|_2^2\Big)  
                   +  s^{(1/2) - b}\|d_A w(s)\|_3 ^2.     \label{ib6b}
    \end{align}
  The first of these two terms is the first term in line \eref{ib8b.1}. The second term in line
   \eref{ib6b}can be dominated  by interpolation  between $L^2$ and $L^6$ thus:
    \begin{align}
   s^{(1/2) - b}& \| d_A w(s)\|_3^2 
   \le  \Big(\kappa s^{-b/2} \| d_Aw(s)\|_2\Big) 
                          \Big(s^{(1-b)/2} \kappa^{-1}\| d_A w(s)\|_6\Big)                    \notag \\
    &\le (1/2) \kappa^2 s^{-b} \| d_A w(s)\|_2^2 
                         + (1/2) s^{1-b} \kappa^{-2} \| d_A w(s)\|_6^2.                     \label{ib7b}
    \end{align}
    The first term in line \eref{ib7b} is the second term in line \eref{ib8b.1}. 
We can dominate the second term in line \eref{ib7b} by applying the
 Gaffney-Friedrichs-Sobolev inequality \eref{gaf50}
to the 2-form $\w = d_Aw(s)$.
We find
\begin{align*}
& (1/2)s^{1-b} \kappa^{-2}\| d_A w(s) \|_6^2   \notag\\
&\ \ \ \ \le (1/2)s^{1-b} \Big(\| d_A^* d_A w(s)\|_2^2    +  \| d_A d_A w(s)\|_2^2  
+ \lambda(B(s)) \| d_A w(s) \|_2^2\Big)                            \notag\\ 
  &\ \ \ \ \le    (1/2) s^{1-b} \| d_A^* d_A w(s)\|_2^2   
  +(1/2)s^{1-b}\|\,[B(s)\wedge w(s)]\,\|_2^2   \\
  &\qquad\qquad      +(1/2) \Big\{s\lambda(B(s))\Big\}\Big(s^{-b} \| d_A w(s) \|_2^2\Big).\\
  &\le  (1/2) s^{1-b} \| d_A^* d_A w(s)\|_2^2   
  +(1/2)s^{1-b}c^2 \|w(s)\|_6^2 \| B(s)\|_3^2   \\
  &\qquad\qquad      +(1/2) \Big\{s\lambda(B(s))\Big\}\Big(s^{-b} \| d_A w(s) \|_2^2\Big).
\end{align*}
The three terms on the right are the terms that appear in lines \eref{ib8b.0} and \eref{ib9b}.
This completes the proof of Lemma \ref{crossterms}.
\end{proof}

\bigskip
\noindent
\begin{proof}[Proof of Lemma \ref{lemcross}]
From \eref{ib5b} we see that
\begin{align}
&\int_0^t s^{1-b}\Big(\|d_A^*d_A w(s)\|_2^2  + \| d_A d_A^* w(s)\|_2^2\Big) ds  \notag\\
&\ \ \ \ \ \ \le   \int_0^t s^{1-b}\| L_{A(s)} w(s)\|_2^2 ds   +\int_0^t s^{1-b}|U(s)|ds              \notag\\ 
&\ \ \ \ \ \ \le  \int_0^t s^{1-b}\| L_{A(s)} w(s)\|_2^2 ds 
+ (1/2)\int_0^t s^{1-b} \|d_A^*d_A w(s)\|_2^2ds            \label{ib20b} \\
&+\int_0^t \Big\{c^2 \Big(s^{3/2} \| B(s)\|_6^2 \Big) \Big( s^{-b} \| d_A^* w(s) \|_2^2 \Big) 
           +(\kappa^2/2) \Big( s^{-b} \| d_A w(s)\|_2^2\Big)                                          \notag\\
            &+ (1/2)c^2 s^{1-b}\|w(s)\|_6^2 \| B(s)\|_3^2
               + \Big(s\lambda(B(s))/2\Big)\Big(s^{-b} \| d_A w(s) \|_2^2\Big)   \Big\}ds.   \label{ib21b}
\end{align}
The second term in line \eref{ib20b} cancels with half of one term on the left.  It suffices
to show,  therefore, that the integral of each of the four terms in the last two lines
can be dominated by an expression of the form $C(t,\rho_A(t)) \nn w\nn_t^2$.
 These four integrals  add to at most
\begin{align*}
&   c^2 (\sup_{0< s \le t}s^{3/2} \|B(s)\|_6^2)\int_0^t s^{-b} \| d_A^* w(s) \|_2^2ds +
(\kappa^2/2)\int_0^t s^{-b} \| d_A w(s)\|_2^2ds \\
& \qquad\qquad + (c^2/2)(\sup_{0< s \le t}s\|B(s)\|_3^2) \int_0^t s^{-b} \|w(s)\|_6^2 ds  \\
&\qquad\qquad + (\sup_{0 <s \le t} s\lambda(B(s))/2)\int_0^t s^{-b} \| d_A w(s) \|_2^2 ds.
\end{align*}
All three suprema are bounded by standard dominating functions of $t, \rho_A(t)$ in accordance
with Lemma \ref{lemiby} with $a = 1/2$. All four integral  factors  are dominated by $\nn w \nn_t^2$
by \eref{ib5a} and \eref{ib6a}. 
This concludes the proof of \eref{ib2b}.
\end{proof}

\bigskip
\noindent
\begin{proof}[Proof of Theorem \ref{ibord1}]
For the proof of \eref{ib10b} we need only apply  Lemma \ref{lem50}   with 
 $f, g, h$ chosen to match up with the differential inequality \eref{di1}. 
 Thus we take 
$f(s) =  \| d_{A(s)} w(s)\|_2^2 +\| d_{A(s)}^* w(s)\|_2^2$, 
take $g(s) = \|w'(s) \|_2^2 + \| L_{A(s)} w(s)\|_2^2$
 and take $h(s)$ to be the entire right hand side of \eref{di1}.
 We find from \eref{vs91} that  
\begin{align}
t^{1-b}\Big\{& \| d_{A(t)} w(t)\|_2^2 +\| d_{A(t)}^* w(t)\|_2^2 \Big\}  
         + \int_0^t s^{1-b} \Big\{ \|w'(s) \|_2^2 + \| L_{A(s)} w(s)\|_2^2 \Big\} ds \notag\\
&\le \int_0^t  s^{1-b}
\Big\{ 2c  \|A'(s)\|_3 \|w(s)\|_6 \Big(\| d_A w(s)\|_2 + \| d_A^* w(s)\|_2\Big) \label{ib16.1}\\
 &\qquad \qquad \qquad+   c^2 \|w(s)\|_6^2 \| B(s)\|_3^2 \Big\}ds \label{ib16.2} \\
 &\ +(1-b)\int_0^t s^{-b}\Big\{ \| d_{A(s)} w(s) \|_2^2 
                    + \| d_{A(s)}^* w(s) \|_2^2\Big\} ds.                                  \label{ib16b}
\end{align}
The integrals in lines \eref{ib16.1} and \eref{ib16.2} add to at most
\begin{align}
&(2c)(\sup_{0< s \le t} s \|A'(s)\|_3)\Big(\int_0^t s^{-b} \|w(s)\|_6^2 ds \Big)^{1/2} \cdot      \notag\\
&\qquad\qquad\qquad\qquad\qquad\Big(\int_0^t s^{-b} \Big(\| d_A w(s)\|_2 
                                     + \| d_A^* w(s)\|_2\Big)^2 ds \Big)^{1/2}                               \notag\\
&+ c^2 \sup_{0< s\le t} s \|B(s)\|_3^2 ) \int_0^t s^{-b} \|w(s)\|_6^2 ds.                      \label{ib16c}
\end{align}
The two suprema in \eref{ib16c} are bounded  by standard dominating functions, in accordance
 with Lemma \ref{lemiby}, while the integral factors  are 
 dominated by $\nn w \nn_t^2$ by \eref{ib5a} and \eref{ib6a}. 
 The integral in \eref{ib16b} is also dominated by $\nn w \nn_t^2$.
 This completes  the proof of \eref{ib10b}.

   For the proof of \eref{ib10ba} observe that the inequality \eref{ib2b} combined with 
   \eref{ib10b} shows that the integral of the first  and third  terms in \eref{ib10ba}
    is finite and in fact    dominated by an $A$ dependent multiple of $\nn w\nn_t$.  
  
   The second  term in line
\eref{ib10ba} is integrable by the GFS inequality \eref{gaf50}  because
\begin{align}
\kappa^{-2} \|d_A w(s)\|_6^2 \le \|d_A^* d_A w(s)\|_2^2 + \|(d_A)^2 w(s)\|_2^2 + \lambda(B(s))
\|d_A w(s)\|_2^2,                
\end{align}
which implies
\begin{align}
\kappa^{-2} \int_0^t s^{1-b} \|d_A w(s)\|_6^2 ds 
&\le \int_0^t  \Big\{s^{1-b}\|d_A^* d_A w(s) \|_2^2 + s^{1-b}\|\, [ B(s) \wedge w(s)]\, \|_2^2  \notag\\
&+ s\lambda(B(s))  \Big(s^{-b} \| d_A w(s)\|_2^2\Big) \Big\} ds .      \label{ib18c}
\end{align}
The first term on the right in \eref{ib18c} is integrable since it is equal to the first term on the left in \eref{ib10ba}, whose integrability has already been proven.
The second term  in \eref{ib18c}is at most 
 $c^2\Big(s\|B(s)\|_3^2\Big) \Big(s^{-b} \|w(s)\|_6^2\Big)$,  which, 
 in view of     \eref{iby6} (with $a = 1/2$)  
  is a bounded function times an integrable function, as is the  third term also.

     The fourth term in \eref{ib10ba} is integrable by an application of the ordinary
  Sobolev inequality.    
    Indeed, since  $d_A^* w(s)$ is a 0-form
   Sobolev's inequality  shows that   
 $ \kappa^{-2}\| d_A^* w(s) \|_6^2  \le  \| d_A d_A^* w(s)\|_2^2   + \| d_A^* w(s)\|_2^2$,
from which the integrability of $s^{1-b} \| \psi(s)\|_6^2$   
  follows because, upon multiplication by $s^{1-b}$, 
 the first term on the right is integrable and the second term is bounded, by \eref{ib10b}, 
  and therefore integrable. 

      The fifth term in \eref{ib10ba} differs only slightly from the third term because
       $\|d\psi(s)\|_2 \le \|d_A\psi(s)\|_2 + \| \, [ A(s)\wedge \psi(s)]\,\|_2$.
       But 
    \begin{align}
       \int_0^t s^{1-b}  \| \, &[ A(s)\wedge \psi(s)]\,\|_2^2 ds 
       \le  c^2\int_0^t   \|  A(s)\|_6^2\  s^{1-b} \|\psi(s)\|_3^2 ds  \notag \\
 &\le c^2 \sup_{0 < s \le t} (s^{1/2} \|A(s)\|_6^2)\int_0^t s^{(1/2)- b}\|\psi(s)\|_3^2 ds, \label{ib18e}
       \end{align}   
 while
 \begin{align}      
   \int_0^t s^{(1/2)- b}&\|\psi(s)\|_3^2 ds 
   \le \int_0^t(s^{-b/2} \|\psi(s)\|_2) ( s^{(1-b)/2} \| \psi(s)\|_6)ds   \notag\\
   &\le \Big(\int_0^t s^{-b} \| \psi(s)\|_2^2 ds \Big)^{1/2} 
   \Big(\int_0^t s^{1-b} \| \psi(s)\|_6^2 ds \Big)^{1/2}  .     \label{ib18f}
 \end{align}      
 The first factor in \eref{ib18f} is finite because $w$ has finite b-action. 
 The second factor already appears as the fourth term on the left in  \eref{ib10ba} and is therefore finite.
 The supremum in line \eref{ib18e} is finite by virtue of \eref{ibA5a}. 
 
 Concerning the sixth term in \eref{ib10ba}, the augmented variational equation \eref{av1} shows that 
 $\zeta(s) = - w'(s) - d_A \psi(s)$. But $\int_0^t s^{1-b}\(\|w'(s)\|_2^2 + \|d_A \psi(s)\|_2^2\) ds < \infty$
  by \eref{ib10b} and \eref{ib10ba} (third term). 
  This completes the proof of \eref{ib10ba}.

 The inequality  \eref{ib10c}  follows from \eref{ib18f}. Finally, the Schwarz inequality shows that
 \begin{align*}
 \int_0^t \|\psi(s)\|_3 ds &\le \(\int_0^t s^{b-(1/2)} ds\)^{1/2} \(\int_0^ts^{(1/2)-b} \|\psi(s)\|_3^2 ds\)^{1/2} \\
 &= o(t^{(b +(1/2))/2}),
 \end{align*}
 which is \eref{ib10d}. 
This completes the proof of Theorem \ref{ibord1}.
 \end{proof}

\subsection{Initial behavior of $w$, order 2}     \label{secib2}

        \begin{theorem}\label{ibord2b} $($Initial behavior, order 2$)$. 
 Suppose that $A(\cdot)$ is 
a strong solution to the Yang-Mills heat equation over $[0, \infty)$ with  
 finite action. Let $0 \le b <1$.
Let $w(\cdot)$ be a strong solution to the augmented variational equation \eref{av1} along
$A(\cdot)$ with finite b-action in the sense  of \eref{w3d}.  
Then there are standard dominating functions $C_j$ such that, for  $0 < t < \infty$, 
there holds
     \begin{align}
t^{2-b} \| w'(t)\|_2^2 
+\int_0^t &s^{2-b} \Big\{\|d_{A(s)} w'(s) \|_2^2  + \|d_{A(s)}^* w'(s) \|_2^2  \Big\}ds   \label{ib540b} \\  
              & \le  \nn w\nn_t^2\ C_{83}(t, \rho_A(t))      \ \ \ \ \qquad\qquad \text{and}                \notag\\           
\int_0^t s^{2-b} \| w'(s)\|_6^2 ds &\le  \nn w\nn_t^2\ C_{91}(t, \rho_A(t)).      \label{ib542b}
\end{align}
Define $\psi$ and $\zeta$ as in \eref{ib10psi} and \eref{ib549}.
The following integral bounds on the third order derivatives of $w$ hold.
        \begin{align}
\int_0^T s^{2-b}\( \|d_A^*d_A\psi(s)\|_2^2     
+ \|d_A^*\zeta(s)\|_2^2 + \|d_A\zeta(s)\|_2^2 \)ds < \infty.        \label{ib542c}
\end{align}
Moreover 
\begin{align}
\int_0^t s^{2-b}  \|d_A \psi(s)\|_6^2  ds  &< \infty, \ \ \ 0\le b <1, \ \   \label{ib543} \\
\int_0^t s^{2-b}  \| d_A^* d_A w(s)\|_6^2ds &< \infty, \ \ \ 0 \le b <1,   \label{ib545}       \\
\int_0^t s^{2-b} \|\zeta(s)\|_6^2 ds &< \infty, \ \ \  0 \le b <1.                  \label{ib545z}                         
\end{align}

 The following interpolation consequences hold. 
\begin{align} 
&\int_0^T \|\psi(s)\|_q^2 ds < \infty,  
           \ \ \ \ \ \ \ q^{-1} = (1/2) - (b/3),\ \ 0 \le b < 1.                     \label{ib546}\\
&\int_0^T s^{1/2} \Big(\| d_A \psi(s)\|_r^2  +  \| d_A^* d_A w(s)\|_r^2  
+ \|\zeta(s)\|_r^2 + \|d\psi(s)\|_r^2\Big) ds < \infty, \label{ib547}\\
 &\qquad\qquad\qquad \qquad \qquad\ \ \ \ r^{-1} = (2/3) - (b/3), \ \ (1/2) \le b < 1. \notag \\
 &\int_0^T s^{(3/2) -b}\( \| d_{A} \psi(s)\|_3^2 + \| d_A^* d_A w(s)\|_3^2  \notag \\
  &\qquad\qquad\ \ \  + \| \zeta(s)\|_3^2 + \|d\psi(s)\|_3^2 \)ds < \infty, \ \ 0 \le b <1.    \label{ib548} \\
  & \int_0^T \|  d_{A(s)} \psi(s)\|_\rho ds < \infty\ \ \text{if}\ \ 2 \le \rho < 3,\ \ 1/2 \le b < 1 \label{ib715}
 \end{align}
 
 \end{theorem}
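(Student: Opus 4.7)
The plan is to mirror the strategy used in Theorem \ref{ibord1}, applied one differentiation order higher, using the second-order integral identity \eqref{intid38} from Lemma \ref{lemintid2} as the starting point. First, I would estimate each cross term on the right of \eqref{intid38} by H\"older's inequality, pulling out $\|A'\|_3$, $\|A'\|_6$ or $\|B'\|_2$, each of which is controlled by a standard dominating function via Lemma \ref{lemiby} with $a=1/2$. The resulting differential inequality has the schematic form $f'(s) + g(s) \le h(s)$, where $f(s) = \|w'(s)\|_2^2$ and $g(s) = 2\|d_{A(s)}w'(s)\|_2^2 + 2\|d_{A(s)}^* w'(s)\|_2^2$. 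Apply Lemma \ref{lem50} with the shifted exponent $b'=b-1$ (so $1-b' = 2-b$); the a priori integrability $\int_0^t s^{1-b}\|w'(s)\|_2^2\,ds < \infty$ required as hypothesis is supplied by the order-1 estimate \eqref{ib10b}. Absorbing the various cross terms into $\nn w\nn_t^2$ using \eqref{ib5a}, \eqref{ib6a} and the $L^6$ bound on $w'$ that will follow, one obtains \eqref{ib540b}.

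For the $L^6$ bound \eqref{ib542b}, apply the Gaffney-Friedrichs-Sobolev inequality \eqref{gaf50} to the $\kf$-valued 1-form $w'(s)$ relative to the connection $A(s)$, then multiply by $s^{2-b}$ and integrate. The resulting right-hand side contains $s^{2-b}(\|d_{A(s)} w'\|_2^2 + \|d_{A(s)}^* w'\|_2^2)$, already controlled by \eqref{ib540b}, together with $s^{2-b}\lambda(B(s)) \|w'(s)\|_2^2$, controlled by \eqref{iby1} combined with the pointwise bound $t^{2-b}\|w'(t)\|_2^2 \le \nn w\nn_t^2 C_{83}$ from the first part of \eqref{ib540b}.

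For the third-order bounds \eqref{ib542c}-\eqref{ib545z}, I would use the pointwise identities of Lemma \ref{lempi2}. The identity \eqref{pi12a} rewrites $d_A^* d_A \psi = -\psi' + 2[A'\lrc w]$, and since $\psi'(s) = d_{A(s)}^* w'(s) + [A'(s)\lrc w(s)]$, we control $\|d_A^* d_A \psi\|_2$ using \eqref{ib540b} and Lemma \ref{lemiby}. The bound on $d_A^*\zeta$ is immediate from \eqref{pi11a} since $\|[w\lrc A']\|_2 \le c\|w\|_6\|A'\|_3$, and the weight $s^2\|A'\|_3^2$ is bounded by \eqref{iby7}. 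For $d_A\zeta$, use $\zeta = -w' - d_A\psi$ from \eqref{ib10av} together with $d_A^2\psi = [B\wedge\psi]$ (Bianchi), so $d_A\zeta = -d_A w' - [B\wedge\psi]$, and each piece is controlled via \eqref{ib540b} and the order-1 bound \eqref{ib10ba} on $\|\psi\|_6^2$. The $L^6$ bounds \eqref{ib543}-\eqref{ib545z} then follow from Gaffney-Friedrichs-Sobolev applied to the 1-forms $d_A\psi$, $d_A^*d_A w$, and $\zeta$ respectively; for $d_A^*d_A w$ one additionally uses $L_A w = -d_A^*d_A w - d_A d_A^* w$ together with the augmented equation $L_A w = w' + [w\lrc B]$ to relate it to $w'$, $d_A\psi$, and lower-order quantities already bounded.

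For the interpolation consequences \eqref{ib546}-\eqref{ib715}, interpolate H\"older-wise between the $L^2$ bound on $\psi(s)$ (coming from finite $b$-action since $\psi = d_A^* w$) and the $L^6$ bound just established, choosing the interpolation exponent to match the prescribed Lebesgue index and weight of $s$, then apply the Schwarz inequality in $s$. The bounds \eqref{ib547}-\eqref{ib715} are handled in the same spirit, interpolating between the order-1 $L^2$ bound and the order-2 $L^6$ bound on the relevant derivative quantity. The main obstacle I anticipate is bookkeeping: there are many cross terms in \eqref{intid38} and in the third-order identities, each combining a singular factor from $A$ (time exponent predicted by Lemma \ref{lemiby}) with a norm of $w$ or its derivatives (controlled by \eqref{ib10b}, \eqref{ib10ba}, or \eqref{ib540b}); care must be taken so that the product of weights in $s$ always lies in the range where Lemma \ref{lem50} and the $\nn\cdot\nn_t$ integrals converge, which is why the weight $s^{2-b}$ (rather than a smaller power) is exactly what the scaling allows.
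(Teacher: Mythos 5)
Your proposal follows the same overall architecture as the paper's proof: derive a second-order differential inequality from \eqref{intid38}, apply Lemma \ref{lem50} with the exponent shifted by one, get \eqref{ib542b} from Gaffney-Friedrichs-Sobolev applied to $w'$, use the pointwise identities of Lemma \ref{lempi2} for the third-order $L^2$ bounds, promote to $L^6$ via GFS, and close with the interpolation lemma. The specific algebraic routes you propose for $d_A^*d_A\psi$, $d_A^*\zeta$, and $d_A\zeta$ are all equivalent to the ones the paper uses (your derivation of $d_A^*d_A\psi$ via $\psi'$ and \eqref{pi12a} gives the same identity the paper obtains by applying $d_A^*$ to \eqref{ib10av}; and for \eqref{ib545} the paper prefers the slightly shorter identity $d_A^*d_A w = \zeta - [w\lrc B]$ over passing through the Hodge Laplacian, but both close correctly). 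Your interpolation step is precisely Lemma \ref{lemrecinterp}.

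One point in your sketch needs to be stated with more care to avoid an apparent circularity. You write that the cross terms in \eqref{intid38} are absorbed using ``the $L^6$ bound on $w'$ that will follow.'' The estimate \eqref{ib542b} on $\int s^{2-b}\|w'\|_6^2\,ds$ is a consequence of \eqref{ib540b} and cannot be fed back into its own derivation. The correct mechanism (which the paper carries out in Lemma \ref{diffinb}) is that each $\|w'(s)\|_6$ appearing in a Hölder estimate of the cross terms is handled pointwise in $s$ by the GFS inequality, producing fractions of $\|d_A w'(s)\|_2^2 + \|d_A^* w'(s)\|_2^2$ plus a $\lambda(B(s))\|w'(s)\|_2^2$ term; the gradient portions are then cancelled against the same quantities appearing with coefficient $2$ on the left of \eqref{intid38}, so that the final differential inequality \eqref{ib550b} contains no $L^6$ norm of $w'$ on the right at all. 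The relevant coefficients in \eqref{ib401}--\eqref{ib404} are tuned so that the $\|w'\|_6^2$ contributions add to exactly $(1/2)\kappa^{-2}\|w'\|_6^2$, which GFS turns into $(1/2)(\|d_Aw'\|_2^2+\|d_A^*w'\|_2^2)+\tfrac12\lambda(B)\|w'\|_2^2$, and the first bracket cancels half of the left side. If you keep this ordering straight (pointwise GFS absorption before integrating, not the integral $L^6$ bound afterward) your plan matches the paper's proof.
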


\begin{remark}{\rm We will show in the next section by different methods that
$\int_0^t s^{(3/2) - b}  \| \psi(s)\|_\infty^2 ds < \infty$, which, interestingly, holds even though
 \eref{ib548} just
 barely fails to give this inequality because Sobolev just barely fails to give control
  of $\|\psi(s)\|_\infty$ by $\| d_A\psi(s)\|_3$.
}
\end{remark}

The proof of Theorem \ref{ibord2b} depends on the following lemmas.

             \begin{lemma}\label{diffinb} $($Differential inequality$)$
             Suppose that $w(\cdot)$ is a strong solution to the
augmented variational equation \eref{av1} on the interval $(0,T]$.
Then   
\begin{align}
\frac{d}{ds} \| w'(s) \|_2^2 
+&\Big\{ \|d_{A(s)} w'(s) \|_2^2  + \|d_{A(s)}^* w'(s) \|_2^2  \Big\}   \notag\\
\le  &\ c_1\, \|A'(s)\|_3^2 \Big(\|w(s)\|_6^2 
                               + 2\kappa^2  ( \|d_Aw\|_2^2 + \|d_A^* w\|_2^2 ) \Big) \notag\\
+&\ c_2\, \| B(s)\|_2^4 \|w'(s)\|_2^2 + c_3\, s^{-1} \|w'(s)\|_2^2  + c_4\, \|w'(s)\|_2^2 \notag\\
+ &\ c_5\,  s^{1/2} \|B'(s)\|_2^2 \| w(s)\|_6^2     \label{ib550b}
\end{align}
for some constants $c_j$ that depend only on a Sobolev constant $\kappa$ and
 the commutator bound $c$.
\end{lemma}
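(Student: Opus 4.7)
The plan is to derive the differential inequality directly from the second-order integral identity \eref{intid38} of Lemma \ref{lemintid2}, which we rewrite as
\begin{equation*}
\frac{d}{ds}\|w'(s)\|_2^2 + 2\bigl(\|d_{A(s)}w'\|_2^2+\|d_{A(s)}^*w'\|_2^2\bigr) = -2(T_1+T_2+T_3+T_4+T_5),
\end{equation*}
where $T_1=([A'\wedge w],d_A w')$, $T_2=([A'\lrc w],d_A^*w')$, $T_3=([A'\lrc d_A w],w')$, $T_4=([A',d_A^*w],w')$ and $T_5=([w\lrc B]',w')$. The task reduces to bounding each $T_j$ by a combination of an $\epsilon$-small multiple of $\|d_A w'\|_2^2+\|d_A^*w'\|_2^2+\|w'\|_6^2$ (all absorbable on the left, the last via the Gaffney--Friedrichs--Sobolev inequality \eref{gaf50}) together with the terms appearing on the right-hand side of \eref{ib550b}.

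For $T_1,T_2$, H\"older in $L^3\cdot L^6\cdot L^2$ followed by Young yields a small multiple of $\|d_A w'\|_2^2+\|d_A^*w'\|_2^2$ plus a term $O(\|A'\|_3^2\|w\|_6^2)$, which is the $c_1\|A'\|_3^2\|w\|_6^2$ contribution. For $T_3,T_4$, H\"older in $L^3\cdot L^2\cdot L^6$ and Young give $\epsilon\|w'\|_6^2 + O(\|A'\|_3^2(\|d_A w\|_2^2+\|d_A^*w\|_2^2))$; invoking \eref{gaf50}, the first summand becomes $\epsilon\kappa^2\bigl(\|d_A w'\|_2^2+\|d_A^*w'\|_2^2+\lambda(B)\|w'\|_2^2\bigr)$, whose derivative parts are absorbed on the left while $\lambda(B)=1+\gamma\|B\|_2^4$ splits off precisely the $c_2\|B\|_2^4\|w'\|_2^2$ and $c_4\|w'\|_2^2$ contributions in \eref{ib550b}. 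Taking $\epsilon = 1/(2\kappa^2)$ produces the factor $2\kappa^2$ inside the stated $c_1$ term.

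The only step requiring extra care is $T_5 = ([w'\lrc B],w') + ([w\lrc B'],w')$. For the first summand, $|2([w'\lrc B],w')|\le 2c\|B\|_3\|w'\|_2\|w'\|_6$ and Young produce $\epsilon\|w'\|_6^2 + O(\|B\|_3^2\|w'\|_2^2)$, and by \eref{iby6} with $a=1/2$, $s\|B(s)\|_3^2$ is bounded by a standard dominating function, so this absorbs into $c_3 s^{-1}\|w'\|_2^2$. For the second summand I use $|2([w\lrc B'],w')|\le 2c\|B'\|_2\|w\|_6\|w'\|_3$, interpolate $\|w'\|_3\le \|w'\|_2^{1/2}\|w'\|_6^{1/2}$, and apply the weighted Young inequality
\begin{equation*}
2c\bigl(s^{1/4}\|B'\|_2\|w\|_6\bigr)\bigl(s^{-1/4}\|w'\|_2^{1/2}\|w'\|_6^{1/2}\bigr) \le c^2 s^{1/2}\|B'\|_2^2\|w\|_6^2 + s^{-1/2}\|w'\|_2\|w'\|_6.
\end{equation*}
A further Young step splits $s^{-1/2}\|w'\|_2\|w'\|_6$ into $\epsilon\|w'\|_6^2$ (absorbable through \eref{gaf50}) plus $O(s^{-1})\|w'\|_2^2$, producing precisely the $c_5 s^{1/2}\|B'\|_2^2\|w\|_6^2$ term in the statement together with additional contributions to $c_3 s^{-1}\|w'\|_2^2$.

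Choosing the various $\epsilon$'s small enough that the cumulative absorbable coefficient on $\|d_A w'\|_2^2+\|d_A^*w'\|_2^2$ does not exceed $1$ leaves exactly one copy of this sum on the left, as required. The conceptual obstacle is recognizing the specific $s^{1/4}$ weighting in the final Young inequality: this weight is precisely what converts $([w\lrc B'],w')$ into the stated $s^{1/2}\|B'\|_2^2\|w\|_6^2$ form, which will integrate finitely against $s^{2-b}$ via \eref{iby12} when this differential inequality is fed into Lemma \ref{lem50} in the next subsection. Every remaining step is a routine combination of H\"older, Young, the Gaffney--Friedrichs--Sobolev inequality, and the initial-behavior bounds of Lemma \ref{lemiby}.
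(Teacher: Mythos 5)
Your overall plan coincides with the paper's: start from the integral identity \eref{intid38}, bound each term by H\"older and Young so that the absorbable pieces involve $\|d_Aw'\|_2^2+\|d_A^*w'\|_2^2$ or $\|w'\|_6^2$, trade $\|w'\|_6^2$ for derivatives via \eref{gaf50}, and collect the remainder into the right side of \eref{ib550b}. Your treatments of $T_1,\dots,T_4$ and of $([w\lrc B'],w')$ match the paper essentially verbatim (the $s^{1/4}$ weighting you single out is exactly what the paper does). However, there is one genuine gap.

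For the term $2([w'\lrc B],w')$ you use the H\"older split $\|B\|_3\|w'\|_2\|w'\|_6$ and Young to produce $\epsilon\|w'\|_6^2 + O(\|B\|_3^2\|w'\|_2^2)$, and then claim that because $s\|B(s)\|_3^2$ is bounded by a standard dominating function, this ``absorbs into $c_3\,s^{-1}\|w'\|_2^2$.'' That absorption makes $c_3$ depend on $\rho_A$ (hence on $A$), but the lemma explicitly requires the constants $c_j$ to depend \emph{only} on the Sobolev constant $\kappa$ and the commutator bound $c$. The form of the statement is designed precisely so that no $A$-dependence enters until the next step (Lemma \ref{lem50}), where it appears in a controlled way. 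The paper avoids this by first writing $2|([w'\lrc B],w')| = 2|([w'\wedge w'],B)| \le 2c\|B\|_2\|w'\|_4^2$, interpolating $\|w'\|_4^2\le\|w'\|_2^{1/2}\|w'\|_6^{3/2}$, and applying Young with exponents $(4,4/3)$; this yields $(1/4)(12c\kappa^{3/2})^4\|B\|_2^4\|w'\|_2^2+(1/8)\kappa^{-2}\|w'\|_6^2$, where the coefficient of $\|B\|_2^4\|w'\|_2^2$ depends only on $\kappa,c$. This is where the $c_2\|B(s)\|_2^4\|w'(s)\|_2^2$ term in \eref{ib550b} actually comes from --- it is produced directly by this estimate, not (as in your sketch) solely from the $\lambda(B)$ contribution after invoking \eref{gaf50}. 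The fix for your argument is to replace your H\"older split for this one term with the paper's $L^4$-interpolation and $(4,4/3)$-Young step.
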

     \begin{proof}  
     Our strategy will be to bound the terms on the right side of the integral identity \eref{intid38}.
In our use of H\"older's inequality we will be forced to use 
$\| w'(s)\|_6$    as a frequent factor. By the Gaffney-Friedrichs-Sobolev inequality
this has the same degree of singularity (as $s \downarrow 0$) as some of the terms
on the left side of \eref{intid38}. We will arrange the estimates in such a way as to allow
cancellation of some of these singular terms.

\bigskip   
  We are going to use H\"older's inequality repeatedly to show that the right side of 
  the identity \eref{intid38}   is at most
  \begin{align}
  &2c^2 \|A'\|_3^2\|w\|_6^2   + (1/2)  \Big( \|d_A w'\|_2^2 +  \|d_A^*w'\|_2^2\Big)  \label{ib401} \\
+ &4c^2\kappa^2\|A'\|_3^2 \Big( \|d_Aw\|_2^2 + \|d_A^* w\|_2^2 \Big) 
                                                            + (1/4) \kappa^{-2} \|w'\|_6^2 \label{ib402}\\
 +& (1/4)(12c\kappa^{3/2})^4 \|B\|_2^4 \|w'\|_2^2   + (1/8) \kappa^{-2} \|w'\|_6^2  \label{ib403}\\
 +&  8(c\kappa)^2 s^{1/2} \| B'\|_2^2 \| w\|_6^2 +
                   2^{-5} \kappa^{-2}s^{-1} \|w'\|_2^2 +  (1/8) \kappa^{-2}\|w'\|_6^2  .     \label{ib404}
\end{align}
Here, as below, we have suppressed the argument $s$ in all functions.
  
        To bound the first two terms on the right side of \eref{intid38} observe that
                  \begin{align}
 2|([A'\wedge w],&d_A w') + ([A'\lrc w],d_A^* w')|                     \notag \\
 &\le 2\Big( \|\, [A' \wedge w]\,\|_2 \|d_A w'\|_2 +\|\, [A'\lrc w]\,\|_2 \| d_A^* w'\|_2\Big) \notag\\
 & \le 2c  \|A'\|_3  \|w\|_6\Big( \|d_A w'\|_2 + \| d_A^* w'\|_2\Big)       \notag\\
 &\le (1/2) \Big(2c \|A'\|_3\|w\|_6\Big)^2 
           + (1/2)  \Big( \|d_A w'\|_2^2 +  \|d_A^*w'\|_2^2\Big).                   \label{ib551b}
 \end{align}  
 This contributes the line \eref{ib401} in our bound of the right side of \eref{intid38}.
 
      To bound  the second pair of terms on the right side on \eref{intid38},
       we use  $2ab \le (2\kappa a)^2 +(b/2\kappa)^2$ twice, with $\kappa$ equal to the
        Sobolev constant in \eref{gaf50}, to find
 \begin{align}
 2| ( [ A'\lrc d_A  w] &+    [ A', d_A^* w], w')| 
 \le 2c\|A'\|_3 \big( \|d_A w\|_2 + \| d_A^* w \|_2 \Big) \|w'\|_6    \notag\\
 &\le  (2c\kappa\|A'\|_3)^2 \Big( \|d_Aw\|_2^2 + \|d_A^* w\|_2^2 \Big) 
 + (1/4) \kappa^{-2} \|w'\|_6^2.                                                             \label{ib552b}
\end{align} 
This contributes the line \eref{ib402} in our bound of the right side of \eref{intid38}.

The last term in \eref{intid38} is $2 ([w'\lrc B] + [w\lrc B'] , w')$.  These two terms must 
 be estimated in different ways.               
To estimate $2([w'\lrc B], w')$  we can use the interpolation 
           $\|f\|_4 \le \|f\|_2^{1/4} \|f\|_6^{3/4}$ to find 
 \begin{align}
2|( &[w'\lrc B], w')| \le 2c \|B\|_2 \|w'\|_4^2                                        \notag\\
&\le \Big(12c\kappa^{3/2} \|B\|_2 \|w'\|_2^{1/2}\Big) 
                       \Big( (1/6)\kappa^{-3/2} \|w'\|_6^{3/2}\Big)                  \notag\\
&\le  (1/4) \Big(12c\kappa^{3/2} \|B\|_2 \|w'\|_2^{1/2}\Big)^4 
               + (3/4) \Big((1/6)\kappa^{-3/2} \|w'\|_6^{3/2}\Big)^{4/3}       \notag\\
&\le (1/4)(12c\kappa^{3/2})^4 \|B\|_2^4 \|w'\|_2^2   + (1/8) \kappa^{-2} \|w'\|_6^2  \label{ib554b}
\end{align}
because $(3/4) (1/6)^{4/3} \le 1/8$. This contributes the line \eref{ib403} in
 our bound of the right side of \eref{intid38}.

Our estimate of the final term  $2( [w\lrc B'], w')$ appearing in \eref{intid38}
 will have  an explicit $s$ dependence.  We have
\begin{align}
2|(& [w\lrc B'], w')| \le 2c  \| B'\|_2\| w\|_6  \|w'\|_3                                     \notag\\
&= \Big(4c\kappa s^{1/4}\|B'\|_2\|w\|_6\Big)     
                \Big((1/2) \kappa^{-1} s^{-1/4}\| w'\|_3\Big)                             \notag\\
&\le (1/2)   \Big(4c\kappa s^{1/4} \|B'\|_2\|w\|_6 \Big)^2 
         +  (1/2) \Big((1/2) \kappa^{-1} s^{-1/4}\| w'\|_3\Big)^2                              \notag\\
& = 8(c\kappa)^2 s^{1/2} \| B'\|_2^2 \| w\|_6^2      
                              + (1/8) \kappa^{-2} s^{-1/2}\|w'\|_3^2                            \notag \\   
  & \le  8(c\kappa)^2 s^{1/2} \| B'\|_2^2 \| w\|_6^2 
  +  (1/8) \kappa^{-2} \{(1/4)s^{-1} \|w'\|_2^2 + \|w'\|_6^2\} .   \label{ib553a}
\end{align}
In the last line we have used $s^{-1/2} \|w'\|_3^2 \le (s^{-1/2} \|w'\|_2) \| w'\|_6
 \le (1/4)s^{-1}\|w'\|_2^2 + \|w'\|_6^2$.
 
      This completes the proof that the right hand side of \eref{intid38} is dominated by the sum
      of the four lines \eref{ib401} -\eref{ib404}.

         Notice that the last term in each of the three lines \eref{ib402}- \eref{ib404} is a multiple
  of $\|w'\|_6^2$ and  they add to $(1/2) \kappa^{-2} \|w'(s)\|_6^2$.
   From the Gaffney-Friedrichs-Sobolev inequality  \eref{gaf50} we find  
$$
(1/2)\kappa^{-2} \|w'\|_6^2 \le (1/2)\Big( \|d_A w'\|_2^2 +  \|d_A^*w'\|_2^2\Big) 
        +(1/2)\lambda(B) \|w'\|_2^2.
$$
Therefore, adding the last term in line \eref{ib401}  to the last terms in the next three lines,
 we find  that the sum of the last terms  in all four lines is at most
\beq
\Big( \|d_A w'\|_2^2 +  \|d_A^*w'\|_2^2\Big) 
        +(1/2)\lambda(B) \|w'\|_2^2.
\eeq
The term  $\Big( \|d_A w'\|_2^2 +  \|d_A^*w'\|_2^2\Big)$ appears on the left side of
\eref{intid38} with a factor of 2. We can therefore cancel this term with half of its multiple
on the left to find that the left side of \eref{ib550b} is bounded by the remaining terms
in the lines \eref{ib401} -\eref{ib404} plus  $(1/2)\lambda(B) \|w'\|_2^2$.
These add to the right side of \eref{ib550b}. 
        This completes the proof of \eref{ib550b}.
 \end{proof} 
 \bigskip

\begin{lemma}\label{lemrecinterp} $($Interpolation bounds$)$ Suppose that $f:[0,t)\rightarrow $
 \{functions on $M$\}.  

\noindent 
Assume that $1/2 \le b \le 3/2$. Let $r^{-1} = (2/3) - (b/3)$. Then $2 \le r \le 6$ and 

\noindent

\begin{align}
\int_0^t s^{b} \|f(s)\|_r^2 ds &\le \Big(\int_0^t s^{1/2} \|f(s)\|_2^2 ds\Big)^{(3/2) -b} 
            \Big( \int_0^t s^{3/2} \| f(s)\|_6^2 ds\Big)^{b - (1/2)}, \label{ib214}   \\                                               
\int_0^t s^{1/2} \|f(s)\|_r^2 ds &\le  \Big(\int_0^t s^{1-b} \|f(s)\|_2^2 ds\Big)^{(3/2) -b} 
            \Big( \int_0^t s^{2-b} \| f(s)\|_6^2 ds\Big)^{b - (1/2)}   ,           \label{ib215} \\
 \int_0^t s^{-1/2} \|f(s)\|_r^2 ds &\le  \Big(\int_0^t s^{-b} \|f(s)\|_2^2 ds\Big)^{(3/2) -b} 
            \Big( \int_0^t s^{1-b} \| f(s)\|_6^2 ds\Big)^{b - (1/2)}   .           \label{ib216}
\end{align} 
Assume that $0\le b \le 1$. Let $q^{-1} = (1/2) - (b/3)$. Then $2\le q  \le 6$ and 
\begin{align}
\int_0^T \|f(s)\|_q^2 ds &\le \Big(\int_0^T s^{-b} \|f(s)\|_2^2 ds \Big)^{1-b}
\Big(\int_0^T  s^{1-b} \|f(s)\|_6^2 ds \Big)^{b} .                      \label{ib217}
\end{align}
Moreover, if $\rho \ge 2$ and $ 1/\rho > (1/2) - (b/3)$ then
\begin{align}
\int_0^T \|f(s)\|_\rho ds 
\le C_{b, \rho}   \Big(\int_0^t s^{1-b} \|f(s)\|_2^2 ds\Big)^{\alpha/2}  
            \Big( \int_0^t s^{2-b} \| f(s)\|_6^2 ds\Big)^{\beta/2},               \label{ib218}
       \end{align}
       with a finite constant $C_{b, \rho}$ and non-negative constants $\alpha, \beta$.

\bigskip       
\noindent
Assume that $ - \infty < b < 2$. Then  
\begin{align}
\int_0^T s^{(3/2)- b} \|f(s)\|_3^2 ds \le \Big(\int_0^T s^{1-b} \|f(s)\|_2^2 ds \Big)^{1/2}
\Big(\int_0^T s^{2-b} \| f(s)\|_6^2ds \Big)^{1/2}. \label{ib219}
\end{align}
\end{lemma}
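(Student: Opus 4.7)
The plan is that every inequality here reduces to the pointwise Riesz–Thorin interpolation $\|f(s)\|_p \le \|f(s)\|_2^{\theta}\|f(s)\|_6^{1-\theta}$, valid when $1/p = \theta/2 + (1-\theta)/6$ with $\theta \in [0,1]$, followed by H\"older's inequality in the time variable $s$, the time weight $s^c$ being distributed between the two resulting factors so that they match the two integrals on the right.

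First I would handle \eqref{ib214}, \eqref{ib215}, \eqref{ib216} uniformly. Setting $1/r = 2/3 - b/3$ with $1/2 \le b \le 3/2$ forces $\theta = 3/2 - b$ and $1-\theta = b-1/2$, both in $[0,1]$. Squaring gives $\|f\|_r^2 \le \|f\|_2^{3-2b}\|f\|_6^{2b-1}$. With weight $s^c$ (where $c=b,\,1/2,\,-1/2$ respectively) split as $s^{c_1'}\cdot s^{c_2'}$, H\"older's inequality with conjugate exponents $p=1/(3/2-b)$ and $p'=1/(b-1/2)$ yields
\begin{equation*}
\int_0^t s^c\|f\|_r^2\,ds \le \Bigl(\int_0^t s^{c_1'p}\|f\|_2^2\,ds\Bigr)^{1/p}\Bigl(\int_0^t s^{c_2'p'}\|f\|_6^2\,ds\Bigr)^{1/p'}.
\end{equation*}
A direct check shows that the exponent pairs $(c_1,c_2)=(1/2,3/2)$, $(1-b,2-b)$, $(-b,1-b)$ are consistent with $c_1'+c_2'=c$, so that the powers $c_j p$ match those prescribed. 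This delivers \eqref{ib214}–\eqref{ib216}. Inequality \eqref{ib217} is analogous: with $1/q=1/2-b/3$ and $0\le b\le 1$ one finds $\theta = 1-b$, and H\"older with exponents $1/(1-b), 1/b$ applied to the trivial split $1 = s^{-b(1-b)}\cdot s^{b(1-b)}$ yields the stated bound. Finally, \eqref{ib219} is the simplest: the standard interpolation $\|f\|_3^2 \le \|f\|_2\|f\|_6$ together with one Cauchy–Schwarz in $s$ with weights $s^{(1-b)/2}$ and $s^{(2-b)/2}$ does it directly.

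The one step that is not pure bookkeeping is \eqref{ib218}, where $\|f(s)\|_\rho$ appears to the first power rather than squared. Here I would use $\|f\|_\rho \le \|f\|_2^\theta\|f\|_6^{1-\theta}$ with $\theta = 3/\rho - 1/2$ and apply a three–factor H\"older with exponents $(2/\theta,\,2/(1-\theta),\,2)$: the first two factors produce $\|f\|_2^2$ and $\|f\|_6^2$ integrated against $s^{1-b}$ and $s^{2-b}$ and raised to the powers $\alpha/2=\theta/2$ and $\beta/2 = (1-\theta)/2$, while the third factor collects the residual pure power of $s$. Integrability of that residual on $(0,T]$ reduces to $(1-b)\theta + (2-b)(1-\theta) < 1$, which simplifies to $\theta > 1-b$, i.e.\ $1/\rho > 1/2 - b/3$ — precisely the hypothesis, and the only place where the strict inequality is used. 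This is the main (and really only) obstacle: everywhere else the ranges of $b$ are calibrated exactly so that the relevant H\"older exponents automatically lie in $[1,\infty]$.
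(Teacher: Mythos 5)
Your proof is correct and is essentially the paper's own argument: the paper also begins from the pointwise interpolation $\|f(s)\|_\rho^2 \le (\|f(s)\|_2^2)^\alpha (\|f(s)\|_6^2)^\beta$ with $\alpha=3/\rho-1/2$, $\beta=3/2-3/\rho$, multiplies by a power of $s$ distributed between the two factors, and then applies H\"older in $s$ with exponents $1/\alpha$, $1/\beta$ (and, for \eqref{ib218}, a third factor to absorb the residual power of $s$, whose integrability at $0$ is exactly the condition $1/\rho>1/2-b/3$). The only cosmetic difference is that the paper parametrizes the weight split by a single free parameter $\gamma$ (proving the master inequality \eqref{ib222} once and then reading off each case by choosing $\gamma$), whereas you choose the splitting ad hoc for each of \eqref{ib214}--\eqref{ib216}; the content is the same.
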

\begin{proof}All of these inequalities  are consequences of the interpolation inequality
\begin{align}
&\|f(s)\|_\rho^2  \le \Big(\|f(s)\|_2^2\Big)^\alpha \Big(\| f(s)\|_6^2\Big)^{\beta},  \label{ib220} \\
           &\ \ \ \ \ \ \ \ \alpha = (3/\rho) - (1/2),\ \ \ \ \  \beta = (3/2) - (3/\rho),            \notag
\end{align}
which is valid for $2 \le \rho \le 6$.
\eref{ib220} implies that, for any real number $\gamma$,
\begin{align}
s^{\gamma + \beta} \|f(s)\|_\rho^2 
\le  \Big(s^{\gamma}\|f(s)\|_2^2\Big)^\alpha 
                     \Big(s^{1+\gamma}\| f(s)\|_6^2\Big)^{\beta}.                               \label{ib221}
\end{align}
Integrate this inequality over $(0,t)$ and use Holder's inequality to find
\begin{align}
\int_0^t s^{\gamma + \beta} \|f(s)\|_\rho^2 ds
\le  \Big(\int_0^t s^{\gamma}\|f(s)\|_2^2ds\Big)^\alpha 
               \Big(\int_0^ts^{1+\gamma}\| f(s)\|_6^2ds\Big)^{\beta}.                            \label{ib222}
\end{align}
All four  
 inequalities in the statement of the lemma now result from proper choice of $\gamma$. Thus:

To prove \eref{ib214}, \eref{ib215}  and \eref{ib216} take $\rho =r$ and $b = 2 -3r^{-1}$ to find
 $\beta = b -(1/2)$ in  all three cases.
Choose $\gamma = 1/2$ to find
  $\gamma + \beta = b$, from which \eref{ib214} follows.
 Choose $\gamma = 1-b$ to find $\gamma + \beta =1/2$, from which \eref{ib215} follows.
 And choose $\gamma = -b$ to find  $\gamma + \beta =-1/2$,  from which \eref{ib216} follows.

        To prove    \eref{ib217}  replace $\rho $ by $q$ in \eref{ib220} - \eref{ib222} and
 take $b =(3/2) - 3q^{-1}$    to find  $\beta = b$.    
Choose  $\gamma = -b$ to find $\gamma + \beta =0$, from which \eref{ib217} follows.

         For the proof of \eref{ib218} observe that by \eref{ib220} we have
\begin{align}
&\int_0^\tau \|f(s)\|_\rho ds \le \int_0^\tau s^{((b-1)\alpha +(b-2)\beta)/2} \(s^{(1-b)/2} \|f(s)\|_2\)^\alpha 
             \(s^{(2-b)/2} \|f(s)\|_6\)^\beta ds                                 \notag\\
  &\le \(\int_0^\tau s^{b-1-\beta} ds \)^{1/2}\(\int_0^\tau s^{1-b} \|f(s)\|_2^2ds\)^{\alpha/2}
         \(\int_0^\tau s^{2-b} \|f(s)\|_6^2ds\)^{\beta/2},        \label{rec715} 
             \end{align}
             wherein we have used   H\"older's inequality with the three powers $2, (2/\alpha), (2/\beta)$.
             But $ b -\beta = b -\{(3/2) - (3/\rho)\}  = 3\{(1/\rho) - [(1/2) - (b/3)]\}$. Hence the first integral in
             \eref{rec715} is finite if and only if $ (1/\rho) - [(1/2) - (b/3)] >0$. This proves \eref{ib218}.

               To prove \eref{ib219} choose $\rho =3$, giving $\alpha = \beta =  1/2$, and 
  choose $\gamma =1-b $ in \eref{ib222}.
\end{proof}

\bigskip
\noindent
             \begin{proof}[Proof of Theorem \ref{ibord2b}] 
Starting with the differential inequality   \eref{ib550b},   we may apply
Lemma \ref{lem50}, choosing  
$f(s) = \|w'(s)\|_2^2$,  $g(s) =  \|d_{A(s)} w'(s) \|_2^2  + \|d_{A(s)}^* w'(s) \|_2^2 $
 and  $h(s)$ equal  to the entire right  hand side of  \eref{ib550b}. 
Replace  the number denoted $b$ in  Lemma \ref{lem50}  by $b-1$, with our
 present meaning of $b$.  
   Then \eref{vs91} shows that 
 
\begin{align}
t^{2-b}& \| w'(t)\|_2^2 
+\int_0^t s^{2-b} \Big\{ \|d_{A(s)} w'(s) \|_2^2  + \|d_{A(s)}^* w'(s) \|_2^2  \Big\}ds     \notag\\
&\le (2-b) \int_0^t s^{1-b} \|w'(s)\|_2^2ds      \label{ib561b}\\
+&\int_0^t  s^{2-b} \Big\{\ c_1\, \|A'(s)\|_3^2 \Big(\|w(s)\|_6^2 
                               + 2\kappa^2  ( \|d_Aw\|_2^2 + \|d_A^* w\|_2^2 ) \Big)       \label{ib562b}\\
&\ \ \ \ + c_2\, \| B(s)\|_2^4 \|w'(s)\|_2^2 + c_3\, s^{-1} \|w'(s)\|_2^2  
                                 + c_4\, \|w'(s)\|_2^2                                                                \label{ib563b}\\
 &\ \ \  \ +\ c_5\,  s^{1/2} \|B'(s)\|_2^2 \| w(s)\|_6^2  \Big\} ds   .                 \label{ib564b}
 \end{align}
The line \eref{ib561b} is finite by \eref{ib10b} and is bounded by an $A$ 
dependent multiple of $\nn w\nn_t^2$.  This justifies use of Lemma \ref{lem50}.  

 We need to show now that the integrals in lines \eref{ib562b} through \eref{ib564b}
  are all finite and dominated by an $A$ dependent multiple of $\nn w\nn_t^2$.  
 The sum of these lines is  bounded by
 \begin{align}
 &c_1 \Big(\sup_{0 < s \le t} s^2 \|A'(s)\|_3^2\Big)\int_0^t  s^{-b}\Big(\|w(s)\|_6^2 
                               + 2\kappa^2  ( \|d_Aw\|_2^2 + \|d_A^* w\|_2^2 ) \Big)  ds \label{ib565.1}\\
  & +c_2 \Big(\sup_{0 < s \le t} s \|B(s)\|_2^4\Big)\int_0^t s^{1-b}  \|w'(s)\|_2^2 ds
  + c_3 \int_0^t s^{1-b} \| w'(s)\|_2^2 ds  \label{ib565.2}\\
  &+c_4 t\int_0^t s^{1-b} \| w'(s)\|_2^2 ds
  + c_5 \sup_{0< s \le t}\Big( s^{5/2} \| B'(s)\|_2^2\Big)
                                           \int_0^t s^{-b} \| w(s)\|_6^ 2 ds. \label{ib565.3}
 \end{align}

The three suprema in these three lines are all dominated by a standard bounding
function of $t, \rho_A(t)$, in accordance with Lemma \ref{lemiby}, with $a =1/2$.

    The integral in line \eref{ib565.1}  is at most $5\kappa^2 \nn w\nn_t^2$
     by \eref{ib5a} and \eref{ib6a}. Therefore 
    line \eref{ib565.1} is dominated by an $A$ dependent multiple of $\nn w\nn_t^2$,
    as required for \eref{ib540b}.

    In line \eref{ib565.2}    
   both integrals are appropriately  dominated in accordance with
     the first order estimate \eref{ib10b}. 
   The first term in line \eref{ib565.3} is also dominated in accordance with \eref{ib10b}.
   The second integral in that line is at most      $\kappa^2 \nn w\nn_t^2$. 
    This completes the proof of \eref{ib540b}.

          For the proof of \eref{ib542b} we need only apply the Gaffney-Friedrichs-Sobolev inequality
   \eref{gaf50}, which shows that 
   $\kappa^{-2} \| w'(s)\|_6^2 \le \| d_{A(s)} w'(s)\|_2^2 +\| d_{A(s)}^* w'(s)\|_2^2  
   + (1 + \gamma \|B(s)\|_2^4)  \|w'(s)\|_2^2$. Upon multiplication by $s^{2-b}$ the
    inequality \eref{ib540b} shows that the first two terms are integrable over $(0, t)$.
     The last term is $s(1 + \gamma \|B(s)\|_2^4)$  times  $s^{1-b} \|w'(s)\|_2^2$, which is the product of a bounded factor, in accordance with Lemma \ref{lemiby}
  and an integrable factor, in accordance with  \eref{ib10b}. This proves \eref{ib542b}.

       The  remaining inequalities in the theorem will be derived from  \eref{ib540b} and \eref{ib542b}
        with the help of the GFS inequality and interpolation. We need the following identities. As in \eref{ib549}
       we write $\zeta(s) = d_A^* d_A w(s) + [w(s)\lrc B(s)]$.  And as in \eref{ib10av} we have  
     $ w'(s) = -\zeta(s) - d_A \psi(s)$. 
             Using the identity $d_A^* \zeta(s) = [w(s) \lrc A'(s)]$ from   \eref{pi11a}, and the Bianchi identity,
              we may apply  $d_A^*$ and $d_A$ to this equation to find
              \begin{align}
              -d_A^*d_A \psi &=  d_A^* w'   + [w\lrc A']\ \ \ \ \text{and}      \label{ib591}\\
              -d_A\zeta &= d_A w'  + [B,\psi].  \label{ib592}
              \end{align}               
 We assert that   
 \begin{align}
 \int_0^T s^{2-b}\( \|\, [w(s)\lrc A'(s)]\, \|_2^2    +\|\, [B(s), \psi(s)]\, \|_2^2 \)ds < \infty.    \label{ib593}
 \end{align}  
Indeed
\begin{align} 
\int_0^T &s^{2-b}\(  \|\, [w(s)\lrc A'(s)]\,\|_2^2  + \|\, [B(s), \psi(s)]\,\|_2^2\) ds  \notag\\
&\le c^2\int_0^T s^{2-b}\(  \|A'(s)\|_3^2 \|w(s)\|_6^2 +  \|B(s)\|_3^2 \|\psi(s)\|_6^2\)ds  \notag\\
&\le c^2 \Big(\sup_{0 < s\le T}s^2\|A'(s)\|_3^2 \Big)\int_0^T s^{-b} \| w(s)\|_6^2 ds   \notag\\
&+ c^2\Big(\sup_{0 < s \le T} s \|B(s)\|_3^2\Big) \int_0^T s^{1-b} \| \psi(s)\|_6^2 ds. \notag
\end{align}
The two suprema are finite by \eref{iby7} and \eref{iby6} (with a = 1/2), respectively.
The two integrals are finite by   the finite action assumption (\eref{w3b}  is finite) and    \eref{ib10ba}, respectively. This proves \eref{ib593}.   
But the left hand side of \eref{ib542c} is equal to
\begin{align}
\int_0^T  s^{2-b}\( \|d_A^* w'   + [w\lrc A']\, \|_2^2 + \|\, [w\lrc A']\,\|_2^2 + \| d_A w'  + [B,\psi]\, \|_2^2 \)ds.
\end{align}
It follows from \eref{ib540b} and \eref{ib593} that this integral is finite. This proves \eref{ib542c}.

Concerning the $L^6$ bounds \eref{ib543} - \eref{ib545z} observe first that
   \begin{align}
   &\int_0^t s^{2-b} \lambda(B(s)) \(\| d_{A(s)} \psi(s)\|_2^2 + \|\zeta(s)\|_2^2\) ds  \notag\\
  &\le \Big(\sup_{0 < s \le t} s\lambda(B(s)) \Big) 
                    \int_0^t s^{1-b} \(\| d_{A(s)} \psi(s)\|_2^2 + \|\zeta(s)\|_2^2\)  ds\notag\\
     &< \infty                     
                      \label{ib570}
   \end{align}
   by  \eref{iby1} and \eref{ib10ba}.

The $L^6$ bound \eref{ib545z} for $\zeta$ now follows from the GFS inequality \eref{gaf50} together
 with \eref{ib542c} and \eref{ib570}. The $L^6$ bound \eref{ib543} follows from 
 the GFS inequality \eref{gaf50}, \eref{ib542c} and the additional equality 
    \begin{align}
   \int_0^t s^{2-b} \| d_A d_A \psi(s)\|_2^2 ds &=\int_0^t s^{2-b} \|\, [B(s), \psi(s)]\,\|_2^2 ds,  
     \label{ib568}
   \end{align}
   which is finite by \eref{ib593}.
   
   The inequality \eref{ib545} can be deduced more easily from  what has already been proven
        than from another  application of the Gaffney-Friedrichs-Sobolev inequality.  
        We have $d_A^* d_Aw = \zeta - [w\lrc B]$. Since $\zeta$ satisfies the inequality \eref{ib545z} we need
        only show that $[w\lrc B]$ does also. But
          \begin{align}
        \int_0^t s^{2-b}\| w(s) \lrc B(s) \|_6^2 ds  &\le
        c^2\int_0^t s^{2-b} \| w(s)\|_6^2 \| B(s)\|_\infty^2 ds \\
        &\le   c^2 \Big(\sup_{0 < s \le t} s^2\|B(s)\|_\infty^2 \Big)
        \int_0^t s^{-b} \| w(s)\|_6^2 ds \\
        &<\infty
            \end{align}
        because the supremum is finite by \eref{iby16} and $w$ has finite b-action.
            This completes the proof of \eref{ib545}.

        For the proof of the interpolation inequalities \eref{ib546} choose 
        $f(s,x) = | \psi(s,x)|$ in \eref{ib217} to find
        \beq
        \int_0^T \|\psi(s)\|_q^2 ds \le \Big(\int_0^T s^{-b} \|\psi(s)\|_2^2 ds\Big)^{1-b}
        \Big(\int_0^T s^{1-b} \| \psi(s)\|_6^2 ds \Big)^b.
        \eeq
         The first factor is finite because $w$ has
        finite $b$ action. The second factor is finite by \eref{ib10ba}. This proves \eref{ib546}.
        
        To prove \eref{ib547}  Put $f(s, x) = |d_A\psi(s, x)|$ in \eref{ib215} to find 
        \begin{align*}
        \int_0^T s^{1/2} &\|d_A \psi(s)\|_r^2 ds \\
        & \le \Big(\int_0^T s^{1-b} \|d_A \psi(s)\|_2^2 ds\Big)^{(3/2)-b}
       \Big(\int_0^T s^{2-b} \|d_A \psi(s)\|_6^2 ds\Big)^{b - (1/2)} .
       \end{align*}
       The first factor is finite by \eref{ib10ba}. The second factor is finite by \eref{ib543}.
            
            The same argument applies to the second and third terms in \eref{ib547}
          because each of them satisfy the same $L^2$ initial behavior bounds \eref{ib10b} 
           as $d_A\psi$ and the same $L^6$ initial behavior bounds \eref{ib543}-\eref{ib545z}. 
          
           Concerning the fourth term in \eref{ib547}, observe that with 
        $q^{-1} = r^{-1} - (1/6) = (1/2) - (b/3)$, we have
        \begin{align}
        \int_0^T s^{1/2} \|\, [ A(s),\psi(s)]\, \|_r^2 ds&\le c^2 \int_0^T s^{1/2} \| A(s)\|_6^2 \|\psi(s)\|_q^2 ds \notag\\
        &\le c^2 \(\sup_{0 < s \le T}s^{1/2} \| A(s)\|_6^2\)\int_0^T \|\psi(s)\|_q^2 ds, \notag
        \end{align}
        which is finite by \eref{ibA5a} and \eref{ib546}. Therefore the fourth term in \eref{ib547} 
        differs from the first term  by a finite amount.  This completes the proof of \eref{ib547}.
                
               To prove \eref{ib548} choose $f(s,x) = |d_{A(s)} \psi(s,x)|$ in \eref{ib219}. Then \eref{ib219}
        together with \eref{ib10ba} and \eref{ib543} show that 
        $\int_0^T s^{(3/2) - b} \| d_A\psi(s)\|_3^2 ds < \infty$. 
        The same argument applies to the second and third terms in \eref{ib548}.

               Concerning the fourth term in \eref{ib548},  
 since $\| d\psi(s)\|_3 \le \| d_A \psi(s)\|_3 + \|\, [A(s), \psi(s)]\,\|_3$, it suffices 
 to observe that 
  \begin{align*}
  \int_0^T s^{(3/2) - b} \|\, &[A(s), \psi(s)]\,\|_3^2 ds \le c^2 \int_0^T   s^{(3/2) - b} \|A(s)\|_6^2 \|\psi(s)\|_6^2ds \\
  &\le c^2 \(\sup_{0< s \le T} s^{1/2} \|A(s)\|_6^2\) \int_0^T s^{1-b} \|\psi(s)\|_6^2 ds \\
  &<\infty
  \end{align*}     
  by \eref{ibA5a} and \eref{ib10ba}. This completes the proof of \eref{ib548}.  
  
  In regard to \eref{ib715},  choose  $f(s) = d_{A(s)} \psi(s)$ in \eref{ib218} to find that for $0 < b \le 1$
  and $\rho \ge 2$  we have 
\begin{align}
\int_0^\tau \|  d_{A(s)} \psi(s)\|_\rho ds < \infty\ \ \text{if $1/\rho > (1/2) - (b/3)$},   \label{rec715a}
\end{align}
 in view of \eref{ib10ba} and \eref{ib543}. In particular if  $ 1/2 \le b <1$ then the restriction on $\rho$
 is satisfied if $ 2 \le \rho < 3$. This proves \eref{ib715}. 
        \end{proof}

\subsection{High $L^p$ bounds for $\psi$}           \label{sechighp}

Our energy methods typically establish $L^p$ bounds on functions for $ 2 \le p \le 6$.
For larger values of $p$ we will use the following Neumann domination method.

\begin{theorem}\label{thmpw1}  
 Let  $1/2 \le a <1$ and $0 < b <1$.  
Suppose that $A(\cdot)$ is a strong solution to the Yang-Mills
 heat equation over $M$ with finite a-action and satisfying the Neumann boundary conditions
 $A(t)_{norm}=0$, $B(t)_{norm} =0$  for $t >0$  in case $M \ne \R^3$.   
 Let $w$    be a strong solution
 to the augmented variational equation \eref{av1}  with  finite b-action in the sense of \eref{w3d}  
 and satisfying Neumann 
 boundary conditions  \eref{vst151n}    
 in case $M \ne \R^3$. 
 Define $\psi(t) = d_A^* w(t)$ for $t >0$  as in \eref{ib10psi}. Then there are   standard dominating 
  functions $C_p$ such that
\begin{align}
\int_0^T t^{(3/2) - b -(3/p)} \|\psi(t)\|_p^2 dt 
 \le C_p(T, \rho_A(T) )  \nn w\nn_T^2     <\infty,  \ \ \ 6 \le p \le  \infty  \label{ib200p}
\end{align}
for all $T < \infty$.
\end{theorem}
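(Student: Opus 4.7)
The plan is to exploit the fact that $\psi = d_A^* w$ is a $\kf$-valued 0-form, so that the identity \eref{pi12a} reduces to an inhomogeneous gauge-covariant heat equation
\beq
\p_s \psi(s) = \Delta_{A(s)} \psi(s) + 2[A'(s)\lrc w(s)], \notag
\eeq
using that $-d_A^* d_A$ agrees with the Bochner Laplacian $\Delta_A$ on $\kf$-valued 0-forms. Under the Neumann hypotheses $A(s)_{norm} = 0$ and $w(s)_{norm}= 0$, an appropriate Neumann boundary condition transfers to $\psi$. The central analytic input is the pointwise Kato/Neumann domination inequality
\beq
|\psi(t,x)| \le \bigl(e^{(t-\tau)\Delta}|\psi(\tau)|\bigr)(x) + 2c \int_\tau^t \bigl(e^{(t-s)\Delta}(|A'(s)|\,|w(s)|)\bigr)(x)\, ds, \quad 0 < \tau \le t \le T, \notag
\eeq
where $e^{r\Delta}$ is the scalar heat semigroup on $M$ (with Neumann boundary conditions when $M \ne \R^3$) and $c$ is the commutator bound on $\kf$. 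This bound follows from the Duhamel representation for the time-dependent generator $\Delta_{A(\cdot)}$ combined with the pointwise inequality $|U(t,s)f| \le e^{(t-s)\Delta}|f|$, justified by Kato's inequality, as in the analogous treatment of the curvature $B$ in \cite{CG1,CG2}.

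I would next specialize $\tau = t/2$ and take the $L^p(M)$ norm, using the standard heat semigroup bound $\|e^{r\Delta}f\|_p \le Cr^{-3(1/q-1/p)/2}\|f\|_q$. For the initial term I use $q = 6$, giving
\beq
\|e^{(t/2)\Delta}|\psi(t/2)|\|_p \le C\,t^{-\nu'}\,\|\psi(t/2)\|_6, \qquad \nu' = \tfrac14 - \tfrac{3}{2p}, \notag
\eeq
and for the forcing integral I use $q = 2$ after the H\"older split $\|\,|A'|\,|w|\,\|_2 \le \|A'(s)\|_3\|w(s)\|_6$, yielding
\beq
\|e^{(t-s)\Delta}(|A'|\,|w|)\|_p \le C(t-s)^{-\nu}\|A'(s)\|_3\|w(s)\|_6, \qquad \nu = \tfrac34 - \tfrac{3}{2p}. \notag
\eeq
Squaring via $(A+B)^2 \le 2A^2 + 2B^2$, applying Cauchy--Schwarz to the convolution integral on $(t/2, t)$, multiplying by $t^\gamma$ with $\gamma = (3/2) - b - 3/p$, and interchanging integrals by Fubini produces
\beq
\int_0^T t^\gamma \|\psi(t)\|_p^2 dt \le C_1 \int_0^{T/2} s^{\gamma - 2\nu'}\|\psi(s)\|_6^2 ds + C_2\int_0^T s^{\gamma + 2 - 2\nu}\|A'(s)\|_3^2\|w(s)\|_6^2 ds. \notag
\eeq

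The key arithmetic observation is that $\gamma - 2\nu' = 1-b$ and $\gamma + 2 - 2\nu + b = 2$ \emph{identically} in $p \in [6,\infty]$. The first integral on the right is therefore dominated by a multiple of $\int_0^T s^{1-b}\|\psi(s)\|_6^2 ds$, which is bounded by a standard dominating function times $\nn w\nn_T^2$ via \eref{ib10ba} in Theorem \ref{ibord1}. For the second integral, I write
\beq
s^{\gamma + 2-2\nu}\|A'(s)\|_3^2\|w(s)\|_6^2 = \bigl(s^2\|A'(s)\|_3^2\bigr)\bigl(s^{-b}\|w(s)\|_6^2\bigr). \notag
\eeq
The first factor is bounded on $(0,T]$ by $T^{a-(1/2)}C_7(T, \rho_A(T))$ via \eref{iby7}, while the second factor has a finite time integral bounded by a multiple of $\nn w\nn_T^2$ via \eref{ib6a}. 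Combining these estimates yields \eref{ib200p} uniformly in $p \in [6, \infty]$.

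The main obstacle will be the rigorous justification of Neumann domination for the time-dependent gauge-covariant generator $\Delta_{A(s)}$, and the verification that the chain of Neumann boundary conditions on $A$ and $w$ indeed transfers to a Neumann boundary condition for $\psi$ compatible with the scalar Neumann Laplacian. This is standard given the framework of \cite{CG1,CG2}, but requires some care. The happy coincidence that the exponent matching $\gamma - 2\nu' = 1-b$ and $\gamma+2-2\nu+b = 2$ holds for \emph{every} $p \in [6,\infty]$ is what makes the uniformity in $p$ of the constant $C_p$ automatic and also why the pairing $r_1 = 3$, $r_2 = 6$ in the H\"older split is the natural choice.
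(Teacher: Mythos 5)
Your proposal is correct, but it takes a genuinely different technical route from the paper. The paper's proof is built on an \emph{averaged} Neumann domination, Lemma \ref{lemnda1} (from \cite[Prop.\ 4.21]{G70}): the pointwise bound
\begin{align*}
|\psi(t,x)| \le t^{-1}\int_0^t e^{(t-s)\Delta_N}|\psi(s)|\,ds\,(x) + t^{-1}\int_0^t e^{(t-s)\Delta_N}\,2s|[A'(s)\lrc w(s)]|\,ds\,(x)
\end{align*}
starts the integral at $s=0$ and pays for this with the prefactor $t^{-1}$, so no initial-time value of $|\psi|$ need be evaluated. This is then fed to the abstract convolution inequality Lemma \ref{lemu1} with exponent $c=3/4 - 3/(2p)$, giving the weighted $L^2_t$ estimate in one stroke. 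You instead use the \emph{non-averaged} Duhamel/Kato domination
\begin{align*}
|\psi(t,x)| \le \bigl(e^{(t-\tau)\Delta}|\psi(\tau)|\bigr)(x) + 2c\int_\tau^t\bigl(e^{(t-s)\Delta}|A'(s)||w(s)|\bigr)(x)\,ds
\end{align*}
restarted at $\tau=t/2$, which requires you to bound $\|\psi(t/2)\|_6$; you pay for this by invoking the first-order energy estimate \eref{ib10ba} (namely $\int_0^T s^{1-b}\|\psi(s)\|_6^2\,ds \lesssim \nn w\nn_T^2$), whereas the paper's averaged version needs only the weaker $\int_0^T s^{-b}\|\psi(s)\|_2^2\,ds \le 4\nn w\nn_T^2$ coming directly from finite $b$-action. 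You also replace the black-box Lemma \ref{lemu1} by hand: Cauchy--Schwarz in the forcing integral plus Fubini, relying on the exponent identities $\gamma-2\nu'=1-b$ and $\gamma+2-2\nu=2-b$ being $p$-independent. Both of these identities check out, and your Fubini computation (inner $t$-integral over $[s,\min(2s,T)]$ scaling as $s^{\gamma+2-2\nu}$ for $\nu<1$) is correct uniformly in $p\in[6,\infty]$, since $\nu\le 3/4$. The forcing-term bound $s^{2-b}\|A'(s)\|_3^2\|w(s)\|_6^2 = (s^2\|A'(s)\|_3^2)(s^{-b}\|w(s)\|_6^2)$ together with \eref{iby7} and \eref{ib6a} is exactly what the paper's Lemma \ref{lemnda4} extracts. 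The one step you flag as delicate --- justifying Kato/Neumann domination for the \emph{time-dependent} generator $\Delta_{A(s)}$ and transferring the Neumann boundary condition from $w$ to $\psi$ --- is indeed the hard technical content, and is precisely what the paper isolates in Lemmas \ref{lemnda1} and \ref{lemnda2}; your appeal to the framework of \cite{CG1,CG2} for this is reasonable. Net comparison: the paper's averaged domination is the slicker device (it only needs the $L^2 \to L^p$ semigroup bound and the $L^2$ action), while your half-time restart is more elementary and self-contained at the cost of a slightly stronger input estimate on $\|\psi\|_6$, which is nonetheless already available.
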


\begin{remark}{\rm Dirichlet boundary conditions are noticeably absent in the allowed hypothesis.
This arises from the failure of Lemma \ref{lemnda2} in this case.
}
\end{remark}

The proof depends on the following five lemmas. The first lemma is taken from \cite{G70}.

     \begin{lemma} \label{lemnda1}$($Neumann domination with averaging$)$ Let $0 < T < \infty$.
Suppose that $M\subseteq \R^N$ is the closure of an open set with smooth boundary and that
  $A:(0, T] \rightarrow C^1(M; \L^1\otimes \kf)$ is a time dependent  1- form on $M$ which is continuous in the time variable.
 Let $0 \le p \le n$. Let $\w: (0,T) \rightarrow C^2( M;\L^p \otimes \kf)$ be a time dependent, $\kf$ valued, p-form on $M$ which is continuously differentiable in the time variable and satisfies the equation
 \begin{align}
 \w'(s,x)  =\sum_{j=1}^N(\n_j^{A(s)} )^2 \w(s,x) + h(s, x),     \label{nda5.1}
 \end{align}
 where $h \in C((0, T]\times M; \L^p \otimes \kf)$. Assume also that if $M \ne \R^N$ then
 \begin{align}
 \n_{n} |\w(s,x)|^2 \le 0,\ \ \ \ \ 0< s <T,  \ \ \  x\in \p M,              \label{nda6.1}
 \end{align}
 where   $n$ is the outward drawn unit normal.
 Denote by $\Delta_N$ the Laplacian on real valued functions over $\R^N$ if $M = \R^N$
  or the Neumann Laplacian  on real valued functions if  $M \ne \R^N$.
 Then 
 \begin{align}
 |\w(t,x)| &\le t^{-1} \int_0^t e^{(t-s)\Delta_N} |\w(s, \cdot)| ds\ (x)    \notag\\
 &\qquad\qquad +  t^{-1}\int_0^t  e^{(t-s)\Delta_N} s|h(s, \cdot)| ds\ (x) .         \label{nda7.1}
 \end{align} 
 \end{lemma}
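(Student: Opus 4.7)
The plan is to combine a Kato-type pointwise inequality for $|\w|$ with a simple time-averaging trick that produces the $t^{-1}$ prefactor in \eref{nda7.1}.

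First I would establish the following pointwise differential inequality, which is the heart of the matter: the scalar function $f(s,x):=|\w(s,x)|$ satisfies, in a weak sense,
\begin{align*}
\p_s f \le \Delta f + |h|\quad\text{on}\ (0,T)\times M^{int},
\end{align*}
where $\Delta$ is the ordinary (non-gauge-covariant) Laplacian on real valued functions. To see this, regularize by $f_\epsilon := (|\w|^2+\epsilon)^{1/2}$, so $f_\epsilon$ is smooth, and compute directly, using $\p_s|\w|^2 = 2\<\w,\w'\>_\kf$ together with \eref{nda5.1} and the fact that $\<\w,(\n_j^A)^2\w\> = \p_j\<\w,\n_j^A\w\> - |\n_j^A\w|^2$ (the $A$-dependent terms vanish because $\text{ad}\,A_j$ is skew on $\kf$). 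A short calculation gives
\begin{align*}
f_\epsilon \p_s f_\epsilon &= \sum_j \p_j\<\w,\n_j^A\w\> - \sum_j |\n_j^A\w|^2 + \<\w,h\>\\
&\le (1/2)\Delta |\w|^2 - \sum_j|\n_j^A\w|^2 + f_\epsilon |h|,
\end{align*}
while $f_\epsilon \Delta f_\epsilon = (1/2)\Delta|\w|^2 - |\n f_\epsilon|^2$. Kato's inequality $|\n f_\epsilon|^2\le \sum_j |\n_j^A\w|^2$ then gives $\p_s f_\epsilon \le \Delta f_\epsilon + |h|$, and the boundary hypothesis \eref{nda6.1} translates to $\p_n f_\epsilon\le 0$ on $\p M$, which is the defining (distributional) inequality for the Neumann heat semigroup. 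Letting $\epsilon\downarrow 0$ gives the claimed inequality for $f=|\w|$ in the sense of Neumann subsolutions.

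Second, I would apply the averaging trick. Set $g(s,x):=sf(s,x)$. Then
\begin{align*}
\p_s g = f + s\,\p_s f \le f + s(\Delta f + |h|) = f + \Delta g + s|h|,
\end{align*}
and $g$ continues to satisfy $\p_n g\le 0$ on $\p M$, with $g(0,\cdot)=0$. By the parabolic comparison principle for the Neumann (or free, if $M=\R^N$) heat semigroup $e^{t\Delta_N}$ — equivalently, by Duhamel's formula applied to the scalar subsolution $g$ — we obtain
\begin{align*}
g(t,x) \le \int_0^t e^{(t-s)\Delta_N}\bigl(f(s,\cdot) + s|h(s,\cdot)|\bigr)(x)\,ds,
\end{align*}
and division by $t$ yields \eref{nda7.1}.

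The main obstacle is the rigorous justification of the Kato inequality and the associated sign of the normal derivative at $\p M$, i.e., verifying that the regularized subsolution property for $f_\epsilon$ passes through the Neumann comparison principle uniformly in $\epsilon$. Once this is in place (handled by the usual truncation plus the hypothesis \eref{nda6.1}, which gives $\p_n f_\epsilon^2 \le 0$), the rest is the elementary multiplication-by-$s$ manipulation and a standard Duhamel/heat-kernel positivity argument.
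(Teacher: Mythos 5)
Your Kato step is correct and produces exactly the right subsolution facts: $\p_s f \le \Delta f + |h|$ with $f=|\w|$, together with $\p_n f_\epsilon\le 0$ on $\p M$, obtained via the regularization $f_\epsilon=(|\w|^2+\epsilon)^{1/2}$. The gap is in the second step. To run Duhamel on $g(s,\cdot):= s\,f(s,\cdot)$ you assert ``$g(0,\cdot)=0$,'' but $\w$ is only hypothesized on the open interval $(0,T)$, and nothing in the statement controls $s\,|\w(s,\cdot)|$ as $s\downarrow 0$; if $|\w(s,\cdot)|$ behaves like $s^{-1}$ near zero, your $g$ does not vanish at the time origin. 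The clean route --- and what the phrase ``Neumann domination with averaging'' is telling you the source \cite{G70} does --- is to first derive from your subsolution inequality and the Neumann comparison principle the un-averaged domination valid for each fixed initial time $r\in(0,t)$,
\[
f(t,\cdot)\ \le\ e^{(t-r)\Delta_N}f(r,\cdot)\ +\ \int_r^t e^{(t-s)\Delta_N}\,|h(s,\cdot)|\,ds ,
\]
and then average this inequality in $r$ against the uniform measure $dr/t$ on $(0,t)$. The left side is constant in $r$; the first term on the right averages to $t^{-1}\int_0^t e^{(t-s)\Delta_N}f(s,\cdot)\,ds$; and a Fubini exchange in the $h$-term, using the region $\{0<r<s<t\}$, turns it into $t^{-1}\int_0^t s\,e^{(t-s)\Delta_N}|h(s,\cdot)|\,ds$, giving \eref{nda7.1} without ever going near $s=0$. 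Your multiplication-by-$s$ device is dual to this averaging and can be salvaged by running Duhamel from $r>0$ and passing to a $\liminf$ as $r\downarrow 0$ along a sequence where $e^{(t-r)\Delta_N}\bigl(r\,f(r)\bigr)(x)\to 0$ --- such a sequence exists whenever the right side of \eref{nda7.1} is finite, which is the only nonvacuous case --- but as written the assertion $g(0,\cdot)=0$ is unjustified, and the averaging formulation avoids the issue entirely.
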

    \begin{proof} See \cite[Proposition 4.21]{G70}. 
    \end{proof}

 \begin{lemma}\label{lemnda2} $($Normal derivative$)$  Suppose that $M \ne \R^3$
 and  that $A(\cdot)$ is
  a strong solution to the Yang-Mills
 heat equation over $(0, \infty)$  
 satisfying Neumann boundary conditions.  
 Let $w$  be a strong solution  to \eref{av1} satisfying Neumann  
 boundary conditions  \eref{vst151n}.  
 Define  $\psi(s) = d_{A(s)}^* w(s)$ for $s >0$
 as in \eref{ib10psi}.
  Then
 \beq
 \n_n |\psi(s, x)|^2 = 0\ \ \  \text{for all}\ \ s>0  
  \ \ \text{and}\ \ x \in \p M.    \label{nda15}
 \eeq
 \end{lemma}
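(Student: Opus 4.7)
My plan is to prove the stronger statement that $\nabla_n \psi$ vanishes identically on $\partial M$, which gives the lemma at once since $\psi$ is a $\kf$-valued $0$-form and hence $\nabla_n |\psi|^2 = 2\langle \nabla_n \psi, \psi\rangle_\kf$. The claim is pointwise on $\partial M$, so I fix $x_0 \in \partial M$ and introduce Fermi coordinates $(u^1, u^2, r)$ centered at $x_0$: in these coordinates $\partial M$ is the hyperplane $\{r = 0\}$, $\partial_r$ is the inward unit normal at each boundary point, and the ambient Euclidean metric takes the block form $g = g_{IJ}(u,r)\,du^I du^J + dr^2$ with $I, J \in \{1,2\}$ tangential. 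The hypotheses unpack into four pointwise identities on $\{r = 0\}$: $A_r = 0$ (from $A_{norm} = 0$); $\partial_r A_I = 0$ (from $B_{norm} = 0$ combined with $A_r|_{r=0} = 0$, via the expansion $B_{Ir} = \partial_I A_r - \partial_r A_I + [A_I, A_r]$); $w_r = 0$ (from $w_{norm} = 0$); and $\partial_r w_I = 0$ (from $(d_A w)_{norm} = 0$ combined with $A_r|_{r=0} = 0$, analogously).

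In the first reduction I express $\psi = d_A^* w$ in Fermi coordinates, differentiate once in $r$, and evaluate at $x_0$. The claim is that every term other than $-\partial_r^2 w_r|_{x_0}$ carries at least one factor drawn from $\{w_r|_{r=0},\, A_r|_{r=0},\, \partial_r w_I|_{r=0},\, \partial_r A_I|_{r=0}\}$ --- including all the Christoffel contributions coming from the $r$-dependence of the tangential metric $g_{IJ}$ and all the gauge-covariant cross-terms from $[A,\cdot]$ --- and therefore vanishes at $x_0$, leaving $\partial_r \psi|_{x_0} = -\partial_r^2 w_r|_{x_0}$.

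In the second reduction I apply the augmented variational equation. Using the Bochner--Weitzenb\"ock identity \eref{vst12}, equation \eref{av1} may be rewritten as $w'(t) = \Delta_{A(t)} w - 2[w\lrc B]$. Since $w_r(t,\cdot) \equiv 0$ on $\{r=0\}$ for every $t$, the time derivative of $w_r$ at $x_0$ is zero, so projecting the equation onto $\partial_r$ at $x_0$ gives $(\Delta_A w)_r|_{x_0} = 2[w\lrc B]_r|_{x_0}$. The right side vanishes because the contributions $[w_I, B_{Ir}]|_{x_0}$ are zero by $B_{norm} = 0$ and any remaining contribution contains a factor of $w_r|_{x_0} = 0$. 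Expanding the Bochner Laplacian on the $r$-component of $w$ in Fermi coordinates and carrying out the same bookkeeping as in the first reduction gives $(\Delta_A w)_r|_{x_0} = \partial_r^2 w_r|_{x_0}$, so $\partial_r^2 w_r|_{x_0} = 0$. Combining yields $\nabla_n \psi|_{x_0} = 0$.

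The principal obstacle is carrying out the bookkeeping in the two reductions: in Fermi coordinates both $d_A^*$ and the Bochner Laplacian $\Delta_A$ acquire Christoffel corrections from the tangential metric $g_{IJ}(u,r)$, and each such correction --- together with each gauge-covariant cross-term from $[A,\cdot]$ --- must be explicitly verified to pair with one of the four vanishing boundary factors. The structural reason the bookkeeping closes is that the Neumann hypotheses on $A$ and on $w$ are engineered precisely to annihilate the normal components and the normal derivatives of tangential components of both objects on $\partial M$, so any tangential--normal coupling in the expansions inevitably produces a vanishing factor.
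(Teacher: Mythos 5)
Your high-level strategy overlaps with the paper's — both proofs use the augmented variational equation to transfer boundary information from $w$ to $\psi$ — but the execution diverges, and the place where it diverges is exactly where your proof has a gap.

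The paper never passes to the Bochner Laplacian or to coordinates. It writes, directly from \eref{av1}, $d_A\psi = -\{w' + d_A^*d_Aw + [w\lrc B]\}$, and then shows that each of the three terms has vanishing normal component: $(w')_{norm}=0$ by differentiating $w_{norm}\equiv 0$ in time; $[w\lrc B]_{norm}=0$ because $B_{norm}=0$; and $(d_A^*d_Aw)_{norm}=0$ via a cited identity from \cite{CG1} (the standard Hodge--theoretic fact that $d_A^*$ carries forms with $\omega_{norm}=0$ to forms with $(d_A^*\omega)_{norm}=0$, applied to the $2$-form $\omega=d_Aw$). The last of these is the nontrivial step; it is true because the would-be surviving Christoffel term $g^{JL}\Gamma^K_{Lr}\omega_{JK}$ pairs a tensor symmetric in $J\leftrightarrow K$ (the shape operator) with the antisymmetric $2$-form $\omega_{JK}$, not because those terms carry factors that vanish on $\partial M$.

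Your version instead asserts that in Fermi coordinates both $\partial_r\psi$ and $(\Delta_Aw)_r$ reduce to $\mp\partial_r^2 w_r$ at $x_0$ because ``every term other than $\mp\partial_r^2w_r$ carries a factor from $\{w_r,\,A_r,\,\partial_r w_I,\,\partial_r A_I\}|_{r=0}$.'' That claim is false. The tangential metric $g_{IJ}(u,r)$ has $\partial_r g_{IJ}|_{r=0}=-2\,\mathrm{II}_{IJ}\neq 0$, and when you differentiate, e.g., the contribution $g^{IJ}[A_I,w_J]$ of $[A\lrc w]$ in $r$, you get the term $(\partial_r g^{IJ})[A_I,w_J]\big|_{x_0}$, which carries none of the listed vanishing factors: it pairs the second fundamental form with tangential components of $A$ and $w$, which are generically nonzero on $\partial M$. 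The analogous terms survive in your expansion of $(\Delta_Aw)_r$ (equivalently, in Cartesian form, terms like $(\partial_i n^j)(\partial_i w_j)$ from differentiating $w_r=n^jw_j$ twice). So neither reduction closes as stated. These residual curvature terms may well cancel between your two reductions — the lemma is true — but your proposal claims each remainder vanishes separately, and that is not what happens. To make your route rigorous you would have to track those second-fundamental-form terms explicitly and exhibit the cancellation, which is exactly the computation the paper avoids by staying at the level of the Hodge operators and invoking the preservation of absolute boundary conditions under $d_A^*$.
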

        \begin{proof} We are assuming now that $A(t)_{norm}=0$ and $B(t)_{norm} =0$ for all $t >0$
        and that  
  \begin{align}
 w(t)_{norm} &= 0\ \ \text{on}\  \p M\ \  \text{for all}\ \ t>0 \ \text{and}  \label{nda16} \\
 (d_{A(t)}w(t))_{norm} &= 0 \ \ \text{on}\  \p M\ \  \text{for all}\ \ t>0.  \label{nda17}
 \end{align}
 We will show  that these four conditions imply that the solution $w(\cdot)$ to the augmented variational equation
 \eref{av1} satisfies
 \begin{align}
 (d_{A(t)}\psi(t))_{norm} = 0\ \ \ \text{for all} \ \ \  t >0.          \label{nda18}
 \end{align}
 We may write \eref{av1} as   $d_A\psi(t) = -\Big\{ w'(t)  +d_A^* d_A w + [ w\lrc B]\Big\}$. 
  It suffices to show that each of the three terms in braces has normal component zero.
  Differentiating \eref{nda16} with respect to $t$ we see that  $  w'(t)_{norm} = 0$. 
 Since $B(t)_{norm} =0$, we also have  $[ w(t) \lrc B(t)]_{norm} =0$ (for any $w$.)    
  It follows from \eref{nda17} and  \cite[ Equ. (3.20)]{CG1} that  $(d_A^*d_A w(t))_{norm} =0$.
 This proves \eref{nda18}.
        
        It follows now from \eref{nda18} that 
 \begin{align*}
 \n_n | \psi(t, x)|^2 &= \<(d_A \psi(t, x))_{norm}, \psi(t, x)\> + \< \psi(t, x), (d_A \psi(t, x))_{norm}\>\\
 &=0.
 \end{align*}    
 \end{proof}

 \begin{lemma} \label{lemnda3}$($Pointwise bound$)$ If $M=\R^3$, or if $M\ne \R^3$ but
 Neumann boundary conditions hold for $A$ and $w$, then
 \begin{align}
 |\psi(t, x)|&\le  t^{-1} \int_0^t e^{(t-s)\Delta_N} |\psi(s, \cdot)| ds\ (x)   \notag\\
 & + t^{-1}\int_0^t e^{(t-s)\Delta_N} 2 s|\, [A'(s) \lrc w(s)]\, | ds \ (x) .       \label{nda20}
 \end{align}
 \end{lemma}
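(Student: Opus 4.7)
The plan is to apply the Neumann domination lemma (Lemma \ref{lemnda1}) to the function $\psi$, after rewriting the equation satisfied by $\psi$ in the form required by that lemma and verifying the relevant boundary conditions.

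First I would invoke the pointwise identity \eqref{pi12a}, which gives
$$\psi'(s) = -d_A^* d_A \psi(s) + 2[A'(s)\lrc w(s)].$$
Since $\psi(s)$ is a $\kf$-valued 0-form on $M$, there is no curvature term in the Bochner--Weitzenb\"ock identity for $\psi$, and so $d_A^* d_A \psi = -\sum_{j=1}^3 (\p_j^{A(s)})^2 \psi = -\Delta_{A(s)} \psi$. Therefore the equation for $\psi$ takes exactly the form \eqref{nda5.1} of Lemma \ref{lemnda1}, with $N=3$, $p=0$, and forcing term
$$h(s,x) = 2[A'(s)\lrc w(s)](x).$$

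Next I would check the boundary hypothesis \eqref{nda6.1} in the case $M \neq \R^3$. The conclusion of Lemma \ref{lemnda2} states that under the Neumann boundary conditions currently in force we have $\n_n|\psi(s,x)|^2 = 0$ on $\p M$, so \eqref{nda6.1} holds with equality (in particular with the required sign). When $M = \R^3$ there is no boundary condition to check. Lemma \ref{lemnda1} then directly yields
$$|\psi(t,x)| \le t^{-1}\int_0^t e^{(t-s)\Delta_N}|\psi(s,\cdot)|\,ds\,(x) + t^{-1}\int_0^t e^{(t-s)\Delta_N}\, 2s\,|[A'(s)\lrc w(s)]|\,ds\,(x),$$
which is the asserted inequality \eqref{nda20}.

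The one point that requires care, and is the main obstacle, is the regularity required for the direct application of Lemma \ref{lemnda1}: that lemma is stated for $A(\cdot) \in C^1$, $\w(\cdot) \in C^2$ in space and $C^1$ in time. Our $A(\cdot)$ and $w(\cdot)$ have only the Sobolev regularity given by the strong--solution hypotheses, so a standard mollification/density argument is needed: approximate $A$ and the initial data of $w$ in the appropriate Sobolev norms by smooth counterparts, apply Lemma \ref{lemnda1} to the corresponding smooth $\psi_n = d_{A_n}^* w_n$ on an interval $[\epsilon, t]$ away from $t=0$ (where the parabolic smoothing of the augmented equation gives full $C^\infty$ regularity), and then pass to the limit in $n$ and in $\epsilon \downarrow 0$ using the a priori initial behavior bounds from Theorems \ref{thmibA1} and \ref{ibord1}. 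The $L^1$ integrability needed to pass to the limit in the two integrals is guaranteed because $\|\psi(s)\|_2$ and $\|[A'(s)\lrc w(s)]\|_1 \le \|A'(s)\|_2\|w(s)\|_2$ have only mild singularities at $s=0$.
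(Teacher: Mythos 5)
Your proof is correct and takes exactly the same route as the paper: use \eqref{pi12a} (with the observation that for $\kf$-valued $0$-forms $d_A^*d_A = -\Delta_A$, so the equation has the form \eqref{nda5.1} with forcing $h(s) = 2[A'(s)\lrc w(s)]$), check the boundary hypothesis \eqref{nda6.1} via Lemma \ref{lemnda2}, and then apply Lemma \ref{lemnda1}. Your additional remark on the regularity mismatch between the $C^1/C^2$ hypotheses of Lemma \ref{lemnda1} and the strong-solution setting is a fair extra precaution that the paper leaves implicit, presumably relying on the parabolic smoothing established in Section \ref{secmildstr} for $t>0$.
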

 \begin{proof} 
 Take $\w(s) = \psi(s)$ in \eref{nda5.1}. Then \eref{pi12a} shows that \eref{nda5.1} holds with  
  $h(s) =2[A'(s)\lrc w(s)]$.
  Lemma \ref{lemnda2}
  shows that  the condition \eref{nda6.1} holds when $\w = \psi$. 
  We may therefore apply Lemma \ref{lemnda1}
   to see that \eref{nda20} holds. 
 \end{proof}

 \begin{lemma} \label{lemnda4} $($Weighted estimate$)$  
 There is a standard dominating function  $C_{50}$ 
   such that     
 \begin{align}
\int_0^T s^{(5/2)- a -b} &\|\, [A'(s) \lrc w(s)]\, \|_2^2 
 \le  C_{50}(T, \rho_A(T)) \nn w\nn_T^2  \ \  \forall\ T \ge 0         .     \label{nda25}
\end{align}
\end{lemma}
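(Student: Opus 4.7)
The plan is to reduce the bound to a pointwise Hölder estimate for the commutator, absorb the $L^3$ norm of $A'(s)$ into one of the initial-behavior suprema from Lemma \ref{lemiby}, and recognize the remaining integral as controlled by $\nn w\nn_T^2$ via the Sobolev-type inequality already recorded in \eref{ib6a}.

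First I would use the standard pointwise commutator estimate $|[A'(s)\lrc w(s)]|\le c\,|A'(s)|\,|w(s)|$, giving by H\"older's inequality
\begin{equation*}
\|[A'(s)\lrc w(s)]\|_2^2 \;\le\; c^2\,\|A'(s)\|_3^2\,\|w(s)\|_6^2.
\end{equation*}
Next I would split the weight $s^{(5/2)-a-b}$ as $s^{(5/2)-a}\cdot s^{-b}$, in order to match the bound \eref{iby7}, which says $\sup_{0<s\le T} s^{(5/2)-a}\|A'(s)\|_3^2 \le C_7(T,\rho_A(T))$.

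With this splitting we obtain
\begin{equation*}
\int_0^T s^{(5/2)-a-b}\,\|[A'(s)\lrc w(s)]\|_2^2\,ds \;\le\; c^2\,\Big(\sup_{0<s\le T} s^{(5/2)-a}\|A'(s)\|_3^2\Big)\,\int_0^T s^{-b}\|w(s)\|_6^2\,ds.
\end{equation*}
The first factor is dominated by $c^2 C_7(T,\rho_A(T))$, and the remaining integral is bounded by $\kappa_6^2\,\nn w\nn_T^2$ thanks to \eref{ib6a}. Combining these gives \eref{nda25} with $C_{50}(T,\rho_A(T)) = c^2\kappa_6^2\,C_7(T,\rho_A(T))$, which is clearly a standard dominating function since $C_7$ is.

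I do not expect any genuine obstacle here: the lemma is essentially a bookkeeping exercise matching the weighting $s^{(5/2)-a-b}$ to the interpolation powers in Lemma \ref{lemiby} and the finite-action hypothesis. The one thing to verify carefully is simply that the power $5/2 - a$ in \eref{iby7} exactly cancels the $s^{5/2-a}$ needed to tame $\|A'(s)\|_3^2$, leaving precisely $s^{-b}\|w\|_6^2$ to be absorbed by the $b$-action norm; this is what forces the particular exponent $(5/2)-a-b$ in the statement.
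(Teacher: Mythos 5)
Your proof is correct and follows exactly the same route as the paper: H\"older to get $\|[A'(s)\lrc w(s)]\|_2^2\le c^2\|A'(s)\|_3^2\|w(s)\|_6^2$, split the weight as $s^{(5/2)-a}\cdot s^{-b}$, pull out the supremum controlled by \eref{iby7}, and absorb the remaining integral into $\nn w\nn_T^2$ via \eref{ib6a}. Nothing to add.
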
 
         \begin{proof}
\begin{align*}
\int_0^T s^{(5/2)- a -b} &\|\, [A'(s) \lrc w(s)]\, \|_2^2 ds
 \le c^2 \int_0^T s^{(5/2)-a -b} \|A'(s)\|_3^2 \| w(s)\|_6^2 ds \\
 &\le c^2 \sup_{0 < s \le T}\Big(s^{(5/2)-a} \|A'(s)\|_3^2\Big) \int_0^T s^{-b}  \| w(s)\|_6^2 ds.
\end{align*} 
The supremum is finite  by \eref{iby7}.
The last integral is at most $\kappa_6^2\, \nn w\nn_T^2$ by \eref{ib6a}.
\end{proof}

\begin{lemma}\label{lemu1}$($A convolution inequality$)$
          Let $0 \le c <1$ and $0 < T < \infty$.  Suppose that  $\alpha$ and $\beta$ are
          non-negative functions  on $(0, T]$ such that
\begin{align}
\alpha(t) \le (1/t)&\int_0^t (t-s)^{-c}  \beta(s) ds\ \ \ \text{for}\ \ \ 0 < t \le T .  \label{u31}
\end{align}
Then for any real number $b_1 < 2c+1$ there holds
\begin{align}
\int_0^T t^{b_1} \alpha(t)^2 dt \le \gamma \int_0^T s^{b_1 -2c} \beta(s)^2 ds                 \label{u32}
\end{align}
for some constant $\gamma$ depending only on $b_1$ and $c$.
\end{lemma}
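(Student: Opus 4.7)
The plan is to reduce the inequality to a Beta-function identity via the scale substitution $s = t\sigma$, and then apply Minkowski's integral inequality on the multiplicative group. First, rewriting the hypothesis \eqref{u31} with $s = t\sigma$, $ds = t\, d\sigma$, gives
\begin{align*}
\alpha(t) \le t^{-c}\int_0^1 (1-\sigma)^{-c} \beta(t\sigma)\, d\sigma,\qquad 0 < t \le T,
\end{align*}
which is the natural scale-invariant form: the kernel $(1-\sigma)^{-c}$ no longer depends on $t$.

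Next, I would view the right-hand side as a vector-valued integral in $L^2((0,T), t^{b_1}dt)$ and apply Minkowski's integral inequality:
\begin{align*}
\Big(\int_0^T t^{b_1} \alpha(t)^2\, dt\Big)^{1/2}
\le \int_0^1 (1-\sigma)^{-c}\Big(\int_0^T t^{b_1 - 2c}\beta(t\sigma)^2\, dt\Big)^{1/2} d\sigma.
\end{align*}
In the inner integral, substitute $u = t\sigma$, $dt = du/\sigma$, to obtain
\begin{align*}
\int_0^T t^{b_1-2c}\beta(t\sigma)^2 dt = \sigma^{-(b_1-2c+1)}\int_0^{T\sigma} u^{b_1-2c}\beta(u)^2 du
\le \sigma^{-(b_1-2c+1)}\int_0^T u^{b_1-2c}\beta(u)^2 du.
\end{align*}
Combining these two steps yields
\begin{align*}
\Big(\int_0^T t^{b_1} \alpha(t)^2\, dt\Big)^{1/2}
\le \Big(\int_0^1 (1-\sigma)^{-c}\sigma^{-(b_1-2c+1)/2}\, d\sigma\Big)
\Big(\int_0^T u^{b_1-2c}\beta(u)^2\, du\Big)^{1/2}.
\end{align*}

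The remaining task is to check that the Beta-type integral
$\gamma^{1/2} := \int_0^1 (1-\sigma)^{-c}\sigma^{-(b_1-2c+1)/2}\, d\sigma$
is finite: convergence at $\sigma = 1$ requires $c<1$, which is hypothesized, and convergence at $\sigma = 0$ requires $(b_1-2c+1)/2 < 1$, i.e.\ $b_1 < 2c+1$, which is precisely the hypothesis on $b_1$. Squaring gives \eqref{u32} with $\gamma$ depending only on $b_1$ and $c$. I do not anticipate any significant obstacle; the only delicate point is matching the two sharp exponent conditions ($c<1$ and $b_1 < 2c+1$) to the convergence of the Beta integral at its two endpoints.
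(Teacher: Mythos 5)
Your argument is correct. The paper itself does not reproduce a proof of this lemma but simply cites \cite[Lemma 4.24]{G70}, so there is nothing to compare line-by-line; your scaling-plus-Minkowski proof is a clean, self-contained derivation. The key steps all check out: the substitution $s = t\sigma$ removes the $t$-dependence from the kernel, Minkowski's integral inequality applies because all quantities are nonnegative, the inner substitution $u = t\sigma$ produces the factor $\sigma^{-(b_1-2c+1)}$, the extension of the upper limit from $T\sigma$ to $T$ is legitimate since the integrand is nonnegative, and the two endpoint conditions for the Beta integral $\int_0^1 (1-\sigma)^{-c}\sigma^{-(b_1-2c+1)/2}\,d\sigma < \infty$ correspond exactly to the stated hypotheses $c < 1$ and $b_1 < 2c+1$. (When $b_1 \le 2c - 1$ the exponent at $\sigma = 0$ is nonnegative and there is no singularity there, so no lower bound on $b_1$ is needed.)
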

     \begin{proof} This is \cite[Lemma 4.24]{G70}. 
     \end{proof}

\bigskip
\noindent
     \begin{proof}[Proof of Theorem \ref{thmpw1}]
     From \eref{nda20} we find 
     \begin{align}
     \|\psi(t)\|_p &\le t^{-1}\int_0^t \|e^{(t-s) \Delta_N} \Big( |\psi(s, \cdot)| 
                       + 2s |\, [A'(s, \cdot) \lrc w(s, \cdot)]\, | \Big) \|_p ds      \notag\\
  & \le   t^{-1}\int_0^t \|e^{(t-s) \Delta_N}\|_{2\rightarrow p} \Big( \|\psi(s)\|_2 
                       + 2s \| \, [A'(s) \lrc w(s)]\, \|_2 \Big) ds    \notag\\
  & \le   t^{-1}\int_0^t c_1(t-s)^{-(3/4) + (3/2p)} \Big( \|\psi(s)\|_2 
                       + 2s \| \, [A'(s) \lrc w(s)]\, \|_2 \Big) ds.  \label{nda30}
     \end{align}
 In Lemma \ref{lemu1}    choose  $c = (3/4) -(3/2p)$ and choose 
     $b_1 = (3/2)-b -(3/p)$. Then $b_1 - 2c =  -b <0$. Take $\alpha(t) = \| \psi(t)\|_p$ and
     $\beta(s) = c_1\Big( \|\psi(s)\|_2  + 2s \| \, [A'(s) \lrc w(s)]\, \|_2 \Big)$ in Lemma \ref{lemu1}. 
     Then  \eref{u32}  and\eref{nda30}       show that
     \begin{align}
     \int_0^T t^{(3/2) - b - (3/p)} \|\psi(t)\|_p^2 dt \le \gamma \int_0^T s^{-b} \beta(s)^2 ds.
     \end{align}
     But, by \eref{ib5a},  $\int_0^T s^{-b}\| \psi(s)\|_2^2 ds \le 4\nn w \nn_T^2 < \infty$ because $w$ has finite b-action. 
     Moreover, since $A$ has finite a-action and therefore finite (1/2) action, we can
      put $a = 1/2$ in \eref{nda25} to conclude that
     $\int_0^T s^{-b} \beta(s)^2 ds < \infty$. This completes the proof of \eref{ib200p}.
\end{proof}

\section{Recovery of $v$ from $w$: the differential equation}     \label{secrec}

\subsection{$v$ and $v_\tau$ satisfy the variational equation}

We intend to recover a solution to the variational equation \eref{ve}  from a  solution
 $w$ to the augmented variational equation \eref{av1}. 
 The two  functions 
 $v(t)$, given in   \eref{rec1} and $v_\tau(t)$ given in \eref{rec2} will be shown to be solutions
  to the variational equation.  
 They differ just by the choice of the lower limit $\tau$ in \eref{rec2}. The behavior   
   of these two functions differ considerably, even for fixed $t >0$,   
   because in \eref{rec1} the integrand comes close
   to the singular point at $s = 0$ whereas in \eref{rec2} it    does not.

Let $\psi(s) = d_{A(s)}^* w(s)$ again, as in \eref{ib10psi}, 
and let $\tau \ge 0$. 
Define  
\begin{align}
v_{\tau}(t) := w(t) + d_{A(t)}\int_\tau^t \psi(s)ds,  \ \ \ 0 < t <  \infty,            \label{rec23} 
\end{align}
and let
 \begin{align}
 \alpha =\int_0^\tau \psi(s) ds, \ \  \tau > 0. 
  \label{rec26}
 \end{align}
 Clearly   
\beq
v_0(t) = v_\tau(t) + d_{A(t)} \alpha\ \ \text{for }\ \ \ \tau >0.     
 \label{rec27} 
\eeq
 The solution $v(t)$ defined in \eref{rec1} coincides with $v_0(t)$. We will use the notation $v_0(t)$ in this 
 section so as to be able to treat the cases $\tau=0$ and $\tau >0$ as simultaneously as possible.
 We are going to show that      
 
 \noindent
\ \ \ \ a)  $v_\tau(t)$ is an almost strong solution if $\tau =0$. Moreover it has the correct initial value $v_0$, 
          
\noindent
\ \ \ \,b) $v_\tau(t)$ is a strong solution if $\tau >0$.  It differs from the almost strong solution $v_0(\cdot)$
 by the  vertical solution $d_{A(t)} \alpha$. 
 We will see in Section \ref{secinit}  
 that Lemma \ref{lemvert1} is applicable.

The second term in \eref{rec23}  is clearly vertical at $A(t)$ for each $t \in (0,  \infty)$.  
It is the vertical correction to $w$ needed to convert the solution, $w$,  of \eref{av1} to a solution of \eref{ve}.
But   the second term  
 is not a vertical solution itself because the scalar factor $\int_\tau^t \psi(s) ds$ 
 is not independent of $t$.

In this subsection we are going to show,  at an algebraic level, that 
$v_\tau(\cdot)$ satisfies the variational equation over $(0,  \infty)$ for all $\tau \in [0,\infty)$.
In Section \ref{secsas} we will show that 
$v_\tau(\cdot)$ is an almost strong solution if $\tau =0$ and a strong solution if $\tau  >0$.
In Section \ref{secinit} we will show that both solutions converge to their correct initial values in the $L^2$
sense.
In Section \ref{secivas}  
 we will show that $v_0(t)$ converges to 
its initial value $v_0$ in  the sense of $H_b^{\As}$, as asserted in  Theorem \ref{thmveu1e}.

\begin{theorem} \label{thmrec5}  
 $($$v$ and $v_\tau$ solve the variational equation$)$ 
Suppose that $w(s)$ is a solution to the augmented variational equation \eref{av1} on $(0,  \infty)$.
Let $\psi(s) = d_{A(s)}^* w(s)$ again  as in \eref{ib10psi}. 
Fix $\tau \ge 0$ and define  
$v_\tau$  by \eref{rec23}. Then
\beq
-v_\tau'(t) = d_{A(t)}^* d_{A(t)} v_\tau(t) + [ v_\tau(t)\lrc B(t)], \ \ \ 0 < t <  \infty. 
         \label{rec711}
\eeq
Let $\zeta(s) = d_{A(s)}^* d_{A(s)} w(s) +[w(s)\lrc B(s)]$ as in \eref{ib549}. 
Then  $v_\tau$ can also be represented as  
\beq
v_\tau(t) = w(\tau) - \int_\tau^t \Big( \zeta(s) +[A(s) -A(t), \psi(s)] \Big) ds, \ \ \   \ 0 < t <  \infty.   \label{rec750}
\eeq
\end{theorem}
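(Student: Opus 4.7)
The plan is to prove both assertions by direct computation, with the only non-routine ingredient being a gauge-covariant Leibniz-type cancellation that, combined with the Yang-Mills heat equation $A'(t) = -d_{A(t)}^* B(t)$, eliminates the cross terms.

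For \eref{rec711}, I would first differentiate $v_\tau(t) = w(t) + d_{A(t)}\alpha(t)$, where $\alpha(t) := \int_\tau^t \psi(s)\,ds$. Since $\alpha'(t) = \psi(t)$ and $d_{A(t)}\beta = d\beta + [A(t),\beta]$ for a $\kf$-valued function $\beta$, the product rule gives
\begin{align*}
v_\tau'(t) = w'(t) + d_{A(t)}\psi(t) + [A'(t), \alpha(t)].
\end{align*}
Substituting $w'(t) = -\zeta(t) - d_{A(t)}\psi(t)$ from \eref{ib10av} collapses this to $v_\tau'(t) = -\zeta(t) + [A'(t),\alpha(t)]$. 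On the other hand, writing $v_\tau = w + d_A\alpha$ and using $d_A^2 \alpha = [B,\alpha]$ on the $0$-form $\alpha$, I would compute
\begin{align*}
d_A^* d_A v_\tau + [v_\tau \lrc B] = \zeta(t) + d_A^*[B,\alpha] + [d_A\alpha \lrc B].
\end{align*}
Matching the two expressions reduces \eref{rec711} to a single algebraic identity
\begin{align*}
d_A^*[B,\alpha] + [d_A\alpha \lrc B] = [d_A^* B, \alpha],
\end{align*}
because the residual $[A'(t),\alpha(t)] + [d_A^* B, \alpha] = [A' + d_A^* B,\,\alpha]$ vanishes by the Yang-Mills heat equation.

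The key identity is most efficiently verified in coordinates with the conventions of Notation \ref{wedcomm}. Expanding $(d_A^*[B,\alpha])_k = -\sum_j \partial_j^A[B_{jk},\alpha]$ by the Leibniz rule splits it into $[(d_A^* B)_k,\alpha]$ plus an ``extra'' term $-\sum_j [B_{jk}, \partial_j^A \alpha]$, while a direct check of $\<w, [u \lrc v]\> = \<[u\wedge w], v\>$ gives $[d_A\alpha \lrc B]_k = +\sum_j [B_{jk}, \partial_j^A \alpha]$; the two extra terms cancel. I expect this coordinate computation to be the main obstacle, since the sign and index conventions for $[\cdot \lrc \cdot]$ must be tracked precisely to obtain cancellation rather than reinforcement.

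For the integral representation \eref{rec750}, I would apply the fundamental theorem of calculus to $w$ on $[\tau,t]$, namely $w(t) = w(\tau) - \int_\tau^t\bigl(\zeta(s) + d_{A(s)}\psi(s)\bigr)ds$, and move $d_{A(t)}$ under the integral sign in the definition \eref{rec23} using the identity $d_{A(t)}\psi(s) = d_{A(s)}\psi(s) + [A(t)-A(s),\psi(s)]$:
\begin{align*}
d_{A(t)}\int_\tau^t \psi(s)\,ds = \int_\tau^t d_{A(s)}\psi(s)\,ds - \int_\tau^t [A(s)-A(t),\psi(s)]\,ds.
\end{align*}
Adding the two expressions, the $d_{A(s)}\psi(s)$ contributions telescope away, yielding \eref{rec750} immediately.
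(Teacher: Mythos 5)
Your proof is correct and follows essentially the same route as the paper: both compute $v_\tau'(t)$ via the product rule, use $-w' = \zeta + d_A\psi$ to cancel the $d_{A(t)}\psi(t)$ term, and reduce \eref{rec711} to the identity $[d_A^*B,\alpha] = d_A^*[B,\alpha] + [d_A\alpha\lrc B]$ (combined with the Yang-Mills heat equation $A'=-d_A^*B$); the paper invokes this as a consequence of the Bianchi identity without spelling out the coordinate check that you supply. Your derivation of \eref{rec750} via the fundamental theorem of calculus and the connection-shift $d_{A(t)}\psi(s) = d_{A(s)}\psi(s) + [A(t)-A(s),\psi(s)]$ is likewise the same telescoping computation as in the paper, just presented with the final addition of $w(t)$ already folded in.
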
 
      \begin{proof}    
   Let 
   \beq
   \eta(t) = \int_{\tau}^t \psi(s) ds, \ \ \ 0 < t <  \infty. \label{rec758}  
   \eeq	
   Then      
   $v_\tau(t) = w(t) + d_{A(t)}\eta(t)$  by \eref{rec23}. 
   We have   
 \begin{align*}
 \frac{d}{dt} d_{A(t)}\eta(t)  &= [A', \eta] +d_A \eta' \\
 &=  - [d_A^* B, \eta]  + d_Ad_A^* w
 \end{align*}
 because $A' = - d_A^*B$ by the Yang-Mills heat equation.
 Hence, suppressing $t$ in places, we find 
 \begin{align}
 -v_\tau'(t) &= - w'(t) - \frac{d}{dt} d_{A(t)}\eta(t)                                              \notag\\
 &= \Big\{ (d_A^*d_A + d_A d_A^*)w + [w\lrc B]\Big\} - d_A d_A^* w + [d_A^* B, \eta]\notag\\
 & = d_A^*d_A  w +[w\lrc B] +[d_A^* B, \eta]       \notag\\
 &= d_A^*d_A(v_\tau  - d_A\eta)  + [(v_\tau-d_A \eta)\lrc B] + [d_A^* B, \eta]    \notag\\
 & = d_A^* d_A v_\tau +[v_\tau \lrc B] + \Big\{ -d_A^*(d_A)^2\eta - [d_A\eta \lrc B] + [d_A^* B, \eta]  \Big\}.
                                                                                                 \label{rec759}
 \end{align}
 By the Bianchi identity we have 
 $d_A^*(d_A)^2 \eta = d_A^*[B,\eta]
  =[d_A^* B, \eta] - [d_A \eta\lrc B]$.
 The expression in braces  in line \eref{rec759} is  therefore zero.
 This proves \eref{rec711}. 

From \eref{ib10av} we see that   $ d_A \psi = -\{ w' + \zeta\}$.
 Hence 
 \begin{align}
 d\int_\tau^t  \psi(s) ds &=\int_\tau^t \Big(d_{A(s)}\psi(s)   -[A(s), \psi(s)]\Big) ds  \notag\\
&= -\int_\tau^t \Big( \Big\{w'(s) + \zeta(s) \Big\} +[A(s), \psi(s)]\)ds \notag\\
 &=w(\tau) - w(t) -\int_\tau^t \(\zeta(s) +[A(s), \psi(s)]\) ds.    \notag 
 \end{align}
 Thus
 \begin{align}
 d\int_\tau^t  \psi(s) ds & =w(\tau) - w(t) -\int_\tau^t \(\zeta(s) +[A(s), \psi(s)]\) ds\ \ \ \ \ \text{and} \label{rec767}  \\
 d_{A(t)}\int_\tau^t  \psi(s) ds  &=w(\tau) - w(t) -\int_\tau^t \(\zeta(s) +[A(s)-A(t), \psi(s)]\) ds. \label{rec766}
 \end{align}
 Add $w(t)$ to  \eref{rec766} to find \eref{rec750}, given the definition \eref{rec23}. 
This completes the proof of the theorem.
\end{proof}

\begin{remark}{\rm The two representations of $v_\tau(t)$ given in \eref{rec23} and \eref{rec750} differ
in the following crucial way. The second derivatives of $w(s)$ in the integrand in \eref{rec750}
are $d_{A(s)}^* d_{A(s)} w(s)$ whereas the second derivatives of $w$ that appear in \eref{rec23}
(after moving $d_{A(t)}$ under the integrand and shifting time parameter to $s$) are 
$d_{A(s)} d_{A(s)}^* w(s)$. In order to compute $H_1$ norms we will have to use the Gaffney-Friedrichs
inequality, which requires computing exterior derivatives of $v_\tau(t)$ and its coderivatives. 
For computing exterior derivatives
the representation \eref{rec23} will allow us to use the Bianchi identity, while \eref{rec750} will allow
us to compute coderivatives using the adjoint Bianchi identity. 
}
\end{remark}

\subsection{Strong solutions vs. almost strong solutions} \label{secsas}        
  \begin{theorem}\label{thmrec10}     
 Let $ 0 < b <1$.  Choose $T \in (0, \infty)$ and let $\As = A(T)$. Suppose that $w$  is the solution to the augmented variational equation \eref{av1}
 constructed in Theorem \ref{thmwe}.   
 Define $v_\tau(t)$ by \eref{rec23} again.
   Then   
   \begin{align}
   v_\tau(t) &\in  H_1^\As \ \ \ \forall\  t \in (0,  \infty)\ \ \ \text{if}\ \ \tau >0.  \label{rec777}\\
   d_{A(t)} v_\tau(t) &\in H_1^\As \ \ \ \forall\  t \in (0, \infty)\ \ \ \text{if}\ \ \tau \ge 0.   \label{rec776}
   \end{align}
   In particular, $v_\tau(t)$ is a strong solution to the variational equation 
   over $(0,\infty)$ if $\tau >0$ and is
   an almost strong solution if $\tau =0$.  
 \end{theorem}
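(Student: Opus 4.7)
I would prove the two regularity claims \eqref{rec777} and \eqref{rec776} by exploiting the two representations of $v_\tau(t)$ in Theorem \ref{thmrec5}. The representation \eqref{rec23} is best suited to computing $d_{A(t)} v_\tau(t)$, since the Bianchi identity applied to the $0$-form $\eta(t) := \int_\tau^t \psi(s)\,ds$ collapses $d_{A(t)}^2 \eta(t)$ to $[B(t),\eta(t)]$ and yields
\[
d_{A(t)} v_\tau(t) = d_{A(t)} w(t) + [B(t),\eta(t)].
\]
The representation \eqref{rec750} avoids reintroducing derivatives of $\eta$ and is therefore natural for estimating $v_\tau(t)$ itself in $H_1^\As$.

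For claim \eqref{rec777}, with $\tau > 0$, I would argue from \eqref{rec750}. The boundary term $w(\tau)$ lies in $H_1^\As$ by Theorem \ref{thmwe}(a), and it then suffices to show the integrand is $L^1([\tau,t]; H_1^\As)$. Apply the Gaffney--Friedrichs inequality \eqref{gaf49} to the $1$-form $\zeta(s)$ to reduce this to bounds on $\|d_{A(s)}\zeta\|_2$, $\|d_{A(s)}^*\zeta\|_2$, and $\lambda(B(s))\|\zeta(s)\|_2$; the second-order bound \eqref{ib542c} and the first-order bound \eqref{ib10ba} provide these in a weighted $L^2$-in-$s$ sense. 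Because the weights $s^{2-b}$ and $s^{1-b}$ are bounded away from zero on $[\tau,t]$ when $\tau > 0$, the bounds upgrade to unweighted $L^2$, hence $L^1$, control; Lemma \ref{lemeqSob} translates $H_1^{A(s)}$-regularity to $H_1^\As$-regularity. The commutator $[A(s)-A(t),\psi(s)]$ is handled analogously via Leibniz, using the uniform $L^\infty$ bound \eqref{ibA72} on $A(s)-A(t)$ together with the $\psi$ bounds in Theorem \ref{ibord1}.

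For claim \eqref{rec776}, valid for all $\tau \ge 0$, I would use the display above. The summand $d_{A(t)} w(t)$ is in $H_1^\As$ because $w$ is a strong solution to the augmented equation, so $w(t)$ lies in the domain of $\Delta_\As$ for $t > 0$. For $[B(t),\eta(t)]$, I would first show $\eta(t) \in H_1^\As$ by bounding $\int_\tau^t \|\psi(s)\|_{H_1^{A(s)}}\,ds$ via Cauchy--Schwarz against $s^{(b-1)/2}$ (whose square is integrable since $b < 1$), the remaining factor being finite by \eqref{ib10b}--\eqref{ib10ba}. Combined with $B(t) \in H_1 \cap L^\infty$ for $t > 0$ (Definition \ref{defstrsol}(b) and \eqref{iby16}), a Leibniz computation then places $[B(t),\eta(t)]$ in $H_1^\As$. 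Together \eqref{rec777} and \eqref{rec776} yield a strong solution when $\tau > 0$; when $\tau = 0$, only \eqref{rec776} holds and we obtain an almost strong solution. The variational equation \eqref{rec711} itself is supplied directly by Theorem \ref{thmrec5}.

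The main technical obstacle is the $\tau = 0$ case of \eqref{rec776}: the integral $\eta(t)$ inherits the full singular behavior of $\psi$ at the origin, and the Leibniz estimate for $\nabla^\As[B(t),\eta(t)]$ requires pairing $\nabla^\As B(t)$ against $\eta(t)$ in an integrable way. In the Neumann and $\R^3$ cases one can use the high-$L^p$ bounds of Theorem \ref{thmpw1} to bring $\eta(t)$ into $L^\infty$; in the Dirichlet case one instead leans on the parabolic smoothness of $B(t)$ for $t > 0$, which furnishes the missing higher-$L^p$ integrability of $\nabla B(t)$ needed to match the $L^6$ control of $\eta(t)$ coming from Sobolev.
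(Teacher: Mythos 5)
Your identification of the basic structure (using \eqref{rec23} plus Bianchi to write $d_{A(t)} v_\tau(t) = d_{A(t)} w(t) + [B(t),\eta(t)]$, and using \eqref{rec750} for the complementary derivative) matches the paper's strategy, and your alternative route to \eqref{rec777} via showing the integrand of \eqref{rec750} is in $L^1([\tau,t];H_1^\As)$ is plausible. However, there is a genuine gap in your treatment of \eqref{rec776}, and you have correctly spotted the difficulty but not its resolution.

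The gap is this: you propose to put $[B(t),\eta(t)]$ into $H_1^\As$ by a Leibniz expansion of $\nabla^\As[B(t),\eta(t)]$, which produces the cross term $[\nabla^\As B(t),\eta(t)]$. Since the strong-solution hypothesis only gives $\nabla B(t)\in L^2$, and $\eta(t)\in H_1^\As\subset L^6$ gives at best an $L^{3/2}$ product, you are stuck, and the workarounds you float are inadequate. Your $L^\infty$ route via Theorem \ref{thmpw1} only controls $\|\eta(t)\|_\infty$ after a Cauchy--Schwarz that requires $\int_0^t s^{b-(3/2)}ds<\infty$, i.e. $b>1/2$, whereas the theorem covers all $b\in(0,1)$; and it is restricted to Neumann and $\R^3$. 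Your ``parabolic smoothness of $B(t)$'' in the Dirichlet case is not a result established anywhere in the paper's framework: nothing in the standing hypotheses gives $\nabla B(t)\in L^p$ for any $p>2$.

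The resolution, which is what the paper actually does (items \eqref{rec795.0}--\eqref{rec796} of Lemma \ref{lemrec5}), is to bypass $\nabla^\As B(t)$ entirely by going through the Gaffney--Friedrichs inequality, which requires $L^2$ bounds on $d_\As[B(t),\eta(t)]$ and $d_\As^*[B(t),\eta(t)]$ rather than on $\nabla^\As[B(t),\eta(t)]$. After splitting $d_\As = d_{A(t)} + [(\As - A(t))\wedge\cdot]$ (and similarly for $d_\As^*$), the Leibniz expansion of $d_{A(t)}[B(t),\eta(t)]$ produces $d_{A(t)}B(t)$, which vanishes by the Bianchi identity; and the expansion of $d_{A(t)}^*[B(t),\eta(t)]$ produces $d_{A(t)}^*B(t) = -A'(t)$, which lies in $L^\infty$ for $t>0$ by \eqref{iby15}. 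The correction terms involving $\As - A(t)$ are harmless because of the $L^\infty$ bound \eqref{ibA72}. No derivative of $B$ ever appears, no high-$L^p$ bound on $\eta$ is needed, and the argument is uniform across all boundary conditions and all $b\in(0,1)$. This structural use of Bianchi and the heat equation is the essential ingredient your proposal is missing; without it, \eqref{rec776} does not close in the full stated generality.
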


The proof depends on the following  lemma.

\begin{lemma}\label{lemrec5} 
 Assume that $0 < b <1$ and $0 \le t < \infty$   
 Define again $\eta(t)$  by \eref{rec758}. Then
\begin{align}
\|\eta(t)\|_2 &< \infty\ \ \forall\  \tau \ge 0  \ \text{and}\ \forall\  t\ge 0                    \label{rec790} \\
\| d_{A(t)} \eta(t)\|_2 &< \infty\ \ \forall\  \tau \ge 0 \ \ \text{if}\  \  t >0                \label{rec791}\\
\|d_\As  d_{A(t)} \eta(t)\|_2 &< \infty \ \ \forall \  \tau \ge 0 \ \ \text{if}\ \  t >0       \label{rec792}\\
\|\, [ B(t), \eta(t)]\, \|_2 & < \infty       \ \ \forall \  \tau \ge 0 \ \ \text{if}\  \ t >0      \label{rec795.0}\\
\|d_\As [ B(t), \eta(t)] \ \|_2   &< \infty \ \ \forall \   \tau \ge 0  \ \ \text{if}\  \ t >0    \label{rec795} \\
\|d_\As^* [ B(t), \eta(t)] \ \|_2  & < \infty \ \ \forall\   \tau \ge 0  \ \ \text{if}\  \ t >0   \label{rec796} \\
\| d_\As^*\int_\tau^t \zeta(s) ds \|_2 &< \infty\ \ \forall\  \tau > 0  \ \ \text{if}\  \ t >0   \label{rec793}\\
\| d_\As^*\int_\tau^t [A(t) - A(s), \psi(s)]ds \|_2 &< \infty\ \ \forall\  \tau > 0 \ \ \text{if}\  \ t >0 \label{rec794}
\end{align}
\end{lemma}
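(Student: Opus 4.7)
The driving observation is that the finite b-action hypothesis $\nn w\nn_t < \infty$, combined with $b < 1$, yields via Cauchy--Schwarz the integrability of $\|\psi(s)\|_2$ over $[0,t]$:
\begin{align*}
\int_0^t \|\psi(s)\|_2\, ds \le \Bigl(\int_0^t s^{b}\, ds\Bigr)^{1/2}\Bigl(\int_0^t s^{-b}\|\psi(s)\|_2^2\, ds\Bigr)^{1/2},
\end{align*}
both factors being finite by Theorem \ref{ibord1} (via \eref{ib5a}). The same device with weight $s^{(b-1)/2}\cdot s^{-b/2}$ gives $\int_0^t s^{-1/2}\|\psi(s)\|_2\, ds < \infty$ (using $b > 0$) and, using \eref{ib10ba} in place of \eref{ib5a}, $\int_0^t \|\zeta(s)\|_2\, ds < \infty$. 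The plan is to reduce each of the eight bounds to an integral of pointwise quantities controlled by these $L^1$ bounds together with the pointwise initial-behavior estimates from Theorem \ref{thmibA1} and Lemma \ref{lemiby}.

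The bound \eref{rec790} is immediate from Minkowski's inequality applied to $\eta(t) = \int_\tau^t \psi(s)\, ds$. For \eref{rec791} I would invoke the representation \eref{rec766}, which writes
\begin{align*}
d_{A(t)}\eta(t) = w(\tau) - w(t) - \int_\tau^t \zeta(s)\, ds - \int_\tau^t [A(s) - A(t), \psi(s)]\, ds;
\end{align*}
the boundary $w$-terms lie in $L^2$ since $w \in C([0,\infty);L^2)$, the $\zeta$-integral by what was said above, and the cross integral using $\|A(s) - \As\|_\infty \le Cs^{-1/2}$ from \eref{ibA72} (extended to $s > T$ via \eref{iby15}) combined with the $s^{-1/2}$ variant of the Cauchy--Schwarz step. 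The bounds \eref{rec795.0} and \eref{rec792} then follow by pointwise estimation: $\|[B(t),\eta(t)]\|_2 \le c\|B(t)\|_\infty \|\eta(t)\|_2$ with $\|B(t)\|_\infty < \infty$ by \eref{iby16}, and because $\eta(t)$ is a $0$-form the Bianchi-type identity $(d_\As)^2\eta(t) = [B(T),\eta(t)]$ together with the shift $d_{A(t)} = d_\As + [\alpha(t),\,\cdot\,]$ (where $\alpha(t) = A(t)-\As$) yields $d_\As d_{A(t)}\eta(t) = [B(T),\eta(t)] + d_\As[\alpha(t),\eta(t)]$, each summand being $L^2$-controlled by \eref{rec790}, \eref{rec791} and $\|\alpha(t)\|_\infty < \infty$.

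For \eref{rec795} and \eref{rec796} -- where $d_\As$ or $d_\As^*$ is applied directly to $[B(t),\eta(t)]$ -- I would combine the graded Leibniz rule with the two forms of the Bianchi identity. The identities $d_{A(t)} B(t) = 0$ and $d_{A(t)}^* B(t) = -A'(t)$ (the latter being the Yang--Mills heat equation) give, after switching from $d_{A(t)}$ to $d_\As$ at the cost of an $[\alpha(t),\,\cdot\,]$ term,
\begin{align*}
d_\As[B(t),\eta(t)] &= -\bigl[[\alpha(t)\wedge B(t)],\eta(t)\bigr] + [B(t)\wedge d_\As\eta(t)], \\
d_\As^*[B(t),\eta(t)] &= -[d_{A(t)}\eta(t)\lrc B(t)] - [\eta(t),A'(t)] - \bigl[\alpha(t)\lrc [B(t),\eta(t)]\bigr].
\end{align*}
The crucial feature is that every derivative of $B(t)$ has been absorbed into a factor of $\alpha(t)$ or $A'(t)$, both pointwise-bounded by \eref{ibA72} and \eref{iby15}; each term is then $L^2$-bounded using the previously established estimates on $\|\eta(t)\|_2$, $\|d_{A(t)}\eta(t)\|_2$ and $\|d_\As\eta(t)\|_2 = \|d_{A(t)}\eta(t) - [\alpha(t),\eta(t)]\|_2$. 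Finally, the bounds \eref{rec793} and \eref{rec794} that require $\tau > 0$ are the easiest since integration is on a compact subinterval of $(0,\infty)$ bounded away from the singularity: for \eref{rec793} one rewrites $d_\As^*\zeta(s) = d_{A(s)}^*\zeta(s) - [\alpha(s)\lrc\zeta(s)] = [w(s)\lrc A'(s)] - [\alpha(s)\lrc\zeta(s)]$ using \eref{pi11a}, and for \eref{rec794} the adjoint Leibniz rule expands $d_\As^*[A(t)-A(s),\psi(s)]$ into sums of products of pointwise-bounded factors on $[\tau,t]$. The principal obstacle is executing the Bianchi bookkeeping in the third paragraph cleanly so that no derivative falls on $B(t)$ itself, since an uncontrolled $\nabla B$ would require regularity of the curvature we do not have available.
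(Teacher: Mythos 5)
Your general strategy (Cauchy--Schwarz against weighted $L^2$ integrals of $\psi$ and $\zeta$, combined with the $L^\infty$ initial-behavior bounds on $A'(t)$, $B(t)$, $A(t)-\As$) is the same one the paper uses, and most of your reductions are sound. Two specific remarks:

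For \eref{rec791} you route through the representation \eref{rec766} and pair $\|A(s)-A(t)\|_\infty$ with $\|\psi(s)\|_2$; the paper instead estimates $\int_\tau^t\|d_{A(t)}\psi(s)\|_2\,ds$ directly, splitting $d_{A(t)}\psi(s)=d_{A(s)}\psi(s)+[(A(t)-A(s)),\psi(s)]$ and pairing $\|A(t)-A(s)\|_6$ with $\|\psi(s)\|_3$ via \eref{ibA6} and \eref{ib10c}. Both work; yours trades a pointwise $L^\infty$ bound for an $L^6$--$L^3$ split.

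There is however a genuine gap in your treatment of \eref{rec792}. You apply the shift $d_{A(t)}=d_\As+[\alpha(t),\cdot]$ to the \emph{inner} factor $\eta(t)$ first, obtaining
\begin{align*}
d_\As d_{A(t)}\eta(t)=[B(T),\eta(t)]+d_\As[\alpha(t),\eta(t)],
\end{align*}
and assert that the second summand is $L^2$-controlled by \eref{rec790}, \eref{rec791} and $\|\alpha(t)\|_\infty<\infty$. That is not the case: expanding by Leibniz, $d_\As[\alpha(t),\eta(t)]$ produces a term of the form $[d_\As\alpha(t),\eta(t)]$, and $d_\As\alpha(t)$ is a genuine spatial derivative of $A(t)-\As$ that none of your three cited facts touch. (One can still control it --- $d\alpha(t)\in L^3$ by \eref{ibA14}, $\As\wedge\alpha(t)\in L^3$, and $\eta(t)\in L^6$ from \eref{ib10ba} with $b>0$ --- but you would have to invoke all of that explicitly.) The paper avoids the issue entirely by shifting the reference connection on the \emph{outer} derivative instead: writing $d_\As\nu=d_{A(t)}\nu+[(\As-A(t))\wedge\nu]$ for the 1-form $\nu=d_{A(t)}\eta(t)$ gives
\begin{align*}
d_\As d_{A(t)}\eta(t)=[B(t),\eta(t)]-[\alpha(t)\wedge d_{A(t)}\eta(t)],
\end{align*}
in which every term is directly a product of an $L^\infty$ factor ($\|B(t)\|_\infty$ or $\|\alpha(t)\|_\infty$) with an $L^2$ quantity already established ($\|\eta(t)\|_2$ or $\|d_{A(t)}\eta(t)\|_2$). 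The order in which you shift connections matters precisely because the Leibniz rule lands a derivative on a different factor in the two cases. Your identities and estimates for \eref{rec795} and \eref{rec796} are fine --- there every derivative does land on $B(t)$ or $\eta(t)$ in a controlled way, as you say --- but \eref{rec792} needs either the paper's decomposition or the extra $d_\As\alpha(t)$ bookkeeping spelled out.
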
 
                \begin{proof} We will write integrals over $[\tau, t]$ or $[t, \tau]$ as if $ t \ge \tau$ with
 no loss of generality. Let $t_1 = max(t, \tau)$.  The proof of \eref{rec790} follows from the inequalities 
 \begin{align}
 \|\eta(t)\|_2 &\le \int_0^{t_1}\|\psi(s)\|_2 ds  \le \Big(\int_0^{t_1} s^{b} ds \Big)^{1/2}\Big(\int_0^{t_1} s^{-b}
      \|\psi(s)\|_2^2 ds\Big)^{1/2}   \notag\\
      &\le t_1^{(b+1)/2} \(\int_0^{t_1}s^{-b} \|d_{A(s)}^* w(s)\|_2^2 ds\)^{1/2} 
                             \le t_1^{(b+1)/2}2\nn w\nn_{t_1} <\infty                                 \notag
 \end{align} 
  for all  $t\ge0$ and $ \tau \ge 0$ by \eref{ib5a}. 
To prove \eref{rec791} and the remaining inequalities we  take $t >0$. Then     
     \begin{align*}
&\| d_{A(t)} \eta(t)\|_2 \le \int_\tau^t \|d_{A(t)} \psi(s)\|_2ds \\
&\le \int_\tau^t \| d_{A(s)}\psi(s)\|_2 ds  + \int_\tau^t \|A(t) - A(s)\|_6 \|\psi(s)\|_3 ds. \\
&\le \int_0^{t_1} \| d_{A(s)}\psi(s)\|_2 ds +\(\int_\tau^t s^{-a}\|A(t) - A(s)\|_6^2 ds\)^{1/2} 
                   \(\int_0^{t_1} s^a\|\psi(s)\|_3^2 ds\)^{1/2}  \\
                   &<\infty\ \    \text{for all} \ \ t>0\  \text{and}\ \tau \ge 0\  \text{if}\ 0 < b <1\ 
\end{align*}
 because the first term is finite by \eref{ib10ba} (since $b >0$), while the second term
 is a product  of an integral over $[\tau, t]$  (or $[t, \tau]$), which is finite when $\tau >0$ because it  
  excludes a neighborhood of $ s =0$,  and is finite for $\tau =0$ by 
\eref{ibA6}.   The second factor is finite by \eref{ib10c} because $ a \ge 1/2 \ge (1/2) - b$ for all $b \in [0, 1)$.

{\bf Proof of \eref{rec792}. } 
For the proof of \eref{rec792} we have
\begin{align*}
d_\As d_{A(t)} \eta(t) &= d_{A(t)}d_{A(t)}\eta(t) +[(A(T) - A(t))\wedge   d_{A(t)} \eta(t)] \\
&= [B(t), \eta(t)] + [(A(T) - A(t))\wedge   d_{A(t)} \eta(t)]. 
\end{align*}
Therefore
\begin{align}
\|d_\As d_{A(t)} \eta(t)\|_2 \le c \|B(t)\|_\infty \|\eta(t)\|_2 + c \|A(T) -A(t)\|_\infty \|  d_{A(t)} \eta(t)\|_2. \notag
\end{align}
The two  $L^\infty$ 
 norms are finite by  \eref{iby16} and \eref{ibA72} 
 respectively.  \eref{rec792} now follows from \eref{rec790} and \eref{rec791}.

        {\bf Proof of  \eref{rec795.0}, \eref{rec795} and \eref{rec796}.}  
    Since  $\| B(t)\|_\infty < \infty$ by \eref{iby16},  the inequality \eref{rec795.0} follows from
    the inequality $\| [ B(t), \eta(t)]\, \|_2 \le c \|B(t)\|_\infty \| \eta(t)\|_2$ and from \eref{rec790}.
        The identities 
\begin{align}
d_\As &[B(t), \eta(t)]  =d_{A(t)} [B(t), \eta(t)] + [(A(T) - A(t)) \wedge [B(t), \eta(t)]]    \notag\\
 &= -[d_{A(t)} \eta(t) \lrc B(t)] + [(A(T) - A(t)) \wedge [B(t), \eta(t)]],                         \label{rec795.1} \\
d_\As^* &[B(t), \eta(t)] = d_{A(t)}^* [B(t), \eta(t)] +[(A(T) - A(t)) \lrc [B(t), \eta(t)] ]  \notag\\
&= -[A'(t), \eta(t)] - [d_{A(t)} \eta(t) \lrc B(t)]  +     [(A(T) - A(t)) \lrc [B(t), \eta(t)] ]         \label{rec796.1}
\end{align}
are similar  and have similar bounds. Thus
\begin{align}
\|\, [d_{A(t)} \eta(t) \lrc B(t)]\, \|_2  \le c \|B(t)\|_\infty \|d_{A(t)} \eta(t)\|_2,  \notag
\end{align}
which is finite by \eref{iby16} and \eref{rec791}. Moreover 
\begin{align}
\|A'(t)\|_\infty < \infty\ \ \ \text{and}\ \ \ \|  A(T) - A(t) \|_\infty < \infty     \notag
\end{align}
by  \eref{iby15}   and \eref{ibA72}. Therefore the remaining three terms in the lines \eref{rec795.1}
and \eref{rec796.1} have finite $L^2$ norms by \eref{rec790} and \eref{rec795.0}.

            {\bf Proof of \eref{rec793}.}  
To prove \eref{rec793} we may write
       \begin{align}
&\| d_\As^*\int_\tau^t \zeta(s) ds \|_2 
      =\| \int_\tau^t \(d_{A(s)}^*\zeta(s)  +[(A(T)- A(s))\lrc \zeta(s)]\)ds \|_2 \notag\\
&\le \int_\tau^t \|d_{A(s)}^*\zeta(s) \|_2ds  +c \int_\tau^t \| A(T) - A(s)\|_6 \|\zeta(s)\|_3    ds  \notag\\
&\le  \int_\tau^t \|d_{A(s)}^*\zeta(s) \|_2ds        \label{rec798} \\
   &+ c\(\int_0^{t_1} s^{-a}\| A(T) - A(s)\|_6^2 ds \)^{1/2} \(\int_\tau^t s^a\|\zeta(s)\|_3^2 ds \)^{1/2}  \label{rec799}
\end{align}
The  integral over $(0,t_1]$ is finite by  \eref{ibA6}. 
Since the interval $[\tau, t]$ is bounded away from zero the integrals over $[\tau, t]$ in lines \eref{rec798}
and \eref{rec799} are finite by  \eref{ib542c} and \eref{ib548}, respectively. This proves \eref{rec793}.

{\bf Proof of \eref{rec794}.}  
We have the identities 
\begin{align}
d_\As^*&\int_\tau^t [A(t) - A(s), \psi(s)]ds  =\int_\tau^t  d_\As^* [A(t) - A(s), \psi(s)]ds  \notag\\
&=\int_\tau^t \([  d_\As^*(A(t) - A(s)), \psi(s)] +[(A(t) - A(s))\lrc d_\As \psi(s)] \) ds. \notag 
\end{align}
Therefore
\begin{align}
\|d_\As^*&\int_\tau^t [A(t) - A(s), \psi(s)]ds\|_2 \le c\int_\tau^t   \(\| d_\As^*(A(t) - A(s))\|_3 \|\psi(s)\|_6 \notag\\
&\ \ \ \ \ +\|A(t) - A(s)\|_3 \| d_\As \psi(s)\|_6 \) ds \notag\\
&\le c \(\int_\tau^t s^{b-1} \| d_\As^*(A(t) - A(s))\|_3^2 ds\)^{1/2} 
                                \(\int_0^{t_1} s^{1-b} \|\psi(s)\|_6^2 ds \)^{1/2}\notag\\
&\ \ \ \ \  + c\(\int_\tau^t s^{b-2}\|A(t) - A(s)\|_3^2ds\)^{1/2}  \(\int_\tau^t s^{2-b} \| d_\As \psi(s)\|_6^2 ds\)^{1/2} \notag
\end{align}
 The $A$ integrals are finite because the interval $[\tau, t]$ excludes a neighborhood of zero.
The first $\psi$ integral is  finite by \eref{ib10ba}. The second $\psi$ integral is finite because it differs
from \eref{ib543}  by at most $\int_\tau^t s^{2-b}\|A(T) - A(s)\|_\infty^2 \| \psi(s)\|_6^2 ds$, while   
 $\|A(T) - A(s)\|_\infty$ is bounded over $[\tau, t]$ in accordance with \eref{ibA72}.
\end{proof}

\bigskip
\noindent
\begin{proof}[Proof of Theorem \ref{thmrec10}] Let $\tau >0$.
In order to prove that $v_\tau(t) \in H_1^\As$ for $t >0$ it suffices, by the Gaffney-Friedrichs inequality,
 to show that $v_\tau(t), d_\As v_\tau(t)$ and $d_\As^* v_\tau(t)$ are all in $L^2(M)$ for each $t >0$. Now 
 \eref{rec23} and \eref{rec750} show that  
\begin{align}
 d_\As v_\tau(t) &=  d_\As w(t) + d_\As d_{A(t)} \int_\tau^t \psi(s) ds,              \label{rec784}\\
 d_\As^* v_\tau(t) &=d_\As^* w(\tau) - d_\As^*\int_\tau^t \(\zeta(s) +[A(s)- A(t),\psi(s)] \) ds. \label{rec785}
 \end{align}
 Since   $w(t)$ and $w(\tau)$ are both in $H_1^{\As}$ for $t > 0$  and $\tau > 0$ we need only
  address the second term in each line. But 
   the second term in line \eref{rec784} is in $L^2(M)$ by \eref{rec792} and
  the second term in line \eref{rec785} is in $L^2(M)$ by \eref{rec793} and \eref{rec794}.
  $v_\tau(t)$ itself is in $L^2(M)$ by  \eref{rec23} and \eref{rec791}. Hence $v_\tau(t) \in H_1^\As$
   for each $t >0$ when  $\tau >0$. In order to show that it is  strong solution to the 
    variational    equation  \eref{ve}, we only need to show    
   that $d_{A(t)}v_\tau(t)$ is in $H_1^\As$ for each $t >0$ since Theorem \ref{thmrec5} already shows that
   it satisfies the variational equation informally. From \eref{rec23} and the Bianchi identity we see that
   \begin{align}
   d_{A(t)} v_\tau(t) = d_{A(t)} w(t) + [B(t), \eta(t)].
   \end{align}
   The first term is in $H_1^\As$  because $w$ is a strong solution by Theorem \ref{thmwe}. 
    The second term is in $H_1^\As$ by the Gaffney-Friedrichs inequality
    in view of \eref{rec795.0}, \eref{rec795} and \eref{rec796}.
          Therefore $v_\tau(\cdot)$ is a strong solution when $\tau >0$. But the last 
   argument shows that $d_{A(t)} v_\tau(t) \in H_1^{\As}$ for any $\tau \ge 0$ because 
     \eref{rec795.0},      \eref{rec795} and \eref{rec796}  all hold for  any $\tau \ge 0$. 
           Therefore $v_\tau(t)$ is an almost strong solution    even for $\tau =0$.
 This completes the proof of Theorem \ref{thmrec10}.  
\end{proof}

\section{Recovery of $v$ from $w$: initial value}  \label{secinitv}

 We have shown in Section  \ref{secrec}  that the functions $v$ and $v_\tau$,
defined in \eref{rec1} and \eref{rec2}, are respectively almost strong
and strong solutions to the variational equation over $(0, \infty)$. In Section \ref{secinit} 
we will show  that both take on their correct initial values in the sense
 of $L^\rho(M; \L^1\otimes \kf)$ convergence for $2\le \rho <3$. We will show in Section \ref{secivas} that 
 the almost strong solution attains its initial value in the stronger
  sense of $H_b^\As$ convergence.     And in Section
 \ref{secrecab} we will show that the strong solution has finite b-action. For the latter two results we
 will have to use some non-gauge invariant techniques along with the non-gauge invariant 
 hypothesis that $\| A(s)\|_3 < \infty$ for some $s >0$.

\subsection{Initial values in the $L^\rho$ sense}   \label{secinit}

It will be convenient to write \eref{rec3} in the form
 \begin{align}
 \alpha_\tau = \int_0^\tau \psi(s) ds, \label{rec3b}
 \end{align}
 where $\psi(s) = d_{A(s)}^*w(s)$ since we will make extensive use of the initial behavior 
  of $\psi(s)$ and some of its derivatives that has been established in Section \ref{secibw}.

       \begin{theorem} \label{strinit} Assume that $1/2 \le a < 1$ and $1/2 \le b < 1$. 
Suppose that $ 2 \le \rho < 3$ and let $ \tau >0$. 
 Denote by $w$ the strong solution to the augmented variational equation constructed
  in Theorem \ref{thmwe}. Define $v(t)$ and $v_\tau(t)$ by \eref{rec1} and \eref{rec2} respectively. Then 
\begin{align}
\| v_\tau(t) - \(v_0 - d_{A_0} \alpha_\tau\)\|_\rho \rightarrow  &0
                                     \ \ \ \text{as}\ \ \ t \downarrow 0 \ \ \ \text{and} \label{rec976} \\
 \sup_{0\le t \le 1} \| v_\tau(t) - v(t)\|_\rho \rightarrow &0 \ \ \ \text{as}\ \ \ \tau  \downarrow 0. \label{rec977}
\end{align}
\end{theorem}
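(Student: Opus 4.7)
Using \eref{rec23} and the identity $\int_\tau^t\psi(s)\,ds = -\alpha_\tau + \int_0^t\psi(s)\,ds$, we rewrite
\[
v_\tau(t) = w(t) - d_{A(t)}\alpha_\tau + d_{A(t)}\int_0^t\psi(s)\,ds,
\]
so that
\[
v_\tau(t) - \bigl(v_0 - d_{A_0}\alpha_\tau\bigr) = \bigl(w(t)-v_0\bigr) - \bigl[A(t)-A_0,\alpha_\tau\bigr] + d_{A(t)}\int_0^t\psi(s)\,ds,
\]
while $v_\tau(t) - v(t) = -d_{A(t)}\alpha_\tau$ follows directly from \eref{rec1} and \eref{rec2}. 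Thus \eref{rec976} reduces to $L^\rho$-vanishing of the three displayed terms as $t\downarrow 0$, and \eref{rec977} reduces to $\sup_{t\in[0,1]}\|d_{A(t)}\alpha_\tau\|_\rho \to 0$ as $\tau\downarrow 0$.

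\textbf{Proof of \eref{rec976}.} (i) By Theorem \ref{thmwe}, $w(t)\to v_0$ in $H_b^\As$ with $b\ge 1/2$; the Sobolev embedding $H_{1/2}^\As\hookrightarrow L^\rho$ for $2\le\rho<3$ gives $L^\rho$-convergence. (ii) The continuity \eref{vst356} (with $s=0$) gives $\|A(t)-A_0\|_3\to 0$. H\"older and Sobolev yield
\[
\|[A(t)-A_0,\alpha_\tau]\|_\rho\le c\|A(t)-A_0\|_3\,\|\alpha_\tau\|_q,\qquad q = \tfrac{3\rho}{3-\rho},
\]
with $\|\alpha_\tau\|_q\le c_\rho\|\alpha_\tau\|_{W^{1,\rho}}<\infty$ for fixed $\tau>0$ (the $H_1^\As$ regularity of $\alpha_\tau$ from Theorem \ref{thmrec1} and the bound \eref{ib715} control $\|\alpha_\tau\|_\rho$ and $\|d\alpha_\tau\|_\rho$). (iii) For the fresh vertical term, commute the operator past the integral to get
\[
d_{A(t)}\int_0^t\psi(s)\,ds = \int_0^t d_{A(s)}\psi(s)\,ds + \int_0^t[A(t)-A(s),\psi(s)]\,ds.
\]
The first integrand has $L^\rho$-norm integrable on $(0,T)$ by \eref{ib715}, so the integral over $(0,t)$ is $o(1)$. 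The second is controlled by H\"older with exponents $(6,p)$, $1/p=1/\rho-1/6\in[1/6,1/3]$, and the Cauchy--Schwarz split
\[
\int_0^t\|A(t)-A(s)\|_6\|\psi(s)\|_p\,ds \le \Bigl(\int_0^t s^{-1/2}\|A(t)-A(s)\|_6^2\,ds\Bigr)^{\!1/2}\Bigl(\int_0^t s^{1/2}\|\psi(s)\|_p^2\,ds\Bigr)^{\!1/2};
\]
the first factor is $o(1)$ by \eref{ibA6}, and the second is finite and $o(1)$ by interpolating $\|\psi(s)\|_p$ between the $L^2$-action bound \eref{ib5a} and the $L^6$-action bound \eref{ib10ba}.

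\textbf{Proof of \eref{rec977}.} Decompose $d_{A(t)}\alpha_\tau = d\alpha_\tau + [A(t),\alpha_\tau]$. The first summand is $t$-independent: $\|d\alpha_\tau\|_\rho\le \int_0^\tau\|d\psi(s)\|_\rho\,ds\to 0$ as $\tau\downarrow 0$, by the same decomposition as step (iii) applied on $[0,\tau]$ (using \eref{ib715}). For the commutator, $\sup_{t\in[0,1]}\|A(t)\|_3<\infty$ follows from the $L^3$-continuity \eref{vst356} on a compact interval, so
\[
\sup_{t\in[0,1]}\|[A(t),\alpha_\tau]\|_\rho \le C\|\alpha_\tau\|_q,\qquad q = \tfrac{3\rho}{3-\rho},
\]
and $\|\alpha_\tau\|_q\le c_\rho(\|\alpha_\tau\|_\rho + \|d\alpha_\tau\|_\rho)$ by Sobolev; both summands vanish as $\tau\downarrow 0$, the first via elementary $L^2$--$L^6$ interpolation of $\|\psi(s)\|_\rho$ together with $\|\alpha_\tau\|_2\le\int_0^\tau\|\psi(s)\|_2\,ds = O(\tau^{(1+b)/2})$, and the second by the argument just used for $\|d\alpha_\tau\|_\rho$.

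\textbf{Main obstacle.} Step (iii) is the crux. One must pair the coefficient $\|A(t)-A(s)\|_6$ -- which blows up like $s^{-1/4}$ as $s\downarrow 0$ by \eref{ibA5} -- against the borderline $L^p$ behavior of $\psi(s)$, with $p$ ranging over $[3,6]$. The $s^{-1/2}$ and $s^{1/2}$ weights in the Cauchy--Schwarz split are calibrated precisely so that \eref{ibA6} and \eref{ib10ba} deliver the required integrability, and the vanishing is inherited from the standard-dominating-function property of the estimates in Section \ref{secibA} and Theorem \ref{ibord1}. The uniformity in $t$ for \eref{rec977} is then easier, because it factors through the non-singular quantity $\sup_{t\in[0,1]}\|A(t)\|_3$ rather than through $\|A(t)\|_6$.
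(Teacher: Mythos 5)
Your proof is correct and, at the structural level, is the same as the paper's: the decomposition
\[
v_\tau(t)-\bigl(v_0-d_{A_0}\alpha_\tau\bigr)=(w(t)-v_0)+d_{A(t)}\!\int_0^t\psi(s)\,ds-\bigl[A(t)-A_0,\alpha_\tau\bigr]
\]
and $v_\tau(t)-v(t)=-d_{A(t)}\alpha_\tau$ are exactly what the paper uses, and the key input \eref{ib715} is the same. The difference is that the paper factors everything through Lemma \ref{lemvert6}, which establishes $\alpha_\tau\in L^p$ for all $p<\infty$, the joint continuity \eref{rec713ab} of $(t,\tau)\mapsto d_{A(t)}\alpha_\tau$ into $L^\rho$, and the $\sup_t$-vanishing \eref{rec713c}; the theorem's proof is then a two-line citation. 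You instead inline the substance of that lemma, and your technical estimates diverge from the paper's in an instructive way. The paper stays gauge-covariant: it writes $d_{A(t)}\alpha_\tau=\int_0^\tau d_{A(s)}\psi(s)\,ds+\int_0^\tau[A(t)-A(s),\psi(s)]\,ds$, pairs the uniformly bounded quantity $\|A(t)-A(s)\|_3$ (from \eref{vst356}) against $\|\psi(s)\|_p$ with $p\in[6,\infty)$, and gets the $L^p$-integrability of $\psi$ from the gauge-covariant Sobolev inequality $\|\psi\|_p\lesssim\|d_{A(s)}\psi\|_\rho+\|\psi\|_2$ applied to \eref{ib715}. You use the non-covariant split $d_{A(t)}\alpha_\tau=d\alpha_\tau+[A(t),\alpha_\tau]$, pair the mildly blowing-up $\|A(t)-A(s)\|_6\sim s^{-1/4}$ against $\|\psi(s)\|_p$ with $p\in[3,6)$, and compensate with the weighted Cauchy--Schwarz split $s^{-1/2}/s^{1/2}$, controlling $\int_0^t s^{1/2}\|\psi(s)\|_p^2\,ds$ by interpolation between \eref{ib5a} and \eref{ib10ba}. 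Both calibrations land in the same place; the paper's version is slightly cleaner in that it never touches $\|A(t)\|_3$ or $\|A(s)\|_6$ directly, which matters for the planned extension in which $A(t)\notin L^3(\R^3)$. A few of your steps are terse (the commutator $\|[A(s),\psi(s)]\|_\rho$ hidden in the bound for $\|d\alpha_\tau\|_\rho$, and the interpolation behind $\|\alpha_\tau\|_\rho\to 0$), but each can be filled in exactly as sketched, so there is no gap.
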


Since $v_\tau$ is a strong solution to the variational equation, by Theorem \ref{thmrec10}, 
it is continuous on $(0, \infty)$ into $H_1^\As$ and therefore also into $H_b^\As$ for $0 \le b \le 1$. 
Theorem \ref{strinit} is properly concerned, therefore, only with the behavior of $v_\tau$ at $t =0$.

\begin{lemma}\label{lemvert6}   $($Continuity of the vertical correction$)$ 
 Let $1/2 \le a < 1$ and $1/2\le b<1$. 
 Let $0 < T < \infty$ and let $\As = A(T)$.
 Suppose that $2 \le \rho <3$.  
 Then  
 \begin{align}
 &\ \alpha_\tau \in  L^p(M; \kf)\ \ \text{for}\ \ 2 \le p <\infty,                     \label{rec713d} \\
 &\ \alpha_\tau \in H_1^\As(M; \kf),      \label{rec713e} \\
 &\sup_{0 < t \le 1} \|d_{A(t)}\alpha_\tau \|_\rho \rightarrow 0\ \ 
                \text{as}\ \ \tau \downarrow 0.                                             \label{rec713c} \\
  &\ t,\tau \mapsto d_{A(t)} \alpha_\tau\in  L^\rho(M; \L^1\otimes \kf)  \ \ 
                     \text{is continuous on}\  [0,\infty)^2.                     \label{rec713ab} 
 \end{align}
 \end{lemma}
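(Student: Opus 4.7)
The plan is to write $\alpha_\tau = \int_0^\tau \psi(s)\,ds$ and to estimate the integrand using the initial-behavior bounds from Sections \ref{secibw} and \ref{sechighp}. For items (iii) and (iv), the decisive trick will be the decomposition $d_{A(t)}\w = d_\As \w + [A(t) - A(T),\w]$, which separates the $t$-dependence of the covariant differential from the time integration in $s$. Since the gauge invariant hypothesis \eref{vst356} gives uniform $L^3$ continuity of $A(\cdot) - A(T)$ on $[0,1]$, the awkward $t$-dependence can then be moved into a simple multiplicative factor controlled by $L^3$ norms.

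For (i), I will apply Minkowski's integral inequality to get $\|\alpha_\tau\|_p \le \int_0^\tau \|\psi(s)\|_p\,ds$. For $p \in [2,6]$, interpolate between the $L^2$ bound implied by \eref{ib5a} and the $L^6$ bound \eref{ib10ba}, then apply Cauchy-Schwarz in $s$ against the weight $s^{-b}$. For $p \in [6,\infty)$, use the Neumann-domination estimate \eref{ib200p} of Theorem \ref{thmpw1} together with Cauchy-Schwarz against $s^{(3/2)-b-3/p}$; the complementary weight is integrable over $(0,\tau)$ precisely because $p < \infty$. Along the way this also shows $\|\alpha_\tau\|_p \to 0$ as $\tau\downarrow 0$ for all finite $p$, a fact needed in (iii) and (iv). For (ii), I will expand
\begin{align*}
d_\As \alpha_\tau = \int_0^\tau d_\As \psi(s)\,ds = \int_0^\tau \Big( d_{A(s)}\psi(s) + [A(T)-A(s),\psi(s)] \Big)\,ds,
\end{align*}
bound the first summand in $L^2$ by Cauchy-Schwarz using \eref{ib10ba}, and the second by $c\|A(T)-A(s)\|_3\|\psi(s)\|_6$, where the first factor is uniformly bounded by \eref{vst356} and the second integrates to a finite value by the argument of (i). Together with $\alpha_\tau \in L^2$ from (i), this gives $\alpha_\tau \in H_1^\As$.

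For (iii), decompose $d_{A(t)}\alpha_\tau = d_\As \alpha_\tau + [A(t)-A(T),\alpha_\tau]$. The first term is independent of $t$; its $L^\rho$ norm is controlled by $\int_0^\tau \|d_\As\psi(s)\|_\rho\,ds$, which after the splitting $d_\As\psi = d_{A(s)}\psi + [A(T)-A(s),\psi(s)]$ is estimated by \eref{ib715} (first piece) and by $c\|A(T)-A(s)\|_3\|\psi(s)\|_q$ with $1/q = 1/\rho - 1/3$ (second piece), both of which tend to $0$ as $\tau \downarrow 0$ by (i) applied at exponent $q$. The second term is bounded in $L^\rho$ by
\begin{align*}
\|[A(t) - A(T),\alpha_\tau]\|_\rho \le c\,\|A(t) - A(T)\|_3\,\|\alpha_\tau\|_q,
\end{align*}
where the first factor is uniformly bounded in $t \in [0,1]$ by \eref{vst356} and the second tends to $0$ by (i). For (iv), for a base point $(t_0,\tau_0)$ I will use the identity $d_{A(t)}\alpha_\tau - d_{A(t_0)}\alpha_{\tau_0} = [A(t)-A(t_0),\alpha_\tau] + d_{A(t_0)}(\alpha_\tau - \alpha_{\tau_0})$. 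As $(t,\tau)\to(t_0,\tau_0)$, the first term vanishes in $L^\rho$ by \eref{vst356} and the $L^q$ boundedness of $\alpha_\tau$ near $\tau_0$; the second vanishes for $\tau_0 > 0$ by dominated convergence on the integral representation of $\alpha_\tau - \alpha_{\tau_0}$, while the case $\tau_0 = 0$ reduces directly to (iii) evaluated at the single time $t_0$.

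The main obstacle is controlling $\|\alpha_\tau\|_q$ and $\|\psi(s)\|_q$ for $q > 6$ (required when $\rho$ is close to $3$), which forces us to invoke the Neumann-domination machinery of Section \ref{sechighp} rather than the more elementary energy/interpolation bounds of Section \ref{secibw}; this is also what explains the restriction to $\rho < 3$. Everything else is a bookkeeping exercise in H\"older and the weighted $s$-integrals already established.
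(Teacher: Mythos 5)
Your overall structure is sound—Minkowski plus H\"older bookkeeping through $\psi$, and separating the $t$-dependence of $d_{A(t)}$ from the $s$-integration via a commutator split—but there is one genuine gap. You invoke the Neumann-domination estimate \eref{ib200p} from Theorem \ref{thmpw1} to control $\int_0^\tau \|\psi(s)\|_p\,ds$ for $p \in [6,\infty)$. That theorem is restricted to $M = \R^3$ or Neumann boundary conditions (see the remark following it: ``Dirichlet boundary conditions are noticeably absent in the allowed hypothesis''). But Lemma \ref{lemvert6} feeds into Theorem \ref{strinit} and hence Theorem \ref{thmveu1e}, which are stated for all three boundary regimes; the paper even emphasizes that Theorem \ref{thmveu2b} ``is the only theorem in this paper in which Dirichlet boundary conditions fail to be encompassed by our techniques.'' Your proof, as written, would not cover the Dirichlet case, so the lemma would only be established in a strictly smaller setting than required.

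The fix, and what the paper actually does, is to get the large-$p$ control on $\psi$ from the Sobolev embedding applied to the energy estimates of Section \ref{secibw} rather than from Neumann domination. For $p \in [6,\infty)$ choose $\rho \in [2,3)$ with $p^{-1} = \rho^{-1} - 1/3$; then $\|\psi(s)\|_p \le \kappa_p\bigl(\|d_{A(s)}\psi(s)\|_\rho + \|\psi(s)\|_2\bigr)$, and integrating in $s$ one uses \eref{ib715} (which is proved from \eref{ib218}, \eref{ib10ba}, \eref{ib543} and is boundary-condition agnostic) together with \eref{rec790}. This yields $\int_0^\tau\|\psi(s)\|_p\,ds < \infty$ for all finite $p$ without ever touching Section \ref{sechighp}. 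The paper records precisely your alternative as a remark and notes why it was rejected: \eref{ib200p} ``disallows Dirichlet boundary conditions.'' Your belief that you were ``forced'' to the Neumann machinery for $q > 6$ is the misconception to correct. The remaining parts of your argument—the split $d_{A(t)} = d_\As + [A(t)-A(T),\,\cdot\,]$ with a further split $d_\As\psi(s) = d_{A(s)}\psi(s) + [A(T)-A(s),\psi(s)]$—are correct and equivalent, up to regrouping, to the paper's one-step identity $d_{A(t)}\alpha_\tau = \int_0^\tau d_{A(s)}\psi(s)\,ds + \int_0^\tau [A(t)-A(s),\psi(s)]\,ds$; the H\"older pairing $\rho^{-1} = 3^{-1} + p^{-1}$ with $p < \infty$ (hence $\rho < 3$) matches the paper's, and the use of \eref{vst356} for uniform $L^3$ control of $A(t)-A(T)$ on compact $t$-intervals is correct.
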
 
            \begin{proof}             
 From  \eref{ib715} we see that for $1/2 \le b <1$ we have 
    \begin{align}
   \int_0^\tau \|d_{A(s)} \psi(s)\|_\rho ds  
           &< \infty  \ \ \text{if}\ \ 2\le \rho<3\ \ \ \text{and}  \label{rec715b} \\
   \int_0^\tau \|d_{A(s)} \psi(s)\|_\rho  ds  
          &\rightarrow 0\ \  \text{as}\ \  \tau\downarrow 0 \ \ \text{if}\ \ 2\le \rho<3.  \label{rec715c}
   \end{align}
   For any number $p \in [6,\infty)$ there is a number $\rho \in [2,3)$ such that $p^{-1} = \rho^{-1} - (1/3)$.
   There is therefore a Sobolev constant $\kappa_p$ such that 
   $ \|\psi(s)\|_p \le \kappa_p\(\|d_{A(s)} \psi(s)\|_\rho + \|\psi(s)\|_2\)$. 
      (Actually, the term $\|\psi(s)\|_2$ is not needed if $M = \R^3$.) 
   Hence, in view of \eref{rec715b}, we have  
   \beq
   \int_0^\tau \|\psi(s)\|_p ds <\infty                         \label{rec715e}
   \eeq
       since $\int_0^\tau\|\psi(s)\|_2 ds < \infty$,  as shown in \eref{rec790}. 
   Since \eref{rec715e} holds also for $p = 2$, it holds for  all $p \in [2, \infty)$.      
   This proves \eref{rec713d}.    
   
   For all  $\tau \ge 0$    
   and $ t \ge 0$  we can write 
\beq
d_{A(t)}\alpha_\tau = \int_0^\tau d_{A(s)} \psi(s) ds + \int_0^\tau [A(t) - A(s), \psi(s)] ds.  \label{rec714}
\eeq
Hence 
   \begin{align}
   &\|d_{A(t)} \alpha_\tau \|_\rho  \le \int_0^\tau  \|d_{A(s)} \psi(s)\|_\rho\ ds 
                                                                                    + \| \int_0^\tau [A(t) - A(s), \psi(s)] ds\|_\rho   \notag\\
    &\ \ \ \  \le   \int_0^\tau  \|d_{A(s)} \psi(s)\|_\rho\ ds 
                   + c \sup_{0 < s \le \tau}\| A(t) - A(s)\|_3\int_0^\tau \|\psi(s)\|_p ds,            \label{rec716}
 \end{align}
 where $\rho^{-1} = 3^{-1} + p^{-1}$. The first integral is finite by \eref{rec715b}. 
 Since  $\rho <3$ we have  $p < \infty$ and therefore the integral 
 in \eref{rec716}   is finite  by \eref{rec715e}.  
 The supremum in line \eref{rec716} is finite by \eref{vst356}.
 
             It follows from  
       \eref{rec715b} and \eref{rec716} that  
\begin{align}
d_{A(t)}\alpha_\tau &\in L^\rho( M),\ \  t \ge 0, \ \ \tau \ge 0.    
    \label{rec713a}
\end{align} 
In particular, for $t = T$ and $\rho =2$ we can conclude that \eref{rec713e} holds.
The last estimate in line \eref{rec716} also shows that 
$\sup_{0\le t \le 1}\| \int_0^\tau [A(t) - A(s), \psi(s)] ds\|_\rho \rightarrow 0$ 
 as $\tau \downarrow 0$, which together with   \eref{rec715c} shows that \eref{rec713c} holds.

 It remains to prove the joint continuity \eref{rec713ab}. For $0 \le \tau_0 \le \tau$ we have
 \begin{align}
 &\|d_{A(t)} \alpha_\tau - d_{A(t_0)} \alpha_{\tau_0} \|_\rho
 \le \|(d_{A(t)} -d_{A(t_0)}) \alpha_\tau\|_\rho + \|d_{A(t_0)}(\alpha_\tau - \alpha_{\tau_0})\|_\rho \notag\\
 &\le c \| A(t) - A(t_0)\|_3 \|\alpha_\tau\|_p 
 +\Big\|\int_{\tau_0}^\tau \(d_{A(s)} \psi(s) +[ A(t_0) - A(s)), \psi(s)]\) ds\Big\|_{\rho} \notag \\
&\le c \| A(t) - A(t_0)\|_3  \int_0^\tau \| \psi(s)\|_p ds  +\int_{\tau_0}^\tau  \|d_{A(s)} \psi(s)\|_\rho  \notag\\
& \ \ \ \   +c\sup_{t_0 \le s \le t} \|A(t_0 ) - A(s)\|_3 \int_{\tau_0}^\tau \|\psi(s)\|_p ds.  \notag
 \end{align}
 All three terms go to zero as $|t-t_0| + |\tau - \tau_0| \rightarrow 0$. This concludes the proof of the lemma.
  \end{proof}

\begin{remark}{\rm  (Larger $\rho$ from larger $b$)
The restriction on $\rho$ specified in \eref{rec715a} allows larger $\rho$  for larger $b$. 
In order for this to yield larger $\rho$ in \eref{rec713c} and \eref{rec713ab}  
when $b > 1/2$  it seems
unavoidable  to assume also that $a > 1/2$, so as to allow larger $\rho$ on the left side of \eref{rec716}. 
 For example if $a > 1/2$ then one can use $\|A(t) - A(s)\|_q$ 
 in \eref{rec716} for some $q >3$,  allowing a finite value of $p$ even if $\rho \ge 3$. We won't pursue
 the arithmetic needed for this because we don't foresee a need for this  extension.
 It might be of some interest to note that the vital condition \eref{rec715e}, which we have derived 
 from a Sobolev inequality, also follows from the high $L^p$ bound \eref{ib200p} for $b \ge 1/2$ 
 because  $(3/2) - b - (3/p) < 1$.
 But the use of \eref{ib200p}  disallows Dirichlet  boundary conditions. 
 Perhaps of some ultimate importance is the fact that for $\rho=2$ we have $p =6$ in \eref{rec716}
 and in this case we  
 can use  the simple energy bound \eref{ib10ba}, which already implies that
 $\int_0^\tau \|\psi(s)\|_6 ds < \infty$ for $b >0$.
}
\end{remark}

\bigskip
\noindent
\begin{proof}[Proof of Theorem \ref{strinit}] By \eref{rec1} we have $v(t) = w(t) + d_{A(t)}\alpha_t$.
Both terms are continuous functions of $t \in [0,\infty)$  into $L^\rho(M, \L^1\otimes \kf)$, the first because
$H_b^\As \hookrightarrow L^2 \cap L^3$ is continuous,  and the second  by  \eref{rec713ab}. Thus $v(t)$ is a continuous function into $L^\rho(M; \L^1\otimes \kf)$. Since, by \eref{ve21}, 
 $v_\tau(t) = v(t) - d_{A(t)}\alpha_\tau$, $v_\tau$ is also a continuous function
  into $L^\rho(M;\L^1\otimes \kf)$ by \eref{rec713ab}. 
  This proves \eref{rec976}. Moreover \eref{rec713c} proves \eref{rec977}.
\end{proof}

\bigskip
\noindent
\begin{proof}[Proof of Lemma \ref{lemvert1}](Vertical solutions) 
We  wish to show that the function $z(t): =d_{A(t)} \alpha$ is an almost strong solution
 of the variational equation when $\alpha$ is an element of $H_1^\As(M; \kf)$.
 Note, by the way, that this hypothesis is satisfied by the elements $\alpha_\tau$ defined in \eref{rec3b},
 as we see from \eref{rec713e}.
Under our present hypothesis $z(t)\in L^2(M;\L^1\otimes \kf)$, although it 
 need not be in $H_1^\As$ for any $t > 0$. But by the Bianchi identity, 
\beq
   d_{A(t)} z(t) = [B(t), \alpha],                                  \label{rec01}
   \eeq
 which we will show  is in $H_1^\As$.  First notice that the computation that gives \eref{rec01}
 involves second derivatives of $\alpha$,  which may only exist as distributions. The second order 
 derivatives that appear give $d^2 \alpha$ which is zero in the distribution sense. 
 Thus although the right side of
 \eref{rec01} is a well defined function, the equation has to be interpreted in the distribution sense.
 Now the identity  $ d_{A(t)}^* [B(t), \alpha] = [d_{A(t)}^* B(t), \alpha]  - [d_{A(t)}\alpha \lrc B(t)]$, 
 together with \eref{rec01} and the Yang-Mills heat equation, $ -A'(t) = d_{A(t)}^* B(t)$, yields
 \begin{align}
  d_{A(t)}^* d_{A(t)} z(t) &= -[A'(t), \alpha]  - [z(t) \lrc B(t)], \label{rec02}\\
   d_{A(t)} d_{A(t)} z(t) &= [B(t)\wedge z(t)].    \label{rec03}
   \end{align}
   Since $B(t)$ and $A'(t)$ are both bounded for each $t >0$ by \eref{iby16} and \eref{iby15}, and
   since $\alpha \in L^2(M;\kf)$ and $z(t) \in L^2(M;\L^1\otimes \kf)$, the right sides of
   \eref{rec01}, \eref{rec02} and \eref{rec03} are all in $L^2$. The Gaffney-Friedrichs
   inequality now shows that $d_{A(t)} z(t)$ is in $H_1^{A(t)}$. At the same time, the definition 
   $z(t) = d_{A(t)} \alpha$ 
   shows that  
   $z(t)$ is a solution to the variational equation because $-z'(t) = -[A'(t), \alpha] =d_A^*d_A z + [z\lrc B]$,
   as follows from \eref{rec02}. Hence $z(t)$ is a vertical, almost strong solution to the variational equation.  
   
   In case $z(t_0) \in H_1^\As$ for some $t_0 > 0$ then the identity $z(t) = z(t_0) + [A(t) - A(t_0), \alpha]$ shows
   that $z(t)$ will also be in $H_1^\As $ if $[A(t) - A(t_0), \alpha] \in H_1^\As$.
   But for $t >0$ the latter is in $H_1^\As$ by another Gaffney-Friedrichs argument: Let $\beta = A(t) - A(t_0)$.
   Then $d_\As^* \beta \in L^3$ by \eref{ibA15c},  $d_\As \beta \in L^3$ by \eref{ibA14} 
   since $\As\wedge \beta \in L^6 \cdot L^6 \subset L^3$,  and $\beta \in L^\infty$ by \eref{ibA72}.
   In the meanwhile $d_\As \alpha \in L^2$ and $\alpha \in L^6\cap L^2$. 
   The product rule now shows that $[\beta, \alpha], d_\As [\beta, \alpha]$ and $d_\As^*[\beta, \alpha]$
   are all in $L^2$. Therefore $[\beta,\alpha] \in H_1^\As$.
      
   Thus $z(\cdot)$ is a strong solution if and only if $z(t_0) \in H_1^\As$ for some $t_0 > 0$. 
               This completes  the proof of Lemma \ref{lemvert1}.                
\end{proof}

\subsection{A non-gauge-invariant representation of $v_\tau$}

 The representations of $v$ and $v_\tau$ given in \eref{rec1} and \eref{rec2} 
   capture    the infinitesimal analog of the ZDS procedure. 
The next theorem  gives another, highly non-gauge invariant
 representation of both. It will be needed to prove that 
 $v(t)$ converges to $v_0$ in the $H_b^\As$ norm as $t\downarrow 0$ and to prove that 
 $v_\tau$ has finite b-action for $\tau>0$.

\begin{theorem} \label{thmrec5b}  
 $($A non-gauge-invariant represenstation of $v_\tau(t)$.$)$
Suppose that $w(s)$ is a solution to the augmented variational equation \eref{av1} on $(0,\infty)$.
Let $\psi(s) = d_{A(s)}^* w(s)$ again  as in \eref{ib10psi}. 
Fix $\tau \ge 0$ and define  $v_\tau$  by \eref{rec23}.
Let $P^\perp$ be the projection in $L^2(M; \L^1 \otimes \kf)$ onto the 
orthogonal complement of the null space of $d^*$.
Define
 \begin{align}
 \hat w(s) &= P^\perp w(s).\ \ \ \ \ \         \label{rec751.0}
\end{align}
Then $v_\tau$ is also given by 
      \begin{align}
v_\tau(t) &=w(t) +  \hat w(\tau) - \hat w(t)    \notag \\
  &+\int_\tau^t\Big( [A(t), \psi(s)] - P^\perp\(\zeta(s) + [A(s), \psi(s)]\)\) ds  \label{rec750o}                        
  \end{align}
  for $t >0$ and $\tau \ge 0$.
  The spatial derivatives 
 of the integrand  in \eref{rec750o} are given by
 \begin{align}
 d \Big( [A(t), \psi(s)] - &P^\perp\(\zeta(s) + [A(s), \psi(s)]\) \) = d[A(t), \psi(s)]      \label{rec756}\\
 d^* \Big( [A(t), \psi(s)] - & P^\perp\(\zeta(s) + [A(s), \psi(s)]\)\)  \notag\\
 = d^*[&A(t) - A(s), \psi(s)]    - [w(s)\lrc A'(s)]  +[ A(s)\lrc  \zeta(s) ].          \label{rec757o}     
 \end{align}
\end{theorem}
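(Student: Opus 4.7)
The plan is to derive \eref{rec750o} directly from the original representation \eref{rec23} of $v_\tau(t)$, using the augmented variational equation to rewrite $d\int_\tau^t \psi(s)\,ds$ in terms of $P^\perp$, and then to verify the derivative identities by exploiting two elementary facts about $P^\perp$: on the one hand $P^\perp$ fixes any form already in $\text{range}(d)$, and on the other $d^* P^\perp = d^*$ since $P := I - P^\perp$ projects onto $\ker d^*$.

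First I would expand \eref{rec23} as $v_\tau(t) = w(t) + d\int_\tau^t \psi(s)\,ds + \int_\tau^t [A(t),\psi(s)]\,ds$, treating $d$ and $[A(t),\cdot]$ separately. Then, from \eref{ib10av}, $d\psi(s) = -w'(s) - \zeta(s) - [A(s),\psi(s)]$; integrating in $s$ gives $d\int_\tau^t \psi(s)\,ds = -(w(t)-w(\tau)) - \int_\tau^t (\zeta(s)+[A(s),\psi(s)])\,ds$. Since the left side lies in $\text{range}(d) \subseteq \overline{\text{range}\,d} = (\ker d^*)^\perp$, applying $P^\perp$ leaves it invariant and replaces $w(t) - w(\tau)$ by $\hat w(t) - \hat w(\tau)$ (using that $P^\perp$ commutes with the Bochner time integral). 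Substituting back into the expression for $v_\tau(t)$ yields exactly \eref{rec750o}.

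For the derivative identities I would use the same algebraic replacement. Writing the bracket in \eref{rec750o} as $[A(t),\psi(s)] + \hat w'(s) + d\psi(s)$ (valid because $P^\perp(\zeta + [A,\psi]) = P^\perp(-w' - d\psi) = -\hat w'(s) - d\psi(s)$, using $P^\perp d\psi = d\psi$), applying $d$ kills $d\psi(s)$ via $d^2=0$ and kills $\hat w'(s)$ because $\hat w'(s)$ lies in $\overline{\text{range}(d\colon \kf\to\L^1\otimes\kf)}$ and $d$ annihilates that closed subspace; this gives \eref{rec756}. For \eref{rec757o}, use $d^* P^\perp = d^*$ to reduce $d^*$ of the integrand to $d^*[A(t)-A(s),\psi(s)] - d^*\zeta(s)$, then rewrite $d^*\zeta = d_{A(s)}^*\zeta - [A(s)\lrc\zeta]$ and invoke the pointwise identity \eref{pi11a} which supplies $d_{A(s)}^*\zeta = [w\lrc A']$.

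The main obstacle will be the justification rather than the algebra: I need the time derivative to commute with $P^\perp$ (straightforward on intervals bounded away from $0$ where $w \in C^1$ into $L^2$), and I need $\hat w'(s)$ to lie in $\overline{\text{range}\,d}$ in a sense strong enough that exterior differentiation annihilates it. The latter follows because $P^\perp$ is the orthogonal projection in $L^2$ onto $\overline{\text{range}\,d}$, and the (closed) exterior derivative $d\colon L^2(\L^1\otimes\kf) \to L^2(\L^2\otimes\kf)$ vanishes on $\overline{\text{range}(d\colon L^2(\kf)\to L^2(\L^1\otimes\kf))}$ by the closedness of $d$ together with $d^2 = 0$ on smooth forms. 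Granted these regularity checks, the identity \eref{rec750o} and the derivative formulas \eref{rec756}--\eref{rec757o} follow from the manipulations above.
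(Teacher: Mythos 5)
Your proposal is correct and takes essentially the same route as the paper: apply $P^\perp$ to the time-integrated form of \eref{ib10av} (the paper's \eref{rec767}, which you re-derive from scratch), use that $P^\perp$ fixes exact 1-forms and that $d^*P^\perp = d^*$, and invoke \eref{pi11a} to convert $d^*\zeta$ into $[w\lrc A'] - [A\lrc\zeta]$. The only cosmetic difference is in \eref{rec756}, where you re-decompose the integrand as $[A(t),\psi(s)] + \hat w'(s) + d\psi(s)$ rather than invoking $dP^\perp = 0$ directly, but both arguments rest on the same fact that $d$ vanishes on the closure of the range of $d$.
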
 
    \begin{proof} 
Apply the projection $P^\perp$ to \eref{rec767}.   $P^\perp$ is the identity
operator on exact 1-forms since these span the orthogonal complement of the kernel of $d^*$.
Thus we have
\begin{align}
d\int_\tau^t  \psi(s) ds = \hat w(\tau) - \hat w(t) -\int_\tau^t P^\perp \(\zeta(s) +[A(s), \psi(s)]\) ds \notag
\end{align}
and therefore
\begin{align}
d_{A(t)}&\int_\tau^t  \psi(s) ds  \notag \\
&= \hat w(\tau) - \hat w(t) + \int_\tau^t\([A(t), \psi(s)] - P^\perp \(\zeta(s) +[A(s), \psi(s)]\)\) ds. \notag
\end{align}
Add $w(t)$ to find \eref{rec750o}.

The identity \eref{rec756} follows immediately from  the identity $dP^\perp =0$ on any 1-form.
For the proof of  \eref{rec757o} 
 we can use the identity  $d^*P^\perp \w = d^*\w$  and \eref{pi11a} to find 
\begin{align*}
d^*P^\perp \( \zeta(s) + [A(s), \psi(s)]\)  &= d^*  \Big(\zeta + [A, \psi]\Big)\\
&= d^*   \zeta  +d^* [A, \psi] \\
&= d_A^*\zeta - [A \lrc  \zeta(s) ]   +d^* [A, \psi]  \\
& =[w(s)\lrc A'(s)]  - [A(s) \lrc  \zeta(s) ]   +d^* [A(s), \psi(s)].
\end{align*} 
This completes the proof of the theorem.
\end{proof}

      \begin{remark}  {\rm  
   In addition to the three representations of $v_\tau$, \eref{rec23}, \eref{rec750} and \eref{rec750o},
       there is a fourth representation of $v(t)$ that has a more
 gauge invariant structure than \eref{rec750o}.
Let $\As = A(T)$ as before and denote by $P_\As$ the projection in 
$L^2(M; \L\otimes \kf)$ onto the null space of $d_\As^*$. Much like in the derivation of \eref{rec750o}
 from \eref{rec767} we can deduce  that 
 \begin{align}
&d_{A(t)} \int_\tau^t  \psi(s) ds             \label{rec750A} \\
&= \hat w(\tau) - \hat w(t) +
\int_\tau^t \Big\{ [A(t) - \As, \psi(s)] -P_\As^\perp \(\zeta(s) +[A(s) - \As, \psi(s)]\)\Big\} ds   \notag
\end{align}
where $\hat w(t) = P_\As^\perp w(t)$. 
         This representation has the advantage that only differences $A(t) - \As$ occur, making the
 representation gauge invariant. But the analog of the identity \eref{rec756} fails because 
 $d_\As P_\As^\perp \ne 0$.      In fact $d_\As P_\As^\perp \w = [B(T), G_\As d_\As^*\w]$ on 1-forms
 $\w$, where $G_\As =(d_\As^* d_\As)^{-1} \phi$  is the Green operator on $\kf $ valued scalars.
 Attempts to use this Green operator in our context have not been successful.
}
\end{remark}

\subsection{Initial value of the almost strong solution in the $H_b^\As$ sense} \label{secivas}

     Our techniques in Sections  \ref{secivas} and \ref{secrecab} are going to rely on using the 
     non-gauge invariant Sobolev space $H_1^0$ defined by
     \beq
     \|\w\|_{H_1^0}^2 
     = \int_M \Big(\sum_{j=1}^3|\p_j \w(x)|_{\L^1\otimes \kf}^2  +|\w(x)|_{\L^1\otimes \kf}^2\Big)dx.
     \eeq     
     All results in the preceding sections have made (usually unavoidable) use of the gauge 
     invariant Sobolev norm $H_1^\As$.  In order to transfer information from preceding sections
     to the present two sections it will be necessary to show equivalence of these two norms.
     Under our standing assumption, that $A(\cdot)$ is a strong solution to the Yang-Mills heat equation,
     these two norms are automatically equivalent  because $\As \equiv A(T) \in L^6 \cap L^2 \subset L^3 $,
      and the next lemma assures that the two norms are equivalent when $\As \in L^3$.     
     The equivalence of these norms is therefore  not an issue for this paper. 
     But we plan to use a weaker notion of solution in \cite{G72} 
      in order to include sections of instantons into the Yang-Mills configuration space.  
      The condition $A(T) \in L^3(\R^3)$ will be disallowed. We therefore
     wish to keep track of  exactly where the  condition $A(T)  \in L^3(\R^3)$ is used in this paper.
     The hypothesis that $\As  \in L^3(M)$ in the theorems of Sections \ref{secivas} and \ref{secrecab}
     are therefore purposefully made explicit even though they already are implied by
      the assumption that $A(\cdot)$ is a  strong solution.
       The condition that $\As \in L^3(M)$ has not been used in any previous part  of this  paper.
            This discussion is of substance  only if $M = \R^3$  because $L^6(M)\subset L^3(M)$ 
            when $M$ is bounded  while $A(T)$ will always be in $L^6(M)$ in \cite{G72}. 
    
      The equivalence of $H_1$
     norms is based on the following elementary lemma.

     \begin{lemma} \label{lemeqSob} Suppose that $A_1$ and $A_2$ are two connection forms over
      a region ${\mathcal R}\subset \R^3$  lying in $W_1(\mathcal R)$.
      Then for all $\w$ in the  domains of the following operators one has
     \begin{align}
     \|\n^{A_1} \w\|_2^2 &\le 
     C\| \n^{A_2} \w \|_2^2 \qquad\qquad\ \ \ \ \, \text{if} \ \ \mathcal R = \R^3    \label{se5}\\
      \|\n^{A_1} \w\|_2^2 +\| \w\|_2^2  &\le  
     C\Big( \| \n^{A_2} \w \|_2^2 +\|\w\|_2^2\Big)\ \ \ \text{if} \ \ \mathcal R = \R^3\ \text{or}\ M,  \label{se6}
     \end{align}     
     where 
     \begin{align}
     C &= 1 + \(1 +c\kappa_6\|A_1 - A_2\|_{L^3(\mathcal R)}\)^2          \label{se7}
     \end{align}
     and $\kappa_6$ is a Sobolev constant. In particular, the norms $H_1^0(M)$ and $H_1^\As(M)$ are 
     equivalent if  $\As \in L^3(M)$ or if $A(s) \in L^3(M)$ for some $s >0$.
          \end{lemma}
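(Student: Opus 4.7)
The plan is to exploit the pointwise identity $\p_j^{A_1}\w - \p_j^{A_2}\w = [(A_1)_j - (A_2)_j,\w]$, which converts the difference of the two gauge-covariant gradients into a commutator. By the triangle inequality
$$\|\n^{A_1}\w\|_2 \le \|\n^{A_2}\w\|_2 + \bigl\|\,[(A_1 - A_2),\w]\,\bigr\|_2,$$
and H\"older's inequality gives $\|\,[(A_1 - A_2),\w]\,\|_2 \le c\,\|A_1 - A_2\|_{L^3(\mathcal R)}\|\w\|_{L^6(\mathcal R)}$, where $c$ is the operator-norm bound on $ad\,x$ that is already in use throughout the paper.

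The key step is then to absorb $\|\w\|_6$ into $\|\n^{A_2}\w\|_2$, and this is where I would invoke the gauge-covariant Kato inequality $|\n|\w|| \le |\n^{A_2}\w|$ (valid pointwise a.e.\ for any $\kf$-valued form). Combined with the scalar Sobolev embedding $\dot H^1(\R^3) \hookrightarrow L^6(\R^3)$, this yields $\|\w\|_6 \le \kappa_6 \|\n^{A_2}\w\|_2$ when $\mathcal R = \R^3$; on a bounded $M$ the inhomogeneous Sobolev embedding gives $\|\w\|_6 \le \kappa_6(\|\n^{A_2}\w\|_2 + \|\w\|_2)$. Inserting either estimate into the previous bound gives
$$\|\n^{A_1}\w\|_2 \le \bigl(1 + c\kappa_6\|A_1 - A_2\|_{L^3(\mathcal R)}\bigr)\|\n^{A_2}\w\|_2$$
on $\R^3$, and the analogous estimate involving $\|\w\|_2$ on the bounded domain. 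Squaring, and in the bounded case adding the trivial inequality $\|\w\|_2^2 \le \|\n^{A_2}\w\|_2^2 + \|\w\|_2^2$, produces exactly the constant $C$ displayed in \eref{se7}, establishing \eref{se5} and \eref{se6}.

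For the final clause, the equivalence of $H_1^0(M)$ and $H_1^\As(M)$ follows on applying \eref{se6} to the pairs $(A_1,A_2) = (0,\As)$ and $(\As,0)$, which requires only $\|\As\|_{L^3(M)} < \infty$. Under the alternative hypothesis that $A(s) \in L^3(M)$ for some $s > 0$, I would reduce to this case by writing $\As = A(s) + \int_s^T A'(\sigma)\,d\sigma$ and estimating the second term in $L^3(M)$ via \eref{iby7}, which furnishes $\|A'(\sigma)\|_3 \le C_7^{1/2}\sigma^{(a-5/2)/2}$; this is integrable on $[s,T]$ because $s>0$, so $\As \in L^3(M)$ and the previous case applies. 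I do not anticipate any real obstacle here; the only conceptually non-obvious point is the use of the Kato inequality, without which one could not pass the scalar Sobolev embedding through the gauge-covariant gradient.
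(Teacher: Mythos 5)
Your proof is correct and follows essentially the same route as the paper: triangle inequality to isolate the commutator term, H\"older's inequality to produce $\|A_1-A_2\|_3\|\w\|_6$, and the gauge-covariant Sobolev bound to absorb $\|\w\|_6$ back into $\|\n^{A_2}\w\|_2$ (plus $\|\w\|_2$ on a bounded domain), with the paper's constant emerging exactly as you describe. The two small places you deviate are both harmless and arguably more careful than the source: you explicitly justify the covariant Sobolev inequality $\|\w\|_6\le\kappa_6\|\n^{A_2}\w\|_2$ via the Kato (diamagnetic) inequality, whereas the paper just asserts such a $\kappa_6$ exists; and for the final clause you propagate $L^3$-membership from $A(s)$ to $\As$ by integrating $A'$ and invoking the initial-behavior bound \eref{iby7}, whereas the paper simply cites the standing assumption \eref{vst356} that $t\mapsto A(t)-A(s)$ is continuous into $L^3(M)$.
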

                  \begin{proof} In case $\mathcal R = \R^3$ we have
           \begin{align}
           \|\n^{A_1} \w\|_2  &\le \|\n^{A_2} \w\|_2  +\|(A_1 - A_2)\w\|_2 \notag   \\
           &\le   \|\n^{A_2} \w\|_2  + c\|(A_1 - A_2)\|_3 \|\w\|_6    \notag   \\
           &\le   \|\n^{A_2} \w\|_2  + c\|(A_1 - A_2)\|_3\, \kappa_6 \| \n^{A_2} \w\|_2,       \label{se8}
           \end{align}
           where $\kappa_6$ is a Sobolev constant for which 
           $\|\w\|_6 \le \kappa_6 \|\n^{A_2} \w\|_2\ \forall \w \in C_c^\infty(\R^3)$.
           Square \eref{se8} to find \eref{se5}. Add $\|\w\|_2^2$ to the square of  \eref{se8} 
            to find \eref{se6} (without the initial term $1$) in case $\mathcal R = \R^3$. 
            The same proof applies in case $\mathcal R = M$
            but one needs the additional term $\|\w\|_2^2$ from the start to use the  Sobolev inequality
            $\|\w\|_6^2 \le \kappa_6^2 \Big( \|\n^{A_2} \w\|_2^2 + \| \w\|_2^2\Big)$ for Neumann
            or Dirichlet boundary conditions.
            
            In particular if $\As \in L^3(M)$ then, choosing $A_1= 0$ and $A_2= \As$ and vice versa,
            we see that $H_1^0$ and $H_1^\As$ are equivalent.     
                   Moreover if $A(s) \in L^3(M)$     then so is $A(T) \in L^3(M)$ because $A(T) - A(s) \in L^3(M)$
                   by \eref{vst356}.  Therefore $H_1^0$ and $H_1^\As$ are equivalent.                    
   \end{proof}

         \begin{theorem} \label{thmrec7o}  
 Let $ 1/2 \le a <1$ and $1/2 \le b <1$. Assume that $\|A(s)\|_{L^3(M)} < \infty$ for some $s >0$.
 Suppose that $w$ is a solution to \eref{av1} with finite b-action.
 Define $v(t)$ by \eref{rec1}.   Then
 \begin{align}
 \|v(t) - v_0\|_{H_b^\As} \rightarrow 0 \ \ \  \text{as}  \ \ t\downarrow 0.   \label{rec779}
 \end{align} 
 Furthermore $v$ is a continuous function into $H_b^\As$ on all of $[0, \infty)$.
 \end{theorem}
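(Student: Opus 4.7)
The plan is to use the equivalence $H_b^\As\simeq H_b^0$ from Lemma \ref{lemeqSob}, available under the hypothesis $\|A(s)\|_3<\infty$, and then to exploit a cancellation hidden inside the augmented variational equation. Starting from $v(t)=w(t)+d_{A(t)}\eta(t)$ with $\eta(t):=\int_0^t\psi(s)\,ds$, the identity $d_{A(s)}\psi(s)=-w'(s)-\zeta(s)$ coming from \eref{ib10av} integrates to $d\eta(t)=-(w(t)-v_0)-\int_0^t(\zeta(s)+[A(s),\psi(s)])\,ds$, which after substitution produces the clean identity
\[
v(t)-v_0\ =\ -\int_0^t\bigl(\zeta(s)+[A(s),\psi(s)]\bigr)\,ds\ +\ [A(t),\eta(t)].
\]
The $w$-contribution cancels completely, which is precisely what distinguishes $v$ from $v_\tau$ and explains why $v$ is continuous into $H_b^\As$ at $t=0$ even in the doubly critical case excluded by Remark \ref{remweak}.

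Next I would bound each of the two pieces in $H_b^0$. For the $L^2$ part, Cauchy--Schwarz with the weight $s^{-b}$, combined with the integrability of $s^{1-b}\|\zeta(s)\|_2^2$ and $s^{1-b}\|\psi(s)\|_6^2$ furnished by Theorem \ref{ibord1} and the uniform boundedness of $\|A(s)\|_3$ on a neighborhood of $0$ (from the hypothesis and \eref{vst356}), gives each piece size $O(t^{b/2})$ in $L^2$. For the full $H_b^0$ norm I would interpolate: choose $r$ with $b<r<1$ and use $\|\phi\|_{H_b^0}\le C\,\|\phi\|_2^{1-b/r}\,\|\phi\|_{H_r^0}^{b/r}$, so that the $L^2$ decay finishes the job as soon as $\|d_{A(t)}\eta(t)\|_{H_r^0}$ is uniformly bounded for small $t$.

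The intermediate-regularity bound I would obtain by applying the non-gauge-invariant representation \eref{rec750o} of Theorem \ref{thmrec5b} with $\tau=0$: the identities \eref{rec756}--\eref{rec757o} express the $d$- and $d^*$-derivatives of the integrand in terms of $[A(t),\psi(s)]$, $[A(t)-A(s),\psi(s)]$, $[w(s)\lrc A'(s)]$ and $[A(s)\lrc\zeta(s)]$, each of which admits a weighted bound via Theorem \ref{thmibA1} and the second-order estimates of Theorem \ref{ibord2b}. Converting these $d,d^*$ bounds into an $H_r^0$ bound through the Sobolev estimate $\|\phi\|_{H_r^0}^2\le C(\|d\phi\|_{L^p}^2+\|d^*\phi\|_{L^p}^2+\|\phi\|_2^2)$ for a suitable $p>2$ then gives the required uniform control on a neighborhood of $0$.

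The main obstacle is that the natural $H_1^0$ endpoint is unreachable: a straight Cauchy--Schwarz on $\int_0^t d_{A(s)}w'(s)\,ds$ combined with the best available bound $\int_0^t s^{2-b}\|d_Aw'(s)\|_2^2\,ds<\infty$ of Theorem \ref{ibord2b} forces an $O(t^{(b-1)/2})$ blow-up for $b<1$ --- exactly the loss that breaks continuity of $v_\tau$ at $t=0$. The remedy is to stay strictly below the top Sobolev index, trading one derivative for an $L^p$ margin with $p$ slightly above $2$; the headroom in the exponents from Theorems \ref{ibord1}--\ref{ibord2b} is just enough to make this work. Finally, continuity of $v$ on $(0,\infty)$ into $H_b^\As$ follows from $v(t)=v_\tau(t)+d_{A(t)}\alpha_\tau$ with an arbitrary fixed small $\tau>0$: $v_\tau$ is strongly continuous into $H_1^\As$ by Theorem \ref{thmrec10}, and the vertical difference $d_{A(t)}\alpha_\tau-d_{A(t')}\alpha_\tau=[A(t)-A(t'),\alpha_\tau]$ is controlled in $H_b^0$ by the $L^3$-continuity of $A$ combined with $\alpha_\tau\in H_1^\As$ from \eref{rec713e}.
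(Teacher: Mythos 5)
Your representation $v(t)-v_0=-\int_0^t\bigl(\zeta(s)+[A(s),\psi(s)]\bigr)\,ds+[A(t),\eta(t)]$ is correct (it is just \eref{rec750} with $\tau=0$, rearranged), and your $L^2$ decay estimate $\|v(t)-v_0\|_2=O(t^{b/2})$ is sound. But the interpolation step is where the argument breaks down. You want $\|\phi\|_{H_b^0}\le C\|\phi\|_2^{1-b/r}\|\phi\|_{H_r^0}^{b/r}$ for some $r\in(b,1)$, hence you need $\|v(t)-v_0\|_{H_r^0}$ \emph{uniformly bounded} near $t=0$. That bound is not available. The Sobolev inequality you invoke only controls $H_r^0$ from $L^{p_r}$ norms of $d\phi,d^*\phi$ with $p_r=6/(5-2r)$, which for $r<1$ satisfies $p_r<2$, not $p>2$. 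More importantly, the $\psi$-side ingredients that would feed into those $L^{p_r}$ bounds --- \eref{ib546} with $q^{-1}=(1/2)-(b/3)$ and \eref{ib547} with $r^{-1}=(2/3)-(b/3)$ --- sit exactly at the exponents dictated by finite $b$-action. There is no ``headroom'': raising the target regularity $r$ above $b$ raises the required Lebesgue indices past what Theorems \ref{ibord1}--\ref{ibord2b} supply, and those theorems cannot be improved without strengthening the hypothesis on $w$. The convergence to zero in the paper's estimates comes from the decaying $A$-factors ($\|A(t)\|_6\,t^{1/4}\to 0$, $\|dA(t)\|_3\,t^{1/2}\to 0$), and that decay is qualitative ($o(1)$), not a quantified power, so it cannot be traded for a higher Lebesgue index either.

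The paper's proof sidesteps interpolation entirely. After writing the vertical correction via \eref{rec750o} as $\hat w(0)-\hat w(t)+\int_0^t\gamma(t,s)\,ds$ (with $\hat w=P^\perp w$ handled by continuity of $P^\perp$ on $H_b^0$), it applies the borderline fractional Sobolev inequality of Lemma \ref{lemstrat2}: $\|\w\|_{H_b^0}\le c_p(\|d^*\w\|_p+\|d\w\|_p+\|\w\|_2)$ with $p^{-1}=(1/2)+(1-b)/3\le 1/2$. Lemma \ref{lemrec30} then shows that \emph{each} of the three quantities on the right goes to zero as $t\downarrow 0$ (not merely bounded), and that is what delivers $\|\int_0^t\gamma(t,s)\,ds\|_{H_b^0}\to 0$ directly, with no need for an intermediate $H_r^0$ bound. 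You should replace the interpolation plan by this direct application: prove that $\|\int_0^t d\gamma(t,s)\,ds\|_p$, $\|\int_0^t d^*\gamma(t,s)\,ds\|_p$ and $\|\int_0^t\gamma(t,s)\,ds\|_2$ each tend to zero, using \eref{rec756}, \eref{rec757o} and the $A$-decay bounds of Theorem \ref{thmibA1} together with \eref{ib546}--\eref{ib548}. Your handling of continuity at $t_0>0$ via $v=v_\tau+d_{A(\cdot)}\alpha_\tau$ and of the equivalence $H_b^\As\simeq H_b^0$ is fine.
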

 
 For the proof of continuity of $v$ at $t=0$ we need to show that
the second term in \eref{rec1}, i.e. the vertical correction,  converges to zero in $H_b^\As$,
since $w(t)$ converges to $v_0$ in $H_b^\As$ by Theorem \ref{thmmild1b}.
Proof of continuity at $t =0$ is more delicate than at  $t >0$  and  will be proved in the next theorem.

 \begin{theorem} \label{thmrec7} $($The vertical correction$)$           
 Assume that $\|A(T)\|_{L^3(M)} <\infty$.  
  Let $ 1/2 \le a <1$ and $1/2 \le b <1$. Suppose that $w$ is a solution to \eref{av1} with finite b-action.
 Then
\begin{align}
\| d_{A(t)} \int_0^t   \psi(s) ds \|_{H_b^\As }&\rightarrow 0\ \  \text{as}  \ \ t\downarrow 0.\label{rec780}
\end{align}
In particular the left hand side of \eref{rec780} is finite for $0 < t <\infty $.

\end{theorem}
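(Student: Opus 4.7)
The plan is to reduce the gauge-invariant statement to a non-gauge-invariant one using the hypothesis $\|A(T)\|_3 < \infty$, then use the representation from Theorem \ref{thmrec5b} to split the vertical correction into a piece that vanishes trivially by continuity of $w$ and an integral piece controlled by interpolation. By Lemma \ref{lemeqSob}, the hypothesis $\|A(T)\|_3 < \infty$ implies that the norms $\|\cdot\|_{H_b^\As}$ and $\|\cdot\|_{H_b^0}$ are equivalent on $[0,T]$, so it suffices to prove $\|F(t)\|_{H_b^0}\to 0$, where $F(t):=d_{A(t)}\int_0^t\psi(s)\,ds = v(t)-w(t)$. Applying Theorem \ref{thmrec5b} with $\tau=0$ and subtracting $w(t)$ from both sides of \eqref{rec750o} gives
\begin{equation*}
F(t)=\bigl(\hat w(0)-\hat w(t)\bigr)+I(t),\qquad I(t):=\int_0^t\Bigl([A(t),\psi(s)]-P^\perp\bigl(\zeta(s)+[A(s),\psi(s)]\bigr)\Bigr)ds,
\end{equation*}
where $\hat w=P^\perp w$ and $P^\perp$ is the Leray projection onto the orthogonal complement of $\ker d^*$.

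The first piece $\hat w(0)-\hat w(t)$ vanishes in $H_b^0$ immediately. Indeed, $P^\perp$ is a matrix of Riesz-type Fourier multipliers of order zero, bounded on every non-gauge-invariant Sobolev space $H_b^0$ (and handled by the appropriate boundary data when $M\ne\R^3$). Theorem \ref{thmwe} gives $w\in C([0,\infty);H_b^\As)$, so by norm equivalence $w\in C([0,\infty);H_b^0)$, whence $\|\hat w(0)-\hat w(t)\|_{H_b^0}\le C\|w(0)-w(t)\|_{H_b^0}\to 0$.

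For the integral remainder $I(t)$, I would use the interpolation bound
$\|I(t)\|_{H_b^0}^2\le C\,\|I(t)\|_{L^2}^{2(1-b)}\|I(t)\|_{H_1^0}^{2b}$
and estimate the two factors separately. The $L^2$ bound proceeds by moving the norm inside, applying Hölder to each commutator and using $\|P^\perp\|_{L^2\to L^2}\le 1$: the term $\int_0^t\|\zeta(s)\|_2\,ds$ is $O(t^{b/2})$ by Cauchy-Schwarz and \eqref{ib10ba}, the term $\int_0^t\|[A(s),\psi(s)]\|_2\,ds$ is bounded via $\|A(s)\|_6\|\psi(s)\|_3$ combined with \eqref{ibA5a}, \eqref{ib10c}, and \eqref{ib10d}, while $\int_0^t\|[A(t),\psi(s)]\|_2\,ds$ is handled by pulling $\|A(t)\|_6$ out (finite for each fixed $t$) and using \eqref{ib10d}. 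Together these give $\|I(t)\|_{L^2}\to 0$ as $t\downarrow 0$, with a rate like $O(t^{1/4})$ in the critical case. For the $H_1^0$ bound I would apply the explicit formulas \eqref{rec756}-\eqref{rec757o} from Theorem \ref{thmrec5b}: $dI(t)=\int_0^t([dA(t),\psi(s)]-[A(t)\wedge d\psi(s)])\,ds$, and $d^*I(t)$ given by \eqref{rec757o}. Each summand is bounded in $L^2$ using Hölder together with the initial-behavior estimates $\|dA(t)\|_3=o(t^{-(3-2a)/4})$ from \eqref{ibA14}, $\|d_\As^*(A(t)-A(T))\|_3=O(t^{-1/2})$ from \eqref{ibA15c}, $\|A(t)-A(T)\|_\infty=O(t^{-1/2})$ from \eqref{ibA72}, the $L^6$ bound $\int_0^t s^{2-b}\|d_A\psi(s)\|_6^2\,ds<\infty$ from \eqref{ib543}, and the pointwise control $[w\lrc A']$, $[A\lrc\zeta]$ via \eqref{iby7}-\eqref{iby15} and \eqref{ib10ba}.

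The main obstacle will be the doubly critical case $a=b=1/2$. There the $L^2$ decay rate and the potential $H_1^0$ blow-up of $I(t)$ are nearly balanced: the key identities all yield powers of $t$ at the borderline of integrability, and one must exploit the $\textit{limsup}$-type vanishing (rather than mere boundedness) in several of the initial-behavior estimates --- in particular $a_t\to 0$ from \eqref{ibA5a}, the $o(\cdot)$ in \eqref{ibA14}, and the $\to 0$ in \eqref{iby15}--\eqref{iby16} --- to upgrade naïve "$O$" bounds to "$o$" bounds. The hypothesis $\|A(T)\|_3<\infty$ enters not only through the norm equivalence but also in controlling $[A(t),\psi(s)]$ uniformly in $s\in[0,t]$ via $\|A(t)\|_3\le\|A(T)\|_3+\|A(t)-A(T)\|_3$ and \eqref{vst356}. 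Assembling the resulting bounds, $\|I(t)\|_{L^2}^{1-b}\|I(t)\|_{H_1^0}^{b}\to 0$, which together with the first paragraph proves \eqref{rec780}.
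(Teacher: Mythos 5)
Your decomposition $F(t)=\bigl(\hat w(0)-\hat w(t)\bigr)+I(t)$ and the treatment of the first piece match the paper's argument. However, your handling of the integral remainder $I(t)$ contains a gap that the paper is specifically structured to avoid. You propose the interpolation inequality
$\|I(t)\|_{H_b^0}^2\le C\,\|I(t)\|_{L^2}^{2(1-b)}\|I(t)\|_{H_1^0}^{2b}$
and ask for vanishing of the product. This requires at minimum that $\|I(t)\|_{H_1^0}$ is finite and under control as $t\downarrow 0$. But controlling $\|dI(t)\|_2$ and $\|d^*I(t)\|_2$ is precisely what the available initial-behavior estimates do not deliver: for example $dI(t)=\int_0^t d[A(t),\psi(s)]\,ds$ forces you into something like $\|A(t)\|_6\int_0^t\|d\psi(s)\|_3\,ds$, and from \eqref{ib548} one only has $\int_0^T s^{(3/2)-b}\|d\psi(s)\|_3^2\,ds<\infty$, so Cauchy-Schwarz gives $\int_0^t\|d\psi(s)\|_3\,ds \lesssim \bigl(\int_0^t s^{b-3/2}\,ds\bigr)^{1/2}(\cdots)$, which is infinite at $b=1/2$. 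Similar borderline failures occur for the other summands. Upgrading $O(\cdot)$ to $o(\cdot)$, as you suggest, does not cure a divergent integral, so the interpolation route is blocked at the critical exponent $b=1/2$.

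The paper's way around this is Lemma \ref{lemstrat2} (Riesz avoidance), which gives
$\|\omega\|_{H_b^0}\le c_p\bigl(\|d^*\omega\|_p+\|d\omega\|_p+\|\omega\|_2\bigr)$
with $p^{-1}=1/2+(1-b)/3$, so $p<2$ throughout the range $b\in[1/2,1)$. This lemma extracts $H_b^0$ regularity directly from $L^p$ control of $d$ and $d^*$ at a subcritical exponent, never requiring $L^2$ (let alone $H_1$) control of the first derivatives of $I(t)$. The estimates of Lemma \ref{lemrec30} — \eqref{rec870}, \eqref{rec871}, \eqref{rec872}, \eqref{rec929c}, \eqref{rec929d} — are precisely calibrated to this $p<2$ scale and do give vanishing of all the relevant $L^p$ norms as $t\downarrow 0$. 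In short, the replacement of $L^2$–$H_1$ interpolation by the $p<2$ Gagliardo–Nirenberg–Riesz estimate is not a cosmetic change; it is the step that makes the proof close at $b=1/2$. You should substitute that lemma for the interpolation step; the remaining ingredients you list (the initial-behavior bounds for $A$, $\psi$, $\zeta$, the uniform $L^3$ control of $A(t)$ via \eqref{vst356}, and the norm equivalence from Lemma \ref{lemeqSob}) are all used in the paper in essentially the way you indicate.
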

 
     The proof depends on the following lemmas.
     
  \begin{lemma}\label{lemstrat2}   $($Riesz avoidance$)$  
  Let $0 \le b \le 1$. Define $p$ in the interval $[6/5, 2]$ by  
\beq
p^{-1} = 2^{-1} +(1-b)/3.    \label{rec864}
\eeq  
 If $\w$ is a $\kf$ valued 1-form in $L^2(M)$ with $d\w \in L^p$ and $d^*\w \in L^p$  then $ \w \in H_b$.
  There is a Sobolev constant   $c_p$ 
  such that
  \begin{align}
\|\w\|_{H_b^0} \le c_p\Big(\|d^* \w\|_p + \| d\w\|_p + \|\w\|_2\Big).    \label{h514s}
\end{align}
 \end{lemma}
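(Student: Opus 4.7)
The plan is to sidestep direct $L^p\to L^p$ Riesz transform bounds (which are awkward in the presence of boundary conditions) by passing through the $L^2$-based Sobolev scale. The key numerological observation is that the exponent $p$ defined by \eqref{rec864} is exactly the Sobolev conjugate for the embedding $L^p(M)\hookrightarrow H^{b-1}(M)$: that is, the dual of the standard Sobolev embedding $H^{1-b}(M)\hookrightarrow L^{p'}(M)$, whose scaling condition $1/p'=1/2-(1-b)/3$ matches \eqref{rec864} after $1/p+1/p'=1$. So once the inequality has been recast with $H^{b-1}$ norms of $d\w$ and $d^*\w$ on the right, Sobolev duality closes the argument.

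First I would verify the two endpoints. At $b=0$ one has $p=6/5$ and the claim $\|\w\|_2\le c_{6/5}(\|d^*\w\|_{6/5}+\|d\w\|_{6/5}+\|\w\|_2)$ is trivial from the last term alone. At $b=1$ one has $p=2$ and \eqref{h514s} is precisely the Gaffney--Friedrichs inequality \eqref{gaf49} (with $A=0$, $B=0$). The substantive content is therefore the case $0<b<1$, which I would handle by first proving the purely $L^2$-Sobolev estimate
\begin{equation}
\|\w\|_{H_b^0}^2 \le C\bigl(\|d\w\|_{H^{b-1}}^2+\|d^*\w\|_{H^{b-1}}^2+\|\w\|_{L^2}^2\bigr), \tag{$\star$}
\end{equation}
where $H^{b-1}$ is understood on the appropriate form bundle. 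On $\R^3$ the Hodge Laplacian $-\Delta_H=d^*d+dd^*$ acting on $1$-forms coincides with the componentwise Euclidean Laplacian, so by the spectral identity $(1-\Delta_H)=1+d^*d+dd^*$ and the intertwining $(1-\Delta_H^{(k)})d=d(1-\Delta_H^{(k-1)})$ across form degrees,
\[
\langle(1-\Delta_H)^b\w,\w\rangle = \|\w\|_{H^{b-1}}^2+\|d\w\|_{H^{b-1}}^2+\|d^*\w\|_{H^{b-1}}^2,
\]
and $\|\w\|_{H^{b-1}}\le\|\w\|_{L^2}$ since $b-1\le 0$. This is $(\star)$ on $\R^3$; on bounded $M$ the same spectral computation works provided one uses the Hodge Laplacian with boundary conditions compatible with the definition of $H_b^0$. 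Finally, applying the Sobolev embedding $L^p(M)\hookrightarrow H^{b-1}(M)$ to the two derivative terms in $(\star)$ yields \eqref{h514s}.

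The main obstacle is Step 2 on bounded $M$: the spectral/intertwining calculation is transparent on $\R^3$ via Fourier, but on a domain one must be careful that the Hodge Laplacian one uses to define $H_b^0$ is paired with boundary conditions consistent with the hypothesis (which only assumes $\w\in L^2$ with $d\w,d^*\w\in L^p$, not a boundary trace for $\w$). In practice this means extending the analysis by zero off $M$ into $\R^3$ after splitting $\w$ via a Hodge-type decomposition adapted to $M$, or equivalently verifying the intertwining directly on the form domain of $\Delta_H$. Once this is set up, the Sobolev embedding $H^{1-b}\hookrightarrow L^{p'}$ on $M$ (hence by duality $L^p\hookrightarrow H^{b-1}$) is standard and the constant $c_p$ in \eqref{h514s} collects only the Sobolev constant and the constant from $(\star)$.
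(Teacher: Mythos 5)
The paper's own ``proof'' is a one-line citation to \cite[Lemma 6.17]{G70}, so there is no in-paper argument for direct comparison; I'll assess your sketch on its merits. Your $\R^3$ case is essentially correct. The identity $\langle(1-\Delta)^b\w,\w\rangle = \|\w\|_{H^{b-1}}^2 + \|d\w\|_{H^{b-1}}^2 + \|d^*\w\|_{H^{b-1}}^2$ is a valid spectral computation on $\R^3$ (since $d$ and $\Delta$ both have constant coefficients and commute, and the Hodge Laplacian is componentwise), and the numerology $1/p = 1/2 + (1-b)/3$ is exactly what makes $L^p \hookrightarrow H^{b-1}$ the dual of $H^{1-b} \hookrightarrow L^{p'}$. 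Together with a routine density step, this yields \eqref{h514s} on $\R^3$ and does indeed ``avoid'' $L^p$-Riesz transform theory, which is in keeping with the lemma's title.

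There is, however, a genuine gap for bounded $M$, which you flag but do not close, and it is not a mere technicality. The hypothesis places no boundary condition on $\w$ -- only $\w \in L^2$, $d\w \in L^p$, $d^*\w \in L^p$ -- whereas $\|\cdot\|_{H_b^0}$ on $M$ is defined through fractional powers of a Laplacian carrying Dirichlet or Neumann boundary conditions (Notation \ref{notSob}), and for $b > 1/2$ this norm is sensitive to the boundary trace. Both remedies you propose fail for the same reason: extension of $\w$ by zero into $\R^3$ produces a surface distribution in $d\tilde\w$ unless $\w_{\mathrm{tan}} = 0$ on $\p M$, and ``verifying the intertwining directly on the form domain of $\Delta_H$'' presupposes $\w$ lies in that form domain, which again requires $\w_{\mathrm{tan}} = 0$ or $\w_{\mathrm{norm}} = 0$. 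Moreover a Stein-type extension is not directly available because the hypotheses give control only of the skew and divergence parts $(d\w, d^*\w)$, not of the full gradient $\nabla \w$ in $L^p$ -- passing from the former to the latter is itself a Riesz transform statement, so invoking it would defeat the ``avoidance.'' A complete proof on bounded $M$ therefore needs either an explicit boundary hypothesis on $\w$, or the elliptic-boundary bookkeeping that \cite[Lemma 6.17]{G70} presumably supplies; your sketch supplies neither.
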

          \begin{proof} This is a slightly simplified version of \cite[Lemma 6.17]{G70} 
 \end{proof}

\begin{lemma}\label{lemrec30}  
 Assume that $1/2 \le a < 1$
 and $1/2 \le b < 1$. Let $ p^{-1} = (5/6) - (b/3)$ as in  \eref{rec864}. 
 Write $\zeta(s)= d_{A(s)}^* d_{A(s)} w(s) +[w(s)\lrc B(s)]$ as in \eref{ib549}.
 Then for any $t \ge0$ we have
 \begin{align}
 \int_0^t\|\ [w(s)\lrc A'(s)]\ \|_p  ds &< \infty,                  \label{rec870} \\
 \int_0^t \|\ [A(s)\lrc \zeta(s)]\ \|_p ds &< \infty   \ \ \ \text{and}            \label{rec871}\\
 \int_0^t \|\zeta(s)\|_2 ds &< \infty.                       \label{rec872}  
 \end{align}
 Define $\eta(t) = \int_0^t \psi(s)ds$. Then
 \begin{align}
\int_0^t \| d^*[A(s) - A(t), \psi(s)] \|_p ds &\rightarrow 0 \ \ \text{as}\ \ \ t\downarrow 0,    \label{rec929c} \\
\int_0^t \| d[A(t), \psi(s)] \|_p ds &\rightarrow 0   \ \   \text{as}\ \ \ t \downarrow 0\ \ \text{and}     \label{rec929d} \\
 \ \ \ \ \ \|\ [B(t), \eta(t)]\ \|_p &\rightarrow 0 \ \   \text{as}\ \ \ t \downarrow 0. \ \             \label{rec850c} 
\end{align}
 \end{lemma}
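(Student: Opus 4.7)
The plan is to prove each of the six estimates by Hölder's inequality against the weighted initial-behavior bounds established in Sections \ref{secibA}, \ref{secibw1}, \ref{secib2}, together with the interpolation identities of Lemma \ref{lemrecinterp}. Throughout I would use $a = 1/2$ in bounds on $A$-quantities, since finite $a$-action with $a \ge 1/2$ automatically implies finite $(1/2)$-action. Two scalar-1-form algebraic identities handle items (4)--(6): $d^*[A, \psi] = [d^*A, \psi] - [A\cdot\nabla\psi]$ and $d[A, \psi] = [dA, \psi] + [A\wedge d\psi]$. The target exponent $p = 6/(5-2b)$ lies in $[3/2, 2]$ for $b \in [1/2, 1)$, and the natural auxiliary exponents that arise are $q_2 = 6/(3-2b) \in [3, 6]$ (with $1/q_2 = 1/p - 1/3$) and $q_2' = 3/(2-b) \in [2, 3)$ (with $1/q_2' = 1/p - 1/6$).

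I would take the three finiteness statements \eref{rec870}--\eref{rec872} first. The third is immediate from Cauchy-Schwarz against the weight $s^{b-1}$ using $\int_0^t s^{1-b}\|\zeta\|_2^2 ds < \infty$ from \eref{ib10ba}. For \eref{rec870}, I split $\|[w\lrc A']\|_p \le c\|w\|_6\|A'\|_r$ with $r = 3/(2-b)$ and reduce to bounding $\int_0^t s^b\|A'\|_r^2 ds$; the key factorization $s^b\|A'\|_r^2 = (s^{1/2}\|A'\|_2^2)^{2(1-b)}(s\|A'\|_3^2)^{2b-1}$ combined with Hölder on the conjugate pair $(1/(2(1-b)), 1/(2b-1))$ together with \eref{iby8} and \eref{iby10} finishes the argument. \eref{rec871} is analogous: apply Cauchy-Schwarz with $\int_0^t s^{-1/2}\|A\|_6^2 ds$ (finite by \eref{ibA6a}) paired against $\int_0^t s^{1/2}\|\zeta\|_r^2 ds$, the latter finite by the interpolation bound \eref{ib215} and \eref{ib10ba}, \eref{ib545z}.

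The three vanishing statements \eref{rec929c}, \eref{rec929d}, \eref{rec850c} require more care. \eref{rec850c} yields to $\|[B(t), \eta(t)]\|_p \le c\|B(t)\|_3\|\eta(t)\|_{q_2}$ with $\|B(t)\|_3 \le Ct^{-1/2}$ from \eref{iby6} and $\|\eta(t)\|_{q_2} \le t^{1/2}(\int_0^t\|\psi\|_{q_2}^2 ds)^{1/2} = t^{1/2}o(1)$ by \eref{ib546} and dominated convergence. For \eref{rec929c} the $d^*$-identity produces two pieces: the first is handled by Cauchy-Schwarz using \eref{ibA15b} (whose bound vanishes like $a_t \downarrow 0$) paired with \eref{ib546}; the second by Cauchy-Schwarz with $\int_0^t s^{-1/2}\|A(s)-A(t)\|_6^2 ds \le C_{22}(t,\rho_A(t))\to 0$ from \eref{ibA6} paired against $\int_0^t s^{1/2}\|\nabla\psi\|_{q_2}^2 ds$ (finite via \eref{ib215}). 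For \eref{rec929d} I use the $d$-identity and pull $\|dA(t)\|_3 = o(t^{-1/2})$ from \eref{ibA14} and $\|A(t)\|_6 = o(t^{-(1-a)/2})$ from \eref{ibA5a} outside the $s$-integral, balancing the first against $\int_0^t\|\psi(s)\|_{q_2}ds = t^{1/2}o(1)$ and the second against $\int_0^t\|d_A\psi(s)\|_{q_2'}ds = O(t^{1/4})$.

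The main technical obstacle is establishing the $O(t^{1/4})$ bound for $\int_0^t\|d_A\psi\|_{q_2'}ds$, since $q_2'\in[2,3)$ falls below the Sobolev-critical exponent and no direct interpolation inequality from Lemma \ref{lemrecinterp} yields both the correct power of $t$ and integrability. The route is a three-factor Hölder: starting from
\[
\|d_A\psi\|_{q_2'} \le s^{-1/4}(s^{(1-b)/2}\|d_A\psi\|_2)^{3/2-b}(s^{(2-b)/2}\|d_A\psi\|_6)^{b-1/2},
\]
one applies Hölder with conjugate exponents $q_c = 2$, $q_a = 4/(3-2b)$, $q_b = 4/(2b-1)$ (which satisfy $1/q_a + 1/q_b + 1/q_c = 1$) to obtain $\int_0^t\|d_A\psi\|_{q_2'}ds \le \sqrt{2}\,t^{1/4}\,I_2^{(3-2b)/4}I_6^{(2b-1)/4}$, where $I_2$ and $I_6$ are the finite integrals from \eref{ib10ba} and \eref{ib543}. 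The $t^{1/4}$ is precisely what is needed to balance $\|A(t)\|_6 = o(t^{-(1-a)/2})$ at $a = 1/2$, and the residual $[A, \psi]$ contributions arising from $\nabla\psi = d_A\psi - [A, \psi]$ are dispatched with Cauchy-Schwarz against $\hat a_t \downarrow 0$ from \eref{ibA6a}.
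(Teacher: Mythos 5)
Your proposal is correct and follows essentially the same route as the paper: in each case a Hölder pairing that places one factor on an $A$- or $B$-quantity controlled by Lemma \ref{lemiby} or Theorem \ref{thmibA1} and the other on a $\psi$- or $\zeta$-quantity controlled by the Section \ref{secibw} energy estimates, with the interpolation exponents of Lemma \ref{lemrecinterp} doing the bookkeeping. Your variations are cosmetic: for \eref{rec870} you interpolate $A'$ between $L^2$ and $L^3$ (invoking \eref{iby10}) where the paper applies \eref{ib214} directly between $L^2$ and $L^6$ (invoking \eref{iby9}); for \eref{rec850c} you pair $\|B(t)\|_3$ against $\|\eta(t)\|_{q_2}$ rather than the paper's $\|B(t)\|_6$ against $\|\eta(t)\|_r$; and you re-derive the $[A,\psi]$ residual control rather than citing \eref{ib547}, which already packages the relevant $\int s^{1/2}\|d\psi\|_r^2$ bound. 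One note: what you flag as ``the main technical obstacle'' is not one --- your three-factor H\"older is precisely Cauchy--Schwarz against $s^{-1/2}$ followed by \eref{ib215}, so $\int_0^t\|d_A\psi\|_{q_2'}\,ds = O(t^{1/4})$ drops out immediately from material already in Lemma \ref{lemrecinterp} and Theorem \ref{ibord2b}. Also watch the typo in your treatment of \eref{rec929c}: the $d\psi$ factor belongs in $L^{q_2'}$, not $L^{q_2}$, consistent with the exponent identity $1/p = 1/6 + 1/q_2'$.
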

        \begin{proof} Let $r^{-1} + 6^{-1} = p^{-1}$. Then $r^{-1} = (2/3) - (b/3)$ and 
\begin{align*}
 \int_0^t \|\  &[w(s)\lrc A'(s)]\ \|_p  ds \le c \int_0^t  \|w(s)\|_6 \| A'(s)\|_r ds \\
 &\le c \Big(\int_0^t s^{-b} \|w(s)\|_6^2 ds \Big)^{1/2} \Big(\int_0^t s^b \|A'(s)\|_r^2 ds \Big)^{1/2}.
 \end{align*}
 The first factor is finite because $w$ is assumed to have finite b-action. 
Using the interpolation inequality \eref{ib214} with $f(s,x) = |A'(s, x)|$  we see that
 the second factor is also  finite in view of the initial   behavior bounds  
\eref{iby8} and \eref{iby9} with $a = 1/2$. Here, as elsewhere, we are using the fact that $A$ has
finite (1/2)-action if it has finite $a$-action for some $a \in [1/2, 1)$. This proves \eref{rec870}.

      For the proof of \eref{rec871} choose $r$ as above. Then
      \begin{align*}
 \int_0^t \|\ [A(s)&\lrc \zeta(s)]\ \|_p ds \le \int_0^t \|A(s)\|_6 \|\zeta(s)\|_r ds \\
 &\le c \(\int_0^t s^{-1/2}\|A(s)\|_6^2 ds \)^{1/2} \(\int_0^t s^{1/2} \| \zeta(s) \|_r^2 ds \)^{1/2}.
\end{align*}
The first factor  is finite by \eref{ibA6a}, with $a = 1/2$.
The second factor is finite by the initial behavior bound  \eref{ib547}. This proves \eref{rec871}.

 For the proof of  \eref{rec872} we have 
  \begin{align}
   \int_0^t \|\zeta(s)\|_2 ds &\le \Big(\int_0^ts^{b-1} ds \Big)^{1/2}   \notag
   \Big(\int_0^t s^{1-b} \|\zeta(s)\|_2^2ds\)^{1/2}
   \end{align}
   which is finite when $b >0$ by \eref{ib10ba}. This proves \eref{rec872}.

To prove \eref{rec929c}  let $\alpha(s) = A(s) -A(t)$ and let $q^{-1} =p^{-1} - 3^{-1} =(1/2) - (b/3)$.
The identity  
\beq
d^*[\alpha(s), \psi(s)] = [\alpha(s)\lrc d\psi] + [d^* \alpha(s), \psi]  \label{rec931o}
\eeq
   allows the bounds
\begin{align}
\int_0^t \|d^*[\alpha(s), &\psi(s)]\, \|_p ds \le c\int_0^t\( \|\alpha(s)\|_6 \|d \psi(s)]\, \|_r  
+\|d^* \alpha(s)\|_3   \| \psi\|_q\)ds  \notag \\
&\le c \(\int_0^t  s^{-1/2}\|\alpha(s)\|_6^2 ds\)^{1/2} \(\int_0^t s^{1/2} \|d \psi(s)]\, \|_r^2 ds \)^{1/2} \notag\\
&+ c\(\int_0^t  \|d^* \alpha(s)\|_3^2ds\)^{1/2}   \(\int_0^t  \| \psi\|_q^2ds\)^{1/2}    \label{rec933}
\end{align}
The four factors are finite by \eref{ibA6} (with $a = 1/2$), \eref{ib547}, \eref{ibA15b}
 and \eref{ib546}, respectively. This proves \eref{rec929c}.
   
    The proof of \eref{rec929d} resembles, partly, the preceding proof.
        Similar to the bounds in \eref{rec933}, we have
    \begin{align}
 &\int_0^t \| d[A(t), \psi(s)]\, \|_p ds 
 \le  \int_0^t \( \|\, [A(t)\wedge d\psi(s)]\, \|_p  + \|\, [ dA(t), \psi(s)]\, \|_p \)ds  \notag\\
 & \le c \int_0^t\( \|A(t)\|_6 \|d\psi(s)\|_r  + \|dA(t)\|_3  \|\psi(s)\|_q\) ds \notag\\
 &\le c \|A(t)\|_6 \(\int_0^t s^{-1/2} ds \)^{1/2}\(\int_0^t s^{1/2} \|d\psi(s)\|_r^2 ds \)^{1/2} \label{rec934} \\
 &\ \ \ \ \ \ \ \ \ \ \ +c \| d A(t)\|_3\ t^{1/2} \( \int_0^t \|\psi(s)\|_q^2 ds \)^{1/2} .                \label{rec935}
 \end{align} 
 The last  integral in line \eref{rec934} is finite by \eref{ib547} while the first two
  factors are $\| A(t)\|_ 6 O(t^{1/4})$,
 which goes to zero as $t \downarrow 0$ in accordance with \eref{ibA5a}. 
 In line \eref{rec935} the product of the first two factors goes to zero by \eref{ibA14} while
  the last integral is finite by \eref{ib546}. This proves \eref{rec929d}.

       For the proof of \eref{rec850c} we have
    \begin{align}
    \|\, [B(t), \eta(t)]\, \|_p \le c \|B(t)\|_6 \|\eta(t)\|_r. \notag
      \end{align}
  Now
  \begin{align}
   \|\eta(t)\|_r &\le \int_0^t \|\psi(s)\|_r ds    \notag\\
   &\le \(\int_0^t s^{1/2} ds\)^{1/2} \(\int_0^t s^{-1/2} \|\psi(s)\|_r^2 ds \)^{1/2}. \notag
   \end{align}
   The second factor is bounded for $t \in [0, T]$ by \eref{ib216}. The first factor  is $O(t^{3/4})$.
   Since, by \eref{iby3},  $t^{3/4} \|B(t)\|_6 =o(1)$ as $ t \downarrow 0$ the  assertion \eref{rec850c} follows.
    \end{proof}

             \begin{lemma}\label{lemrec31}
 Assume that $1/2 \le a < 1$  and $1/2 \le b < 1$. Let $ p^{-1} = (5/6) - (b/3)$ as in  \eref{rec864}. 
 Let $t_0 >0$. Then 
 \begin{align}
& \| \int_0^t d[A(t) -A(t_0), \psi(s) ] ds\|_p 
+\|\int_{t_0}^t d[A(t_0), \psi(s)] ds\|_p            \label{rec940}  \\                
 &   \| \int_0^t d^*[A(t)-A(t_0), \psi(s) ] ds\|_p   
 +\|\int_{t_0}^t  d^*[A(t_0) - A(s),\psi(s)] ds \|_p       \rightarrow 0 \notag
 \end{align}
 as $t \rightarrow t_0$. 
 \end{lemma}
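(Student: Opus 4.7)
The plan is to treat each of the four terms by expanding the bracketed 1-forms via the graded Leibniz rule, applying Hölder's inequality with the auxiliary Lebesgue exponents $q^{-1} = \tfrac{1}{2} - \tfrac{b}{3}$ and $r^{-1} = \tfrac{2}{3} - \tfrac{b}{3}$ (chosen so that $p^{-1} = \tfrac{1}{6} + q^{-1} = \tfrac{1}{3} + r^{-1}$), and invoking the already-established initial-behavior bounds: \eref{ib546} and \eref{ib547} from Theorem \ref{ibord2b} give $\int_0^T \|\psi(s)\|_q^2 ds < \infty$ and $\int_0^T s^{1/2}\|d\psi(s)\|_r^2 ds < \infty$, and Cauchy--Schwarz then shows that $\int_0^T \|\psi(s)\|_q\, ds$ and $\int_0^T \|d\psi(s)\|_r\, ds$ are both finite.

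I would first dispose of the two integrals over the shrinking interval $[t_0, t]$. Leibniz gives
\begin{align*}
d[A(t_0), \psi(s)] &= [dA(t_0), \psi(s)] - [A(t_0) \wedge d\psi(s)], \\
d^*[A(t_0) - A(s), \psi(s)] &= [d^*(A(t_0) - A(s)), \psi(s)] - [(A(t_0) - A(s)) \lrc d\psi(s)],
\end{align*}
and Hölder bounds the $L^p$ norms of these integrands by $L^1(ds)$ functions on $[0, T]$, since $\|dA(t_0)\|_3$, $\|A(t_0)\|_6$, and the differences $\|A(t_0) - A(s)\|_6$ and $\|d^*(A(t_0) - A(s))\|_3$ are all locally bounded in $s$ near $t_0 > 0$. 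Absolute continuity of the Lebesgue integral then forces these two integrals to vanish as $t \to t_0$.

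For the two integrals over $[0, t]$, I would expand the brackets in the analogous way so that the $t$-dependent factors pull outside the $s$-integral. This produces bounds of the form
\begin{align*}
c\|d(A(t) - A(t_0))\|_3 \int_0^t \|\psi(s)\|_q\, ds + c\|A(t) - A(t_0)\|_6 \int_0^t \|d\psi(s)\|_r\, ds
\end{align*}
for Term 1, and similarly for Term 3 with $d^*$ in place of $d$. The $s$-integrals are uniformly bounded in $t$, so matters reduce to showing that each prefactor tends to zero. I would obtain $\|A(t) - A(t_0)\|_6 \to 0$ from continuity of $A : (0,\infty) \to H_1 \hookrightarrow L^6$, and $\|dA(t) - dA(t_0)\|_3 \to 0$ by writing $dA = B - A\wedge A$ and combining the H\"older-type estimate \eref{eie40} for $\|B(t)-B(t_0)\|_3$ with the $L^6$ continuity of $A$.

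The hard part will be the prefactor $\|d^*(A(t) - A(t_0))\|_3$ arising in Term 3, which cannot come from $H_1$-continuity of $A$ alone (that only controls derivatives in $L^2$). I would resolve it by invoking the Yang--Mills heat equation through \eref{ibA40}, namely $d^*A'(s) = -[A(s)\lrc A'(s)]$. Integrating from $t_0$ to $t$ and estimating in $L^3$ yields
\begin{align*}
\|d^*A(t) - d^*A(t_0)\|_3 \le c\int_{t_0}^t \|A(\sigma)\|_6 \|A'(\sigma)\|_6\, d\sigma,
\end{align*}
and the integrand is locally bounded near $t_0 > 0$ by \eref{ibA5a} and \eref{iby5} (both with $a = \tfrac{1}{2}$), so absolute continuity finishes the argument.
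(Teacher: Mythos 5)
Your proposal is correct and follows essentially the same route as the paper: Leibniz plus H\"older with the exponents $q^{-1} = \tfrac12 - \tfrac b3$, $r^{-1} = \tfrac23 - \tfrac b3$, the $L^1$-integrability of $\|\psi(s)\|_q$ and $\|d\psi(s)\|_r$ from \eref{ib546} and \eref{ib547}, and the Yang--Mills identity \eref{ibA40} to establish $L^3$-continuity of $d^*A(\cdot)$ at $t_0>0$. The only (cosmetic) divergences are that you derive $L^3$-continuity of $dA(\cdot)$ from the algebraic identity $dA = B - A\wedge A$ plus \eref{eie40}, whereas the paper integrates $dA' = B' - [A\wedge A']$ in time; and you phrase the $[t_0,t]$-terms via absolute continuity of an $L^1$ integrand, while the paper writes out the explicit Cauchy--Schwarz bounds from \eref{rec933}--\eref{rec935} with $[0,t]$ replaced by $[t_0,t]$.
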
  
              \begin{proof} Just as in the inequalities leading to \eref{rec935} we have
\begin{align}
&\int_0^t  \|d[A(t) -A(t_0), \psi(s) ] ds\|_p +\int_0^t  \|d^*[A(t) -A(t_0), \psi(s) ] ds\|_p \\
& \le 4c \|A(t)- A(t_0)\|_6 t^{1/4}\(\int_0^t  s^{1/2} \|d\psi(s)\|_r ^2ds\)^{1/2}  \notag\\
&+c \(\| d (A(t) -A(t_0))\|_3 + \| d^* (A(t) -A(t_0))\|_3\)\ t^{1/2} \( \int_0^t \|\psi(s)\|_q^2 ds \)^{1/2}  \notag
\end{align}
Now $\| A(t) - A(t_0)\|_6 \rightarrow 0$ as $t \rightarrow t_0$ because of \eref{iby5}.
 The two $L^3$ norms in the last line 
are also continuous in $t$ at $t_0 \ne 0$, as we can derive from the two identities \eref{ibA40} 
 and   $B'(s) =d_A A'(s) = dA'(s) +[A(s)\wedge A'(s)]$. Indeed we find
  \begin{align}
  \|d^*(A(t) - A(t_0))\|_3 &\le | \int_{t_0}^t c \|A(s)\|_6 \|A'(s)\|_6 ds| \ \ \ \text{and} \\
  \|d (A(t) - A(t_0))\|_3 &\le |\int_{t_0}^t \|B'(s)\|_3 ds| +| \int_{t_0}^t c \|A(s)\|_6 \|A'(s)\|_6 ds|
  \end{align}
  As long as $t$ and $t_0$ stay away from zero all of these integrals are finite, 
  by  \eref{iby5} and \eref{ibA5a}, for the first line and part of the  second line, and 
  by \eref{iby12} and \eref{iby13}  for the rest of the second line, and
  go to zero as $t \rightarrow t_0$. To deal with the second term in \eref{rec940} just replace the integral over
  $[0, t]$ in \eref{rec934} and \eref{rec935} by the integral over $[t_0, t]$  and replace
   $A(t)$ by $A(t_0)$ everywhere. Use \eref{ibA14} to see that $\|dA(t_0)\|_3 < \infty$. It follows that the second
   term in \eref{rec940} goes to zero.
   Similarly, one need only replace the integrals over $[0, t]$ in \eref{rec933}  by integrals over $[t_0, t]$
   and put $t =t_0$ in the definition of    $\alpha(s)$ to see that the fourth term in \eref{rec940} goes to zero
   as $t \rightarrow t_0$. 
 \end{proof}

\begin{remark}{\rm
 
         It's interesting to observe that     one cannot replace $d$ by $d^*$ in the
          inequality  \eref{rec929d} because $\|d^*A(t)\|_3$ can be identically infinite under
  our hypothesis - that $A(\cdot)$ has finite action. It's tempting to believe that some power counting scheme
  could unify the many estimates in this paper. But it would have to take into 
  account the fact that $dA(t)$ and $d^* A(t)$
  have very different behavior, in spite of both being first derivatives of $A$.
 }
 \end{remark}

\bigskip
\noindent
     \begin{proof}[Proof of Theorem \ref{thmrec7}] 
 By the representation \eref{rec750o} 
with $\tau =0$ we may write
the vertical correction for $\tau =0$ as
\begin{align} 
d_{A(t)} \int_0^t \psi(s) ds =  \hat w(0) - \hat w(t)    
  +\int_0^t \gamma(t,s) ds     \ \ \   0< t <\infty,           \label{rec750o0}   
  \end{align}
  where
    \beq
  \gamma(t,s) = [A(t), \psi(s)] - P^\perp\(\zeta(s) + [A(s), \psi(s)]\) . \label{rec750o1}
  \eeq      
 Since $w(t)$ is continuous into $H_b^0$ so is  
     $\hat w(t)$ because $P^\perp:H_b^0 \rightarrow H_b^0$ is continuous by   \cite[Lemma 6.10]{G70}.
     Therefore $\|\hat w(0) - \hat w(t)\|_{H_b^0} \rightarrow 0$ as $t \downarrow 0$.
With a view toward applying Lemma \ref{lemstrat2} observe that \eref{rec756} and \eref{rec757o} give
\begin{align}
d\int_0^t \gamma(t,s) ds &= \int_0^t d[A(t), \psi(s) ] ds                             \label{rec752a}\\
d^*\int_0^t \gamma(t,s) ds&=  \int_0^t d^*[A(t)-A(s), \psi(s) ] ds              \label{rec752b}\\
 & \qquad\qquad + \int_0^t \( -[w(s) \lrc A'(s)] + [A(s) \lrc \zeta(s)] \) ds.     \label{rec752c}
\end{align}
By Lemma  \ref{lemstrat2} it suffices to show that the $L^p(M)$ norm of each of these three integrals 
goes to zero as $t\downarrow 0$ as well as  $\|\int_0^t \gamma(t,s) ds\|_2$.
           But  \linebreak 
    $\int_0^t \| d[A(t), \psi(s)]\, \|_p ds \rightarrow 0$ by  \eref{rec929d} and
$ \int_0^t \| d^*[A(t) -A(s), \psi(s)] \|_p ds \rightarrow 0$ by \eref{rec929c}. \eref{rec870} and \eref{rec871}
show that the line \eref{rec752c} also goes to zero in $L^p(M)$ norm.
 Furthermore,       
\begin{align*}
&\int_0^t\|\gamma(t,s)\|_2 ds \le \int_0^t\(\|\, [A(t),\psi(s)]\,\|_2  +\|\zeta(s)\|_2 + \|\, [A(s),\psi(s)]\,\|_2\) ds  \\
&\le c\|A(t)\|_6 \int_0^t \| \psi(s)\|_3 ds  +\int_0^t \|\zeta(s)\|_2 ds +c\int_0^t\| A(s)\|_6 \|\psi(s)\|_3 ds \\
&\le c\|A(t)\|_6 o(t^{1/4})   +\int_0^t \|\zeta(s)\|_2 ds   \\
&+c\(\int_0^t s^{b-(1/2)} \|A(s)\|_6^2 ds\)^{1/2} \(\int_0^t s^{(1/2) - b} \|\psi(s)\|_3^2 ds\)^{1/2}
\end{align*} 
by \eref{ib10d} (for $b \ge 0$).
 All three terms go to zero as $t\downarrow 0$,
the first by \eref{ibA5a}, the second by \eref{rec872} and the third by \eref{ibA6a} and \eref{ib10c}.
 This completes the proof that the vertical correction 
 goes to zero in $H_b^0$ norm.
 
      Now $\As \in L^3(M)$ as assumed in the statement of the theorem. Therefore, by Lemma \ref{lemeqSob}
      the $H_1^\As$ norm is equivalent to the $H_1^0$ norm. By interpolation between $H_1$ and $L^2$
      it follows that the $H_b^\As$ norm is equivalent to the $H_b^0$ norm. 
      This concludes the proof of  \eref{rec780}. 
\end{proof}

\bigskip
\noindent
\begin{proof}[Proof of Theorem \ref{thmrec7o}]
 Continuity of $v$ at $t =0$ has been proved in Theorem \ref{thmrec7}.  Suppose then that
$t_0 > 0$ and $t >0$. We need to show that the differences of the integral
 term in \eref{rec750o0} go to zero in $H_b^0$ norm.  Taking differences at $t$ and $t_0$ in 
 \eref{rec752a} - \eref{rec752c} we find
\begin{align}
    d\int_0^t \gamma(t,s) ds  - d&\int_0^{t_0} \gamma(t_0,s)
= \int_0^t d[A(t) -A(t_0), \psi(s) ] ds  \label{rec753a} \\
&+\int_{t_0}^t d[A(t_0), \psi(s)] ds                           \label{rec753b}\\
    d^*\int_0^t \gamma(t,s) ds- d^*&\int_0^{t_0} \gamma(t_0,s) ds
 =  \int_0^t d^*[A(t)-A(t_0), \psi(s) ] ds   \label{rec753c} \\
 &+\int_{t_0}^t  d^*[A(t_0) - A(s),\psi(s)] ds          \label{rec753d}\\
 &+ \int_{t_0}^t \( -[w(s) \lrc A'(s)] + [A(s) \lrc \zeta(s)] \) ds.     \label{rec753e}
\end{align}
To apply Lemma  \ref{lemstrat2} we need to show that each of the five integrals on the right go to
zero in $L^p(M)$ norm as $t\rightarrow t_0$. This is clear for the integral in line \eref{rec753e} because of
\eref{rec870} and \eref{rec871}. The $L^p(M)$ norm of each of the remaining four integrals goes to zero
as $t\rightarrow t_0$ by the corresponding assertion in Lemma \ref{lemrec31}.
Finally, to complete the application of Lemma \ref{lemstrat2} we need to show that 
$\|\int_0^t \gamma(t,s)ds - \int_0^{t_0}\gamma(t_0,s)ds\|_2 ds \rightarrow 0$ as $t\rightarrow t_0 \ne 0$. 
The cancelations and estimates
needed are similar to the preceding difference computations but a little simpler. We omit the details.
\end{proof}

\subsection{Finite action of the strong solution} \label{secrecab}

\begin{theorem}\label{thmrec9b} $($Finite action of the strong solution$)$ 
Suppose that $1/2 \le a < 1$ and $1/2\le b <1$ and that $w$
is  a strong solution of the augmented variational equation \eref{av1} with  finite strong b-action
in the sense that \eref{w3c} holds.
Assume that $\As \in L^3(M)$. Suppose also that $max(a,b) > 1/2$.  Let $c = min(a,b)$ and
let  $\tau >0$.  
 Define $v_\tau$ by \eref{rec2}. Then
\begin{align}
\int_0^\tau t^{-c}\Big(  \|\n^{\As} v_\tau(t)\|_2^2  +\|v_\tau(t)\|_2^2 \Big) dt < \infty.      \label{rec880}
\end{align}
\end{theorem}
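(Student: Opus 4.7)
The first step is to reduce to intrinsic Sobolev estimates. Because $\As \in L^3(M)$, Lemma \ref{lemeqSob} gives $\|\cdot\|_{H_1^\As} \asymp \|\cdot\|_{H_1^0}$, so it suffices to bound $\int_0^\tau t^{-c}(\|d v_\tau\|_2^2 + \|d^* v_\tau\|_2^2 + \|v_\tau\|_2^2)\,dt$. The $L^2$ contribution is immediate: since $v_\tau$ is a strong solution on $(0,\infty)$ and extends continuously to $t=0$ in $L^2$ by Theorem \ref{strinit} at $\rho=2$, it is bounded on $[0,\tau]$ in $L^2$, and $c<1$ renders the weighted integral finite.

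Next I would decompose $v_\tau(t) = w(t) + d_{A(t)}\eta_\tau(t)$ with $\eta_\tau(t) = \int_\tau^t \psi(s)\,ds$. The $w$-piece is controlled directly by the finite strong $b$-action hypothesis \eref{w3c}: since $c \le b$,
\begin{align*}
\int_0^\tau t^{-c}\|w(t)\|_{H_1^\As}^2\,dt \le \tau^{b-c}\int_0^\tau t^{-b}\|w(t)\|_{H_1^\As}^2\,dt < \infty.
\end{align*}
The remaining task is to bound $\int_0^\tau t^{-c}\|d_{A(t)}\eta_\tau(t)\|_{H_1^0}^2\,dt$, and the representation \eref{rec766}, rewritten for $t \in (0,\tau]$ as
\begin{align*}
d_{A(t)}\eta_\tau(t) = w(\tau) - w(t) + \int_t^\tau \zeta(s)\,ds + \int_t^\tau [A(s)-A(t),\psi(s)]\,ds,
\end{align*}
is the most convenient starting point. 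The fixed term $w(\tau)\in H_1^0$ contributes the finite quantity $\|w(\tau)\|_{H_1^0}^2\,\tau^{1-c}/(1-c)$. For the $\int_t^\tau \zeta(s)\,ds$ piece I would pass $d$ and $d^{*}$ inside the integral, then invoke Hardy's inequality \eref{H2} with $\beta = c$: the weighted contribution is dominated by $C\int_0^\tau s^{2-c}(\|\zeta\|_2^2 + \|d\zeta\|_2^2 + \|d^{*}\zeta\|_2^2)\,ds$, which is bounded by $C\tau^{b-c}\int_0^\tau s^{2-b}(\cdots)\,ds$ and hence finite by the initial-behaviour estimates \eref{ib10ba} and \eref{ib542c}.

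The main obstacle is the integral $I(t) := \int_t^\tau [A(s)-A(t),\psi(s)]\,ds$ together with its exterior derivative and coderivative, because the $t$-dependence of $A(t)$ inside the integrand blocks a direct Hardy estimate. For the $L^2$-norm of $I(t)$, use $\|A(s)-A(t)\|_2 \le Cs^{a/2}$ obtained by integrating \eref{iby2} from $t$ to $s$, pair against the high-$L^p$ bound $\int_0^\tau s^{(3/2)-b}\|\psi(s)\|_\infty^2\,ds<\infty$ of Theorem \ref{thmpw1}, and apply Cauchy--Schwarz to show that $\int_t^\tau \|[A(s)-A(t),\psi(s)]\|_2\,ds$ is uniformly bounded in $t \in (0,\tau]$; multiplying by $t^{-c}$ and integrating gives a finite quantity since $c<1$. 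The derivative terms $dI(t)$ and $d^{*}I(t)$ require a more intricate analysis: expand the commutators by the product rule, split $A(s) - A(t)$ so that the $t$-dependent factor can be separated from the $s$-dependent factor, and combine the initial-behaviour bounds of Theorem \ref{thmibA1} (notably \eref{ibA5}, \eref{ibA6}, \eref{ibA10a}, \eref{ibA15b}--\eref{ibA15c}) with the $\psi$-estimates \eref{ib543} and \eref{ib546}--\eref{ib548} and the high-$L^p$ bounds of Theorem \ref{thmpw1}. The hypothesis $\max(a,b)>1/2$ enters precisely here, providing the strictly positive slack in the exponents required for all the relevant weighted integrals to converge; in the excluded doubly-critical case $a=b=1/2$ this slack vanishes, as anticipated in Remark \ref{remweak}.
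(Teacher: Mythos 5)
Your plan takes a genuinely different route from the paper's, and it does work, but with one point that needs repair.

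The paper does not work directly with the representation \eref{rec750}. Instead (Section \ref{secrecab}, \eref{rec959}) it introduces the projection $P^\perp$ onto the orthocomplement of $\ker d^*$ and writes $v_\tau(t) = w(t) + \hat w(\tau) - \hat w(t) + u_\tau(t)$, with $u_\tau(t)=\int_\tau^t \gamma(t,s)\,ds$ and $\gamma(t,s)=[A(t),\psi(s)] - P^\perp(\zeta(s)+[A(s),\psi(s)])$. The payoff of $P^\perp$ is the identity $dP^\perp = 0$, which makes $d\gamma(t,s)=d[A(t),\psi(s)]$ and removes every trace of $\zeta$ from the exterior-derivative computation (\eref{rec756}); only the coderivative $d^*\gamma$ retains $\zeta$, and there it appears in the already tame form $[w\lrc A'] + [A\lrc\zeta]$, via $d_A^*\zeta = [w\lrc A']$ (\eref{pi11a}). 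You avoid $P^\perp$ altogether and instead confront $d\zeta$ and $d^*\zeta$ head on from \eref{rec750}. This is more direct and arguably more natural, and it buys you a symmetry between the $d$ and $d^*$ sides at the cost of having to convert gauge-covariant derivative bounds into ordinary-derivative bounds. Also, your $L^2$ estimate for $I(t)$ uses $\|A(s)-A(t)\|_2\,\|\psi(s)\|_\infty$ (hence Theorem \ref{thmpw1}), whereas the paper's Lemma \ref{lemrec9o} uses $\|A\|_6\|\psi\|_3$ and needs no $L^\infty$ bound there; since $L^\infty$ bounds on $\psi$ are needed elsewhere anyway, this is a wash in the end, but it is a different pairing.

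The one point that needs attention: you claim that $\int_0^\tau s^{2-c}\bigl(\|d\zeta\|_2^2 + \|d^*\zeta\|_2^2\bigr)\,ds$ is finite ``by \eref{ib542c}.'' That estimate controls $\|d_A\zeta\|_2$ and $\|d_A^*\zeta\|_2$, not the flat derivatives. You must add the commutator terms: $d\zeta = d_A\zeta - [A\wedge\zeta]$ and $d^*\zeta = d_A^*\zeta - [A\lrc\zeta]$. These are handled by writing $s^{2-c}\|[A(s)\wedge\zeta(s)]\|_2^2 \le c^2\bigl(s^{1/2}\|A(s)\|_6^2\bigr)\bigl(s^{(3/2)-c}\|\zeta(s)\|_3^2\bigr)$, which is bounded times integrable by \eref{ibA5a} and \eref{ib548} (after lowering the exponent $(3/2)-c$ to $(3/2)-b$ via $c\le b$), and similarly for the interior product. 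Alternatively, and closer to the paper's mechanism, one observes that $d^*\zeta = [w\lrc A'] - [A\lrc\zeta]$ by \eref{pi11a}, so the $d^*$ side does not involve third derivatives of $w$ at all; and for the $d$ side one needs \eref{ib542c}. Once this is repaired the argument is sound. Finally, while your description of the $dI(t)$, $d^*I(t)$ estimate is vague, the device you would need — Cauchy--Schwarz with the weight $s^{b-(3/2)}$ against $s^{(3/2)-b}\|\psi(s)\|_\infty^2$, then Fubini to interchange the $s$- and $t$-integrals, then \eref{ibA10b}--\eref{ibA10c} to control $\int_0^s t^{-c}\|d^\#(A(s)-A(t))\|_2^2\,dt$ — is exactly the paper's Lemma \ref{lemrecef}, and the final $s$-integral converges because its exponent $a+b-(3/2)-c = \max(a,b) - 3/2 > -1$ exactly when $\max(a,b) > 1/2$. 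So you have the right idea about where the hypothesis enters, but the exponent bookkeeping that realizes it should be made explicit.
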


For the proof we are once again  going to use the non-gauge invariant
 representation of $v_\tau$ given in \eref{rec750o}.
      Let
\begin{align}
u_\tau(t) = \int_\tau^t \gamma(t,s) ds, \label{rec960}
\end{align}
where $\gamma(t,s)$ is defined in \eref{rec750o1}.
Then, by \eref{rec750o}, we have
     \begin{align}
v_\tau(t) =w(t) +  \hat w(\tau) - \hat w(t)     +u_\tau(t).     \label{rec959}
\end{align}    
The  finite strong b-action of the first three terms in \eref{rec959} will be easily  established at
 the end of this section using the equivalence of norms
discussed at the beginning of Section \ref{secivas}. That equivalence will depend on use of the condition
$\As \in L^3(\R^3)$ in case $M = \R^3$.  The proof of finite  strong b-action of $u_\tau$ in the $H_1^0$ norm 
does not depend on this condition but  has a lengthy proof. We will focus on this first. 
  In the next theorem we will establish  finite strong b-action of the fourth term in \eref{rec959}
while merely assuming  finite b-action for $w$.

\begin{theorem} \label{thmrec9c}  
Suppose that $1/2 \le a < 1$ and $1/2\le b <1$ and that $w$ 
 is a solution to  the augmented variational equation \eref{av1} with finite b-action in the sense 
of \eref{w3d}. 
Assume also that $max(a,b) > 1/2$.  Let $c = min(a,b)$ and
let  $\tau >0$.   
  Then
 \begin{align}
 \int_0^\tau t^{-c} \|u_\tau(t)\|_{H_1^0}^2 dt < \infty.  \label{rec938o}
 \end{align}
\end{theorem}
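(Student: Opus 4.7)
The plan is to bound $\|u_\tau(t)\|_{H_1^0}^2$ by a constant times $\|u_\tau(t)\|_2^2 + \|du_\tau(t)\|_2^2 + \|d^* u_\tau(t)\|_2^2$ using the integration-by-parts identity on $\R^3$ (or Gaffney-Friedrichs on bounded $M$, once one checks that $u_\tau(t)$ inherits appropriate boundary conditions from $w$ and $A$ via the defining formula \eqref{rec960} and \eqref{rec959}, noting that $P^\perp$ preserves the relevant conditions). The identities \eqref{rec756} and \eqref{rec757o} then reduce the task to proving $\int_0^\tau t^{-c}\|F(t)\|_2^2\,dt < \infty$, where $F(t)$ ranges over $u_\tau(t)$ and the five explicit integrands in \eqref{rec752a}-\eqref{rec752c} entering $du_\tau(t)$ and $d^* u_\tau(t)$.

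For each such $F(t)$ I would split $\gamma(t,s)$ (and the analogous integrands for $du_\tau$, $d^*u_\tau$) into a $t$-independent piece $g(s)$ and a piece linear in $A(t)$. For the $t$-independent part, the integral $\int_t^\tau \|g(s)\|_2\,ds$ fits Hardy's inequality (Lemma \ref{HI} with $\beta = c \in [1/2,1)$), reducing the matter to showing $\int_0^\tau s^{2-c}\|g(s)\|_2^2\,ds < \infty$. Each such bound follows from a single H\"older split in space, using the $L^p$ initial behavior bounds of Lemma \ref{lemiby} and Theorem \ref{thmibA1} on the $A$-factor, and those of Theorems \ref{ibord1}, \ref{ibord2b} on the $w$-factor. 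For the $t$-linear piece $[A(t), \int_\tau^t \psi(s)\,ds]$ and its exterior derivative, one pulls $A(t)$ outside the commutator and controls $\|A(t)\|_6$, $\|A(t)\|_\infty$, or $\|dA(t)\|_3$ using \eqref{ibA5a}, \eqref{ibA72}, \eqref{ibA14}, then applies Hardy to $\int_t^\tau \|\psi(s)\|_3\,ds$ with a correspondingly adjusted weight.

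The main obstacle is the balance of exponents in the doubly critical case $a = b = 1/2$: generic H\"older estimates produce integrands proportional to $s^{-1}$ that just fail to be integrable, and logarithmic divergences appear. The hypothesis $\max(a, b) > 1/2$ supplies precisely the margin needed, but a case distinction governs where the extra $s^\epsilon$ is harvested. When $a > 1/2$, one exploits the improved $a$-action bounds on $\|A(s)\|_6$, $\|A'(s)\|_3$, $\|B(s)\|_3$ from Lemma \ref{lemiby} and Theorem \ref{thmibA1} (using the sharp exponent $a$ rather than $1/2$); when $b > 1/2$, one uses the improved interpolation bounds \eqref{ib546}-\eqref{ib548} on $\|\psi(s)\|_p$, $\|d\psi(s)\|_r$, $\|\zeta(s)\|_r$ from Theorem \ref{ibord2b}. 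The choice $c = \min(a,b)$ keeps the $t^{-c}$ weight strictly away from the critical value $1$, so that in each of the five estimates the saved $s^\epsilon$ is enough to secure convergence of $\int_0^\tau s^{2-c}\|g(s)\|_2^2\,ds$, and summing the six contributions (the five derivative terms plus $\|u_\tau\|_2^2$) yields \eqref{rec938o}.
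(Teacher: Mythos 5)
Your high-level reduction — Gaffney-Friedrichs with trivial connection, then control of $\|u_\tau(t)\|_2$, $\|du_\tau(t)\|_2$, $\|d^*u_\tau(t)\|_2$ via the identities \eqref{rec756}--\eqref{rec757o} — matches the paper's Proposition \ref{propfa1}, and the pure $L^2$ piece goes through as you describe (cf.\ Lemma \ref{lemrec9o}). But the decomposition you propose for the first-derivative pieces is not the one the paper uses, and it fails in two concrete ways.

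First, the $d^*$-piece. Your split into ``a $t$-independent piece $g(s)$ plus a piece linear in $A(t)$ to be pulled out of the commutator'' forces you to break $d^*[A(t)-A(s),\psi(s)]$ into $d^*[A(t),\psi(s)] - d^*[A(s),\psi(s)]$, which introduces $\|d^*A(t)\|_3$. That quantity is \emph{not} controlled by finite action: the paper warns explicitly (the remark after Theorem \ref{thmibA1}, again after Lemma \ref{lemrec31}) that $\|d^*A(t)\|_3$ can be identically infinite, and the difference bounds \eqref{ibA15b}, \eqref{ibA15c}, \eqref{ibA10b}, \eqref{ibA10d} work precisely because the pure-gauge contributions to $d^*(A(t)-A(s))$ cancel. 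That cancellation is lost the moment you separate $A(t)$ from $A(s)$.

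Second, even for the $d$-piece, your extraction of $A(t)$ produces the $t$-weight $t^{-c}\|dA(t)\|_3^2$. By \eqref{ibA14} this is $o\big(t^{-c-(3-2a)/2}\big)$, and the exponent satisfies $-c-\tfrac{3}{2}+a < -1$ for \emph{every} $a\in[\tfrac12,1)$, $b\in[\tfrac12,1)$ (since $a-b<\tfrac12$), so the weight lies strictly outside the range $\beta<1$ in which Lemma \ref{HI} applies; and since $\int_t^\tau\|\psi(s)\|_6\,ds$ does not tend to $0$ as $t\downarrow 0$, the resulting integral genuinely diverges. The paper instead pulls out $A(s)$ time-matched with $\psi(s)$: in Lemma \ref{lemrec9cd} the integrand factors as $\big(s\|dA(s)\|_3^2\big)\big(s^{1-b}\|\psi(s)\|_6^2\big)$, bounded times integrable, because both factors sit at the same time $s$. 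What then remains is the genuine $t$-dependent difference $d^\#[A(t)-A(s),\psi(s)]$, and the paper is emphatic in the proof of Lemma \ref{lemrecef} that Hardy cannot be used on it. Instead it applies Cauchy--Schwarz in $s$ and reverses the order of the $t$- and $s$-integrals, so that the uniform bound on $\int_0^s t^{-a}\|d^\#(A(s)-A(t))\|_2^2\,dt$ can be exploited. That Fubini step is where $\max(a,b)>\tfrac12$ is actually spent (directly when $b>\tfrac12$, via $t^{-c}=t^{a-1/2}t^{-a}$ when $a>\tfrac12$), and it is where the high-$L^\infty$ Neumann-domination bound \eqref{ib200p} on $\psi$ enters. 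Your ``Hardy with adjusted weight'' picture omits this mechanism, and without it the estimate does not close.
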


\begin{remark}{\rm (Strategy) We will use the Gaffney-Friedrichs inequality \eref{gaf49} 
with connection form zero.
It  suffices to prove, then,  that 
\begin{align}
\int_0^\tau t^{-c} \Big( \|d^* u_\tau(t)\|_2^2 + \| d u_\tau(t)\|_2^2 + \|u_\tau(t)\|_2^2 \Big) dt < \infty.  \label{rec939o}
\end{align}
}
\end{remark}
The following proposition addresses each of these three terms.

 \begin{proposition} \label{propfa1} 
Under the hypotheses of Theorem \ref{thmrec9c} and with $\gamma(t,s)$ defined in \eref{rec750o1} we have  
\begin{align}
 &\int_0^\tau t^{-c} \(\int_t^\tau  \|d^*\gamma(t, s)\|_2 ds \)^2 dt < \infty \label{rec940o}\\
 & \int_0^\tau t^{-c} \(\int_t^\tau  \|d\gamma(t, s)\|_2 ds \)^2 dt < \infty  \label{rec941o}\\
&\int_0^\tau t^{-a} \(\int_t^\tau \|\gamma(t,s)\|_2 ds \)^2 dt < \infty.   \label{rec942o}
 \end{align}
 \end{proposition}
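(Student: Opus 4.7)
The plan is to handle all three bounds by a uniform strategy: combine Hardy's inequality \eref{H2} with a Fubini "exchange of order of integration" trick to control the mixed $(t,s)$ contributions. First I would substitute the explicit representations from Theorem \ref{thmrec5b}, namely
\begin{align*}
d\gamma(t,s) &= d[A(t), \psi(s)], \\
d^*\gamma(t,s) &= d^*[A(t) - A(s), \psi(s)] - [w(s)\lrc A'(s)] + [A(s)\lrc \zeta(s)],
\end{align*}
and rewrite $\gamma(t,s)$ itself by expanding $[A(t), \psi(s)] = [A(s), \psi(s)] + [A(t) - A(s), \psi(s)]$ to obtain $\gamma(t,s) = [A(t) - A(s), \psi(s)] + (I-P^\perp)[A(s), \psi(s)] - P^\perp \zeta(s)$. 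Analogously $d[A(t), \psi(s)]$ in \eref{rec941o} splits as $d[A(s), \psi(s)] + d[A(t) - A(s), \psi(s)]$. In each bound the integrand then decomposes into an \emph{$s$-only} part and a \emph{difference} part proportional to $A(t) - A(s)$ (or $d(A(t) - A(s))$).

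For the $s$-only terms, Hardy's inequality \eref{H2} with $\beta = a$ in \eref{rec942o} and $\beta = c$ in \eref{rec940o}, \eref{rec941o} reduces each to verifying $\int_0^\tau s^{2-\beta} f(s)^2 ds < \infty$ for $f(s)$ equal to one of $\|\zeta(s)\|_2$, $\|A(s)\|_6\|\psi(s)\|_3$, $\|A(s)\|_6\|\zeta(s)\|_3$, $\|w(s)\|_6\|A'(s)\|_3$, $\|dA(s)\|_3\|\psi(s)\|_6$, $\|A(s)\|_6\|d\psi(s)\|_3$. Each such integrability follows by combining the initial-behavior estimates from Lemma \ref{lemiby} and Theorems \ref{thmibA1}, \ref{ibord1}, \ref{ibord2b}; for instance, $\int_0^\tau s^{2-c}\|A(s)\|_6^2\|d\psi(s)\|_3^2 ds$ is controlled by using $\|A(s)\|_6^2 \leq Cs^{a-1}$ from \eref{ibA5} against \eref{ib548}, since $s^{1+a-c}/s^{3/2-b} = s^{\max(a,b) - 1/2}$ is bounded on $(0,\tau]$ by the hypothesis $\max(a,b) \geq 1/2$.

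For the difference terms, I would write $A(t) - A(s) = -\int_t^s A'(r)\,dr$ (and, where needed in \eref{rec941o}, $d(A(t) - A(s)) = -\int_t^s dA'(r)\,dr$ with $dA'(r) = B'(r) - [A'(r)\wedge A(r)]$). Interchanging the order of integration then recasts each contribution to $\int_t^\tau \|\cdot\|_2 ds$ as $\int_t^\tau h(r)\,dr$ with
$$ h(r) = \|A'(r)\|_{?} \cdot \int_r^\tau \|\cdot\|_{?}\, ds $$
(or with $\|dA'(r)\|$ in place of $\|A'(r)\|$). Hardy's inequality then reduces matters to $\int_0^\tau r^{2-\beta} h(r)^2\, dr < \infty$. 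The inner factor $\int_r^\tau \|\cdot\|_? ds$ is bounded (at worst up to a logarithm) by Cauchy-Schwarz against \eref{ib10ba}, \eref{ib543}, \eref{ib548}, while the outer integral is handled by \eref{iby8}, \eref{iby9}, \eref{iby12}, \eref{iby13} combined with the interpolation lemma \ref{lemrecinterp}; in particular $\int_0^\tau r^{2-c}\|B'(r)\|_3^2 dr$ is bounded via $\|B'\|_3^2 \leq \|B'\|_2\|B'\|_6$ and Cauchy-Schwarz against \eref{iby12}, \eref{iby13}.

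The main obstacle is the delicate arithmetic of exponents, especially in the borderline regime $c = 1/2$. At precisely this point the Hardy-plus-Fubini reduction produces an inner factor behaving like $\int_r^\tau s^{b-3/2} ds = \log(\tau/r)$ when $b = 1/2$, or forces one to balance $r^{2-c}$ against $r^{3-a}$ in the outer $\|B'\|_6^2$ integral. In both scenarios the strict hypothesis $\max(a,b) > 1/2$ (equivalently $a+b-c > 1/2$) supplies an extra positive power of $r$ or $\tau$ that absorbs the logarithm and restores integrability, exactly as flagged in Remark \ref{remweak}.
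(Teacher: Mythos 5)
Your overall plan---decompose $\gamma(t,s)$, $d\gamma(t,s)$, $d^*\gamma(t,s)$ into $s$-only parts (handled by Hardy's inequality \eref{H2}) and difference parts involving $A(t)-A(s)$---is in the right spirit, and the $s$-only pieces do close essentially as you say. But the Fubini step through $A(t)-A(s)=-\int_t^s A'(r)\,dr$ does not close for the difference pieces, and the explicit claim ``\,$\int_0^\tau r^{2-c}\|B'(r)\|_3^2\,dr<\infty$\," is false under the stated hypotheses. Indeed, Cauchy--Schwarzing $\|B'\|_3^2\le\|B'\|_2\|B'\|_6$ against \eref{iby12} and \eref{iby13} requires splitting $r^{2-c}=r^\alpha r^\beta$ with $2\alpha\ge 2-a$ and $2\beta\ge 3-a$, hence $\alpha+\beta\ge (5-2a)/2$; one then needs $2-c\ge 5/2-a$, i.e.\ $c\le a-1/2$. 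But $c=\min(a,b)$ with $a,b\ge 1/2$: if $a\le b$ this reads $a\le a-1/2$, impossible; if $a>b$ it reads $b\le a-1/2$, i.e.\ $a\ge b+1/2\ge 1$, again impossible. The same arithmetic defeats the companion term $[d^*A'(r),\psi(s)]$: with $\psi(s)$ controlled only in $L^q$ for $q\le 6$ (which is all that \eref{ib10ba}, \eref{ib543}, \eref{ib548} and Lemma \ref{lemrecinterp} supply), the outer weight $r^{2-c}$ against $\|d^*A'(r)\|_3\le c\|A(r)\|_6\|A'(r)\|_6$ again requires $c\le 2a-1$, which fails throughout the admissible range.

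The paper's proof of this proposition (through Lemmas \ref{lemrec9o}, \ref{lemrec9cd}, Corollary \ref{correc9}, and Lemma \ref{lemrecef}) avoids this obstruction by two means that your proposal omits. First, in Lemma \ref{lemrecef} the difference $\alpha(s,t)=A(s)-A(t)$ is \emph{not} expanded through $\int A'$; instead its $L^2$ norm is paired against $\|\psi(s)\|_\infty$, the only pairing gentle enough to make the weights work: after an interior Cauchy--Schwarz with weights $s^{\pm(b-3/2)}$ and a Fubini of the outer $(t,s)$ integral, one invokes \eref{ibA10b}--\eref{ibA10c} to see that $\int_0^s t^{-a}\|d^{(*)}\alpha(s,t)\|_2^2\,dt$ is bounded uniformly in $s$ (a gain of a full power of $t$ over anything available at the level of $L^3$ or pointwise bounds on $B'$), and the last factor $\int_0^\tau s^{3/2-b}\|\psi(s)\|_\infty^2\,ds$ is then handled by Theorem \ref{thmpw1}. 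Second, and decisively, this $L^\infty$ control on $\psi$ does not come from Sobolev or from the interpolation Lemma \ref{lemrecinterp}---as the paper remarks after Theorem \ref{ibord2b}, Sobolev just fails to give $\|\psi\|_\infty$ from $\|d_A\psi\|_3$---but from the Neumann-domination argument in Section \ref{sechighp}. Your argument never invokes Theorem \ref{thmpw1} and never produces any bound on $\psi(s)$ beyond $L^6$, which is exactly what leaves the difference-term exponents unbalanced. Finally, when $b=1/2$ the paper closes the last gap by writing $t^{-c}=t^{a-1/2}t^{-a}$ so as to siphon the extra power from $a>1/2$; a similar move is gestured at in your last paragraph, but without the $L^\infty$ control on $\psi$ there is no borderline log to absorb---the integral is divergent by a power, not a log.
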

The proof depends on the following three lemmas and corollary.

       \begin{lemma}\label{lemrec9o}Let $1/2 \le a <1$ and $0 \le b <1$. 
    If $w$ is a solution to the augmented variational equation \eref{av1} with 
       finite b-action in the sense  of \eref{w3d} 
       then  \eref{rec942o} holds.
\end{lemma}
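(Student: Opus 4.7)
The plan is to bound $\|\gamma(t,s)\|_2$ using only the $L^2$ contractivity of $P^\perp$, split the result into three elementary pieces, and estimate each against the initial-behavior results of Lemma \ref{lemiby} and Theorem \ref{ibord1}. First, since $P^\perp$ is an orthogonal projection in $L^2$, from the definition \eref{rec750o1} and H\"older,
\[
\|\gamma(t,s)\|_2 \le c\,\|A(t)\|_6\|\psi(s)\|_3 \,+\, \|\zeta(s)\|_2 \,+\, c\,\|A(s)\|_6\|\psi(s)\|_3.
\]
Denoting the three summands by $f_1(t,s)$, $f_2(s)$, $f_3(s)$, it suffices to show that $\int_0^\tau t^{-a}\bigl(\int_t^\tau f_j \, ds\bigr)^2 dt < \infty$ for $j=1,2,3$. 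Note that $A(T) \in H_1 \hookrightarrow L^6$ by Sobolev, so \eref{ibA5a} and \eref{ibA6a} are available.

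For $f_1$: factor $\|A(t)\|_6$ out of the inner integral. By \eref{ib10d}, $\int_0^\tau \|\psi(s)\|_3\,ds < \infty$, so $\int_t^\tau \|\psi(s)\|_3\,ds \le C$ uniformly in $t\in(0,\tau]$. The remaining integral $\int_0^\tau t^{-a}\|A(t)\|_6^2\,dt$ is finite by \eref{ibA6a}.

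For $f_2$: by Cauchy--Schwarz with weight $s^{1-b}$,
\[
\Big(\int_t^\tau \|\zeta(s)\|_2\,ds\Big)^2 \le \Big(\int_t^\tau s^{b-1}\,ds\Big)\Big(\int_t^\tau s^{1-b}\|\zeta(s)\|_2^2\,ds\Big).
\]
The second factor is bounded uniformly in $t$ by \eref{ib10ba}. The first factor is $O(1)$ when $b>0$ and $O(\log(\tau/t))$ when $b=0$; in either case, multiplying by $t^{-a}$ and integrating over $(0,\tau]$ gives a finite quantity because $a<1$.

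For $f_3$: by Cauchy--Schwarz,
\[
\Big(\int_t^\tau \|A(s)\|_6\|\psi(s)\|_3\,ds\Big)^2 \le \Big(\int_0^\tau s^{-a}\|A(s)\|_6^2\,ds\Big)\Big(\int_0^\tau s^{a}\|\psi(s)\|_3^2\,ds\Big).
\]
The first factor is finite by \eref{ibA6a}. Since $a \ge 1/2$ we have $a+b-\tfrac12 \ge 0$, so $s^{a} \le \tau^{a+b-1/2}\,s^{(1/2)-b}$ on $(0,\tau]$; hence the second factor is dominated by a multiple of $\int_0^\tau s^{(1/2)-b}\|\psi(s)\|_3^2\,ds$, which is finite by \eref{ib10c}. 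Thus $\bigl(\int_t^\tau f_3\,ds\bigr)^2$ is bounded by a $t$-independent constant, and a final multiplication by $t^{-a}$ is integrable over $(0,\tau]$ since $a<1$.

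The main technical obstacle is really a bookkeeping one: one has to identify the right Cauchy--Schwarz weights so that the $A$-integral closes via \eref{ibA6a} and the $\psi$-integral closes via \eref{ib10c}. The compatibility $a+b \ge 1/2$, which holds automatically since $a\ge 1/2$, is precisely what allows us to absorb the $s^{a}$ weight into the $\psi$-estimate \eref{ib10c}; the assumption $a<1$ makes $\int_0^\tau t^{-a}\,dt$ integrable and handles the logarithmic case $b=0$ for the $\zeta$-term without further work.
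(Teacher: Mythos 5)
Your proof is correct and follows essentially the same route as the paper's: you use $L^2$ contractivity of $P^\perp$ and H\"older to reduce to the same three scalar estimates, and handle the $[A(t),\psi(s)]$ and $[A(s),\psi(s)]$ terms by the same factorizations and the same ingredients (\eref{ibA6a}, \eref{ib10d}, \eref{ib10c}). The only real divergence is in the $\zeta$-term: the paper applies Hardy's inequality \eref{H2} with $\beta=a$, noting $\int_0^\tau s^{2-a}\|\zeta(s)\|_2^2\,ds<\infty$ since $2-a\ge 1-b$, whereas you substitute a Cauchy--Schwarz split with weight $s^{1-b}$ and then handle the resulting $t^{b-a}$ (or $t^{-a}\log(\tau/t)$ when $b=0$) weight by integrating directly using $a<1$. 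Both are valid; the paper's Hardy route is marginally cleaner because it avoids the logarithmic case distinction at $b=0$, but your version is equally rigorous and more elementary.
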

         \begin{proof}
From \eref{rec750o1} we see that 
\begin{align}
\| \gamma(t,s)\|_2 \le   \|\, [A(t), \psi(s)]\,\|_2 + \|\zeta(s)\|_2 +\|\, [A(s), \psi(s)]\, \|_2 \label{rec942i}
\end{align}
It suffices to prove \eref{rec942o} for each of these terms. Now  
 \begin{align}
 \int_0^\tau t^{-a}\( \int_t^\tau \|\, [A(t), \psi(s)]\,\|_2 ds\)^2 dt 
&\le c^2 \int_0^\tau t^{-a}\|A(t)\|_6^2 dt \(\int_0^\tau \|\psi(s)\|_3 ds \)^2,   \notag
 \end{align}
 which is finite by \eref{ibA6a} and  \eref{ib10d} for all $a \in [1/2, 1)$ and all $b \in [0, 1)$.
      
         Concerning the second term, observe that $\int_0^\tau s^{2-a} \|\zeta(s)\|_2^2 ds < \infty$
because $\int_0^\tau s^{1-b} \|\zeta(s)\|_2^2 ds < \infty$ in accordance with \eref{ib10ba}, while
$2-a \ge 1-b$ for all $a \in [1/2, 1)$ and all $b \in [0, 1)$. Therefore Hardy's inequality 
\eref{H2} with $\beta =a$ shows that
\eref{rec942o} holds in this range for the second term in \eref{rec942i}.
Finally,
   \begin{align*}
   \(\int_t^\tau\|\, [A(s), \psi(s)]\,\|_2ds\)^2 &\le \(\int_t^\tau c \|A(s)\|_6 \|\psi(s)\|_3ds \)^2 \\
   &\le c^2\int_0^\tau s^{-1/2} \|A(s)\|_6^2 ds \int_0^\tau s^{1/2}\|\psi(s)\|_3^2 ds, 
   \end{align*}
   which is bounded by \eref{ibA6a} and \eref{ib10c} for all $a \in [1/2, 1)$ and all $b \in [0, 1)$.
   Therefore the third term in \eref{rec942i} makes a finite contribution to \eref{rec942o}.
\end{proof}

\begin{lemma} \label{lemrec9cd}   
  Let $1/2 \le a < 1$ and $0 \le b <1$.
   If $w$ is a solution to the augmented variational equation \eref{av1} with 
    finite b-action in the sense of \eref{w3d}  then 
\begin{align}
&\int_0^\tau s^{2-b}  \|d [A(s),\psi(s)]\, \|_2^2 ds < \infty \ \ \ \text{and}     \label{rec891} \\
& \int_0^\tau s^{2-b} \|\,  [ A(s)\lrc  \zeta(s)]     - [w(s)\lrc A'(s)]\,   \|_2^2 ds 
                  < \infty .         \label{rec892}
\end{align}
\end{lemma}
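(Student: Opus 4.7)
The plan is to expand each integrand via the product rule (or direct Hölder splitting) and bound it in $L^2$ by pairing one factor in $L^6$ with another in $L^3$. In every case the weight $s^{2-b}$ is redistributed so that one factor of the integrand is controlled by a uniformly bounded supremum (from Lemma \ref{lemiby} or Theorem \ref{thmibA1}) and the other by an integral already shown to be finite in Theorem \ref{ibord1} or Theorem \ref{ibord2b}.

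For \eqref{rec891}, since $\psi$ is a $0$-form, the Leibniz rule gives $d[A(s),\psi(s)] = [dA(s),\psi(s)] - [A(s)\wedge d\psi(s)]$, hence
\[
\|d[A(s),\psi(s)]\|_2 \le c\bigl(\|dA(s)\|_3\|\psi(s)\|_6 + \|A(s)\|_6\|d\psi(s)\|_3\bigr).
\]
The first cross term contributes at most
\[
\sup_{0<s\le\tau}\bigl(s\,\|dA(s)\|_3^2\bigr)\int_0^\tau s^{1-b}\|\psi(s)\|_6^2\,ds,
\]
where the supremum is finite (indeed, vanishes as $s\downarrow 0$) by the second limit in \eqref{ibA14}, while the integral is the $\|\psi\|_6$-term of \eqref{ib10ba}. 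The second cross term contributes at most
\[
\sup_{0<s\le\tau}\bigl(s^{1/2}\|A(s)\|_6^2\bigr)\int_0^\tau s^{(3/2)-b}\|d\psi(s)\|_3^2\,ds,
\]
which is finite because $A(T)\in L^6$ (strong solutions lie in $H_1$), so \eqref{ibA5a} bounds the supremum, and the integral is the $\|d\psi\|_3$-term of \eqref{ib548}.

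For \eqref{rec892}, treat the two pieces separately by Hölder. Since $\zeta$ is a $1$-form, $\|[A(s)\lrc \zeta(s)]\|_2 \le c\|A(s)\|_6\|\zeta(s)\|_3$, so
\[
\int_0^\tau s^{2-b}\|[A(s)\lrc\zeta(s)]\|_2^2\,ds \le c^2\sup_{s\le\tau}\bigl(s^{1/2}\|A(s)\|_6^2\bigr)\int_0^\tau s^{(3/2)-b}\|\zeta(s)\|_3^2\,ds,
\]
whose right-hand side is finite by \eqref{ibA5a} and the $\|\zeta\|_3$-term in \eqref{ib548}. Similarly, $\|[w(s)\lrc A'(s)]\|_2 \le c\|w(s)\|_6\|A'(s)\|_3$, giving
\[
\int_0^\tau s^{2-b}\|[w(s)\lrc A'(s)]\|_2^2\,ds \le c^2\sup_{s\le\tau}\bigl(s^2\|A'(s)\|_3^2\bigr)\int_0^\tau s^{-b}\|w(s)\|_6^2\,ds,
\]
with the supremum bounded by \eqref{iby7} at $a=1/2$ (applicable because finite $a$-action for $a\ge 1/2$ implies finite $(1/2)$-action) and the integral bounded by $\kappa_6^2\nn w\nn_\tau^2<\infty$ via \eqref{ib6a}.

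No serious obstacle arises: the argument is essentially bookkeeping, matching each factor of $s^{2-b}$ to a bounded initial-behavior supremum and an already-established finite integral. The only point that could mislead is that \eqref{ib548} bounds $\|d\psi\|_3$ itself (not merely $\|d_A\psi\|_3$), so no separate treatment of a commutator $[A,\psi]$ inside $d\psi$ is required in the proof of \eqref{rec891}.
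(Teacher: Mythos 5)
Your proof is correct and follows essentially the same route as the paper's: the same Leibniz expansion $d[A,\psi]=[dA,\psi]-[A\wedge d\psi]$, the same Hölder pairings (each $L^2$ norm split into an $L^3$ factor times an $L^6$ factor), and the same redistribution of the weight $s^{2-b}$ into a uniformly bounded supremum times an integral controlled by \eqref{ib10ba} or \eqref{ib548}. The one minor deviation is that for the $\int s^{-b}\|w(s)\|_6^2\,ds$ factor you cite \eqref{ib6a} (which invokes only the hypothesis $\nn w\nn_\tau<\infty$), whereas the paper cites \eqref{ce401}; your citation is, if anything, more directly matched to the lemma's stated hypothesis.
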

\begin{proof}
From the identity  $d[A,\psi] = [dA, \psi] - [A\wedge d\psi]$ 
  we find
 \beq
  \|d [A(s),\psi(s)]\, \|_2 \le   c\|dA(s)\|_3 \| \psi(s)\|_6 +c\|A(s)\|_6 \|d\psi(s)\|_3.  \notag
  \eeq
  It suffices, therefore, to prove that
 \begin{align}
 \int_0^\tau s^{2-b}\Big(\|dA(s)\|_3^2 \| \psi(s)\|_6^2 
                   +\|A(s)\|_6^2 \|d\psi(s)\|_3^2  \) ds < \infty. \label{rec883c}
 \end{align}
 But 
 \begin{align}
 \int_0^\tau  s^{2-b}\|dA(s)\|_3^2 \| \psi(s)\|_6^2 ds 
               = \int_0^\tau \(s \|dA(s)\|_3^2\) \(s^{1-b} \| \psi(s)\|_6^2\)ds.     \notag
 \end{align}
 The first factor in the integrand is bounded, by \eref{ibA14}, and the
  second is integrable, by \eref{ib10ba}. 
  Similarly, 
  \beq
   s^{2-b}\|A(s)\|_6^2 \|d\psi(s)\|_3^2 = \(s^{1/2} \|A(s)\|_6^2\)\(s^{(3/2)- b}\|d\psi(s)\|_3^2 \), \notag
   \eeq
   wherein the first factor is bounded, by \eref{ibA5a}, and the second factor is integrable, by \eref{ib548}.
This proves \eref{rec891}.

For the proof of \eref{rec892} we have the bound
 \begin{align}
 \|- [w(s)\lrc A'(s)]  &+[ A(s)\lrc  \zeta(s)]\, \|_2     \notag\\
&  \le c\|w(s)\|_6 \| A'(s)\|_3   +c \|A(s)\|_6 \|\zeta(s)\|_3.  \label{rec881ac} 
\end{align}
Therefore it suffices to show that
\begin{align}
\int_0^\tau s^{2-b}\Big\{ \|w(s)\|_6^2 \|A'(s)\|_3^2 
 + \|A(s)\|_6^2\|\zeta(s)\|_3^2   \Big\}ds  \notag
 \end{align}
 is finite. The first term may be written $(s^{-b}\|w(s)\|_6^2) ( s^2\|A'(s)\|_3^2)$, which is an integrable function
 times a bounded function by respectively \eref{ce401} and \eref{iby7} (for all $a \ge 1/2$). 
 The second term may  be written $(s^{1/2} \| A(s)\|_6^2) ( s^{(3/2) -b} \|\zeta(s)\|_3^2)$,
 which is a bounded function (by \eref{ibA5a}) times an integrable function by \eref{ib548}.
This completes the proof of the lemma.
\end{proof}

\begin{corollary} \label{correc9} 
  Let $1/2 \le a < 1$ and $0 \le b <1$.
  If $w$ is a solution to the augmented variational equation \eref{av1} with finite b-action
  in the sense of \eref{w3d}  then 
\begin{align}
\int_0^\tau t^{-b}\Big(\int_t^\tau \|d [A(s),\psi(s)]\, \|_2 ds \Big)^2 dt < \infty  \ \ \ \text{and}    \label{rec883b} \\
\int_0^\tau t^{-b} \( \int_t^\tau  \|- [w\lrc A']  
                 +[ A\lrc  \zeta(s)] \|_2 ds\)^2 dt  <\infty.                                \label{rec887a}
\end{align}
\end{corollary}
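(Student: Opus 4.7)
The plan is to derive the Corollary as a direct consequence of Lemma \ref{lemrec9cd} via Hardy's inequality \eref{H2} from Lemma \ref{HI}. Specifically, in \eref{H2} I will take $T = \tau$ and $\beta = b$; the condition $-\infty < \beta < 1$ is satisfied since $0 \le b < 1$ by hypothesis.

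For \eref{rec883b}, I would set $g(s) = \|d[A(s),\psi(s)]\|_2$, so that $G(t) = \int_t^\tau g(s)\,ds$ is precisely the inner integral appearing in \eref{rec883b}. Then \eref{H2} yields
\[
\int_0^\tau t^{-b}\Big(\int_t^\tau \|d[A(s),\psi(s)]\|_2\,ds\Big)^2 dt \;\le\; \frac{4}{(1-b)^2}\int_0^\tau s^{2-b}\|d[A(s),\psi(s)]\|_2^2\,ds,
\]
and the right side is finite by \eref{rec891}. Analogously, for \eref{rec887a} I would set $g(s) = \|-[w(s)\lrc A'(s)] + [A(s)\lrc \zeta(s)]\|_2$, apply \eref{H2} in the same way, and conclude finiteness from \eref{rec892}.

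There is no real obstacle here: both inequalities in the Corollary are literally the conclusion of the Hardy inequality \eref{H2} applied to the two pointwise bounds already established in Lemma \ref{lemrec9cd}. The only thing to verify is that the exponent in \eref{H2} matches the exponent $2 - b$ appearing in \eref{rec891} and \eref{rec892} (it does, since $2 - \beta = 2 - b$) and that the constant $4/(1-b)^2$ is finite (it is, since $b < 1$).
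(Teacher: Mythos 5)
Your proof is correct and follows exactly the paper's own argument: both inequalities are obtained by applying Hardy's inequality \eref{H2} with $\beta = b$ and $T = \tau$, and the resulting right-hand sides are exactly the quantities shown finite in Lemma \ref{lemrec9cd}.
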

\begin{proof}
By Hardy's inequality, \eref{H2}, the left hand side of \eref{rec883b} is at most 
 \begin{align}
 \frac{4}{(1-b)^2}\int_0^\tau s^{2-b}  \|d [A(s),\psi(s)]\, \|_2^2 ds,
 \end{align}
 which has been shown to be finite in Lemma \ref{lemrec9cd}.  By the same argument, \eref{rec887a}
 follows from \eref{rec892}.
\end{proof}

\begin{lemma}\label{lemrecef} 
Suppose that $1/2 \le a < 1$ and $1/2\le b <1$. Assume that $max(a,b) > 1/2$.  Let $c = min(a,b)$.
 If $w$ is a solution to the augmented variational equation \eref{av1} with 
 finite b-action in the sense of \eref{w3d}   then 
\begin{align}
\int_0^\tau t^{-c}\( \int_t^\tau \| d^*[A(t) - A(s), \psi(s)]\, \|_2 ds \)^2 dt &< \infty\ \ \text{and}  \label{rec887c}\\
\int_0^\tau t^{-c}\( \int_t^\tau \| d[A(t) - A(s), \psi(s)]\, \|_2 ds \)^2 dt &< \infty.        \label{rec887d}
\end{align}
\end{lemma}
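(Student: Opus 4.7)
The plan is to apply Minkowski's integral inequality to bring the outer $L^2(t^{-c}\,dt)$-norm inside the $s$-integration, reducing both bounds to showing
$$\int_0^\tau \Bigl(\int_0^s t^{-c}\,\|d^{(*)}[A(t)-A(s),\psi(s)]\|_2^2\,dt\Bigr)^{1/2} ds<\infty.$$
The product rule $d^*[\alpha,\psi]=[d^*\alpha,\psi]-[\alpha\lrc d\psi]$ (and its exterior analogue) together with H\"older's inequality with exponent pair $(3,6)$ then splits each integrand into a ``derivative-of-$A$'' term and an ``$A$-times-$d\psi$'' term.

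The ``$A$-times-$d\psi$'' term is common to both the $d$ and $d^*$ bounds. Here the elementary integral bound $\int_0^s t^{-c}\|A(t)-A(s)\|_6^2\,dt\le s^{a-c}\,C_{22}(s,\rho_A(s))$, which follows from \eqref{ibA6} and $c\le a$, gives a factor $s^{(a-c)/2}$. Pairing with $\|d\psi(s)\|_3$ via Cauchy-Schwarz with the weight $s^{3/2-b}$ matched to \eqref{ib548} yields finiteness precisely when $(a-c)+(b-3/2)>-1$, i.e., when $\max(a,b)>1/2$, which is the standing hypothesis.

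For the ``derivative-of-$A$'' term I would apply Hardy's inequality \eqref{H3} with $h(t)=\|d^{(*)}(A(t)-A(s))\|_3$ (as a function of $t$ with $s$ fixed) and $\beta=c$, reducing matters to controlling $\int_0^s t^{2-c}\|d^{(*)}A'(t)\|_3^2\,dt$. In the $d^*$-case the Bianchi-type identity \eqref{ibA40c} gives $d^*A'=-[A\lrc A']$, and the resulting pointwise product is estimated by combining the pointwise bound on $\|A'\|_6$ from \eqref{iby5} with the integrated control of $\|A\|_6^2$ from \eqref{ibA6a}. In the $d$-case, $dA'=B'-[A\wedge A']$ from \eqref{ibA40b} produces an additional contribution from $B'$, which is handled using the interpolation $\|B'\|_3^2\le\|B'\|_2\|B'\|_6$ together with the integral bounds \eqref{iby12} and \eqref{iby13}.

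The main technical obstacle is this last $B'$ contribution, because the required integrability $\int_0^\tau\sigma^{2-c}\|B'(\sigma)\|_3^2\,d\sigma<\infty$ is borderline: combining the pointwise bound $\|B'(\sigma)\|_2^2\le C\sigma^{a-3}$ from \eqref{iby4} with the integral bound \eqref{iby13} via Cauchy-Schwarz leaves no margin, and one must choose the interpolation exponent carefully so that the hypothesis $\max(a,b)>1/2$ supplies exactly the slack needed. The disappearance of this slack in the doubly-critical case $a=b=1/2$ is precisely the obstruction flagged in Remark~\ref{remweak}, which is why that case is excluded from the hypothesis.
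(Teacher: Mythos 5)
Your handling of the $[\alpha\lrc d\psi]$ (respectively $[\alpha\wedge d\psi]$) term — H\"older with the $(L^6,L^3)$ pairing, the bound $\int_0^s t^{-c}\|\alpha(s,t)\|_6^2\,dt\le s^{a-c}C_{22}$ coming from \eref{ibA6} and $c\le a$, then weighted Cauchy--Schwarz against \eref{ib548} — is correct, and the arithmetic $(a-c)+(b-3/2)>-1\iff\max(a,b)>1/2$ indeed closes that half exactly as in the paper.

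The gap is in the ``derivative-of-$A$'' term. Pairing $\|d^{(*)}\alpha\|_3$ with $\|\psi\|_6$ and applying Hardy reduces you to proving $\int_0^s t^{2-c}\|d^{(*)}A'(t)\|_3^2\,dt<\infty$. In the $d$-case this contains $\int_0^s t^{2-c}\|B'(t)\|_3^2\,dt$. Interpolating $\|B'\|_3^2\le\|B'\|_2\|B'\|_6$ and trying every available split — the pointwise bound $t^{3-a}\|B'(t)\|_2^2\le C_4$ from \eref{iby4} together with \eref{iby13}, or the two integral bounds \eref{iby12} and \eref{iby13} via Cauchy--Schwarz — forces the power condition $a-c>1/2$, i.e.\ $a>\min(a,b)+1/2\ge 1$, which never holds. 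There is no choice of interpolation exponent for which $\max(a,b)>1/2$ supplies the needed slack: the required slack is $a-c>1/2$, not $\max(a,b)>1/2$, and $a<1$ combined with $c\ge 1/2$ rules it out unconditionally. The same obstruction $c\le 2a-1$ appears in the $d^*$-case through $\int_0^s t^{2-c}\|A(t)\|_6^2\|A'(t)\|_6^2\,dt$, again impossible since $c\ge 1/2$ and $a<1$.

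The paper avoids this by taking the opposite H\"older pairing on the derivative term, namely $\|d^{(*)}(A(t)-A(s))\|_2\,\|\psi(s)\|_\infty$. The $L^2$ control of $d^{(*)}(A(t)-A(s))$, namely that $\int_0^s t^{-a}\|d^{(*)}(A(t)-A(s))\|_2^2\,dt$ is bounded in $s$, comes directly from \eref{ibA10b} and \eref{ibA10c} — no Hardy, no $B'$ in $L^3$. The $L^\infty$ control $\int_0^T t^{(3/2)-b}\|\psi(t)\|_\infty^2\,dt<\infty$ is \eref{ib200p} with $p=\infty$, which is Theorem~\ref{thmpw1} — the Neumann-domination estimate. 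That $L^\infty$ bound on $\psi$ is the ingredient your proposal never invokes, and it is essential; it is also what confines this lemma (and Theorem~\ref{thmveu2b}) to $M=\R^3$ or Neumann boundary conditions. Your Minkowski reformulation is a legitimate alternative to the paper's weighted Cauchy--Schwarz plus Fubini, but with the $(L^3,L^6)$ pairing the estimate cannot close.
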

       \begin{proof}  Unlike the inequalities in Corollary \ref{correc9}, we cannot use Hardy's inequality now
       because the integrands in \eref{rec887c} and \eref{rec887d} depend on $t$.
       Let $\alpha =\alpha(s,t) = A(s) -A(t)$. Then, similar to the identity \eref{rec931o}, we have
       \begin{align}
d[\alpha, \psi(s)] &= [d\alpha, \psi(s)]    
               -[\alpha \wedge d\psi(s)]  .         \label{rec932o}
\end{align}  
Therefore
 \begin{align}
\| d^*[\alpha(s,t), \psi(s)]\, \|_2         
&\le  c\|d^*\alpha(s,t)\|_2 \|\psi(s)\|_\infty +c \|\alpha(s,t)\|_6 \| d \psi(s)\|_3     \label{rec902c}    \\
\| d[\alpha(s,t), \psi(s)]\, \|_2         
&\le  c\|d\alpha(s,t)\|_2 \|\psi(s)\|_\infty +c \|\alpha(s,t)\|_6 \| d \psi(s)\|_3  .        \notag                  
\end{align} 

The statements   \eref{ibA10b} and \eref{ibA10c} show that $\int_0^s t^{-a} \| d^\# \alpha(s,t)\|_2^2 dt$
is bounded for $s \in [0, T]$, where $d^\# = d$ or $d^*$. It will suffice to prove only \eref{rec887c}.

We need to show that each of the two terms on the right side of \eref{rec902c} 
makes a finite contribution to \eref{rec887c}.
For the first term we have
\begin{align}
&\int_0^\tau t^{-c} \(\int_t^\tau \|d^* \alpha(s,t)\|_2  \|\psi(s)\|_\infty ds \)^2 dt   \notag \\
&\le \int_0^\tau t^{-c} \(\int_t^\tau s^{b-(3/2)}\|d^* \alpha(s,t)\|_2^2ds\)
   \(\int_0^\tau s^{(3/2) -b} \| \psi(s)\|_\infty^2  ds\) dt   \notag\\
   &=\int_0^\tau s^{b -(3/2) }\(\int_0^s  t^{-c}\|d^* \alpha(s,t)\|_2^2dt\) ds 
          \(\int_0^\tau s^{(3/2) -b} \|  \psi(s)\|_\infty^2  ds\)       \label{rec902d}
\end{align}
The last factor in \eref{rec902d} is finite for any $b \in [0,1)$ by \eref{ib200p} with $p =\infty$.
Moreover  $\int_0^s  t^{-c}\|d^* \alpha(s,t)\|_2^2dt$   is bounded for $0\le s \le \tau$ because $c \le a$.
Therefore if $b > 1/2$ then the right hand side of \eref{rec902d}  is finite.  
 If $b =1/2$ then $c = 1/2$ and $a > 1/2$. In this case we may write $t^{-c} = t^{a -(1/2)} t^{-a}$
to find, since $b-(3/2) = -1$ that the integral is at most
\begin{align*}
\int_0^\tau s^{-1} s^{a - (1/2)}\(\int_0^s  t^{-a}\|d^* \alpha(s,t)\|_2^2dt\) ds 
          \(\int_0^\tau s^{(3/2) -b} \|  \psi(s)\|_\infty^2  ds\) < \infty.
          \end{align*}
The contribution to \eref{rec887c} of the second term in \eref{rec902c} can be estimated
by the same use of Schwarz' inequality 
and reversal of order of integration as for the first term, giving
\begin{align}
&\int_0^\tau t^{-c} \(\int_t^\tau \|\alpha(s,t)\|_6  \| d \psi(s)\|_3  ds \)^2 dt   \notag \\
   &\le \int_0^\tau s^{b -(3/2) }\(\int_0^s  t^{-c}\|\alpha(s,t)\|_6^2dt\) ds 
          \(\int_0^\tau s^{(3/2) -b} \| d \psi(s)\|_3^2  ds\).                   
\end{align}
Since $\int_0^s t^{-a}\|\alpha(s,t)\|_6^2dt$ is bounded for $s \in (0, \tau]$, by \eref{ibA6},
and  since \linebreak
$\int_0^\tau    s^{(3/2) -b} \| d \psi(s)\|_3^2  ds <\infty$ by \eref{ib548}, the same argument used for the first term applies to the second term as  well.
\end{proof}

\bigskip
\noindent
\begin{proof}[Proof of Proposition \ref{propfa1}] We see from  
\eref{rec750o1}, \eref{rec756}  and\eref{rec757o} that 
\begin{align}
d\gamma(t,s) &=     d[A(t), \psi(s)] \ \ \ \ \    \ \ \ \text{and} \\
d^*\gamma(t,s) &= d^*[A(t) - A(s), \psi(s)]                                
 - [w(s)\lrc A'(s)]  +[ A(s)\lrc  \zeta(s) ].
\end{align}
Therefore, combining 
 \eref{rec887a} and \eref{rec887c} we find \eref{rec940o}.
By combining \eref{rec887d} with \eref{rec883b} we find \eref{rec941o} because 
$d[A(t), \psi(s)] = d[A(t) -A(s), \psi(s)] + d[A(s),\psi(s)]$.
In applying Corollary \ref{correc9} one should observe that $c \le b$.
Therefore all assertions in Proposition \ref{propfa1} have been proved.
\end{proof}

\bigskip
\noindent
\begin{proof}[Proof of Theorem \ref{thmrec9c}]  
 Since $c \le a$  the inequality
\eref{rec942o} holds for $a$ replaced by $c$. Therefore all three integrals in \eref{rec939o} have been
 shown to be finite. This proves \eref{rec939o} and therefore \eref{rec938o}. 
\end{proof}

\bigskip
\noindent
\begin{proof}[Proof of Theorem \ref{thmrec9b}]  
 In the representation  \eref{rec959}  of the strong solution  $v_\tau(t)$ 
 the first term  $w(t)$ has 
 finite strong b-action  for $H_1^\As$ by assumption
  and therefore  
   finite strong  b-action for $H_1^0$ by the equivalence of norms.
  Here we are using the hypothesis $\As \in L^3(M)$,
 which is a substantial hypothesis in case $M = \R^3$ but is automatic when $M$ is bounded because
 $\As$ is always in $L^6(M)$, being a strong solution to the Yang-Mills heat equation.
 Since $c \le b$, the first term also has finite strong c-action for $H_1^0$.

Concerning the second term, note that $w(\tau) \in H_1^\As = H_1^0$. Since $P^\perp$ is continuous
on $H_1^0$ it follows  that $\hat w(\tau) \in H_1^0$. 
 Therefore 
 $\int_0^\tau t^{-c} \| \hat w(\tau)\|_{H_1^0}^2 dt = \| \hat w(\tau)\|_{H_1^0}^2 \int_0^\tau t^{-c} dt < \infty$.
 
 The strong c-action of the third   term is bounded by \linebreak 
 $\|P^\perp\|_{H_1^0 \rightarrow H_1^ 0}^2
 \int_0^\tau s^{-c} \| w(s)\|_{H_1^0}^2 ds$, which is finite   by  \eref{w3c} 
 since the $H_1^0$ norm is equivalent to the $H_1^\As$ norm and  $c \le b$.
 
 The fourth term in \eref{rec959} has been shown to have 
 finite strong c-action for $H_1^0$ in 
 Theorem \ref{thmrec9c}    even under the weaker condition that $w$ merely 
 has finite b-action   in the sense  of \eref{w3d}.  
 
 Hence
 $\int_0^\tau t^{-c} \| v_\tau(t)\|_{H_1^0}^2 dt < \infty$. Finally we can shift back to the Sobolev
 norm $H_1^\As$ using once more the equivalence of norms under the
  hypothesis that $\As \in L^3(\R^3)$. This completes the proof of Theorem \ref{thmrec9b}.
\end{proof}

\section{Proofs of the main theorems} \label{secrecpf}
\subsection{Existence}

\noindent
    \begin{proof}[Proof of Theorem \ref{thmrec1} (recovery) and part of Theorem \ref{thmveu1e},(existence)] 
     Since $ v$ and $v_\tau$, defined in \eref{rec1} and \eref{rec2} respectively,  
     are both given by \eref{rec23} 
     with $\tau =0$ and $\tau >0$, respectively,
     we already know from Theorem \ref{thmrec5} that  they  
    are  solutions to the variational equation \eref{ve},
     at least at the level of informal computation. 
           In Theorem \ref{thmrec10} we proved that $v(t)$ is an almost strong solution
     and $v_\tau(t)$ is a strong solution.

 Concerning the initial values of these two solutions, Theorem \ref{strinit} 
 shows that both the almost strong solution $v$ and the strong solution $v_\tau$ converge
  to their initial values in the  sense of $L^\rho(M; \L^1\otimes \kf)$ for $2 \le \rho < 3$.
  \eref{ve22a} follows from \eref{rec713c}.
    Theorem \ref{thmrec7o}  shows that the almost
 strong solution $v(t)$ is continuous on $[0, \infty)$ into $H_b^\As$,  as required in \eref{rec1.1},
  and in particular   converges to its correct initial value $v_0$  in $H_b^\As$ norm. 
 This completes the proof of Theorem \ref{thmveu1e}, Parts 1., 2. and 3.  Moreover the extension from
 $L^2$ to $L^\rho$ discussed in Remark \ref{remweak} has also been proved.
 
 All of these proofs are proofs about  the functions constructed in the recovery theorem,
  Theorem \ref{thmrec1}, which is now proved.
  
  Of these two theorems it remains only to prove uniqueness (Part 4.  of Theorem \ref{thmveu1e}.)
  This will be proved in Section \ref{secrecu}.
\end{proof}

\bigskip
\noindent
\begin{proof}[Proof of Theorem \ref{thmveu2b}]   (Finite action for  $\tau >0$)
The proof that the strong solution $v_\tau(\cdot)$ constructed by the ZDS procedure has 
 finite strong c-action when $c = min(a,b)$
 and $max(a,b) > 1/2$ is restated in Theorem \ref{thmrec9b} and proved in Section \ref{secrecab}. 
\end{proof}

\subsection{Uniqueness} \label{secrecu}

Parts 1., 2, and 3. of Theorem \ref{thmveu1e}, asserting the existence of solutions $v$ to the variational equation and their properties, have been proved in the proof of Theorem  \ref{thmrec1}. 
 It remains to prove the uniqueness of strong solutions.

\bigskip
\noindent
As in the  case of the Yang-Mills heat equation itself, the coefficients
in the variational equation \eref{ve} are too singular for the standard proof of uniqueness,
based on Gronwall's inequality, to be applicable. We will instead adapt the proof
used in \cite{G70}. 
     It suffices to prove uniqueness in the largest class of solutions
 of interest to us.  Accordingly we will assume that $a = 1/2$ and $b = 1/2$. 
 
 \begin{theorem}
 Suppose that $a = b = 1/2$ and that $A(\cdot)$ is a strong solution of the Yang-Mills heat equation
 of finite action. Let $v_1$ and $v_2$ be two strong solutions of the variational equation along $A(\cdot)$
 with finite $(1/2)$-action in the sense that \eref{w3d} holds for both.
 If $v_1(0) = v_2(0)$ then $ v_1(t) = v_2(t)$ for all $t \ge 0$.
 \end{theorem}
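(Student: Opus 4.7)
Let $u := v_1 - v_2$. Then $u$ is itself a strong solution of the homogeneous variational equation $-u' = d_A^* d_A u + [u \lrc B]$ on $(0,\infty)$, with $u(0)=0$ in $L^2$, and by the obvious subadditivity of the seminorm in \eref{w3b} the difference inherits finite $(1/2)$-action. My plan is to invert the ZDS construction of Theorem \ref{thmrec1} and thereby reduce uniqueness for the variational equation to the already-proved uniqueness for the augmented variational equation (Theorem \ref{thmwe}). I will write $u = w + d_{A(t)} \eta(t)$, where $\eta$ solves an auxiliary scalar parabolic equation with vanishing initial data and $w$ is shown to satisfy the augmented equation \eref{av1} with vanishing initial data; Theorem \ref{thmwe} will then force $w \equiv 0$, substitution will force $\eta \equiv 0$, and hence $u \equiv 0$.

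Step 1. Define $\eta : [0,\infty) \to L^2(M;\kf)$ as the strong solution of the $\kf$-valued scalar equation
\begin{equation*}
\eta'(t) + d_{A(t)}^* d_{A(t)} \eta(t) = d_{A(t)}^* u(t), \qquad \eta(0)=0.
\end{equation*}
Because $d_A^* d_A$ is strongly (not merely weakly) elliptic on $0$-forms, the machinery of Section \ref{secEUaug} simplifies considerably in this scalar case: freezing $\As = A(T)$, splitting $d_{A(t)}^* d_{A(t)} = d_\As^* d_\As + K_0(t)$ with a first-order remainder $K_0$ controlled as in Lemma \ref{lemests1}, and running the contraction argument of Theorem \ref{thmmild1b} on the corresponding $0$-form path space, yields the required $\eta$. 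The Bochner--Weitzenb\"ock identity \eref{vst12} together with the finite $(1/2)$-action of $u$ gives $\int_0^T s^{-1/2} \|d_{A(s)}^* u(s)\|_2^2\, ds < \infty$, so the source term has the integrability needed to drive the contraction.

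Step 2 and 3. Set $w := u - d_{A(t)} \eta(t)$. A direct computation exactly parallel to the forward direction of Theorem \ref{thmrec5}, using the variational equation for $u$, the defining equation for $\eta$, the Bianchi and adjoint-Bianchi identities (in particular $(d_A^*)^2 d_A u = [B \lrc d_A u]$ in three dimensions, as used in the proof of \eref{pi11a}), the Yang--Mills relation $d_A^* B = -A'$, and the algebraic identity $d_A^*[B,\eta] + [d_A \eta \lrc B] + [A',\eta] = 0$, gives
\begin{equation*}
-w'(t) = (d_{A(t)}^* d_{A(t)} + d_{A(t)} d_{A(t)}^*) w(t) + [w(t) \lrc B(t)],
\end{equation*}
which is the augmented variational equation \eref{av1}, with $w(0)=0$. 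Granting that $w$ lies in $\Q_T^{(b)}$ with $b=1/2$, the uniqueness clause of Theorem \ref{thmwe} forces $w \equiv 0$ on $[0,T]$ for small $T$. Then $u = d_A \eta$ and $d_A^* u = d_A^* d_A \eta$, whence the auxiliary equation collapses to $\eta' \equiv 0$; combined with $\eta(0)=0$ this gives $\eta \equiv 0$, so $u \equiv 0$ on $[0,T]$. Iteration from shifted initial times, as in the last paragraph of Section \ref{secmildstr}, extends the conclusion to all of $[0,\infty)$.

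The main obstacle is the membership claim $w \in \Q_T^{(b)}$, that is, the small-time decay $\limsup_{t \downarrow 0} t^{(1-b)/2} \|w(t)\|_{H_1^\As} = 0$ required by the uniqueness part of Theorem \ref{thmwe}. Both $u$ and $d_A \eta$ are expected to blow up in $H_1^\As$ as $t \downarrow 0$, and their singularities must cancel at precisely the rate dictated by the finite $(1/2)$-action of $u$. This is the infinitesimal analogue of the short-time cancellation between $w$ and its vertical correction in the forward ZDS procedure. It should follow from an initial-behavior analysis for the scalar equation of Step 1 modelled on Theorem \ref{ibord1} and Lemma \ref{lem50}, combined with the weighted bound $\int_0^T s^{-1/2}\|d_A^* u(s)\|_2^2\, ds < \infty$ to pin down the precise rate of $\|d_A \eta(t)\|_{H_1^\As}$ near $t=0$ and match it against that of $u(t)$.
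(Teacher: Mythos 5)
Your approach is genuinely different from the paper's and, as you yourself flag, it has an unresolved gap at its critical point. The paper proves uniqueness by a direct energy estimate: setting $v = v_1 - v_2$ and computing $(d/dt)\|v(t)\|_2^2 = -2\|d_A v\|_2^2 - 2([v\lrc B],v) \le 2c\|B(t)\|_\infty \|v(t)\|_2^2$, which gives a Gronwall-type inequality $f'(t)\le \beta(t) f(t)$ with $f=\|v\|_2^2$ and $\beta(t)=2c\|B(t)\|_\infty$. Since $\beta$ is singular at $t=0$ (only $\int_0^T t\,\beta(t)^2 dt<\infty$ from \eref{iby17}), the standard Gronwall lemma does not apply; the paper instead invokes the singular-coefficient version from \cite[Theorem 7.20]{G70}, which requires an a priori decay rate $f(t)=o(t^{1/2})$. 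That rate is obtained by Cauchy--Schwarz from the bound $\int_0^T s^{1/2}\|v'(s)\|_2^2\,ds<\infty$, which the paper cites forward to \cite{G72} but which is a straightforward consequence of the differential-inequality machinery of Section \ref{secids}. This is a two-line reduction, bypassing the ZDS apparatus entirely.

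Your plan inverts the ZDS procedure and pushes the difficulty into showing that the corrected function $w = u - d_{A(t)}\eta(t)$ lies in $\Q_T^{(1/2)}$, i.e.\ that $t^{1/4}\|w(t)\|_{H_1^\As}\to 0$. This is exactly where the argument is incomplete, and it is not a small technicality. Both $u(t)$ and $d_{A(t)}\eta(t)$ are generically unbounded in $H_1^\As$ as $t\downarrow 0$ (indeed, $u$ being a strong solution with finite $(1/2)$-action gives only $\int_0^T s^{-1/2}\|u(s)\|_{H_1^{A(s)}}^2\,ds<\infty$, not the pointwise decay $\|u(t)\|_{H_1^\As} = o(t^{-1/4})$), and the claim that their singularities cancel to the required rate is precisely the kind of assertion that the paper's initial-behavior estimates in Sections \ref{secibw}--\ref{secib2} exist to substantiate --- and those estimates are proved only for solutions of the \emph{augmented} equation, not for differences of solutions to the weakly parabolic equation. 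You cannot invoke them here without first establishing that your $w$ already solves \eref{av1} with the right membership, which is what you are trying to prove. Without a concrete derivation of the decay rate (for instance via an analogue of Lemma \ref{lem50} applied to the scalar equation for $\eta$, together with a matching upper bound on $\|u(t)\|_{H_1^\As}$ that does not currently exist in your argument), the appeal to the uniqueness clause of Theorem \ref{thmwe} is not justified. The algebraic identity in your Step 2 checks out, and the inverse-ZDS idea is conceptually natural, but as it stands the proof does not close. The paper's energy method is both shorter and avoids this issue entirely by never leaving $L^2$.
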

                \begin{proof}
 Let $v_1$ and $v_2$ be two solutions of finite (1/2)-action. Then
so is $v \equiv v_1 - v_2$. We want to show that if $v(0) = 0$ then $v(t) =0$ for all $t \ge 0$.
Denote by $B(t)$ the curvature of $A(t)$.
We have then, for $t>0$, 
\begin{align*}
(d/dt) \|v(t)\|_2^2 &= 2 (v'(t), v(t)) \\
&=2( -d_A^* d_Av(t), v(t)) -2 ([v(t)\lrc B(t)], v(t)) \\
&= -2 \| d_{A} v(t)\|_2^2 - 2 ([v(t)\lrc B(t)], v(t)) \\
&\le 2 c \|B(t)\|_\infty \|v(t)\|_2^2.
\end{align*}
Let $\beta(t) = 2c\|B(t)\|_\infty$ and $ f(t) = \| v(t)\|_2^2$. Then 
\beq
f'(t) \le \beta(t) f(t), \ \ \ \ t >0.
\eeq
We know from \eref{iby17}    
that
\beq
\int_0^T t \beta(t)^2 dt < \infty
\eeq
for any $T \in (0, \infty)$ when $A(\cdot)$ has finite action.
Hence the proof in \cite[Theorem (7.20)]{G70} 
 that $ f(t) \equiv 0$ if $f(0) = 0$ goes through once we have shown 
that 
\beq
f(t) = o(t^{1/2}).       \label{u10}
\eeq
It will be shown in  \cite[Theorem 5.5]{G72} 
that if $v$ has finite (1/2)-action then 
\beq
\int_0^T s^{1/2} \|v'(s)\|_2^2 ds < \infty         \label{u12}
\eeq
Thus, if $v(0) = 0$ then 
\begin{align*}
\|v(t)\|_2 &\le \int_0^t \|v'(s)\|_2 ds \\
&\le \Big(\int_0^t s^{-1/2} ds \Big)^{1/2} 
\Big( \int_0^t s^{1/2} \|v'(s)\|_2^2 ds \Big)^{1/2} \\
&= t^{1/4} \sqrt{2}\Big( \int_0^t s^{1/2} \|v'(s)\|_2^2 ds \Big)^{1/2}.
\end{align*}
This proves \eref{u10} and concludes the proof of uniqueness asserted in Part 4. of
Theorem  \ref{thmveu1e}.      
 This completes the proof of Theorem \ref{thmveu1e}.
\end{proof}

\section{Bibliography}

\bibliographystyle{amsplain}
\bibliography{ymh}

\end{document}